\definecolor{nicegreen}{RGB}{0,180,0}
\newtheorem{thm}{Theorem}[section]
\newtheorem*{thm*}{Theorem}
\newtheorem{cor}[thm]{Corollary}
\newtheorem{lemma}[thm]{Lemma}
\newtheorem{sublemma}[thm]{Sublemma}
\newtheorem{propn}[thm]{Proposition}
\newtheorem*{propn*}{Proposition}
\theoremstyle{definition}
\newtheorem{defn}[thm]{Definition}
\theoremstyle{definition}
\newenvironment{rmk}
  {\pushQED{\qed}\examplex}
  {\popQED\endexamplex}
\newcommand{\qp}{\mathbb{Q}_p}
\newcommand{\qpb}{\overline{\mathbb{Q}}_p}
\newcommand{\zp}{\mathbb{Z}_p}
\newcommand{\rhobar}{\overline{\rho}}
\newcommand{\varrhobar}{\overline{\varrho}}
\newcommand{\rbar}{{\overline{r}}}
\newcommand{\taubar}{\overline{\tau}}
\newcommand{\BC}{\textnormal{BC}}
\newcommand{\JH}{\textnormal{JH}}
\newcommand{\sym}{\textnormal{sym}}
\newcommand{\poly}{\textnormal{poly}}
\newcommand{\der}{\textnormal{der}}
\newcommand{\soc}{\textnormal{soc}}
\newcommand{\Hom}{\textnormal{Hom}}
\newcommand{\Ext}{\textnormal{Ext}}
\newcommand{\aff}{\textnormal{aff}}
\newcommand{\inj}{\textnormal{Inj}}
\newcommand{\proj}{\textnormal{Proj}}
\newcommand{\rad}{\textnormal{rad}}
\newcommand{\pol}{\textnormal{pol}}
\newcommand{\dd}{\textnormal{dd}}
\newcommand{\Gal}{\textnormal{Gal}}
\newcommand{\rig}{\textnormal{rig}}
\newcommand{\HT}{\textnormal{HT}}
\newcommand*{\longhookrightarrow}{\ensuremath{\lhook\joinrel\relbar\joinrel\rightarrow}}
\newcommand*{\longtwoheadrightarrow}{\ensuremath{\relbar\joinrel\twoheadrightarrow}}
\newcommand{\sm}[4]{\left(\begin{smallmatrix} #1 & #2 \\ #3 & #4 \end{smallmatrix}\right)}
\newcommand{\un}[1]{\underline{#1}}
\newcommand{\cC}{{\mathcal{C}}}
\newcommand{\cE}{{\mathcal{E}}}
\newcommand{\cG}{{\mathcal{G}}}
\newcommand{\cH}{{\mathcal{H}}}
\newcommand{\cJ}{{\mathcal{J}}}
\newcommand{\cM}{{\mathcal{M}}}
\newcommand{\cO}{{\mathcal{O}}}
\newcommand{\cS}{{\mathcal{S}}}
\newcommand{\cZ}{{\mathcal{Z}}}
\newcommand{\bG}{{\mathbf{G}}}
\newcommand{\bbA}{{\mathbb{A}}}
\newcommand{\bbC}{{\mathbb{C}}}
\newcommand{\bbF}{{\mathbb{F}}}
\newcommand{\bbG}{{\mathbb{G}}}
\newcommand{\bbQ}{{\mathbb{Q}}}
\newcommand{\bbR}{{\mathbb{R}}}
\newcommand{\bbT}{{\mathbb{T}}}
\newcommand{\bbV}{{\mathbb{V}}}
\newcommand{\bbZ}{{\mathbb{Z}}}
\newcommand{\tP}{\widetilde{P}}
\newcommand{\tR}{\widetilde{R}}
\newcommand{\tW}{\widetilde{W}}
\newcommand{\tv}{\widetilde{v}}
\newcommand{\tw}{\widetilde{w}}
\newcommand{\fM}{{\mathfrak{M}}}
\newcommand{\fS}{{\mathfrak{S}}}
\newcommand{\fm}{{\mathfrak{m}}}
\newcommand{\fp}{{\mathfrak{p}}}
\newcommand{\fq}{{\mathfrak{q}}}
\newcommand{\ft}{{\mathfrak{t}}}
\newcommand{\fw}{{\mathfrak{w}}}
\newcommand{\sfK}{{\mathsf{K}}}
\newcommand{\sfa}{{\mathsf{a}}}
\newcommand{\sfb}{{\mathsf{b}}}
\newcommand{\sfc}{{\mathsf{c}}}
\newcommand{\sfd}{{\mathsf{d}}}
\newcommand{\sfx}{{\mathsf{x}}}
\newcommand{\sfy}{{\mathsf{y}}}
\newcommand{\nG}{{\textnormal{G}}}
\newcommand{\nJ}{{\textnormal{J}}}
\newcommand{\nL}{{\textnormal{L}}}
\newcommand{\nN}{{\textnormal{N}}}
\newcommand{\nP}{{\textnormal{P}}}
\newcommand{\nS}{{\textnormal{S}}}
\newcommand{\nU}{{\textnormal{U}}}
\newcommand{\nW}{{\textnormal{W}}}
\newcommand{\bbra}[1]{\llbracket #1\rrbracket}
\newcommand{\wt}[1]{\widetilde{#1}}
\newcommand{\F}{\mathbb{F}}
\newcommand{\congto}{\xrightarrow{\,\sim\,}}
\DeclareMathOperator{\Spec}{Spec}
\newcommand{\stacks}[1]{\href{https://stacks.math.columbia.edu/tag/#1}{#1}}
\begin{document}
\nocite{}

\title[Gelfand--Kirillov dimension for $p$-adic unitary groups of rank 2]{Gelfand--Kirillov dimension for mod $p$ representations of $p$-adic unitary groups of rank 2}
\date{}

\author{Karol Kozio\l}
\address{Department of Mathematics, CUNY Baruch College, 55 Lexington Ave, New York, NY 10010 USA} 
\email{karol.koziol@baruch.cuny.edu}

\author{Stefano Morra}
\address{Universit\'e Paris 8, Laboratoire d'Analyse, G\'eom\'etrie et Applications, Universit\'e Sorbonne Paris Nord, CNRS, UMR 7539,  F-93430, Villetaneuse, France}
\email{morra@math.univ-paris13.fr}

\subjclass[2010]{11F33 (primary), 11F80, 11F55, 22E50 (secondary)}

\begin{abstract}
  Let $p$ be a prime number and $F/F^+$ a CM extension of a totally real field such that every place of $F^+$ above $p$ is unramified and inert in $F$.  We fix a finite place $v$ of $F^+$ above $p$, and let $\overline{r}: \textnormal{Gal}(\overline{F^+}/F^+) \longrightarrow {}^C\textnormal{U}_{1,1}(\overline{\mathbb{F}}_p)$ be a modular $L$-parameter valued in the $C$-group of a rank 2 unitary group associated to $F/F^+$.  We assume $\overline{r}$ is semisimple and sufficiently generic at $v$.  Using recent results of Breuil--Herzig--Hu--Morra--Schraen along with our previous work, we prove that certain admissible smooth $\overline{\mathbb{F}}_p$-representations of the $p$-adic unitary group $\textnormal{U}_{1,1}(F^+_v)$ associated to $\overline{r}$ in spaces of mod $p$ automorphic forms have Gelfand--Kirillov dimension $[F^+_v:\mathbb{Q}_p]$.   
  \end{abstract}

\maketitle
\tableofcontents

\section{Introduction}

\subsection{Context and main result}

Let $p$ be a prime number.  In recent years there has been substantial progress in formulating versions of local and global Langlands reciprocity laws with $p$-torsion coefficients.  The most spectacular progress in this direction has been the $p$-adic local Langlands correspondence for $\nG\nL_2(\bbQ_p)$, which established a bijection between $p$-adic Galois representations of $\mathrm{Gal}(\overline{\bbQ}_p/\bbQ_p)$ and representations of $\nG\nL_2(\bbQ_p)$ on $p$-adic Banach spaces (see \cite{berger:bourbaki,breuil:bourbaki, breuil:ICM} for an overview).  The starting point in constructing such a correspondence is the so-called semisimple mod $p$ local Langlands correspondence for $\nG\nL_2(\bbQ_p)$: in \cite{breuil:GL2QpI}, Breuil constructed a bijection between continuous irreducible $2$-dimensional $\mathrm{Gal}(\overline{\bbQ}_p/\bbQ_p)$-representations on $\bbF$-vector spaces ($\bbF$ being a finite ``sufficiently large'' extension of $\bbF_p$, which we fix through this introduction) and supersingular smooth $\nG\nL_2(\bbQ_p)$-representations over $\bbF$-vector spaces, which was moreover compatible with the weight part of Serre's conjecture.

The situation when considering $p$-adic groups larger than $\nG\nL_2(\bbQ_p)$ has been suprisingly more difficult.  This is due in part to the overabundance of irreducible smooth representations of $p$-adic groups on $\bbF$-vector spaces, which are far more numerous than the objects on the ``Galois side'' (see \cite{BP, hu:supersingular, schraen:presentation, wu} for an account of some of the difficulties which arise).  Furthermore, even for representations coming from global constructions (which are expected to be relevant for a $p$-adic Langlands correspondence via local/global compatibility), it has been extremely hard to obtain structural results.  Beyond information at tame level obtained in \cite{EGS,LMS,huwang:K1, le:nonsplit}, the difficult results of \cite{BHHMS,huwang} have been the first progress towards non-tame properties of the supersingular representations appearing in Hecke eigenspaces of the cohomology of Shimura curves.

Despite these hard-won achievements, the overwhelming majority of the advances in the $p$-adic local Langlands correspondence have involved the general linear group.  As a result, the possibility of functoriality for $p$-torsion Langlands parameters has been largely unexplored so far.

This paper aims at making progress on understanding cohomological representations appearing in a putative mod $p$ local Langlands correspondence in the case of $p$-adic unitary groups in two variables.  The reason for considering such groups is twofold.  On the one hand, the most definitive results towards a mod $p$ local Langlands correspondence so far are for the group $\nG\nL_2$, and the unitary groups we consider are outer forms thereof, which allows us to adapt techniques appearing in the literature to our setup.  On the other hand, initial evidence (in the form of the weight part of Serre's conjecture) obtained in \cite{koziolmorra} leads us to expect that the results below will have interesting and fruitful applications and will reveal new structure.  Hence, in this article we continue the work begun in \cite{koziolmorra} and establish analogs of the main results of \cite{EGS,LMS,BHHMS} for $p$-adic unitary groups in two variables defined over an unramified extension of $\bbQ_p$.

In order to state the main result we introduce some notation.  Let $K_2/K$ be unramified extensions of $\qp$ with $K_2/K$ quadratic, and write $\nU_{1,1}$ for the unramified unitary group in two variables defined over the ring of integers $\cO_K$ of $K$.  We let ${}^C \nU_{1,1}$ denote the $C$-group of $\nU_{1,1}$, i.e., the usual Langlands $L$-group of a canonical central extension of $\nU_{1,1}$.  A local $L$-parameter is a continuous homomorphism $\Gal(\qpb/K) \longrightarrow {}^C{\nU_{1,1}}(\bbF)$ compatible with the projection ${}^C{\nU_{1,1}}(\bbF) \longtwoheadrightarrow \Gal(K_2/K)$, and whose multiplier is equal to the mod $p$ cyclotomic character.  We shall be interested in mod $p$ representations of $\nU_{1,1}(K)$ associated to such $L$-parameters.

As mentioned above, due to the preponderance of smooth mod $p$ representations of $p$-adic groups, it is natural to first restrict our attention to representations having global origin, namely those appearing in the isotypic components in the cohomology of arithmetic groups with mod $p$ coefficients.  To present this setup, let $F/F^+$ be a CM extension unramified at all finite places, where $F^+/\bbQ$ is totally real.  We assume that $p$ is unramified in $F$ and any place of $F^+$ above $p$ is \textbf{inert} in $F$.  Moreover, for simplicity of exposition, we assume that $p$ is the unique $p$-adic place of $F^+$ and that $F^+_p \cong K$.  (This assumption is not required for our main results in the body of the paper.)  Let $\bbG$ be a unitary group defined over $\cO_{F^+}$, which is isomorphic to the compact unitary group $\nU_2(\bbR)$ at all infinite places, isomorphic to $\nU_{1,1}(K)$ at $p$, and isomorphic to $\nG\nL_2(F^+_v)$ at all places $v$ which split in $F$.

Given the above setup, we may now construct the representations of $\nU_{1,1}(K)$ on which we will focus.  We denote by $S_{\bbG}(\sfK^p,\bbF)$ the space of algebraic automorphic forms on $\bbG$ with tame level $\sfK^p \subset \bbG(\bbA^{p,\infty}_{F^+})$ and infinite level at $p$.  Letting $\fm'_{\rbar}$ denote the maximal ideal associated to a global $L$-parameter $\rbar:\Gal(\overline{\bbQ}/F^+) \longrightarrow {}^C{\nU_{1,1}}(\bbF)$ in the unramified Hecke algebra acting on $S_{\bbG}(\sfK^p,\bbF)$ (see Subsection \ref{subsub:Hecke} for its precise definition), we set
$$\pi(\rbar) := S_{\bbG}(\sfK^p,\bbF)[\fm'_{\rbar}],$$
which is a smooth mod $p$ representation of $\bbG(F^+_p) \cong \nU_{1,1}(K)$ associated to $\rbar$ (and which, a priori, depends on $\rbar$ and the global setup, and not only $\rbar|_{\Gal(\qpb/K)}$).

The $\nU_{1,1}(\cO_K)$-socle of the representation $\pi(\rbar)$ was investigated in the paper \cite{koziolmorra} in relation to Serre weight conjectures.  Going deeper into the structure of $\pi(\rbar)$, and inspired by the techniques of \cite{BHHMS}, the main result of the present paper is as follows:

\begin{thm}
  \label{mainthm-intro}
We maintain the above global setup: $F/F^+$ is a CM field extension of $F^+$ which is unramified at all finite places, and such that $p$ is unramified and inert in $F$.  Let $\bbG$ be the unitary group defined above, and let $\rbar: \textnormal{Gal}(\overline{\bbQ}/F^+) \longrightarrow {}^C\nU_{1,1}(\bbF)$ be an $L$-parameter with cyclotomic multiplier. We furthermore assume that:
\begin{itemize}
\item $\pi(\rbar)\neq 0$ (in other words, $\rbar$ is modular);
\item $\rbar^{-1}({}^C{\nU^{\circ}_{1,1}}(\bbF)) = \Gal(\overline{\bbQ}/F)$;
  \item $\rbar(\Gal(\overline{\bbQ}/F)) \supset \nG\nL_2(\bbF_p)$;
  \item $\overline{\bbQ}^{\ker(\textnormal{ad}^0(\rbar))}$ does not contain $F(\zeta_p)$;
  \item $\rbar$ is unramified outside $p$;
  \item $\rbar$ is tamely ramified and $12$-generic above $p$. 
\end{itemize}
Then $\dim_{\nU_{1,1}(K)}(\pi(\rbar)) = [K:\bbQ_p]$, where $\dim_{\nU_{1,1}(K)}$ denotes the Gelfand--Kirillov dimension.
\end{thm}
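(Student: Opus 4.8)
The plan is to adapt to the quasi-split outer form $\nU_{1,1}$ the strategy of \cite{BHHMS}, which establishes the analogous statement for $\nG\nL_2(\bbQ_{p^f})$, feeding in the Serre weight and $\nU_{1,1}(\cO_K)$-socle computations of \cite{koziolmorra} as local input and the Taylor--Wiles--Kisin patching construction for $\bbG$ as global input. Write $G := \nU_{1,1}(K)$ and $d := [K:\bbQ_p]$, and fix a uniform open pro-$p$ subgroup $H \leq G$, of dimension $4d = \dim_{\bbQ_p}G$. With $\Lambda := \F\llbracket H\rrbracket$, the Gelfand--Kirillov dimension $\dim_G(\pi(\rbar))$ equals the dimension of the finitely generated graded module $\mathrm{gr}(\pi(\rbar)^\vee)$ over the polynomial $\F$-algebra $\mathrm{gr}\,\Lambda$ in $4d$ variables, on which $G$ acts through the adjoint action and whose $\Spec$ is, after base change, a product of $d$ copies of $\fg := \mathrm{Lie}(\nU_{1,1})$ indexed by the embeddings $\cO_K/p \hookrightarrow \Fpbar$. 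The content of the theorem is that this dimension equals the ``flag variety'' value $d$, namely the $\bbQ_p$-dimension of the unipotent radical of a Borel of $\nU_{1,1}/K$ -- the trace-zero line $\{x \in K_2 : \mathrm{Tr}_{K_2/K}(x) = 0\}$. I would prove the two inequalities separately.

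\textbf{Upper bound $\dim_G(\pi(\rbar)) \leq d$.} I would transport the commutative-algebra heart of \cite{BHHMS}. Admissibility of $\pi(\rbar)$ makes $\pi(\rbar)^H$ finite-dimensional; by \cite{koziolmorra} the representation $\pi(\rbar)|_{\nU_{1,1}(\cO_K)}$ has the multiplicity-one socle predicted from $\rbar|_{\Gal(\qpb/K)}$; and $\pi(\rbar)^{I_1}$ is cyclic over the pro-$p$ Iwahori--Hecke algebra with the relevant Hecke operators at $v$ acting invertibly. Combining these, one shows that $\mathrm{gr}(\pi(\rbar)^\vee)$ is annihilated by the images in $\mathrm{gr}\,\Lambda$ of the opposite-unipotent and torus directions of $\fg$ in each of the $d$ embeddings, so its support lies in a closed subscheme of $\Spec\mathrm{gr}\,\Lambda$ of dimension $d$, giving the bound.

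\textbf{Lower bound $\dim_G(\pi(\rbar)) \geq d$.} Here I would use patching. The hypotheses on $\rbar$ (modularity, $\rbar(\Gal(\overline{\bbQ}/F)) \supset \nG\nL_2(\bbF_p)$, $F(\zeta_p)$ not contained in the fixed field of $\ker(\mathrm{ad}^0(\rbar))$, and $\rbar$ tamely ramified and $12$-generic above $p$) are precisely what is needed to run the Taylor--Wiles--Diamond--Fujiwara method for $\bbG$ and produce a patched module $M_\infty$ over $R_\infty\llbracket H\rrbracket$ that is maximal Cohen--Macaulay over the local framed deformation ring $R_\infty$, with $\pi(\rbar)^\vee \cong M_\infty/\fa M_\infty$ for an $M_\infty$-regular ideal $\fa$ with the expected number of generators (mod $p$ local--global compatibility). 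A dimension count on $M_\infty$ then yields $\dim_G(\pi(\rbar)) \geq d$; equivalently, $\pi(\rbar)^\vee$ surjects onto the dual of the diagram-theoretic representation of \cite{koziolmorra} attached to $\rbar|_{\Gal(\qpb/K)}$, whose graded module is faithful over the subring of $\mathrm{gr}\,\Lambda$ corresponding to the unipotent radical $\fn$, so that $\dim_G(\pi(\rbar)) \geq \dim_{\bbQ_p}\fn = d$. Together with the upper bound, this proves the theorem.

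\textbf{Main obstacle.} Both directions are substantial, but I expect the upper bound to be the crux. Beyond importing the $\nG\nL_2$ machinery of \cite{BHHMS}, the new work is the associated-graded bookkeeping for the outer form $\nU_{1,1}$: describing $\mathrm{gr}\,\Lambda$ and the pro-$p$ Iwahori--Hecke algebra directly for $\nU_{1,1}$, where $K/\bbQ_p$ and $K_2/K$ enter in genuinely different ways (the positive root subgroup lives in $K_2$ but contributes only its trace-zero part), and locating the $d$-dimensional support by hand. A secondary point is to check that $12$-genericity and the big-image hypothesis are simultaneously strong enough for the local inputs of \cite{koziolmorra} and \cite{BHHMS} and for Taylor--Wiles patching.
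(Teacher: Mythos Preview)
Your two-inequality outline matches the paper's architecture, and your lower-bound sketch via Taylor--Wiles--Kisin patching and a Cohen--Macaulay dimension count is essentially what the paper does. But your plan for the upper bound, and your identification of the main obstacle, are both off in ways that matter.

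\medskip

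\textbf{The upper bound: transfer, not direct computation.} You propose to analyse $\mathrm{gr}\,\Lambda$ and the pro-$p$ Iwahori--Hecke algebra for $\nU_{1,1}$ directly, worrying about how $K_2/K$ enters the root-space bookkeeping. The paper bypasses this entirely. The key structural observation (Section~\ref{sec:transfer}) is that inside $\nG\nU_{1,1}(\cO_K)$, the subgroups $\nU_{1,1}(\cO_K)$ and $\nG\nL_2(\cO_K)$ differ only by central prime-to-$p$ subgroups: one has crossed-product decompositions $\bbF[\nG\nU_{1,1}(k_K)] \cong \bbF[\nU_{1,1}(k_K)]*k_K^\times \cong \bbF[\nG\nL_2(k_K)]*\nU_1(k_K)$, and analogously for Iwasawa algebras modulo powers of the augmentation ideal. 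This lets the paper transfer a $\sfK_\nU$-representation $\pi$ with central character to a $\sfK_\nG$-representation $\pi_\nG$ with the same restriction to $\nS\nL_2(\cO_K)$, and in particular the same Gelfand--Kirillov dimension. The abstract upper-bound criterion (Theorem~\ref{thm:abstract:GKdim}) therefore reduces immediately to the $\nG\nL_2$ criterion of \cite{BHHMS}; no new associated-graded analysis is required. Your proposed description of the mechanism (``annihilated by opposite-unipotent and torus directions'', ``$\pi^{I_1}$ cyclic over the Hecke algebra'') also does not match the actual \cite{BHHMS} criterion, which is formulated in terms of three representation-theoretic conditions: (i) $\JH(\soc_{\sfK}\pi)=\nW^?(\rhobar)$, (ii) $[\pi[\fm^2]:\sigma]=[\soc_{\sfK}\pi:\sigma]$ for all $\sigma\in\nW^?(\rhobar)$, and (iii) $\JH(\pi^{I_1})=\JH(D_1(\rhobar))$.

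\medskip

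\textbf{The real obstacle: polarized deformation rings.} The genuine new work, which your proposal does not mention, is verifying conditions (i)--(iii) above for $\pi(\rbar)$. Condition (i) is \emph{not} simply the Serre-weight result of \cite{koziolmorra}: one needs multiplicity one in the socle, and condition (ii) demands the same multiplicity in $\pi[\fm^2]$. Both are obtained via freeness results for patched modules (Propositions~\ref{prop:freeness:1}, \ref{prop:freeness:2}, \ref{prop:cyclic:lattice:1}, Corollary~\ref{cor:mult:1}), and these in turn rest on explicit computations of potentially crystalline deformation rings for ${}^C\nU_{1,1}$-valued parameters with Hodge--Tate weights $\leq(2,-1)$ and multi-type descent data (Theorems~\ref{thm:height+monodromy} and~\ref{prop:multitype-def-ring}). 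These computations require tracking the polarization on Kisin modules, which introduces symmetry ideals $I^{(j',j'+f)}_{\sym,\infty}$ relating embeddings $j'$ and $j'+f$ (Tables~\ref{Table1}, \ref{Table2}); this is where the $12$-genericity hypothesis is actually consumed, via the monodromy condition and Elkik approximation arguments (Sublemma~\ref{lem:Elkik:1}). Your proposal treats the patching as routine input, but the specific structure of these polarized deformation rings is what makes the gluing argument of \cite{EGS} go through and is the bulk of the paper.

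\medskip

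A minor point: the upper-unipotent of $\nU_{1,1}(K)$ is $\{x\in K_2:\bar x=x\}=K$, not the trace-zero line; the dimension count is unaffected.
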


\noindent (We reiterate that the assumption on $p$ being inert in $F^+$ is only for simplicity of exposition, and is not required for our main result below.  In the body of the paper, our assumption on splitting behavior is that $F^+/\bbQ$ is unramified at $p$, that $F/F^+$ is unramified at all finite places, and that every place of $F^+$ above $p$ is inert in $F$.  The representation $\pi(\rbar)$ is then defined with infinite level at a fixed place $v$ of $F^+$ above $p$; see equation \eqref{defofpi} below.)

\subsection{Strategy}

The proof of the $\nG\nL_2$-analog of Theorem \ref{mainthm-intro} is based on two main ingredients: 
\begin{enumerate}
  \item the behavior of certain torsion $\bbF[\![I_\nG]\!]$-modules with minimal multiplicities (which turn out to have the expected Gelfand--Kirillov dimension), where $I_\nG$ denotes the upper-triangular Iwahori subgroup of $\nG\nL_2(K)$; and
  \item \label{ingredient2} the fact that the smooth representations appearing in cohomology satisfy these minimal multiplicity assumptions.
\end{enumerate}
The first component is purely of a group-theoretic nature, while the second comes from favorable alignment between locally algebraic representation theory and the intersection theory of deformation rings with certain $p$-adic Hodge theoretic conditions.

The main observations we use for proving Theorem \ref{mainthm-intro} is that the two ingredients above, which pertain to the general linear group, can be adapted to the setting of unitary groups.  We spell out more carefully what we mean by this.  Firstly, we note that the Iwahori subgroup $I_\nG$ of $\nG\nL_2(K)$ and the Iwahori subgroup $I_\nU$ of $\nU_{1,1}(K)$ ``agree up to a central subgroup:''  both are subgroups of $I_{\nG\nU}$, the Iwahori subgroup of $\nG\nU_{1,1}(K)$, and $I_{\nG\nU}$ is generated by its center and either of these two subgroups.  Thus, we may deduce results about modules over $\bbF[\![I_\nU]\!]$ by first extending the action to $\bbF[\![I_{\nG\nU}]\!]$, and then restricting to $\bbF[\![I_\nG]\!]$, or vice versa (we make this procedure more precise in Section \ref{sec:transfer}).  Moreover, this transfer procedure works more generally for various subgroups of $\nG\nL_2(\cO_K)$ and $\nU_{1,1}(\cO_K)$, and as a consequence we may directly import many results regarding Ext groups and socle filtrations from \cite{BHHMS}, see Section \ref{appendix:EGC}.  This addresses the first ingredient above.

In order to have an analog for unitary groups of the second ingredient above, we prove results which are endemic to unitary groups (i.e., results which we cannot obtain simply by applying the transfer procedure of the previous paragraph).  The main results in this direction are the precise descriptions of various rings parametrizing deformations of local $L$-parameters $\Gal(\overline{\bbQ}_p/K) \longrightarrow {}^C\nU_{1,1}(\bbF)$ and satisfying the $p$-adic Hodge theoretic conditions which are relevant for the local/global compatibility statements we need (namely, Hodge--Tate weights $(2,-1)$ and ``multi-type'' potentially crystalline descent data).  This is done by following the outline of \cite{BHHMS}, but now imposing the extra symmetries coming from the polarizations on Kisin modules introduced in \cite{koziolmorra}.  This yields explicit descriptions of the geometry and intersection theory of deformation spaces of ${}^C\nU_{1,1}$-valued $L$-parameters, for which we refer to Section \ref{sec:galdefs}.

In order to make use of the structure of deformation rings obtained above to prove minimal multiplicity results about $\pi(\rbar)$ (as in the ingredient \eqref{ingredient2} above), we must relate local deformation rings to spaces of global automorphic forms.  This is done following the strategy initiated in \cite{EGS}, via the mechanism of Taylor--Wiles--Kisin patching of \cite{CEGGPS}.  We employ a slight variation on our previous work \cite{koziolmorra} to obtain an exact functor $M_\infty$ from finitely generated $W(\bbF)$-modules with an action of $\nU_{1,1}(\cO_K)$ to finitely generated modules over a ring $R_\infty$, which is a power series ring over a tensor product of local deformation rings.  
(Here we write $W(\bbF)$ to denote the ring of Witt vectors of $\bbF$.)
As discussed in \cite{CEGGPS}, this functor satisfies favorable geometric and local/global compatibility properties, so that the support of $M_\infty$ computed on suitable representations is ``as large as possible'' over the potentially crystalline deformation spaces considered above.
By applying the functor to various types of representations (irreducible representations of $\nU_{1,1}(\cO_K)$ over $\bbF$, projective envelopes of such representations over $\bbF$ and $W(\bbF)$, lattices in locally algebraic representations, etc.), using the local/global compatibility properties of $M_\infty$, and following the strategy of \cite{BHHMS}, we obtain several freeness results for patched modules.  These parallel the analogous results for $\nG\nL_2$, and suffice to establish ingredient \eqref{ingredient2} for $\nU_{1,1}$.

In the course of the patching arguments in the proof of Theorem \ref{mainthm-intro}, we also establish results about invariants of $\pi(\rbar)$ under the principal congruence subgroup:

\begin{thm}
  Maintain the setting of Theorem \ref{mainthm-intro}.  We then have an isomorphism of $\nU_{1,1}(\cO_K/(p))$-representations
  $$\pi(\rbar)^{\nU_{1,1}(\cO_K)_1} \cong D_0(\rbar|_{\Gal(\overline{\bbQ}_p/K)}),$$
  where $\nU_{1,1}(\cO_K)_1 := \ker(\nU_{1,1}(\cO_K) \longtwoheadrightarrow \nU_{1,1}(\cO_K/(p)))$ is the principal congruence subgroup, and $D_0(\rbar|_{\Gal(\overline{\bbQ}_p/K)})$ denotes the $\nU_{1,1}$-analog of the Diamond diagrams for $\nG\nL_2(\cO_K)$-representations introduced in \cite{BP}.  
\end{thm}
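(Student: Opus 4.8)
The plan is to transport to $\nU_{1,1}$ the strategy of \cite{EGS}, refined as in \cite{BHHMS}; as the surrounding text indicates, the statement is obtained alongside Theorem~\ref{mainthm-intro}, using the same patching machinery. Write $\rhobar := \rbar|_{\Gal(\qpb/K)}$ and $K_1 := \nU_{1,1}(\cO_K)_1$. Since $K_1$ is a pro-$p$ group and $\pi(\rbar)\neq 0$, the invariants $\pi(\rbar)^{K_1}$ form a nonzero smooth --- hence, by admissibility of $\pi(\rbar)$, finite-dimensional --- representation of the finite group $\nU_{1,1}(\cO_K/(p))$, and the point is to identify it with $D_0(\rhobar)$. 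Recall that $D_0(\rhobar)$, the $\nU_{1,1}$-analog of the Diamond diagrams of \cite{BP}, is characterized as the \emph{maximal} subrepresentation of $\bigoplus_{\sigma\in W(\rhobar)}\inj_\sigma$ (injective envelopes taken in the category of $\nU_{1,1}(\cO_K/(p))$-representations) having socle $\bigoplus_{\sigma\in W(\rhobar)}\sigma$ and whose Jordan--Hölder constituents occur with the explicit multiplicities dictated by the Serre-weight combinatorics; the submodule structure of the $\inj_\sigma$ needed to make sense of this --- including the relevant $\Ext$ groups and socle filtrations --- is imported from \cite{BHHMS} via the transfer of Section~\ref{sec:transfer} (see Section~\ref{appendix:EGC}).

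I would first compute $\soc_{\nU_{1,1}(\cO_K)}\pi(\rbar) = \bigoplus_{\sigma\in W(\rhobar)}\sigma$, each weight occurring exactly once. The inclusion $\bigoplus_{\sigma\in W(\rhobar)}\sigma\subseteq\soc_{\nU_{1,1}(\cO_K)}\pi(\rbar)$ is the weight part of Serre's conjecture established in \cite{koziolmorra}, applicable since $\rbar$ is modular, tamely ramified and $12$-generic above $p$. The reverse inclusion and multiplicity one follow from the patched module $M_\infty$: the space $\Hom_{\nU_{1,1}(\cO_K)}(\sigma,\pi(\rbar))$ is governed by $M_\infty$ applied to the projective $\nU_{1,1}(\cO_K)$-cover $\proj_\sigma$ of $\sigma$, and the explicit description in Section~\ref{sec:galdefs} of the potentially crystalline deformation ring of $\rhobar$ with Hodge--Tate weights $(2,-1)$ and the tame inertial descent datum attached to $\sigma$ shows that $M_\infty(\proj_\sigma)$ is nonzero precisely when $\sigma\in W(\rhobar)$, and is then free of rank one over that deformation ring. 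As every $\sigma\in W(\rhobar)$ is inflated from $\nU_{1,1}(\cO_K/(p))$, taking $K_1$-invariants --- left exact, and preserving the $K_1$-trivial semisimple socle --- gives $\soc_{\nU_{1,1}(\cO_K/(p))}\pi(\rbar)^{K_1}=\bigoplus_{\sigma\in W(\rhobar)}\sigma$ with multiplicity one as well; in particular there is a socle-compatible embedding $\pi(\rbar)^{K_1}\hookrightarrow\bigoplus_{\sigma\in W(\rhobar)}\inj_\sigma$.

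It remains to show that this embedding factors through $D_0(\rhobar)$ and is then an equality. By the maximality characterization, factoring through $D_0(\rhobar)$ amounts to verifying that $\pi(\rbar)^{K_1}$ satisfies the prescribed upper bounds on Jordan--Hölder multiplicities, and equality then follows from matching lower bounds. For an irreducible $\nU_{1,1}(\cO_K/(p))$-representation $\tau$, the multiplicity $[\pi(\rbar)^{K_1}:\tau]$ equals $\dim_\bbF\Hom_{\nU_{1,1}(\cO_K/(p))}(\proj_\tau,\pi(\rbar)^{K_1})$, which by adjunction for the normal pro-$p$ subgroup $K_1$ equals $\dim_\bbF\Hom_{\nU_{1,1}(\cO_K)}(\proj_\tau,\pi(\rbar))$ after inflating $\proj_\tau$ to $\nU_{1,1}(\cO_K)$; this is again computed by $M_\infty$. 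Running $M_\infty$ on the relevant projective $\nU_{1,1}(\cO_K)$-modules --- projective covers of Serre weights over $\bbF$ and over $W(\bbF)$, and $\nU_{1,1}(\cO_K)$-stable lattices in the locally algebraic ``multi-type'' representations whose mod~$p$ reductions realize the various $\tau$ --- and combining local/global compatibility for $M_\infty$ (the support of $M_\infty$ on such a lattice being the predicted cycle) with the intersection theory of Section~\ref{sec:galdefs}, one obtains both the upper bounds (so that $\pi(\rbar)^{K_1}\subseteq D_0(\rhobar)$ inside $\bigoplus_{\sigma}\inj_\sigma$) and the matching lower bounds $[\pi(\rbar)^{K_1}:\tau]\geq[D_0(\rhobar):\tau]$. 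With the embedding and finiteness of length in hand, these force $\pi(\rbar)^{K_1}=D_0(\rhobar)$ as $\nU_{1,1}(\cO_K/(p))$-representations.

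The hard part is establishing the freeness statements for $M_\infty$ used above. These ultimately rest on the fine geometry of the deformation spaces of ${}^C\nU_{1,1}$-valued local $L$-parameters under the $p$-adic Hodge-theoretic conditions in play --- the identification of their irreducible components, their Cohen--Macaulayness, and the reducedness and multiplicities of the special cycles cut out by Serre weights and by tame types --- which is the genuinely ``unitary'' input of the paper and cannot be produced by the group-theoretic transfer of Section~\ref{sec:transfer}; following the outline of \cite{BHHMS}, it is obtained by imposing on that analysis the polarizations on Kisin modules introduced in \cite{koziolmorra}. Once these inputs are granted --- all established in the body prior to this point --- the multiplicity comparison above is essentially formal.
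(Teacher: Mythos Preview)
Your outline has the right socle computation, but the central multiplicity step is not supported by the inputs you invoke, and in the paper's logic the implication actually runs the other way. You propose to bound $[\pi^{K_1}:\sigma]$ for $\sigma\in\nW^?(\rhobar)$ by computing $\dim_\bbF M_\infty^v(P_\sigma)/\fm_\infty$ from deformation-ring geometry, and then deduce $\pi^{K_1}\subseteq D_0(\rhobar)$. But geometry of the deformation space gives maximal Cohen--Macaulayness and support of $M_\infty^v(\tP_\sigma)$, not its minimal number of generators; and the multi-type rings of Section~\ref{sec:multi-type} are used only later, for $M_\infty^v(R'_{2,j})$ and Corollary~\ref{cor:mult:1}, never for $\tP_\sigma$. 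In the paper (following \cite{BHHMS}), cyclicity of $M_\infty^v(\tP_\sigma)$ in Proposition~\ref{prop:freeness:2} is in fact deduced \emph{from} the isomorphism $\pi^{K_1}\cong D_0(\rhobar)$, precisely via the identity $\dim_\bbF M_\infty^v(P_\sigma)/\fm_\infty=[\pi^{K_1}:\sigma]$. So your argument, as written, is circular relative to the paper's structure.

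The paper's actual proof of $\pi^{K_1}\cong D_0(\rhobar)$ avoids patched multiplicities for $P_\sigma$ entirely and is purely representation-theoretic. The inclusion $D_0(\rhobar)\hookrightarrow\pi^{K_1}$ comes from the Serre weight conjecture of \cite{koziolmorra} combined with the gluing lemma of \cite[Lem.~9.2]{breuil-buzzati}. The reverse inclusion $\pi^{K_1}\hookrightarrow D_0(\rhobar)$ is obtained by transferring \cite[Lem.~4.5, Prop.~4.6]{LMS} to $\nU_{1,1}(k_K)$ via Section~\ref{sec:transfer}: once the $\sfK_\nU$-socle of $\pi$ is known to be $\bigoplus_{\sigma\in\nW^?(\rhobar)}\sigma$ with multiplicity one --- which requires only Proposition~\ref{prop:freeness:1}\eqref{it:freeness:1:1}, i.e.\ freeness of $M_\infty^v(\sigma)$, \emph{not} of $M_\infty^v(P_\sigma)$ --- the structure of injective envelopes over $\nU_{1,1}(k_K)$ forces $\pi^{K_1}$ into $D_0(\rhobar)$.

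Two smaller points. In your socle paragraph you conflate $\sigma$ with $\proj_\sigma$: the space $\Hom_{\sfK_\nU}(\sigma,\pi)$ is controlled by $M_\infty^v(\sigma)/\fm_\infty$, not by $M_\infty^v(\proj_\sigma)/\fm_\infty$; and the deformation ring relevant to a Serre weight has Hodge--Tate weights $(1,0)$ (or the crystalline weights attached to $\sigma$), not $(2,-1)$. Also, ``matching lower bounds on multiplicities'' does not by itself produce the embedding $D_0(\rhobar)\hookrightarrow\pi^{K_1}$; that is what the Breuil--Buzzati lemma supplies.
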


We conclude with some speculation.  Many of our results are direct analogs of those appearing in \cite{BHHMS}.  However, we expect the representations $\pi(\rbar)$ to differ from their $\nG\nL_2$-analogs in a very specific way.  Namely, \cite[Cor. 3.3.5.6]{BHHMS2} shows (under some global assumptions) that when $\rhobar$ is an irreducible $\nG\nL_2(\bbF)$-valued Galois representation, then the $\nG\nL_2(K)$-representation $\pi_{\nG\nL_2}(\rhobar)$ is irreducible and supersingular.  On the other hand, the Serre weight results of \cite{koziolmorra} along with the classification result of \cite{koziol:U11} show that, at least when $K = \bbQ_p$, the representation $\pi(\rbar)$ defined above cannot be irreducible.  We expect that, when $\rbar|_{\Gal(\overline{\bbQ}_p/K)}$ is of ``niveau 2,'' the representation $\pi(\rbar)$ is a direct sum of two irreducible, supersingular representations, which are moreover conjugate by the element $\sm{1}{0}{0}{p} \in \nG\nU_{1,1}(K)$.  We hope to address this in future work.

\subsection{Structure of the paper}  We now give a brief outline of the contents of each section.

After collecting basic notation in Section \ref{sec:notation}, we proceed in Section \ref{sec:gptheory} to recall the $p$-adic unitary groups we will be working with.  We also spell out how these unitary groups are related to general linear groups via the process of base change, and discuss the relevant Langlands dual groups.  The notions of $L$-parameters and their interaction with representation theory of $p$-adic unitary groups is treated in Section \ref{sec:Lparams-Serrewts}, where we also recall the form of the inertial local Langlands correspondence we will be needing in this paper.

Section \ref{sec:transfer} contains the key principles to transfer representation theoretic results between different groups.  We mainly deal with modules over Artinian algebras, but we also give a few results which pertain to representations on $W(\bbF)$-lattices (Subsection \ref{sec:transfer:char0}).

The combinatorics of Serre weights is handled in Section \ref{appendix:EGC}.  In particular, we interpret the base change map as a map between two extension graphs (introduced for the group $\nG\nL_2$ in \cite{LMS}), and deduce results about predicted sets of Serre weights and constituents of mod $p$ reductions of Deligne--Lusztig representations.

Sections \ref{sec:Kisinmods} and \ref{sec:galdefs} are devoted to computations of the potentially crystalline deformations which are relevant for global applications.  We first give the necessary background on polarized Kisin modules with monodromy.  This notion is necessary to access potentially crystalline deformations which are not potentially Barsotti--Tate, the latter having already been treated in \cite[\S 5]{koziolmorra}.  We subsequently compute the relevant deformation rings in Section \ref{sec:galdefs}, following the proofs of \cite[\S 4]{BHHMS}.  In order to make the arguments of \textit{op. cit.} work in our setting, we analyze the effect of the polarization by including certain symmetry ideals (described in Tables \ref{Table1} and \ref{Table2}).

In Section \ref{sec:BP} we define Diamond diagrams (introduced in \cite{BP} for $\nG\nL_2$) in the setting of unitary groups.  This is necessary to give explicit representation theoretic criteria for an upper bound on the Gelfand--Kirillov dimension of a smooth $\bbF$-valued representation of $\nU_{1,1}(K)$.  In particular, the transfer procedure of Section \ref{sec:transfer} comes into play to import the representation theoretic results of \cite{BHHMS} to our unitary group context.

In Section \ref{sec:global} we finally apply the results of Sections \ref{sec:galdefs} and \ref{sec:BP} to arrive at our main result.  After constructing and establishing the fundamental properties of the relevant patching functors, we deduce several results about freeness of patched modules.  Finally, in Subsection \ref{freenesslattices} we perform the ``gluing argument'' of \cite{EGS} to obtain the multiplicity one statements necessary to apply the upper bound criterion from Subsection \ref{subsec:UB}.  (The lower bound follows in a manner analogous to \cite{BHHMS}.)

We include two appendices.  In Appendix \ref{appA}, we show that if $\sigma$ is a predicted Serre weight of a tamely ramified $L$-parameter $\rhobar:\Gal(\overline{\bbQ}_p/K) \longrightarrow {}^C\nU_{1,1}(\bbF)$ (of the kind considered in the introduction), then $\rhobar$ admits a crystalline lift with Hodge--Tate weights determined by $\sigma$.  In Appendix \ref{appB}, we follow the argument of \cite[App. A]{emertongee} to show that such a local $\rhobar$ can be globalized to a global $\rbar:\Gal(\overline{\bbQ}/F^+) \longrightarrow {}^C\nU_{1,1}(\bbF)$, which is moreover modular.

\subsection*{Acknowledgements}  The initial ideas for this project grew out of a SQuaRE meeting at the American Institute of Mathematics in August 2019, and subsequently developed into the manuscript \cite{BHHMS}.  We would like to heartily thank Christophe Breuil, Florian Herzig, Yongquan Hu, Benjamin Schraen, and Sug Woo Shin for their input and enlightening conversations.  We also thank AIM for providing excellent working conditions.  Parts of this work were carried out at the University of Michigan and the University of Toronto, and we would like to thank these institutions for their support.

During the preparation of this article, KK was supported by NSF grant DMS-2101836, and SM was supported by the Institut Universitaire de France and the A.N.R. project CLap-CLap ANR-18-CE40-0026.

\section{Notation}
\label{sec:notation}

\subsection{Fields}
\label{notation:fields}
Let $p$ denote an odd prime number, and fix an algebraic closure $\qpb$ of $\qp$.  We denote by $\overline{\bbF}_p$ the residue field of $\qpb$, and we assume that all field extensions of $\qp$ are contained in $\qpb$.  Throughout we will work with a finite extension $E$ of $\qp$ which will serve as our field of coefficients. We let $\cO$ denote the ring of integers of $E$, $\varpi$ its uniformizer, and $\bbF$ its residue field.  We will assume $E$ and $\bbF$ are sufficiently large as necessary.

Let $f\geq 1$, and let $K$ denote the unramified extension of $\qp$ of degree $f$.  We let $\cO_K$ denote its ring of integers, with canonical uniformizer $p$, and denote its residue field by $k_K$.  We also let $K_2$ denote the unique unramified quadratic extension of $K$, and $\cO_{K_2}$ its ring of integers.  The group $\nU_1(K)\subset \cO_{K_2}^\times$ is defined as the kernel of the norm map $K_2^\times \longrightarrow K^\times$.

We fix an embedding $\sigma_0:k_K \longhookrightarrow \bbF$, and define $\sigma_j := \sigma_0 \circ \varphi^j$, where $\varphi:x \longmapsto x^p$ is the arithmetic Frobenius on $k_K$.  We let $\cJ := \Hom(k_K, \bbF)$ denote the set of field embeddings, which is identified with $\{0, 1, \ldots, f - 1\}$, considered modulo $f$.  We let $\sigma_0':k_{K_2} \longhookrightarrow \bbF$ denote an embedding extending $\sigma_0$, and identify $\cJ' := \Hom(k_{K_2},\bbF)$ with $\{0, 1, \ldots, 2f - 1\}$ modulo $2f$ as above.

\subsection{Representations}
All representations will live on vector spaces over $E$ or $\bbF$, or on $\cO$-modules, unless otherwise indicated.  By abuse of notation, we will generally not distinguish between a representation and its isomorphism class.  If $G$ is a group, $H \trianglelefteq G$ a normal subgroup, $V$ an $H$-representation and $g\in G$, we write $V^g$ to denote the $H$-representation obtained by keeping the same underlying vector space (or $\cO$-module) $V$, and letting $h$ act by $ghg^{-1}$.

Given a finite length representation $V$ of some group, we let $\textnormal{JH}(V)$ denote its set of Jordan--H\"older factors.  If $V$ denotes a representation of a (pro)finite group $G$ on a finite-dimensional $E$-vector space, then we may choose a $G$-stable $\cO$-lattice $V^\circ$ inside $V$, and we write $\overline{V^\circ}$ for its reduction mod $\varpi$.  The set of Jordan--H\"older factors of $\overline{V^\circ}$ is independent of the choice of lattice $V^\circ$.  We therefore write $\textnormal{JH}(\overline{V})$ for $\textnormal{JH}(\overline{V^\circ})$.  
We denote by $V\longmapsto V^\vee$ the duality functor defined on the category of finit- dimensional $E$-vector spaces (resp.~finite-dimensional $\bbF$-vector spaces).

\subsection{Galois theory}
For any field $F$, we let $\Gamma_F := \textnormal{Gal}(\overline{F}/F)$ denote the absolute Galois group of $F$, where $\overline{F}$ is a fixed separable closure of $F$.  If $F$ is a number field and $v$ is a place of $F$, we let $F_v$ denote the completion of $F$ at $v$, and use the notation $\textnormal{Frob}_v$ to denote a geometric Frobenius element of $\Gamma_{F_v}$.  If $F$ is a $p$-adic field, we let $I_F$ denote the inertia subgroup of $\Gamma_F$.  In addition, we let $\textnormal{Art}_F:F^\times \longrightarrow \Gamma_F^{\textnormal{ab}}$ denote the local Artin map, which sends uniformizers to geometric Frobenius elements.

For $F$ either a number field or a $p$-adic field, we let $\varepsilon:\Gamma_F\longrightarrow \zp^\times$ denote the $p$-adic cyclotomic character, and let $\overline{\varepsilon}$ or $\omega$ denote its reduction mod $p$.

If $F$ is a $p$-adic field, $V$ a de Rham representation of $\Gamma_F$ over $E$, and $\kappa:F \longhookrightarrow E$ an embedding, then we define $\textnormal{HT}_\kappa(V)$ to be the multiset of Hodge--Tate weights with respect to $\kappa$.  Thus, $\textnormal{HT}_\kappa(V)$ contains $i$ with multiplicity $\dim_E(V\otimes_{F,\kappa}\widehat{\overline{F}}(-i))^{\Gamma_F}$.  In particular, $\textnormal{HT}_\kappa(\varepsilon) = \{1\}$.

By abuse of notation, we also use $\varphi$ to denote an element of $\Gamma_{\bbQ_p}$ which is a fixed lift of $\textnormal{Art}_{\bbQ_p}(p^{-1})\in \Gamma_{\bbQ_p}^{\textnormal{ab}}$; in particular, we have $\varepsilon(\varphi) = 1$.  Consequently, by extending the embedding $\sigma_0$ (resp., $\sigma_0'$) to a field embedding $K \longhookrightarrow E$ (resp., $K_2 \longhookrightarrow E$), we may use $\varphi$ to identify $\cJ$ with $\Hom(K,E)$ (resp., $\cJ'$ with $\Hom(K_2,E)$).  (Note that this is opposite to our notation in \cite{koziolmorra}; there, $\varphi$ was used to denote a geometric Frobenius element.)

\subsection{Miscellany}
We write matrix transposes \emph{on the right}, so that $A^\top$ denotes the transpose of a matrix $A$.  Given an (anti)automorphism $\theta$ of $\nG\nL_n(R)$ which commutes with the transpose, we write $A^{\theta\top}$ for $(A^\theta)^{\top}$; in particular, we write $A^{-\top}$ for $(A^{-1})^\top$.

\section{Group theory}
\label{sec:gptheory}

We recall some of the main notation from \cite[\S 2]{koziolmorra}.  We diverge slightly from the notation in \textit{op. cit.}, in order to be consistent with \cite{BHHMS}.

\subsection{Unitary groups over $\cO_K$}
\label{unitarygps:ok}
\subsubsection{}

Let $\nU_{1,1}$ denote the algebraic group over $\cO_K$ given by
$$\nU_{1,1}(R) = \left\{g\in \nG\nL_2(\cO_{K_2}\otimes_{\cO_K}R): g^{(\varphi^f\otimes 1)\top}\Phi_2g = \Phi_2\right\},$$
where $R$ is an $\cO_K$-algebra, and $\Phi_2 := \sm{0}{1}{-1}{0}$.  We also define $\nG\nU_{1,1}$ by
$$\nG\nU_{1,1}(R) = \left\{g\in \nG\nL_2(\cO_{K_2}\otimes_{\cO_K}R): g^{(\varphi^f\otimes 1)\top}\Phi_2g = \kappa\Phi_2~\textnormal{for some $\kappa \in R^\times$} \right\}.$$

We shall crucially use several group decompositions.  First, we have an isomorphism of $\cO_K$ group schemes:
\begin{equation}
  \label{GL2decomp}
\nG\nU_{1,1}  \cong  \nG\nL_{2/\cO_K} \times^{\bG_{m/\cO_K}} \textnormal{Res}_{\cO_{K_2}/\cO_K}(\bG_{m/\cO_{K_2}}).
\end{equation}
We have an analogous decomposition relative to $\nU_{1,1}$, at the level of $\cO_K$-points.  Precisely, we have an isomorphism of topological groups
\begin{equation}
  \label{U11decomp}  
  \nG\nU_{1,1}(\cO_K)  \cong  \nU_{1,1}(\cO_K) \times^{\nU_1(\cO_K)} \cO_{K_2}^\times.
\end{equation}
Note that the above decomposition does not hold at the level of $K$-points.  Using equation \eqref{U11decomp}, we may construct other such decompositions for quotient groups or closed subgroups.  For example, we deduce
\begin{equation}
  \label{U11decomp:finite}  
  \nG\nU_{1,1}(k_K)  \cong  \nU_{1,1}(k_K) \times^{\nU_1(k_K)} k_{K_2}^\times.
\end{equation}

\subsubsection{}

We let $H := \widetilde{\nU}_{1,1}$ denote the central $\bG_m$-extension constructed in \cite{buzzardgee}, considered as a group scheme over $\cO_K$.  Concretely, $H = \nU_{1,1}\times_{\nP\nG\nL_2}\nG\nL_2$ is the set of all pairs $(h,h')$, with $h\in \nU_{1,1}, h'\in \nG\nL_2$, subject to the condition that $h$ and $h'$ have the same image in $\nP\nG\nL_2$ (recall that $\nU_{1,1}$ and $\nG\nL_2$ become isomorphic over an algebraic closure of $K$, so that we have a map from $\nU_{1,1}$ to $\nP\nG\nL_2$).  The maps $H \longrightarrow \nU_{1,1}$ and $H\longrightarrow \nG\nL_2$ are the projections onto the corresponding factors, and the map $\imath:\bG_m\longrightarrow H$ is $\lambda\longmapsto (1, \sm{\lambda}{0}{0}{\lambda})$.  The action of $\Gamma_K$ on the first factor of $H(\qpb)$ is the one induced from $\nU_{1,1}(\qpb)$, while the action on the second factor is the standard one.

\subsubsection{}
\label{tori}

We let $T_{\nU}, T_\nG, T_H$ denote the diagonal maximal tori of $\nU_{1,1}, \nG\nL_2$, and $H$, respectively, over $\cO_K$.  The character group of $T_H$ (over $\overline{\bbQ}_p$) is given by
$$X^*(T_{H}) = \left\{ (a,b,c,d)\in X^*(T_\nU)\oplus X^*(T_\nG)\cong \bbZ^4 \right\}\big/\sim$$
where
$$(a,b,c,d)\sim (a + z, b - z, c - z, d + z)$$
for $z\in \bbZ$.  
The maps $X^*(T_\nU)\longrightarrow X^*(T_{H}), X^*(T_\nG)\longrightarrow X^*(T_{H})$ are the inclusions into the corresponding factors, and the projection $X^*(T_{H})\longrightarrow X^*(\bG_m)\cong \bbZ$ is 
$$(a,b,c,d)\longmapsto c + d.$$  
We will identify $X^*(T_\nU)$ with its image in $X^*(T_H)$ without comment.

Similarly, the cocharacter group is given by
$$X_*(T_{H}) = \left\{(a', b', c', d')\in X_*(T_\nU)\oplus X_*(T_\nG)\cong \bbZ^4 :a' - b' = c'- d'\right\}.$$
The maps $X_*(T_{H})\longrightarrow X_*(T_\nU), X_*(T_{H})\longrightarrow X_*(T_\nG)$ are the projections onto the corresponding factors, and the map $X_*(\bG_m)\cong \bbZ \longrightarrow X_*(T_{H})$ is 
$$a' \longmapsto (0,0,a',a').$$

\subsubsection{}

The actions of $\Gamma_K$ on $X^*(T_{H})$ and $X_*(T_{H})$ are the ones induced from $X^*(T_\nU)$ and $X_*(T_\nU)$: they are both unramified, and we have
$$\varphi^f\cdot (a,b,c,d) =  (-b,-a,c,d)$$
on both lattices.

\subsubsection{}

The roots $\Phi_H \subset X^*(T_{H})$ are given by $\{\pm\alpha_H\}$, where 
$$\alpha_H := (1,-1,0,0).$$ 
Likewise, the coroots $\Phi_H^\vee \subset X_*(T_{H})$ are given by $\{\pm\alpha_{H}^\vee\}$ where 
$$\alpha_H^\vee := (1,-1,1,-1).$$ 
The upper triangular Borel subgroup of $H$ determines the subset $\Delta_H = \Phi_H^+ = \{\alpha_H\}$ of positive (and simple) roots (and similarly for the set $\Delta_H^\vee$ of simple coroots).

We define 
$$\rho_H := (1,0,0,0) \in X^*(T_H),$$
so that $\langle\rho_H,\alpha_H^\vee\rangle = 1$.  Note, however, that $\rho_H$ is \emph{not} Galois-invariant.  To remedy this, we define 
$$\eta_H := (0,0,1,0)\in X^*(T_H).$$
This element is Galois-invariant, and $\langle\eta_H,\alpha_H^\vee\rangle = 1$.  (The character $\eta_H$ is a twisting element for $H$, in the sense of \cite{buzzardgee}.)

The Weyl group of $(H, T_{H})$ is denoted $W$.  It is a cyclic group of order 2, whose generator we denote $\fw$.

\subsection{Unitary groups over $\bbZ_p$}
\label{unitarygps:qp}

\subsubsection{}
\label{sub:sub:unitary:loc}
We now consider unitary groups over $\qp$.  We set 
$$(\underline{\nU}_0,~\underline{T}_{\nU,0},~\underline{G}_0,~\underline{T}_0)  :=  \textnormal{Res}_{\cO_K/\zp}(\nU_{1,1},~T_\nU,~H,~ T_H),$$
and define 
$$(\underline{\nU},~\underline{T}_\nU,~\un{G},~ \un{T}) := (\underline{\nU}_0,~\underline{T}_{\nU,0},~\un{G}_0,~ \un{T}_0) \times_{\bbZ_p}\cO.$$
Thus, we have a surjections 
$$\underline{G} \longtwoheadrightarrow \underline{\nU},\qquad \underline{T} \longtwoheadrightarrow \underline{T}_{\nU},$$

\subsubsection{}
Recall that we assumed $E$ to be sufficiently large (in particular, we assume $E$ contains the images of all embeddings $K_2 \longhookrightarrow \overline{E}$).  In particular, the isomorphism $\cO_K\otimes_{\bbZ_p}\cO \stackrel{\sim}{\longrightarrow} \cO^{\cJ}$ given by $x\otimes y \longmapsto (\sigma_j(x)y)_{j \in \cJ}$ gives an isomorphism $\un{G} \cong \prod_{j \in \cJ} H_{/\cO}$.  We have a similar isomorphisms for $\un{T}$, $\un{\nU}, \un{T}_\nU$, $X^*(\un{T})$, $X^*(\un{T}_\nU)$, $\un{\Phi}$, and $\un{\Phi}^+$ (where the latter denote the sets of (positive) roots of $(\un{G},\un{T})$ relative to the upper-triangular Borel subgroup in each embedding).  In particular, any $\mu \in X^*(\un{T})$ may be written as $\mu = (\mu_j)_{j \in \cJ}$ according to this decomposition.

There is an automorphism $\pi$ of $X^*(\un{T})$ corresponding to the arithmetic Frobenius, given by 
$$\pi \big((a_j,b_j,c_j,d_j)_{j \in \cJ}\big)_{j'} = \begin{cases} (a_{j' - 1}, b_{j' - 1}, c_{j' - 1}, d_{j' - 1}) & \textnormal{if}~ 1 \leq j' \leq f - 1 \\ (-b_{f - 1}, -a_{f - 1},  c_{f - 1}, d_{f - 1}) & \textnormal{if}~ j' = 0.\end{cases}$$
An analogous action (i.e., with a ``shift right'') holds for $X_*(\un{T})$, $X^*(\un{T}_\nU)$ and $X_*(\un{T}_\nU)$.

\subsubsection{}
The set of positive roots $\un{\Phi}^+$ is given by $\{\alpha_j\}_{j \in \cJ}$, where $\alpha_j \in X^*(\un{T})$ is equal to $\alpha_H$ in embedding $j$ and 0 elsewhere.  Analogously, we have the set $\un{\Phi}^{\vee,+}$ of positive coroots, given by $\{\alpha_j^\vee\}_{j \in \cJ}$.  We let $\rho_j \in X^*(\un{T})$ denote the character which is equal to $\rho_H$ in embedding $j$ and 0 elsewhere, and set $\rho := \sum_{j \in \cJ} \rho_j$.  We make analogous definitions of $\eta_j$ and $\eta$, and note that $\eta$ gives a twisting element of $\un{G}$.  As with the analogous groups over $\cO_K$, we identify $X^*(\un{T}_\nU)$ with its image in $X^*(\un{T})$ (as those characters whose last two entries are 0 in each embedding).  Thus, $\alpha_j, \rho_j \in X^*(\un{T}_\nU)$ for each $j \in \cJ$.

The Weyl group $\un{W}$ of $(\un{G},\un{T})$ is identified with $W^{f}$.  We shall write elements of $\un{W}$ as $s = (s_0,s_1,\ldots, s_{f - 1})$, and define $\un{\fw} := (\fw, \fw, \ldots, \fw)$.  The group $\un{W}$ is also canonically identified with the Weyl group of $(\un{\nU},\un{T}_\nU)$, and we make this identification without further comment.

\subsection{Base change}
\label{sec:basechange}

It will be convenient to relate the constructions above to those of the group $\nG\nL_{2/\cO_{K_2}}$.  We will do this using the procedure of base change.  

\subsubsection{}
We define
$$(\un{G}',~\un{T}') := \textnormal{Res}_{\cO_{K_2}/\bbZ_p}(\nG\nL_{2/\cO_{K_2}},~T_{\nG/\cO_{K_2}})\times_{\bbZ_p}\cO.$$
Our convention will be that given some object (homomorphism, character, etc.) for the group $\un{G}$, we will denote with a prime the analogously defined object for $\un{G}'$.

\subsubsection{}
As in the previous subsection, the assumption that $E$ is sufficiently large implies that we have an isomorphism $\un{G}' \cong \prod_{j' \in \cJ'} \nG\nL_{2/\cO}$.  We have similar isomorphisms for $\un{T}'$, $X^*(\un{T}')$, $\un{\Phi}'$, and $\un{\Phi}'^+$ (the latter defined relative to the upper-triangular Borel subgroup in each embedding $j' \in \cJ'$).

As in \cite[\S 3B1]{koziolmorra}, we identify $X^*(\underline{T}')$ with two copies of $X^*(\underline{T}_\nU)$.  In particular, given $\mu \in X^*(\underline{T}_\nU)$, we define
\begin{equation}
  \label{def-of-BC}
  \BC(\mu) := (\mu,-\underline{\fw}(\mu)) \in X^*(\underline{T}').
\end{equation}
Analogously, the Weyl group $\un{W}'$ of $(\un{G}', \un{T}')$ is identified with two copies of $\un{W}$.

We let $\pi'$ denote the automorphism of $X^*(\un{T}')$ corresponding to the arithmetic Frobenius.  Given $\mu' \in X^*(\un{T}')$, the automorphism $\pi'$ is given by $\pi'(\mu')_{j'} = \mu'_{j' - 1}$ for all $j' \in \cJ'$.  In particular, if $\mu \in X^*(\un{T}_\nU)$, then we have
$$-(\un{\fw},\un{\fw})\pi'^f\big(\BC(\mu)\big) = \BC(\mu).$$

Finally, we define $\eta'_{j'}$ to be the character which is equal to $(1,0)$ in embedding $j'$ and 0 elsewhere, and set $\eta' := \sum_{j' \in \cJ'}\eta'_{j'}$.

\subsection{Weyl modules}
\label{subsec:WM}
Given $\lambda = (\lambda_j)_{j \in \cJ} \in X^*(\un{T}_{\nU})$ we denote by ${V(\lambda)}$ the representation of $\nU_{1,1}(\cO_K)$ given by 
$$V(\lambda) := \bigotimes_{j\in\cJ}(\textnormal{Sym}^{\lambda_{j,1}-\lambda_{j,2}}(\cO^2)\otimes_{\cO} {\det}^{\lambda_{j,2}})^{(j)} := \bigotimes_{j = 0}^{f - 1} (\textnormal{Sym}^{\lambda_{j,1}-\lambda_{j,2}}(\cO_{K_2}^2)\otimes_{\cO_{K_2}} {\det}^{\lambda_{j,2}})\otimes_{\cO_{K_2},\sigma'_j}\cO.$$
(The tensor product on the right-hand side is naturally a representation of $\nG\nL_2(\cO_{K_2})$, and we view it as a representation of $\nU_{1,1}(\cO_K)$ by restriction.)  Thus, $V(\lambda)$ has rank $0$ as soon as $\lambda_{j,1}-\lambda_{j,2}<0$ for some $j\in\cJ$.  We adopt similar notation for $\lambda \in X^*(T_\nU)$.  Given an $\cO$-algebra $A$, we set $V_A(\lambda) := V(\lambda) \otimes_\cO A$.

The representation $V(\lambda)$ is ``well-defined up to isomorphism'' in the following sense: for $0 \leq j \leq f - 1$, define $\lambda_{j + f} := -\fw(\lambda_j)$.  Then, we have an isomorphism of $\nU_{1,1}(\cO_K)$-representations
$$(\textnormal{Sym}^{\lambda_{j,1}-\lambda_{j,2}}(\cO_{K_2}^2)\otimes_{\cO_{K_2}} {\det}^{\lambda_{j,2}})\otimes_{\cO_{K_2},\sigma'_j}\cO \cong (\textnormal{Sym}^{\lambda_{j + f,1}-\lambda_{j + f,2}}(\cO_{K_2}^2)\otimes_{\cO_{K_2}} {\det}^{\lambda_{j + f,2}})\otimes_{\cO_{K_2},\sigma'_{j + f}}\cO.$$

Finally, we note that $V(\lambda)$ is the representation denoted by $W_\lambda$ in \cite[\S 6B5]{koziolmorra} when $|\Sigma_p| = 1$ in the notation of \emph{op.~cit}.

\subsection{Dual groups}
\label{dualgps}

We now define the relevant Langlands dual groups.

\subsubsection{}

The based root datum of $\nU_{1,1}$ (with respect to the upper-triangular Borel subgroup) is given by 
$$\left(X^*(T_\nU)\cong \bbZ^2 ,~\{(1,-1)\},~ X_*(T_\nU)\cong \bbZ^2 ,~ \{(1,-1)\}\right).$$
We take $\nU_{1,1}^\vee := \nG\nL_{2/\bbZ_p}$ as the dual group, equipped with its standard based root datum.  We thus obtain an induced action of $\Gamma_K$ on $\nU_{1,1}^\vee$ given by
$$\gamma\cdot \widehat{g} = \begin{cases} \widehat{g} & \textnormal{if}~ \gamma\in \Gamma_{K_2},\\ \Phi_2\widehat{g}^{-\top}\Phi_2^{-1} = \det(\widehat{g})^{-1} \cdot \widehat{g} & \textnormal{if}~ \gamma\in \Gamma_K\smallsetminus\Gamma_{K_2},\end{cases}$$
for $\widehat{g} \in \nG\nL_2$.

\subsubsection{}
\label{subsub:dual:root}
Consider now the group $H = \widetilde{\nU}_{1,1}$.  The based root datum of $H$ is given by 
$$\Psi_H := \left(X^*(T_H),~ \Delta_H,~ X_*(T_H),~ \Delta_H^\vee\right),$$
and therefore the dual based root datum is 
$$\Psi_H^\vee = \left(X_*(T_H),~ \Delta_H^\vee,~ X^*(T_H),~ \Delta_H\right).$$
We let $H^\vee$ denote the dual group of $H$ over $\bbZ_p$, with maximal torus $T_H^\vee$ and upper-triangular Borel subgroup which contains $T_H^\vee$.  As in \cite[\S 2C]{koziolmorra}, we have $H^\vee \cong \nG\nL_{2/\bbZ_p} \times \bG_{m/\bbZ_p}$, with $\Gamma_K$-action given by
$$\gamma\cdot (\widehat{h},~a) = \begin{cases}(\widehat{h},~a) & \textnormal{if}~\gamma\in \Gamma_{K_2},\\ \left( a \cdot \Phi_2\widehat{h}^{-\top}\Phi_2^{-1},~ a\right) = \left( a\det(\widehat{h})^{-1} \cdot \widehat{h},~a\right) & \textnormal{if}~\gamma\in \Gamma_K\smallsetminus\Gamma_{K_2},\end{cases}$$
for $(\widehat{h},a)\in\nG\nL_2 \times \bG_m$.

Thus, we obtain the based root datum for $H^\vee$
\begin{eqnarray*}
\Psi_{H^\vee} & := & \left(X^*(T_H^\vee),~ \widehat{\Delta},~ X_*(T_H^\vee),~ \widehat{\Delta}^{\vee}\right)\\
 & = & \left(\bbZ^3,~ \{(1,-1,0)\},~ \bbZ^3,~ \{(1,-1,0)\}\right),
 \end{eqnarray*}
equipped with an action of $\Gamma_K$.  Moreover, we obtain an isomorphism of based root data $\phi:\Psi_H^\vee\stackrel{\sim}{\longrightarrow}\Psi_{H^\vee}$:
\begin{eqnarray*}
 \phi:X_*(T_H) & \stackrel{\sim}{\longrightarrow} & X^*(T_H^\vee)\\
 (a',b',c',d') & \longmapsto & (a',b',c'-a')\\
 (\phi^\vee)^{-1}:X^*(T_H) & \stackrel{\sim}{\longrightarrow} & X_*(T_H^\vee)\\
 (a,b,c,d) & \longmapsto & (a + c,b + d,c + d)
\end{eqnarray*}
where the last coordinate in the character (resp.~cocharacter) group of $T_H^\vee$ corresponds to the $\bG_m$ factor of $H^\vee$.  Note that this exchanges the roots and coroots.  We use this isomorphism to identify the Weyl group of $(H^\vee,T_H^\vee)$ with $W$.

\subsubsection{}
\label{def-of-Cgroup}

Finally, we define
$${}^C\nU_{1,1} := {}^L H = H^\vee \rtimes \textnormal{Gal}(K_2/K) = (\nG\nL_2 \times \bG_m)\rtimes\textnormal{Gal}(K_2/K),$$
with the Galois group acting on $H^\vee$ as above.  The injection $\imath:\bG_m\longrightarrow H$ induces a dual map $\widehat{\imath}:{}^L H\longrightarrow\bG_m$, which is given by $(\widehat{h},a)\rtimes\gamma\longmapsto a$.  We will occasionally make use of this group in a global context (i.e., replacing $K_2/K$ with a quadratic extension $F/F^+$ of number fields) as in \cite[Rmk. 2.1]{koziolmorra}.

In addition, we shall make use of the group scheme $\cG_2$ over $\bbZ_p$ defined in \cite[\S 2.1]{CHT}, which is equipped with a certain homomorphism $\nu: \cG_2 \longrightarrow \bG_m$.  The group $\cG_2$ is isomorphic to ${}^C\nU_{1,1}$; see \cite[\S 8.3]{buzzardgee} and \cite[\S 2D]{koziolmorra} for the explicit description.  In particular, under this isomorphism, the map $\widehat{\imath}$ corresponds to $(-)^{-1}\circ \nu$.

\section{Tame $L$-parameters and predicted Serre weights}
\label{sec:Lparams-Serrewts}

In this section we recall from \cite[\S\S 3,4]{koziolmorra} the notion of tame $L$-parameters for unitary groups and the set of Serre weights associated to them.

\subsection{Serre weights and Deligne--Lusztig representations}
\label{subsec:SW-and-DL}
\subsubsection{}
Let us define the sets of $p$-restricted, regular, and inner-product-zero characters as
\begin{eqnarray*}
  X_1(\un{T}) & := & \{\lambda \in X^*(\un{T}): 0 \leq \langle\lambda, \alpha^\vee\rangle \leq p - 1~\textnormal{for all}~\alpha \in \un{\Phi}^+\},\\
  X_{\textnormal{reg}}(\un{T}) & := & \{\lambda \in X^*(\un{T}): 0 \leq \langle\lambda, \alpha^\vee\rangle < p - 1~\textnormal{for all}~\alpha \in \un{\Phi}^+\},\\
   X^0(\un{T}) & := & \{\lambda \in X^*(\un{T}): \langle\lambda, \alpha^\vee\rangle = 0 ~\textnormal{for all}~\alpha \in \un{\Phi}^+\}.
\end{eqnarray*}
Recall that a Serre weight of $\un{G}_0(\bbF_p) = \widetilde{\nU}_{1,1}(k_K)$ is an irreducible representation of $\un{G}_0(\bbF_p)$ on an $\overline{\bbF}_p$-vector space.  Given $\lambda\in X_1(\un{T})$, we write $F(\lambda)$ to denote the restriction to $\un{G}_0(\bbF_p)$ of the irreducible algebraic representation of $\un{G}\times_{\cO}\overline{\bbF}_p$ of highest weight $\lambda$.  By \cite[Lem. 9.2.4]{GHS} this assignment gives a well-defined bijection 
\begin{eqnarray*}
\frac{X_1(\un{T})}{(p - \pi)X^0(\un{T})} & \longrightarrow & \left\{\text{Serre weights of $\un{G}_0(\bbF_p)$}\right\}_{/\cong}\\
\lambda & \longmapsto & F(\lambda).
\end{eqnarray*}
We will always assume that $\bbF$ is large enough so that any Serre weight can be realized over $\bbF$.  We have analogous constructions for $\nU_{1,1}(k_K)$ and $\nG\nL_{2}(k_{K_2})$, the former of which is included in the above construction (by taking $\lambda \in X_1(\un{T}_\nU) = X^*(\un{T}_\nU) \cap X_1(\un{T}$)), and the latter of which we denote with primed notation.  

\subsubsection{}
Recall that the fundamental $p$-alcove of  $X^*(\un{T})$ is defined as
\[
\{\lambda\in X^*(\un{T})\otimes_{\bbZ}\bbR\ :\ 0<\langle \lambda+\eta,\alpha^\vee\rangle<p ~\textnormal{for all}~ \alpha\in \un{\Phi}^+\}
\]
and that $\mu \in X^*(\un{T})$ is said to be $N$-deep (in the fundamental $p$-alcove) if $N < \langle \mu + \eta, \alpha^\vee\rangle < p - N$ for all $\alpha \in \un{\Phi}^+$.  We then say that a Serre weight $F$ is $N$-deep if $F\cong F(\lambda)$ for some $\lambda\in X_1(\un{T})$ which is $N$-deep.  (This notion is independent of the choice of $\lambda$.)

\subsubsection{}
Given $(w,\mu)\in \un{W}\times X^*(\un{T})$ such that $\mu-\eta$ is $0$-deep, we have an associated Deligne--Lusztig representation $R_w(\mu)$ as in \cite[\S 9.2]{GHS} (where the representation $R_w(\mu)$ is denoted $R(w,\mu)$).  It is a genuine representation  of $\un{G}_0(\bbF_p)$ over $\overline{\bbQ}_p$ and we will always assume that $E$ is large enough so that $R_w(\mu)$ can be realized over $E$.  Given $N \geq 0$, we say that a Deligne--Lusztig representation $R$ is \textit{$N$-generic} if $R\cong R_w(\mu+\eta)$ with $\mu$ being $N$-deep.

We will often use the following fact from \cite[\S 4.1]{herzig:duke}: if $(w,\mu)\in \un{W}\times X^*(\un{T})$ is such that $\mu-\eta$ is $0$-deep, and if $(\nu,s)\in X^*\rtimes \un{W}$ then 
$$R_{w}(\mu)\cong R_{sw\pi(s)^{-1}}\big(s(\mu)+p\pi^{-1}(\nu)-sw\pi(s)^{-1}(\nu)\big).$$
Conversely, if $R_{w}(\mu)\cong R_{w'}(\mu')$ with $\mu - \eta, \mu' - \eta$ being $0$-deep, then $w'=sw\pi(s)^{-1}$ and $\mu'=s(\mu)+p\pi^{-1}(\nu)-sw\pi(s)^{-1}(\nu)$ for some $(\nu,s)\in X^*\rtimes \un{W}$.

\subsubsection{}
As with Serre weights, we have analogous constructions for $\nU_{1,1}(k_K)$ and $\nG\nL_2(k_{K_2})$.  In particular, we may naturally identify Serre weights and Deligne--Lusztig representations of $\un{G}_0(\bbF_p)$ on which the subgroup $\imath(k_K^\times)$ acts trivially with Serre weights and Deligne--Lusztig representations of $\un{\nU}_0(\bbF_p) = \nU_{1,1}(k_K)$.  Further, we will occasionally view Serre weights and Deligne--Lusztig representations of $\un{G}_0(\bbF_p)$ as representations of $\un{G}_0(\bbZ_p)$ by inflation (similarly for $\nU_{1,1}$ and $\nG\nL_2$).

\subsubsection{}
\label{twist-by-epsilon}
Given $\mu \in X_1(\un{T}_\nU)$, we define a \textit{base change map} from Serre weights of $\nU_{1,1}(k_K)$ to Serre weights of $\nG\nL_2(k_{K_2})$ by
$$\BC\big(F(\mu)\big) := F'(\BC(\mu)) = F'(\mu, -\un{\fw}(\mu)).$$
The map $\BC$ is well-defined and injective on isomorphism classes.  Furthermore, recall from \cite[\S 3C7]{koziolmorra} that we have a map $\epsilon$ on isomorphism classes of representations of $\textnormal{GL}_2(k_{K_2})$ induced by twisting by the automorphism
$$g \longmapsto \left(\begin{pmatrix}0 & 1 \\ -1 & 0 \end{pmatrix}g^{-\top}\begin{pmatrix}0 & 1 \\ -1 & 0 \end{pmatrix}^{-1}\right)^{(q)}.$$
On Serre weights, this automorphism takes the form 
$$\epsilon\left(F'(\mu, \mu')\right) \cong F'(-\underline{\fw}(\mu'), -\underline{\fw}(\mu)),$$
where $\mu,\mu' \in X_1(\underline{T}_\nU)$ (see \cite[\S 3E2]{koziolmorra}).  By \cite[Lem. 3.25]{koziolmorra}, for a Serre weight $F'$ of $\nG\nL_2(k_{K_2})$, we have $\epsilon(F') \cong F'$ if and only if $F'$ is in the image of the base change map.

Similarly, if $w \in \un{W}$ and $\mu \in X^*(\un{T}_\nU)$ with $\mu - \eta$ being $0$-deep, we define a \textit{base change map} from Deligne--Lusztig representations of $\nU_{1,1}(k_K)$ to Deligne--Lusztig representations of $\nG\nL_2(k_{K_2})$ by
$$\BC\big(R_w(\mu)\big) = R'_{(w,w)}(\BC(\mu)) = R'_{(w,w)}(\mu, -\un{\fw}(\mu))$$
(see \cite[Eqs. (3C.1), (3C.2), \S 3C6]{koziolmorra}).  The map $\BC$ is well-defined and injective on isomorphism classes.  On Deligne--Lusztig representations, twisting by $\epsilon$ takes the form
$$\epsilon\big(R'_{(w,w')}(\mu,\mu')\big) \cong R'_{(w',w)}(-\un{\fw}(\mu'),-\un{\fw}(\mu)),$$
where $w,w' \in \un{W}$ and $\mu,\mu' \in X^*(\un{T}_\nU)$ with both $\mu - \eta$ and $\mu' - \eta$ being $0$-deep (see \cite[\S 3C7]{koziolmorra}).  By \cite[Lem. 3.12]{koziolmorra}, for a Deligne--Lusztig representation $R'$ of $\nG\nL_2(k_{K_2})$, we have $\epsilon(R') \cong R'$ if and only if $R'$ is in the image of the base change map.

\subsection{$L$-parameters}

\subsubsection{}
\label{subsub:Lparamdefs}
We begin with a definition.

\begin{defn}
  Let $R$ be a topological $\bbZ_p$-algebra.  
  \begin{enumerate}
    \item An \textit{$L$-parameter (with $R$-coefficients)} is a continuous homomorphism $\Gamma_{K} \longrightarrow {}^C\nU_{1,1}(R)$ which is compatible with the projection to $\Gal(K_2/K)$.
    \item Given an $L$-parameter $\rho:\Gamma_K \longrightarrow {}^C\nU_{1,1}(R)$, the homomorphism $\widehat{\imath}\circ \rho :\Gamma_K \longrightarrow R^\times$ is called the \textit{multiplier of $\rho$}.
    \item An \textit{inertial $L$-parameter} is a continuous homomorphism $I_{K} \longrightarrow H^\vee(R)$ which extends to an $L$-parameter.
    \item Two (inertial) $L$-parameters are \textit{equivalent} if they are $H^\vee(R)$-conjugate.
    \item Given an $L$-parameter $\rho: \Gamma_K \longrightarrow {}^C\nU_{1,1}(R)$, we define its \emph{base change} $\BC(\rho):\Gamma_{K_2} \longrightarrow \nG\nL_2(R)$ as the composition of the restriction $\rho|_{\Gamma_{K_2}}: \Gamma_{K_2} \longrightarrow (\nG\nL_2\times\bG_m)(R)$ with the projection $(\nG\nL_2\times\bG_m)(R) \longtwoheadrightarrow \nG\nL_2(R)$.
    \item Similarly, given a $\cG_2$-valued $L$-parameter $\varrho: \Gamma_K \longrightarrow \cG_2(R)$, we define its \emph{base change} $\BC'(\varrho):\Gamma_{K_2} \longrightarrow {}\nG\nL_2(R)$ as the composition of the restriction $\varrho|_{\Gamma_{K_2}}:\Gamma_{K_2} \longrightarrow (\nG\nL_2\times\bG_m)(R)$ with the projection $(\nG\nL_2\times\bG_m)(R) \longtwoheadrightarrow \nG\nL_2(R)$.  (See \cite[\S 4A3]{koziolmorra} for the relationship between $\BC(\rho)$ and $\BC'(\varrho)$.)
  \end{enumerate}
\end{defn}
(This is \cite[Defs. 4.1, 4.3]{koziolmorra}, employing the equivalences between $H^\vee(R)$-conjugacy classes of parameters with source $\Gamma_K$ and $\un{G}^\vee(R)$-conjugacy classes of parameters with source $\Gamma_{\bbQ_p}$; see \cite[Lems. 9.4.1, 9.4.5]{GHS}.)

We have similar notions when replacing $\Gamma_K$ by $\Gamma_{F^+}$ with $F^+$ a global field with a place $v$ satisfying $F^+_v\cong K$ (in which case we talk about \emph{global} $L$-parameter).

\subsubsection{}

Given $(w,\mu)\in\un{W}\times X^*(\un{T})$, we define a tamely ramified inertial $L$-parameter $\tau(w,\mu):I_K \longrightarrow H^\vee(\cO)$ as in \cite[\S 4A4]{koziolmorra} (using the Teichm\"uller lift of $\omega_{2f}$ in \textit{op. cit.}), and let $\overline{\tau}(w,\mu)$ denote its mod $p$ reduction.  
Using the same argument as in \cite[Lem. 4.4]{koziolmorra}, if $\mu \in X^*(\un{T}_\nU)$ and $\eta_{\underline{c}} := \sum_{j \in \cJ}c_j\eta_j$, then we have an equivalence of tame inertial types $I_{K_2} \longrightarrow \nG\nL_2(\cO)$:
\begin{equation}
  \label{BC-of-inertial-type}
  \BC\big(\tau(w,\mu + \eta_{\underline{c}})\big)  \cong  \tau'\Big((w,w),\BC(\mu) + \eta'_{(\underline{c},\underline{c})}\Big),  
\end{equation}
where $\eta'_{(\underline{c},\underline{c})} := \sum_{0 \leq j \leq f - 1}c_j\eta'_j + \sum_{0 \leq j \leq f - 1}c_j \eta'_{j + f}$, and where the tame inertial type on the right is defined as in \cite[Def. 2.3.1]{BHHMS}.

\subsubsection{}
\label{subsub:SW}

Given a tamely ramified inertial $L$-parameter $\overline{\tau}: I_K \longrightarrow H^\vee(\bbF)$, we define an associated set of Serre weights $\nW^?(\overline{\tau})$, as in \cite[Def.~9.2.5]{GHS} (see also the discussion of this set in \cite[\S 4B]{koziolmorra}).  Given a tamely ramified $L$-parameter $\rhobar:\Gamma_K \longrightarrow {}^C\nU_{1,1}(\bbF)$, we define $\nW^?(\rhobar) := \nW^?(\rhobar|_{I_K})$.

\subsubsection{}

Given $N \geq 0$, we say that a tamely ramified inertial $L$-parameter $\tau:I_K \longrightarrow H^\vee(\cO)$ is \textit{$N$-generic} if $\tau\cong \tau(w,\mu + \eta)$ for some $(w,\mu)\in\un{W}\times X^*(\un{T})$ with $\mu$ being $N$-deep.  When $\tau$ is $0$-generic, we call the pair $(w,\mu)$ a \textit{lowest alcove presentation} of $\tau$.  Analogously, given a tamely ramified $L$-parameter $\rhobar:\Gamma_K \longrightarrow {}^C\nU_{1,1}(\bbF)$, we say that $\rhobar$ is \textit{$N$-generic} if $\rhobar|_{I_K}$ is $N$-generic.

\subsubsection{}

Suppose $R$ is a topological $\bbZ_p$-algebra, and let $\rho: \Gamma_K \longrightarrow {}^C\nU_{1,1}(R)$ denote an $L$-parameter.  For $k \in \bbZ$, we let $\rho(k)$ denote the \textit{$k^{\textnormal{th}}$ Tate twist of $\rho$}, i.e., the unique $L$-parameter $\Gamma_K \longrightarrow {}^C\nU_{1,1}(R)$ which satisfies $\BC(\rho(k)) = \BC(\rho)\otimes_R \varepsilon^k$ and $\widehat{\imath}\circ \rho(k) = (\widehat{\imath}\circ \rho){\varepsilon}^{2k}$.

\subsection{Inertial local Langlands}
\label{sec:ILL}
Recall that a \textit{tame inertial type} is a homomorphism $\tau':I_{K_2}\longrightarrow \nG\nL_2(\cO)$ with open kernel, which factors through the tame quotient of $I_{K_2}$, and which extends to a representation of the Weil group of $K_2$.
If $\tau'$ satisfies the condition $(\tau')^{\varphi^{f}}\cong \tau'^\vee$ (i.e.,~$\tau'$ is \emph{conjugate self-dual}) we can associate to $\tau'$ a smooth irreducible representation $\sigma(\tau')$ of $\nU_{1,1}(\cO_K)$ over $E$ (in a manner compatible with Henniart's inertial local Langlands correspondence via base change, see \cite[Def. 4.10]{koziolmorra}).  The representation $\sigma(\tau')$ satisfies results towards an inertial local Langlands correspondence for $\nU_{1,1}(K)$ (\cite[Thm. 4.11]{koziolmorra}).

Suppose $\mu \in X^*(\un{T}_\nU)$ is such that $\mu - \eta$ is 1-deep.  Then the same is true of $\BC(\mu) \in X^*(\un{T})$, and $\tau' := \tau'((s,s),\BC(\mu))$ is $1$-generic. Thus, by \cite[Cor. 2.3.5]{LLL} and \cite[\S 2.2]{BHHMS}, we have
\begin{equation}
  \label{ILLC'-explicit}
  \sigma'(\tau') \cong R'_{(s,s)}(\BC(\mu)),
\end{equation}
where the left-hand side denotes the representation of $\nG\nL_2(\cO_{K_2})$ associated to $\tau'$ by Henniart's inertial local Langlands correspondence.  By the injectivity of the base change map on Deligne--Lusztig representations (\cite[\S 3C6]{koziolmorra}), we obtain
\begin{equation}
  \label{ILLC-explicit}
  \sigma(\tau') \cong R_s(\mu).
\end{equation}

\section{Some general algebra}
\label{sec:transfer}

The goal of this section is to collect some general results about modules over certain Artinian rings.  This will allow us to deduce results about multiplicities and extensions for the group $\nU_{1,1}$ from the analogous results for $\nG\nL_2$.

\subsection{Setup}  
\label{subsec:setup}
We put ourselves in the following general setting.

Let $\Lambda'$ denote an Artinian algebra over $\bbF$, $\Lambda \subset \Lambda'$ a subalgebra, and $Z$ a finite subgroup of $\cZ(\Lambda')^\times$, the group of units of the center of $\Lambda'$.  We assume these data satisfy the following hypotheses:
\begin{itemize}
\item The field $\bbF$ is sufficiently large (so that, in particular, all simple modules are absolutely simple).
\item The order of $Z$ is prime to $p$.
\item There exists a set-theoretic section $y \longmapsto \widetilde{y}$ to the surjection $Z \longtwoheadrightarrow Z/(\Lambda \cap Z) =: Y$ such that: 
\begin{itemize}
\item $\widetilde{1_Y} = 1_Z = 1_{\Lambda'}$.
\item $\Lambda'$ is free as a $\Lambda$-module, on the left and right, with basis given by $\{\widetilde{y}\}_{y \in Y}$.
\item For $y_1, y_2 \in Y$, we have $\widetilde{y_1}\Lambda = \Lambda\widetilde{y_1}$ and $\widetilde{y_1}\cdot\widetilde{y_2}\Lambda = \widetilde{y_1y_2}\Lambda$.  (Note: it is not necessarily true that $\widetilde{y_1}\cdot \widetilde{y_2} = \widetilde{y_1y_2}$ in $\Lambda'$.)
\end{itemize}
\end{itemize}
The last bullet point means that $\Lambda'$ is equal to a crossed product algebra $\Lambda*Y$, cf. \cite[\S I.5.8]{mcconnellrobson}.  Note also that this definition implies $\Lambda'$ is generated by $\Lambda$ and $Z$.

\subsection{Examples}  
\label{exs}

Before proceeding with general theory, we give some examples of the above setup that we will use later.

We will make use of the decompositions \eqref{U11decomp:finite} and \eqref{GL2decomp}.  

\begin{enumerate}
\item \label{pt1}
We can take 
$$\Lambda'  =  \bbF[\textnormal{GU}_{1,1}(k_K)], \qquad \Lambda  = \bbF[\textnormal{U}_{1,1}(k_K)], \qquad Z  =  k_{K_2}^\times,$$
so that, using the decomposition \eqref{U11decomp:finite}, we get
$$\Lambda' \cong \bbF[\textnormal{U}_{1,1}(k_K)]\otimes_{\bbF[\textnormal{U}_1(k_K)]} \bbF[k_{K_2}^\times] \cong \Lambda*(k_{K_2}^\times/\textnormal{U}_1(k_K)) \cong \Lambda*k_K^\times.$$

\item \label{pt2}
We can take
$$ \Lambda'  =  \bbF[\textnormal{GU}_{1,1}(k_K)], \qquad \Lambda  =  \bbF[\textnormal{GL}_{2}(k_K)], \qquad Z  =  k_{K_2}^\times,$$
so that, using the decomposition \eqref{GL2decomp}, we get 
$$\Lambda' \cong \bbF[\textnormal{GL}_2(k_K)]\otimes_{\bbF[k_K^\times]} \bbF[k_{K_2}^\times] \cong \Lambda*(k_{K_2}^\times/k_K^\times) \cong  \Lambda*\textnormal{U}_1(k_K).$$
\end{enumerate}

In the next two examples, we use the notation $\sfK_{\nJ}$, for $\nJ \in \{\nG, \nU, \nG\nU\}$, to denote the group of $\cO_K$-points of the group scheme $J_{/\cO_K} \in \{\nG\nL_{2/\cO_K}, \nU_{1,1/\cO_K}, \nG\nU_{1,1/\cO_K}\}$.  Also, $\sfK_{\nJ,1}$ denotes the pro-$p$ radical of $\sfK_{\nJ}$, $Z_{\nJ}$ denotes the center of $\sfK_{\nJ}$, and $Z_{\nJ,1}$ denotes the maximal pro-$p$ subgroup of $Z_{\nJ}$.  Note that, since $p > 2$, we have $\sfK_{\nJ, 1} \cong \nS\nL_2(\cO_K)_1 \times Z_{\nJ,1}$, where $\nS\nL_2(\cO_K)_1$ denotes the principal congruence subgroup of $\nS\nL_2(\cO_K)$.  We write $\bbF\llbracket \sfK_{\nJ}\rrbracket$ for the Iwasawa algebra of $\sfK_{\nJ}$ over $\bbF$, i.e.,~the inverse limit $\underset{\sfK'\triangleleft \sfK_{\nJ}}{\varprojlim}\bbF[\sfK_{\nJ}/\sfK']$ where the limit runs over the normal compact open subgroups $\sfK'$ of $\sfK_{\nJ}$.

\begin{enumerate}
\setcounter{enumi}{2}
\item \label{pt3}
Let $\fm_{\nG\nU}^n$ be the two-sided ideal of $\bbF\llbracket \sfK_{\nG\nU}\rrbracket$ generated by $\langle k - 1: k \in \sfK_{\nG\nU,1}\rangle^n$ and $\langle z - 1: z \in Z_{\nG\nU,1}\rangle$, and let $\fm_{\nU}^n$ be the two-sided ideal of $\bbF\llbracket \sfK_{\nU}\rrbracket$ generated by $\langle k - 1: k \in \sfK_{\nU,1}\rangle^n$ and $\langle z - 1: z \in Z_{\nU,1}\rangle$.  Set 
$$\Lambda' = \bbF\llbracket \sfK_{\nG\nU}\rrbracket/\fm_{\nG\nU}^n, \qquad \Lambda = \bbF\llbracket \sfK_{\nU}\rrbracket/\fm_{\nU}^n,$$ 
and let $Z$ denote the image of $\cO_{K_2}^\times$ in $\Lambda'$, which is isomorphic to $k_{K_2}^\times$.  Then decomposition \eqref{U11decomp} implies
$$\Lambda' = (\bbF\llbracket \sfK_{\nU}\rrbracket/\fm_{\nU}^n) \otimes_{\bbF[\textnormal{U}_1(k_K)]} \bbF[k_{K_2}^\times] = \Lambda*(k_{K_2}^\times/\textnormal{U}_1(k_K)) = \Lambda*k_K^\times.$$

\item \label{pt4}
Let $\fm_{\nG}^n$ be the two-sided ideal of $\bbF\llbracket \sfK_{\nG}\rrbracket$ generated by $\langle k - 1: k \in \sfK_{\nG,1}\rangle^n$ and $\langle z - 1: z \in Z_{\nG,1} \rangle$.  Set 
$$\Lambda' = \bbF\llbracket \sfK_{\nG\nU}\rrbracket/\fm_{\nG\nU}^n, \qquad \Lambda = \bbF\llbracket \sfK_{\nG}\rrbracket/\fm_{\nG}^n,$$ 
and let $Z$ denote the image of $\cO_{K_2}^\times$ in $\Lambda'$, which is isomorphic to $k_{K_2}^\times$.  Then decomposition \eqref{GL2decomp} implies
$$\Lambda' = (\bbF\llbracket \sfK_{\nG}\rrbracket/\fm_{\nG}^n) \otimes_{\bbF[k_K^\times]} \bbF[k_{K_2}^\times] = \Lambda*(k_{K_2}^\times/k_K^\times) =  \Lambda*\textnormal{U}_1(k_K).$$
\end{enumerate}

In the next two examples, we let $I_{\nJ}$, for $\nJ \in \{\nG, \nU, \nG\nU\}$, denote the upper triangular Iwahori subgroup of $\sfK_{\nJ}$, and $I_{\nJ,1}$ its pro-$p$-Iwahori subgroup.  Since $p > 2$, we have $I_{\nJ,1} \cong I_{\nS,1}\times Z_{\nJ,1}$, where $I_{\nS,1}$ denotes the upper triangular pro-$p$-Iwahori subgroup of $\nS\nL_2(\cO_K)$.  
As above, we write $\bbF\llbracket I_{\nJ}\rrbracket$ to denote the Iwasawa algebra of $I_{\nJ}$ over $\bbF$.

\begin{enumerate}
\setcounter{enumi}{4}
\item \label{pt5}
Let $\fm_{\nG\nU,\textnormal{Iw}}^n$ denote the two-sided ideal of $\bbF\llbracket I_{\nG\nU} \rrbracket$ generated by $\langle i - 1: i \in I_{\nG\nU,1}\rangle^n$ and $\langle z - 1: z\in Z_{\nG\nU,1}\rangle$, and let $\fm_{\nU,\textnormal{Iw}}^n$ denote the two-sided ideal of $\bbF\llbracket I_{\nU} \rrbracket$ generated by $\langle i - 1: i \in I_{\nU,1}\rangle^n$ and $\langle z - 1: z\in Z_{\nU,1}\rangle$.  Set 
$$\Lambda' = \bbF\llbracket I_{\nG\nU} \rrbracket/\fm_{\nG\nU,\textnormal{Iw}}^n, \qquad \Lambda = \bbF\llbracket I_\nU \rrbracket/\fm_{\nU,\textnormal{Iw}}^n,$$
and let $Z$ denote the image of $\cO_{K_2}^\times$ in $\Lambda'$, which is isomorphic to $k_{K_2}^\times$.  Then the ``Iwahori version'' of decomposition \eqref{U11decomp} implies 
$$\Lambda' = (\bbF\llbracket I_\nU \rrbracket/\fm_{\nU,\textnormal{Iw}}^n)\otimes_{\bbF[\nU_1(k_K)]} \bbF[k_{K_2}^\times] = \Lambda * (k_{K_2}^\times/\nU_1(k_K)) = \Lambda * k_K^\times.$$

\item \label{pt6}
Let $\fm_{\nG,\textnormal{Iw}}^n$ be the two-sided ideal of $\bbF\llbracket I_\nG \rrbracket$ generated by $\langle i - 1: i \in I_{\nG,1}\rangle^n$ and $\langle z - 1: z\in Z_{\nG,1}\rangle$.  Set
$$\Lambda' = \bbF\llbracket I_{\nG\nU} \rrbracket/\fm_{\nG\nU,\textnormal{Iw}}^n, \qquad \Lambda = \bbF\llbracket I_\nG \rrbracket/\fm_{\nG,\textnormal{Iw}}^n,$$
and let $Z$ denote the image of $\cO_{K_2}^\times$ in $\Lambda'$, which is isomorphic to $k_{K_2}^\times$.  Then the ``Iwahori version'' of decomposition \eqref{GL2decomp} implies 
$$\Lambda' = (\bbF\llbracket I_\nG \rrbracket/\fm_{\nG,\textnormal{Iw}}^n)\otimes_{\bbF[k_K^\times]} \bbF[k_{K_2}^\times] = \Lambda * (k_{K_2}^\times/k_K^\times) = \Lambda * \nU_1(k_K).$$
\end{enumerate}

The results below will allow us to pass information back and forth from $\nG\nL_2$ to $\nU_{1,1}$ via $\nG\nU_{1,1}$ in various contexts.  

\vspace{15pt}

\subsection{Module theory}

We return to the general setting of Subsection \ref{subsec:setup}.  Unless indicated otherwise, ``module'' will mean ``left module.''

\begin{lemma}
\label{resprops}
\hfill
\begin{enumerate}
\item Let $V$ be a $\Lambda'$-module.  Then $V$ is simple (resp., projective) if and only if $V|_\Lambda$ is simple (resp., projective). \label{resprops-2}
\item Injective $\Lambda'$-modules remain injective upon restriction to $\Lambda$. \label{resprops-3}
\item Let $V$ be a $\Lambda'$-module.  Then
$$\soc^i_{\Lambda'}(V)|_\Lambda \cong \soc^i_{\Lambda}(V|_{\Lambda}) \qquad \textnormal{and} \qquad \rad^i_{\Lambda'}(V)|_{\Lambda} \cong \rad^i_{\Lambda}(V|_\Lambda),$$ 
where $\soc_\bullet^i$ and $\rad_\bullet^i$ denote the socle and radical filtrations, respectively (see \cite[\S 1]{alperin}). \label{resprops-4}
\item Let $V$ be a $\Lambda'$-module.  Then
$$\inj_{\Lambda'}(V)|_\Lambda \cong \inj_{\Lambda}(V|_\Lambda) \qquad \textnormal{and} \qquad \proj_{\Lambda'}(V)|_{\Lambda} \cong \proj_{\Lambda}(V|_{\Lambda}),$$ 
where $\inj_\bullet$ and $\proj_\bullet$ denote the injective envelope and projective cover, respectively.
\label{resprops-5}
\end{enumerate}
\end{lemma}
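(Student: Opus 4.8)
The plan is to exploit two features of the hypotheses: that $|Y|$ divides $|Z|$ and is therefore invertible in $\bbF$, and that the chosen lifts $\widetilde{y}\in\widetilde{Y}\subseteq Z$ are \emph{central} units of $\Lambda'$ while $\Lambda'$ is free of rank $|Y|$ as a left and as a right $\Lambda$-module. From these I would first extract three Maschke-type facts. \textbf{(A)} For every $\Lambda'$-module $V$ the multiplication map $\Lambda'\otimes_\Lambda V\to V$ admits a $\Lambda'$-linear section, namely $v\mapsto\frac{1}{|Y|}\sum_{y\in Y}\widetilde{y}\otimes\widetilde{y}^{-1}v$ (and dually the unit $V\to\Hom_\Lambda(\Lambda',V)$ splits); in particular $V$ is always a $\Lambda'$-direct summand of $\Lambda'\otimes_\Lambda(V|_\Lambda)$. \textbf{(B)} A $\Lambda'$-module that is semisimple on restriction to $\Lambda$ is semisimple over $\Lambda'$: given a $\Lambda'$-submodule $W'\subseteq W$ and a $\Lambda$-linear projection $\pi\colon W\to W'$, the average $\pi':=\frac{1}{|Y|}\sum_{y\in Y}\widetilde{y}\,\pi\,\widetilde{y}^{-1}$ is a $\Lambda'$-linear projection onto $W'$; the only point needing attention is that $\pi'$ commutes with each $\widetilde{z}$, which holds because the crossed-product cocycle $\widetilde{z}\widetilde{y}\widetilde{zy}^{-1}$ lies in $\Lambda\cap Z$ and hence is central and commutes with $\pi$. \textbf{(C)} The Jacobson radicals satisfy $J(\Lambda')=\Lambda'J(\Lambda)=J(\Lambda)\Lambda'$ and $J(\Lambda')^i=J(\Lambda)^i\Lambda'$, where $J(-)$ is the Jacobson radical: the ideal $\Lambda'J(\Lambda)$ is two-sided and nilpotent, while $\Lambda'/\Lambda'J(\Lambda)\cong(\Lambda/J(\Lambda))*Y$ is semisimple by (B) applied over $\Lambda/J(\Lambda)$. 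Finally, since $\Lambda'$ is free over $\Lambda$ on both sides, restriction carries projective $\Lambda'$-modules to projective $\Lambda$-modules, and carries injective $\Lambda'$-modules to injective $\Lambda$-modules (its left adjoint $\Lambda'\otimes_\Lambda-$ being exact).

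For part \ref{resprops-2}: if $V|_\Lambda$ is simple then so is $V$, trivially; conversely, if $V$ is a simple $\Lambda'$-module then $Z$ acts on it through a scalar character (as $\bbF$ is large, simple modules being absolutely simple), so every $\Lambda$-submodule of $V$ is automatically $Z$-stable, hence $\Lambda'$-stable since $\Lambda'$ is generated by $\Lambda$ and $Z$, and $V|_\Lambda$ is simple. If $V$ is projective over $\Lambda'$ then $V|_\Lambda$ is projective by freeness; conversely, if $V|_\Lambda$ is projective then $\Lambda'\otimes_\Lambda(V|_\Lambda)$ is a projective $\Lambda'$-module of which $V$ is a direct summand by (A), so $V$ is projective. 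Part \ref{resprops-3} is precisely the ``restriction preserves injectives'' statement recorded above.

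For part \ref{resprops-4} I would first treat $\soc=\soc^1$. The containment $\soc_{\Lambda'}(V)|_\Lambda\subseteq\soc_\Lambda(V|_\Lambda)$ follows from part \ref{resprops-2}, and conversely $\soc_\Lambda(V|_\Lambda)$ is stable under $\Lambda$ and under every $z\in Z$ (which carries it $\Lambda$-isomorphically to a semisimple $\Lambda$-submodule, hence into itself, with equality after applying $z^{-1}$), so it is a $\Lambda'$-submodule, semisimple over $\Lambda'$ by (B), and therefore contained in $\soc_{\Lambda'}(V)$; applying this to the $\Lambda'$-modules $V/\soc^{i-1}_{\Lambda'}(V)$ and pulling back along $V\twoheadrightarrow V/\soc^{i-1}_{\Lambda'}(V)$ gives $\soc^i_{\Lambda'}(V)|_\Lambda\cong\soc^i_\Lambda(V|_\Lambda)$ for all $i$ by induction. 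For the radical filtration, (C) together with $\Lambda'V=V$ gives $\rad^i_{\Lambda'}(V)=J(\Lambda')^iV=J(\Lambda)^i\Lambda'V=J(\Lambda)^iV=\rad^i_\Lambda(V|_\Lambda)$. For part \ref{resprops-5}, put $I:=\inj_{\Lambda'}(V)$; then $I|_\Lambda$ is injective by part \ref{resprops-3}, and it is an \emph{essential} extension of $V|_\Lambda$ because, $I$ being of finite length over $\Lambda'$, its $\Lambda$-socle is essential and equals $\soc_{\Lambda'}(I)|_\Lambda=\soc_{\Lambda'}(V)|_\Lambda=\soc_\Lambda(V|_\Lambda)\subseteq V$ (using part \ref{resprops-4} and $\soc_{\Lambda'}(I)=\soc_{\Lambda'}(V)$, which holds since $V\subseteq I$ is essential over $\Lambda'$). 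Hence $I|_\Lambda\cong\inj_\Lambda(V|_\Lambda)$. Dually, with $P:=\proj_{\Lambda'}(V)$, the module $P|_\Lambda$ is projective by part \ref{resprops-2} and the surjection $P|_\Lambda\twoheadrightarrow V|_\Lambda$ remains superfluous, its kernel lying in $\rad_{\Lambda'}(P)$ and $\rad_{\Lambda'}(P)|_\Lambda=\rad_\Lambda(P|_\Lambda)$ by part \ref{resprops-4}; thus $P|_\Lambda\cong\proj_\Lambda(V|_\Lambda)$.

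The real obstacle is not any single item but the glue supplied by (B) and (C): once restriction is known to be exact, to preserve freeness, and to commute with semisimplicity and with the Jacobson radical (equivalently, with the socle and radical filtrations), the passage from ``injective/projective'' to the \emph{minimal} such object is formal, via essentialness and superfluousness. The one computation genuinely requiring care is the $\Lambda'$-equivariance of the averaged projection in (B), where the crossed-product cocycle enters — though the centrality of $Z$ keeps this elementary.
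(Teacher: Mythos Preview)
Your argument is correct and follows essentially the same skeleton as the paper's proof: simplicity via central characters, preservation of projectives/injectives via freeness of $\Lambda'$ over $\Lambda$, the identity $J(\Lambda')=J(\Lambda)\Lambda'$ for the socle/radical filtrations, and then essentialness/superfluousness for envelopes and covers. The main methodological difference is that you supply self-contained Maschke-type averaging arguments (your (A), (B), (C)) for facts the paper obtains by citation: the paper invokes \cite[Thm.~7.5.6(ii)]{mcconnellrobson} for the projectivity equivalence and \cite[Thm.~1.4]{rr} for $J(\Lambda')=J(\Lambda)\Lambda'$, then deduces both filtration statements directly from the Jacobson radical identity, whereas you treat the socle filtration hands-on via $Z$-stability and (B). Your route is more explicit and avoids external references; the paper's is shorter. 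One small quibble: in part~\ref{resprops-5} you write ``$I$ being of finite length over $\Lambda'$,'' but the lemma is stated for arbitrary $\Lambda'$-modules $V$, so $I=\inj_{\Lambda'}(V)$ need not have finite length. This is harmless, since over an Artinian ring the socle of \emph{any} module is essential (every nonzero submodule contains a simple one), so you can simply drop that clause and the argument goes through unchanged --- this is exactly the fact the paper records as well.
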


\begin{proof}
\begin{enumerate}
\item The claim for simplicity follows from the fact that simple modules have central characters, and that $\Lambda'$ is generated by $\Lambda$ and $Z$.  For projectivity, we use \cite[Thm. 7.5.6(ii)]{mcconnellrobson}.
\item Since $\Lambda'$ is free of finite rank over $\Lambda$, the functor $\Lambda'\otimes_\Lambda -$ is exact.  If $V$ is injective over $\Lambda'$, the composite functor $\Hom_{\Lambda'}(\Lambda'\otimes_\Lambda -, V)$ is also exact.  Since the latter functor is naturally isomorphic to $\Hom_\Lambda(-, V|_\Lambda)$, we conclude that $V|_\Lambda$ is injective.  
\item Since $|Z|$ is invertible in $\bbF$, by \cite[Thm. 1.4]{rr} we have $\mathfrak{J}_{\Lambda'} = \mathfrak{J}_{\Lambda} \Lambda'$, where $\mathfrak{J}_\bullet$ denotes the Jacobson radical.  The claim then follows from the definitions of the socle and radical filtrations given in \cite[\S 1]{alperin}, noting that we can omit the finite-dimensionality assumption on $V$ of \textit{op. cit.} by invoking \cite[\S 4 Exer. 18; Prop. 24.4(2)]{lam}.
\item By point \eqref{resprops-3}, we have that $I := \inj_{\Lambda'}(V)|_{\Lambda}$ is injective, so that $I \cong \inj_{\Lambda}(\soc_{\Lambda}(I))$ (we are using here the fact that for a module $M$ over an Artinian ring $\Lambda$, the extension $\soc_\Lambda(M) \subset M$ is essential; see \cite[\S 19, Exer. 28(2)]{lam2}).  By point \eqref{resprops-4}, we have 
$$\soc_{\Lambda}(I) = \soc_{\Lambda}(\inj_{\Lambda'}(V)|_{\Lambda}) \cong \soc_{\Lambda'}(\inj_{\Lambda'}(V))|_{\Lambda} \cong \soc_{\Lambda'}(V)|_{\Lambda} \cong \soc_{\Lambda}(V|_{\Lambda}),$$ 
so that $I \cong \inj_{\Lambda}(\soc_{\Lambda}(V|_{\Lambda})) \cong \inj_{\Lambda}(V|_{\Lambda})$.  The argument for projective covers is similar.  
\end{enumerate}
\end{proof}

\begin{lemma}
  \label{decomp:centralchar-charp}
Let $V$ be a $\Lambda'$-module.  Then $V$ decomposes as a direct sum of $\Lambda'$-modules
$$V \cong \bigoplus_{\chi:Z \rightarrow \bbF^\times} V^{Z = \chi},$$
where $V^{Z = \chi} := \{v \in V: z\cdot v = \chi(z)v\}$.
\end{lemma}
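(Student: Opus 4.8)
The plan is to build, inside $\Lambda'$, a complete set of orthogonal central idempotents indexed by the characters of $Z$, and then to split $V$ along them. The first step is to record two consequences of the hypotheses in Subsection~\ref{subsec:setup}: since $Z$ is a subgroup of the abelian group $\cZ(\Lambda')^\times$, it is itself finite abelian; since $|Z|$ is prime to $p$, it is invertible in $\bbF$; and since $\bbF$ is assumed sufficiently large, every character $Z \longrightarrow \overline{\bbF}^\times$ already takes values in $\bbF^\times$ (enlarging $\bbF$ to contain the $|Z|$-th roots of unity if necessary). Thus the index set $\{\chi \colon Z \longrightarrow \bbF^\times\}$ of the asserted decomposition is exactly the full character group of $Z$.

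Next I would set, for each such $\chi$,
$$e_\chi := \frac{1}{|Z|}\sum_{z \in Z} \chi(z)^{-1} z \ \in\ \Lambda',$$
which lies in $\cZ(\Lambda')$ because $Z$ does. The standard orthogonality relations for characters of the finite abelian group $Z$ (valid over $\bbF$ since $|Z| \in \bbF^\times$ and the characters are $\bbF$-valued) give $e_\chi e_{\chi'} = \delta_{\chi,\chi'} e_\chi$, $\sum_\chi e_\chi = 1_{\Lambda'}$, and $z\, e_\chi = \chi(z)\, e_\chi$ for all $z \in Z$. These are short computations, not an obstacle.

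Finally, for any $\Lambda'$-module $V$, the decomposition $1 = \sum_\chi e_\chi$ into orthogonal central idempotents yields $V = \bigoplus_\chi e_\chi V$ as a direct sum of $\Lambda'$-submodules, and it remains to identify $e_\chi V$ with $V^{Z=\chi}$. The inclusion $e_\chi V \subseteq V^{Z=\chi}$ is immediate from $z e_\chi = \chi(z) e_\chi$; conversely, if $v \in V^{Z=\chi}$ then $e_\chi v = \frac{1}{|Z|}\sum_z \chi(z)^{-1}(z v) = \frac{1}{|Z|}\sum_z \chi(z)^{-1}\chi(z) v = v$, so $v \in e_\chi V$. This finishes the argument. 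Equivalently, one could phrase the whole proof as the observation that $V$ is a module over the split semisimple commutative $\bbF$-algebra $\bbF[Z] \cong \prod_\chi \bbF$ via the central embedding $\bbF[Z] \hookrightarrow \cZ(\Lambda')$, and read off the decomposition from the idempotents of $\prod_\chi \bbF$; I expect no genuine difficulty in either formulation, the only subtlety being to invoke the largeness of $\bbF$ so that the $e_\chi$ are actually defined over $\bbF$.
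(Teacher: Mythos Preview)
Your proof is correct and follows essentially the same approach as the paper: both construct the central idempotents $e_\chi = |Z|^{-1}\sum_{z\in Z}\chi(z)^{-1}z$ (the paper writes the equivalent $\chi(z)z^{-1}$), use orthogonality and $\sum_\chi e_\chi = 1$ to decompose $V$, and identify $e_\chi V$ with $V^{Z=\chi}$. You spell out the identification $e_\chi V = V^{Z=\chi}$ in slightly more detail than the paper (which simply says ``one easily checks''), but the argument is the same.
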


\begin{proof}
Let $\chi$ be a character of $Z$.  Since $|Z|$ is prime to $p$, we can define
$$e_\chi := \frac{1}{|Z|}\sum_{z\in Z}\chi(z)z^{-1} \in \bbF[Z] \subset \Lambda'.$$
The set $\{e_\chi\}_{\chi:Z \rightarrow \bbF^\times}$ gives a set of mutually orthogonal idempotents whose sum is 1.  Therefore, we have
$$V = \bigoplus_{\chi:Z \rightarrow \bbF^\times} e_\chi V.$$
Since the $e_\chi$ are central, the space $e_\chi V$ is stable by $\Lambda'$.  One easily checks that $e_\chi V = V^{Z = \chi}$.
\end{proof}

Let $V$ be a $\Lambda'$-module, and suppose $Z$ acts on $V$ by a character (in particular, this occurs when $V$ is simple).  We let $\omega_V:Z \longrightarrow \bbF^\times$ denote this character of $Z$.

\begin{cor}\label{extcor}
Suppose $U$ is a $\Lambda$-module.  Then the $\Lambda$-action on $U$ extends to an action of $\Lambda'$, that is, $U$ can (non-uniquely) be given the structure of a $\Lambda'$-module.
\end{cor}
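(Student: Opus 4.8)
The plan is to extend the module structure by building an action of the central subgroup $Z$ compatible with the given action of $\Lambda$, and for this the key point is that $Z_0 := \Lambda \cap Z$ lies in the center of $\Lambda$. Indeed, since $Z \subseteq \cZ(\Lambda')^\times$ by hypothesis, any element of $Z$ that lies in $\Lambda$ commutes with all of $\Lambda' \supseteq \Lambda$, so $Z_0 \subseteq \cZ(\Lambda)$. As $|Z_0|$ divides $|Z|$ it is prime to $p$, and (using that $\bbF$ is sufficiently large) $\bbF[Z_0]$ is a split semisimple commutative algebra contained in $\cZ(\Lambda)$. Running the idempotent argument from the proof of Lemma~\ref{decomp:centralchar-charp} with $Z_0$ in place of $Z$ and the $\Lambda$-module $U$ in place of $V$, I would decompose $U = \bigoplus_{\psi} U_\psi$ as a direct sum of $\Lambda$-submodules indexed by characters $\psi \colon Z_0 \to \bbF^\times$, where each $z \in Z_0$ acts on $U_\psi$ by the scalar $\psi(z)$.

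Next I would extend the $Z_0$-action to a $Z$-action on each piece. Since $Z$ is a finite abelian group and $\bbF$ contains enough roots of unity, the restriction map of character groups $\textnormal{Hom}(Z,\bbF^\times) \to \textnormal{Hom}(Z_0,\bbF^\times)$ is surjective; for each $\psi$ I would fix an extension $\chi_\psi \colon Z \to \bbF^\times$. Define $\theta \colon Z \to \textnormal{Aut}_\bbF(U)$ by letting $z$ act on $U_\psi$ by the scalar $\chi_\psi(z)$ for every $\psi$. Then $\theta$ is a group homomorphism; it restricts on $Z_0$ to the action coming from the $\Lambda$-module structure (as $\chi_\psi(z_0) = \psi(z_0)$); and its image consists of operators that are scalar on each $U_\psi$, hence commutes with the action of $\Lambda$, which preserves each $U_\psi$.

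Finally I would assemble a $\Lambda'$-module structure. Writing $\rho \colon \Lambda \to \textnormal{End}_\bbF(U)$ for the given action and using the crossed-product basis $\{\widetilde y\}_{y \in Y}$ with $\widetilde Y \subseteq Z$ and $\widetilde{1_Y} = 1_{\Lambda'}$, I note that each $\widetilde y$ is central in $\Lambda'$ and that $\widetilde{y_1}\widetilde{y_2} \in Z$ maps to $y_1 y_2$ in $Y$, so $\widetilde{y_1}\widetilde{y_2} = z_0\, \widetilde{y_1 y_2}$ for some $z_0 = z_0(y_1,y_2) \in Z_0$. I would then define the $\bbF$-linear map $\Phi \colon \Lambda' \to \textnormal{End}_\bbF(U)$, $\lambda \widetilde y \mapsto \rho(\lambda)\theta(\widetilde y)$, and check it is an $\bbF$-algebra homomorphism: this amounts to the identity $\rho(\lambda_1\lambda_2 z_0)\,\theta(\widetilde{y_1 y_2}) = \rho(\lambda_1)\theta(\widetilde{y_1})\rho(\lambda_2)\theta(\widetilde{y_2})$, which follows from $\widetilde{y_1}$ commuting with $\rho(\lambda_2)$, from $\theta$ being a homomorphism with $\theta|_{Z_0} = \rho|_{Z_0}$, and from $\widetilde{y_1}\widetilde{y_2} = z_0\widetilde{y_1 y_2}$, together with $\Phi(1_{\Lambda'}) = \mathrm{id}$. (Equivalently, one may identify $\Lambda' \cong \Lambda \otimes_{\bbF[Z_0]} \bbF[Z]$ as $\bbF$-algebras, as in the descriptions in Subsection~\ref{exs}, and invoke the universal property of this tensor product of algebras.) Restricting $\Phi$ along $\Lambda \hookrightarrow \Lambda'$ recovers $\rho$, so the resulting $\Lambda'$-module restricts to $U$ over $\Lambda$; the non-uniqueness is visible in the freedom of choosing each $\chi_\psi$.

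There is no deep obstacle here: the only points requiring care are the bookkeeping around $Z_0 = \Lambda \cap Z$ — that it lies in $\cZ(\Lambda)$, that it therefore acts by scalars on each isotypic component $U_\psi$ so that its action genuinely extends to $Z$, and that the crossed-product cocycle takes values in $Z_0$ so the extended $Z$-action is compatible with multiplication in $\Lambda'$ — all of which are immediate from $Z \subseteq \cZ(\Lambda')$, $|Z|$ being prime to $p$, and $\bbF$ being sufficiently large.
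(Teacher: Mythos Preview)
Your proof is correct and follows essentially the same approach as the paper: decompose $U$ according to characters of $Z_0 = \Lambda \cap Z$, then extend each such character to $Z$. The paper's proof is terser and simply says ``let $Z$ act on $U$ via $\chi$'' (relying implicitly on $\Lambda'$ being generated by $\Lambda$ and $Z$), whereas you additionally spell out the crossed-product verification that this really yields a $\Lambda'$-module structure; this extra care is fine but not strictly necessary.
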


\begin{proof}
Mimicking the above proof with $\Lambda'$ replaced by $\Lambda$ and $Z$ replaced by $\Lambda \cap Z$, we may decompose $U$ as $\bigoplus_{\psi:\Lambda\cap Z \rightarrow \bbF^\times} U^{\Lambda \cap Z = \psi}$.  Therefore it suffices to prove the claim when $\Lambda \cap Z$ acts on $U$ by a character $\omega_U: \Lambda \cap Z \longrightarrow \bbF^\times$.  In this case, we may choose any $\chi: Z \longrightarrow \bbF^\times$ for which $\chi|_{\Lambda \cap Z} = \omega_U$, and let $Z$ act on $U$ via $\chi$.  
\end{proof}

\begin{lemma}\label{resext}
Let $V$ and $W$ be two $\Lambda'$-modules on which $Z$ acts by a character, and suppose $\omega_V = \omega_W$.  Then the restriction map gives an isomorphism
$$\Ext^i_{\Lambda'}(V,W) \stackrel{\sim}{\longrightarrow} \Ext^i_{\Lambda}(V|_{\Lambda}, W|_{\Lambda}).$$
\end{lemma}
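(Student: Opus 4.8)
The plan is to compute both sides from a single projective resolution of $V$, chosen so that the restriction map is literally an isomorphism of $\Hom$-complexes. First I would fix any projective resolution $Q_\bullet \twoheadrightarrow V$ over $\Lambda'$ and apply the central idempotent $e_{\omega_V} = |Z|^{-1}\sum_{z \in Z}\omega_V(z)z^{-1} \in \bbF[Z] \subseteq \Lambda'$ (as in the proof of Lemma \ref{decomp:centralchar-charp}), setting $P_\bullet := e_{\omega_V}Q_\bullet$. Multiplication by $e_{\omega_V}$ is an exact functor on $\Lambda'$-modules, since $M = \bigoplus_\chi e_\chi M$ naturally, and it carries projectives to projectives because $e_{\omega_V}\Lambda'$ is a direct summand of $\Lambda'$; moreover $e_{\omega_V}V = V$ because $Z$ acts on $V$ by $\omega_V$. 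Hence $P_\bullet \twoheadrightarrow V$ is again a projective resolution over $\Lambda'$, now with $Z$ acting on each $P_i$ through $\omega_V$.

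Next, by Lemma \ref{resprops}\eqref{resprops-2} each $P_i|_\Lambda$ is projective over $\Lambda$, so $P_\bullet|_\Lambda \twoheadrightarrow V|_\Lambda$ is a projective resolution over $\Lambda$. Therefore $\Ext^i_{\Lambda'}(V,W)$ and $\Ext^i_\Lambda(V|_\Lambda, W|_\Lambda)$ are the $i$-th cohomologies of $\Hom_{\Lambda'}(P_\bullet,W)$ and $\Hom_\Lambda(P_\bullet|_\Lambda, W|_\Lambda)$ respectively, and the restriction map is induced by the tautological inclusion of complexes $\Hom_{\Lambda'}(P_\bullet,W) \hookrightarrow \Hom_\Lambda(P_\bullet|_\Lambda, W|_\Lambda)$ (it is compatible with the differentials, which are precomposition with the $\Lambda'$-linear, hence $\Lambda$-linear, maps $P_{i+1} \to P_i$). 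I would then check that this inclusion is an equality: it is clearly injective, and if $f \colon P_i|_\Lambda \to W|_\Lambda$ is $\Lambda$-linear then for $z \in Z$ and $x \in P_i$ one has $f(zx) = f(\omega_V(z)x) = \omega_V(z)f(x) = \omega_W(z)f(x) = zf(x)$, using $\omega_V = \omega_W$ and that $Z$ acts on $W$ by $\omega_W$; so $f$ commutes with $Z$, and since $\Lambda'$ is generated by $\Lambda$ and $Z$ it is $\Lambda'$-linear. Thus the two $\Hom$-complexes coincide, and passing to cohomology yields the asserted isomorphism.

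The only point needing a word of justification is that the map obtained this way is indeed the restriction map of the statement: this is the routine fact that $\Ext$ is independent of the chosen projective resolution and that restriction of extension classes is computed on resolutions precisely by reinterpreting a $\Lambda'$-linear cocycle as a $\Lambda$-linear one. One could also avoid the special resolution: for an arbitrary $\Lambda'$-projective resolution $Q_\bullet$, decompose $Q_\bullet = \bigoplus_\chi e_\chi Q_\bullet$; the summand $e_{\omega_V}Q_\bullet$ behaves as above, while for $\chi \neq \omega_V$ the complex $e_\chi Q_\bullet$ is a bounded-above acyclic complex of projective $\Lambda'$-modules, hence contractible, so it stays contractible after restriction to $\Lambda$ and contributes nothing to the cohomology on either side. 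I do not expect any genuine obstacle here — the substantive inputs are exactly Lemma \ref{resprops}\eqref{resprops-2} (restriction preserves projectivity) together with the central-idempotent decomposition of Lemma \ref{decomp:centralchar-charp}.
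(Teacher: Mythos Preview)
Your proof is correct and essentially dual to the paper's: where you use a projective resolution of $V$ and invoke Lemma \ref{resprops}\eqref{resprops-2} (projectives restrict to projectives), the paper instead takes an injective resolution of $W$ and invokes Lemma \ref{resprops}\eqref{resprops-3} (injectives restrict to injectives). The key steps---cutting out the $\omega_V$-isotypic piece of the resolution via the central idempotent, and identifying $\Hom_{\Lambda'}$ with $\Hom_\Lambda$ on modules with matching $Z$-character using that $\Lambda'$ is generated by $\Lambda$ and $Z$---are the same in both arguments.
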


\begin{proof}
Suppose $W'$ is a $\Lambda'$-module on which $Z$ acts by a character, such that $\omega_V = \omega_{W'}$.  The space $\Hom_{\Lambda}(V|_{\Lambda}, W'|_{\Lambda})$ is then a $Z$-representation: we have 
$$(z\cdot\varphi)(v) = z\cdot\big(\varphi(z^{-1}\cdot v)\big)$$
and in particular
$$\Hom_{\Lambda'}(V,W')  =  \Hom_{\Lambda}(V|_{\Lambda}, W'|_{\Lambda})^Z  =  \Hom_{\Lambda}(V|_{\Lambda}, W'|_{\Lambda}),$$
where the last equality follows from the fact that $\omega_V = \omega_{W'}$.

Now let $0 \longrightarrow W \longrightarrow I^0 \longrightarrow I^1 \longrightarrow \ldots$ denote an injective resolution of $W$ as a $\Lambda'$-module.  By decomposing each $I^i$ as a direct sum indexed by characters of $Z$, we may assume that $Z$ acts on each $I^i$ by a character equal to $\omega_W$.  Further, by restricting and using Lemma \ref{resprops}\eqref{resprops-3}, we see that $0 \longrightarrow W|_{\Lambda} \longrightarrow I^\bullet|_{\Lambda}$ is an injective resolution of $W|_{\Lambda}$ as an $\Lambda$-module.  Therefore, the previous paragraph implies
$$\Ext_{\Lambda}^i(V|_{\Lambda}, W|_{\Lambda}) = H^i\big(\Hom_{\Lambda}(V|_{\Lambda}, I^\bullet|_{\Lambda})\big) = H^i\big(\Hom_{\Lambda'}(V, I^\bullet)\big) = \Ext^i_{\Lambda'}(V,W).$$
\end{proof}

\begin{cor}\label{multcor}
Suppose $V$ is a finitely generated $\Lambda'$-module on which $Z$ acts by a character, and $\sigma$ is a simple $\Lambda'$-module such that $\omega_\sigma = \omega_V$.  Then the same is true upon restriction to $\Lambda$, and we have
$$[V:\sigma] = [V|_{\Lambda}: \sigma|_\Lambda],$$
where $[V:\sigma]$ denotes the multiplicity with which $\sigma$ appears as a Jordan--H\"older factor of $V$ (and analogously for the restrictions).
\end{cor}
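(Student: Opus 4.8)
The plan is to reduce the statement to a comparison of composition series and then to apply Lemma \ref{resext} in cohomological degree $0$. First I would dispatch the preliminary assertion: $V|_{\Lambda}$ is finitely generated over $\Lambda$, since $\Lambda'$ is finite (free) over $\Lambda$ and $V$ is finitely generated over $\Lambda'$; the module $\sigma|_{\Lambda}$ is simple by Lemma \ref{resprops}\eqref{resprops-2}; and $\Lambda \cap Z$ acts on both $V|_{\Lambda}$ and $\sigma|_{\Lambda}$ through the restriction of the common character $\omega_V = \omega_\sigma$. This is ``the same is true upon restriction to $\Lambda$.''

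Next I would choose a composition series $0 = V_0 \subset V_1 \subset \dots \subset V_n = V$ of $V$ as a $\Lambda'$-module. Each subquotient $V_i/V_{i-1}$ is a simple $\Lambda'$-module, and because $Z$ acts on $V$ through the single character $\omega_V$, it acts on every $V_i/V_{i-1}$ through $\omega_V$ as well. Restricting to $\Lambda$ and invoking Lemma \ref{resprops}\eqref{resprops-2} once more, each $(V_i/V_{i-1})|_{\Lambda}$ is a simple $\Lambda$-module, so the restricted filtration $0 = V_0|_{\Lambda} \subset \dots \subset V_n|_{\Lambda} = V|_{\Lambda}$ is a composition series of $V|_{\Lambda}$. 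Consequently $[V|_{\Lambda}:\sigma|_{\Lambda}]$ is the number of indices $i$ with $(V_i/V_{i-1})|_{\Lambda} \cong \sigma|_{\Lambda}$, while by construction $[V:\sigma]$ is the number of indices $i$ with $V_i/V_{i-1} \cong \sigma$ as $\Lambda'$-modules.

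It therefore suffices to prove: for a simple $\Lambda'$-module $S$ with $\omega_S = \omega_V$, one has $S|_{\Lambda} \cong \sigma|_{\Lambda}$ if and only if $S \cong \sigma$ over $\Lambda'$. The ``if'' direction is trivial. For ``only if,'' an isomorphism $S|_{\Lambda} \cong \sigma|_{\Lambda}$ shows $\Hom_{\Lambda}(S|_{\Lambda},\sigma|_{\Lambda}) \neq 0$; since $\omega_S = \omega_V = \omega_\sigma$, Lemma \ref{resext} with $i = 0$ identifies this space with $\Hom_{\Lambda'}(S,\sigma)$, which is hence nonzero, and as $S$ and $\sigma$ are both simple $\Lambda'$-modules any nonzero homomorphism between them is an isomorphism. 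Applying this to each $S = V_i/V_{i-1}$ identifies the two counts, giving $[V:\sigma] = [V|_{\Lambda}:\sigma|_{\Lambda}]$.

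The proof is largely bookkeeping once Lemmas \ref{resprops} and \ref{resext} are available; the one point demanding genuine attention — and the closest thing to an obstacle — is making sure that distinct simple $\Lambda'$-modules occurring as subquotients of $V$ stay pairwise non-isomorphic after restriction to $\Lambda$. This is exactly the content of the degree-$0$ case of Lemma \ref{resext}, so no new input is needed; one should simply keep the discussion confined throughout to simple modules with central character $\omega_V$, since simple $\Lambda'$-modules with different central characters could conceivably restrict to isomorphic $\Lambda$-modules but can never appear as subquotients of $V$.
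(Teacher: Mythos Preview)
Your argument is correct and complete. It differs from the paper's proof, which does not choose a composition series at all but instead invokes the multiplicity formula $[V:\sigma] = \dim_{\bbF}\Hom_{\Lambda'}(V,\inj_{\Lambda'}(\sigma))$, applies Lemma~\ref{resext} (in degree $0$) to replace $\Hom_{\Lambda'}$ by $\Hom_{\Lambda}$, and then uses Lemma~\ref{resprops}\eqref{resprops-5} to identify $\inj_{\Lambda'}(\sigma)|_{\Lambda}$ with $\inj_{\Lambda}(\sigma|_{\Lambda})$. Your approach is more elementary in that it avoids injective envelopes entirely, relying only on Lemma~\ref{resprops}\eqref{resprops-2} (simples restrict to simples) and the degree-$0$ case of Lemma~\ref{resext}; the paper's approach is more functorial and slightly shorter, but uses the extra input of Lemma~\ref{resprops}\eqref{resprops-5}. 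Both ultimately hinge on the same degree-$0$ isomorphism of $\Hom$ spaces, just packaged differently.
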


\begin{proof}
We have
\begin{eqnarray*}
[V:\sigma] & = & \dim_{\bbF}\Big(\Hom_{\Lambda'}\big(V,\inj_{\Lambda'}(\sigma)\big)\Big) \\
 & \stackrel{\textnormal{Lem.}~ \ref{resext}}{=} & \dim_{\bbF}\Big(\Hom_{\Lambda}\big(V|_{\Lambda}, \inj_{\Lambda'}(\sigma)|_{\Lambda}\big)\Big) \\
 & \stackrel{\textnormal{Lem.}~ \ref{resprops}\eqref{resprops-5}}{=} & \dim_{\bbF}\Big(\Hom_{\Lambda}(V|_{\Lambda}, \inj_{\Lambda}(\sigma|_{\Lambda})\big)\Big) \\
 & = & [V|_{\Lambda} : \sigma|_{\Lambda}].
\end{eqnarray*}
\end{proof}

\subsection{Analogs in characteristic 0}
\label{sec:transfer:char0}

We now record some analogous results for modules over a discrete valuation ring.  Since the necessary results are only used in a few places, we have not strived for the maximum level of generality.

We suppose we are in the following situation.  Let $\bbF$ and $\cO$ be as in Subsection \ref{notation:fields}.  (The result below holds more generally for any discrete valutation ring with residue field $\bbF$.)  We let $\cG$ denote a finite group, and $\cH \trianglelefteq \cG$ a normal subgroup.  We assume these data satisfy the following hypotheses:
\begin{itemize}
  \item The field $\bbF$ is sufficiently large (i.e., $\bbF$ contains the $m^{\textnormal{th}}$ roots of unity, where $m$ is the least common multiple of the orders of elements of $\cG$; in particular every simple $\bbF[\cG]$-module is absolutely simple).
  \item The order of $Z(\cG)$, the center of $\cG$, is prime to $p$. 
  \item The finite group $\cG$ is generated by $\cH$ and $Z(\cG)$.
\end{itemize}
The last point implies that $\cO[\cG]$ is equal to a crossed product algebra.  More precisely, let us set $Y := Z(\cG)/(\cH \cap Z(\cG))$.  Then we have $\cO[\cG] \cong \cO[\cH]*Y$, the latter defined as in Subsection \ref{subsec:setup}.

\begin{lemma}\hfill
  \label{integralres}
  \begin{enumerate}
    \item Any simple $\cO[\cG]$-module comes via inflation from the surjection $\cO[\cG] \longtwoheadrightarrow \bbF[\cG]$.  Similarly for $\cO[\cH]$.  \label{integralres-1}
    \item Any indecomposable $\cO[\cG]$-module admits a central character.  Similarly for $\cO[\cH]$.  \label{integralres-2}
    \item Let $V$ be a $\cO[\cG]$-module.  Then $V$ is simple (resp., projective) if and only if $V|_{\cO[\cH]}$ is simple (resp., projective).  \label{integralres-3}
  \end{enumerate}
\end{lemma}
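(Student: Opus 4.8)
The plan is to transfer each statement from the characteristic-$p$ results already established in Lemma \ref{resprops} and Corollary \ref{extcor}, together with standard integral representation theory (lifting idempotents over the complete local ring $\cO$). Set $\Lambda' = \cO[\cG]$, $\Lambda = \cO[\cH]$ and $Z = Z(\cG)$; the hypotheses guarantee $\Lambda' = \Lambda * Y$ with $Y = Z/(\cH\cap Z)$ of order prime to $p$, so the formal setup of Subsection \ref{subsec:setup} applies with $\cO$ in place of $\bbF$ — and in fact the proofs of Lemmas \ref{resprops} and \ref{resext} only used that the base ring is a field in a few spots which we now revisit.

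\textbf{Part \eqref{integralres-1}.} Since $\cO$ is a complete local ring, any simple $\cO[\cG]$-module $V$ is annihilated by the radical of $\cO[\cG]$, which contains $\varpi$ (as $\cO[\cG]/\varpi = \bbF[\cG]$ is finite-dimensional and $\varpi$ is topologically nilpotent in $\cO$); hence $V$ is a module over $\bbF[\cG] = \cO[\cG]/\varpi$, i.e.\ comes via inflation. The same argument applies verbatim to $\cO[\cH]$.

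\textbf{Part \eqref{integralres-2}.} Let $V$ be an indecomposable $\cO[\cG]$-module. The center $Z(\cG)$, being a finite abelian group of order prime to $p$, has group algebra $\cO[Z(\cG)]$ which (after enlarging $\bbF$ so it contains the requisite roots of unity, as assumed) splits as a product of copies of $\cO$ by lifting the orthogonal idempotents $\overline{e}_\chi \in \bbF[Z(\cG)]$ from the proof of Lemma \ref{decomp:centralchar-charp} to orthogonal idempotents $e_\chi \in \cO[Z(\cG)]$ — this lifting is possible because $|Z(\cG)|$ is invertible in $\cO$, so one can write down $e_\chi = \frac{1}{|Z(\cG)|}\sum_{z} \tilde\chi(z) z^{-1}$ directly using a Teichm\"uller-type lift $\tilde\chi$ of $\chi$. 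These $e_\chi$ are central in $\cO[\cG]$, so $V = \bigoplus_\chi e_\chi V$ as $\cO[\cG]$-modules; indecomposability forces $V = e_\chi V$ for a single $\chi$, which is the asserted central character. Again the identical argument works for $\cO[\cH]$.

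\textbf{Part \eqref{integralres-3}.} For simplicity: if $V|_{\cO[\cH]}$ is simple then so is $V$ (a proper $\cO[\cG]$-submodule would restrict to a proper $\cO[\cH]$-submodule). Conversely, if $V$ is simple then by \eqref{integralres-1} it is an $\bbF[\cG]$-module, and Lemma \ref{resprops}\eqref{resprops-2} (applied with the field $\bbF$, $\Lambda' = \bbF[\cG]$, $\Lambda = \bbF[\cH]$) gives that $V|_{\bbF[\cH]} = V|_{\cO[\cH]}$ is simple — here we use that $\cH$ is generated by $\cH\cap Z(\cG)$ and $\cH$... more precisely, that the quotient $\bbF[\cG] = \bbF[\cH]*Y$ has $\bbF[\cG]$ generated by $\bbF[\cH]$ and the image of $Z(\cG)$, which holds because $\cG$ is generated by $\cH$ and $Z(\cG)$. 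For projectivity: $\cO[\cG]$ is free of finite rank over $\cO[\cH]$ with basis $\widetilde Y$ (by the crossed-product structure), so restriction preserves projectives; conversely if $V|_{\cO[\cH]}$ is projective, one applies \cite[Thm. 7.5.6(ii)]{mcconnellrobson} to the crossed product $\cO[\cG] = \cO[\cH]*Y$ exactly as in the proof of Lemma \ref{resprops}\eqref{resprops-2}, noting that $|Y|$ divides $|Z(\cG)|$ and is therefore invertible in $\cO$, which is the hypothesis needed for that theorem. The main obstacle is purely bookkeeping: one must check that the cited crossed-product result \cite[Thm. 7.5.6(ii)]{mcconnellrobson} is stated (or trivially extends) to the case where the base ring is a complete DVR rather than a field — the invertibility of $|Y|$ is exactly what makes this go through, since it makes the relevant trace/averaging argument work integrally.
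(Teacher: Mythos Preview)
Your proof is correct and follows essentially the same approach as the paper. The paper's argument for part \eqref{integralres-1} is slightly more direct (a simple $\cO[\cG]$-module is cyclic, hence finitely generated over $\cO$, so $\varpi V$ is a proper submodule by Nakayama and therefore zero), and for parts \eqref{integralres-2} and \eqref{integralres-3} the paper invokes exactly the same idempotent decomposition and the same reference \cite[Thm.~7.5.6(ii)]{mcconnellrobson} that you do.
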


\begin{proof}
  \begin{enumerate}
    \item Let $\varpi$ denote a uniformizer of $\cO$.  Any simple $\cO[\cG]$-module $V$ is finitely generated over $\cO$, which implies that $\varpi V$ is a proper submodule of $V$, and therefore equal to 0.
    \item Let $P$ denote an indecomposable $\cO[\cG]$-module.  As in the proof of Lemma \ref{decomp:centralchar-charp}, we have an $\cO[\cG]$-equivariant decomposition
    $$P \cong \bigoplus_{\chi:Z(\cG) \rightarrow \cO^\times} P^{Z(\cG) = \chi},$$
    the direct sum running over characters of $Z(\cG)$.  The claim now follows from indecomposability of $P$.
    \item The claim about simplicity follows from item \eqref{integralres-1} and Lemma \ref{resprops}\eqref{resprops-2}.  For projectivity, we use \cite[Thm. 7.5.6(ii)]{mcconnellrobson}
  \end{enumerate}
\end{proof}

\section{Extension graph combinatorics}
\label{appendix:EGC}

We discuss combinatorics related to the extension graph, as in \cite[\S 2.4]{BHHMS}.  We follow the conventions of \textit{op. cit.,} which differ by an $\eta$-shift from those of \cite{LMS} and \cite{koziolmorra}.

We will consider the extension graphs for both $\underline{G} = \textnormal{Res}_{\cO_K/\bbZ_p}(H)\times_{\bbZ_p}\cO$ and $\underline{G}' = \textnormal{Res}_{\cO_{K_2}/\bbZ_p}(\nG\nL_2)\times_{\bbZ_p}\cO$.  Let
$$\Lambda_{\textnormal{wt}} := X^*(\underline{T})/X^0(\underline{T}),\qquad \Lambda'_{\textnormal{wt}} := X^*(\underline{T}')/X^0(\underline{T}')$$
denote the weight lattices for $\underline{G}^{\der}$ and $\underline{G}'^{\der}$, respectively, and identify them with $\bbZ^{\cJ}$ and $\bbZ^{\cJ'}$.  Given $\mu \in X^*(\underline{T})$ (resp., $\mu' \in X^*(\underline{T}')$), we define recentered extension graphs by
\begin{eqnarray*}
  \Lambda^{\mu}_{\textnormal{wt}} & := & \left\{\omega \in \Lambda_{\textnormal{wt}}: 0 \leq \langle \overline{\mu} + \omega , \alpha^\vee \rangle < p - 1~ \textnormal{for all}~\alpha \in \underline{\Phi}^+\right\}, \\
  \Lambda'^{\mu'}_{\textnormal{wt}} & := & \left\{\omega' \in \Lambda'_{\textnormal{wt}}: 0 \leq \langle \overline{\mu'} + \omega' , \alpha^\vee \rangle < p - 1~ \textnormal{for all}~\alpha \in \underline{\Phi}'^+\right\}.
\end{eqnarray*}
We also define affine and extended affine Weyl groups by 
$$\widetilde{\underline{W}}  :=  X^*(\un{T}) \rtimes \un{W}, \qquad  \underline{W}_\aff  :=  \Lambda_{\textnormal{rt}} \rtimes \un{W}, $$
$$\widetilde{\underline{W}}'  :=  X^*(\un{T}') \rtimes \un{W}', \qquad \underline{W}_\aff'  :=  \Lambda_{\textnormal{rt}}' \rtimes \un{W}',$$
where $\Lambda_{\textnormal{rt}}$ (resp., $\Lambda_{\textnormal{rt}}'$) denotes the root lattice of $\un{G}$ (resp., $\un{G}'$).  The inclusion $X^*(\un{T}) \longhookrightarrow \un{\tW}$ is denoted by $\lambda \longmapsto t_\lambda$ to emphasize the translation action of $X^*(\un{T})$ on $\Lambda_{\textnormal{wt}}$ (with analogous notation for $\un{G}')$.

\subsection{The $\nG\nL_2$ case}

Recall that that $\bullet_p$ denotes the $p$-dilated dot action: if $\mu' \in X^*(\underline{T}')$ and $w't_{\lambda'} \in \underline{\widetilde{W}}'$, then 
$$w't_{\lambda'}\bullet_p \mu' = w'(\mu' + p\lambda' + \eta') - \eta'$$
(and analogously for $\underline{G}$).  Given $\mu' \in X^*(\underline{T}')$, we define 
\begin{eqnarray*}
  \ft'_{\mu'}: X^*(\underline{T}') & \longrightarrow & X^*(\underline{T}')/(p - \pi')X^0(\underline{T}') \\
  \omega' & \longmapsto & \widetilde{w}' \bullet_p(\mu' + \omega') + (p - \pi')X^0(\underline{T}')
\end{eqnarray*}
where $\widetilde{w}'$ is the unique element of $\Omega' \cap t_{-\pi'^{-1}(\omega')}\underline{W}'_{\textnormal{aff}}$.  (Here $\Omega'$ denotes the stabilizer of the fundamental $p$-alcove under the $p$-dilated dot action of $\underline{\tW}'$.)  The map $\ft'_{\mu'}$ factors through the projection $X^*(\underline{T}') \longtwoheadrightarrow \Lambda'_{\textnormal{wt}}$, and we let $\ft_{\mu'}$ denote the restriction to $\Lambda'^{\mu'}_{\textnormal{wt}}$.  One then checks that $\ft_{\mu'}$ has image contained in the regular weights.  We thus obtain an injective map
$$\ft_{\mu'}: \Lambda'^{\mu'}_{\textnormal{wt}} \longhookrightarrow X_{\textnormal{reg}}(\underline{T}')/(p - \pi')X^0(\underline{T}'),$$
whose image consists of exactly those weights $\lambda'$ for which $\lambda'|_{\underline{Z}'} = \mu'|_{\underline{Z}'}~ \textnormal{mod}~(p - \pi')X^0(\underline{T}')$ (where $\un{Z}'$ denotes the center of $\un{G}'$).  In particular, by \cite[Lem. 9.2.4]{GHS}, the map $\omega' \longmapsto F'(\ft_{\mu'}(\omega'))$ gives a bijection between $\Lambda'^{\mu'}_{\textnormal{wt}}$ and the set of regular Serre weights with central character $\mu'|_{\underline{Z}'}$.

Given a subset $J \subset \cJ'$, we define $\eta'_J := \sum_{j' \in J}\eta'_{j'} \in X^*(\underline{T}')$, and denote by $\overline{\eta}'_J$ its image in $\Lambda'_{\textnormal{wt}}$.  Define $\Sigma'\subset \Lambda'_{\textnormal{wt}}$ to be the set $\{\overline{\eta}'_J\}_{J \subset \cJ'}$.

The following are Propositions 2.4.2 and 2.4.3 of \cite{BHHMS}.

\begin{propn}
\label{prop:SW:extgr:GL2}
  Suppose that $\rhobar': \Gamma_{K_2} \longrightarrow \nG\nL_2(\bbF)$ is a tame Galois representation such that $\rhobar'|_{I_{K_2}} \cong \overline{\tau}'(s', \mu' + \eta')$ for some $(s', \mu') \in \underline{W}' \times X^*(\underline{T}')$ with $\mu'$ lying $1$-deep in the fundamental $p$-alcove.  Then
  $$\nW^?(\rhobar') = \left\{F'(\ft_{\mu'}(s'\omega')): \omega' \in \Sigma'\right\}.$$
\end{propn}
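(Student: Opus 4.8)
Since $\nW^?$ depends only on the restriction to inertia, we have $\nW^?(\rhobar') = \nW^?(\rhobar'|_{I_{K_2}}) = \nW^?\big(\overline{\tau}'(s',\mu'+\eta')\big)$, so the content of the proposition is the computation of the predicted weight set of the tame inertial type $\overline{\tau}'(s',\mu'+\eta')$ and its identification with $\{F'(\ft_{\mu'}(s'\omega')): \omega'\in\Sigma'\}$. This is \cite[Prop.~2.4.2, Prop.~2.4.3]{BHHMS}; the plan is as follows.

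The starting point is the explicit combinatorial description of $\nW^?$ for $\nG\nL_2$ over an unramified field. For a $1$-generic tame inertial type of $\nG\nL_2(k_{K_2})$, the recipe of \cite[Def.~9.2.5]{GHS} (equivalently, the explicit recipe of \cite{herzig:duke} in the $\nG\nL_2$ tame case) yields a set of $2^{|\cJ'|}$ pairwise distinct regular Serre weights, naturally indexed by the subsets $J\subseteq\cJ'$: the weight attached to $J$ is built from the lowest alcove presentation $(s',\mu')$ by, embedding by embedding, making a binary choice governed by whether $j'\in J$, performing an $\eta'$-type shift, and then applying the unique element of the stabilizer $\Omega'$ in the appropriate coset in order to return the result to the fundamental $p$-alcove. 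First I would match this data with the extension graph: the indexing subset $J$ is recorded by $\overline{\eta}'_J\in\Sigma'$, the dependence on $s'$ is exactly the $\underline{W}'$-action $\omega'\mapsto s'\omega'$ on $\Lambda'_{\textnormal{wt}}$, and the ``shift plus return-to-alcove'' step is by construction what $\ft_{\mu'}$ computes, namely the $p$-dilated dot action of the unique element of $\Omega'\cap t_{-\pi'^{-1}(\omega')}\underline{W}'_\aff$ taken modulo $(p-\pi')X^0(\underline{T}')$. The hypothesis that $\mu'$ is $1$-deep is used to ensure that $s'\overline{\eta}'_J$ lies in $\Lambda'^{\mu'}_{\textnormal{wt}}$ for every $J$, that $\ft_{\mu'}$ is defined and injective there, and that every output weight is regular, so that no cases are lost or doubled. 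Assembling these identifications embedding by embedding gives $\nW^?(\rhobar') = \{F'(\ft_{\mu'}(s'\omega')): \omega'\in\Sigma'\}$.

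The substance of the argument, and its only real obstacle, is the bookkeeping of conventions: one must pin down the $\eta$-shift (which differs here from \cite{LMS} and \cite{koziolmorra}), the direction of the Frobenius twist $\pi'$ relative to the labeling of $\cJ'$, the identification $\Lambda'_{\textnormal{wt}}\cong\bbZ^{\cJ'}$, and the central character. For the last point one checks that $\rhobar'$ forces $\nW^?(\rhobar')$ to consist of regular Serre weights with central character $\mu'|_{\underline{Z}'}$, which by the discussion preceding the proposition is exactly the image of $\ft_{\mu'}$; this is what makes the two sides a priori land in the same finite set, after which the comparison is a term-by-term matching. With the conventions fixed, the remaining verification is a finite computation requiring no further input.
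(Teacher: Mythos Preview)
Your proposal is correct and aligns with the paper's treatment: the paper simply records this proposition as \cite[Prop.~2.4.2]{BHHMS} without giving an independent proof, and you do the same while additionally sketching the underlying combinatorial identification.
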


\begin{propn}
\label{prop:JH:inertial}
  Suppose $\tau' := \tau'(s'w'^{-1}, \mu' - s'w'^{-1}(\nu'))$ for some $(s',\mu'), (w',\nu') \in \underline{W}'\times X^*(\underline{T}')$ such that $\mu' - s'w'^{-1}(\nu') - \eta'$ lies $1$-deep in the fundamental $p$-alcove.  If $\nu' \in \eta' + \Lambda'_{\textnormal{rt}}$, then 
  $$\JH\left(\overline{\sigma'(\tau')}\right) = \left\{F'\left(\ft_{\mu' - \eta'}(s'w'^{-1}(\omega' - \overline{\nu}'))\right): \omega' \in \Sigma'\right\}.$$
\end{propn}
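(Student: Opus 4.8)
The plan is to reduce the statement to the Serre weight computation of Proposition~\ref{prop:SW:extgr:GL2}, via the explicit inertial local Langlands correspondence and a combinatorial shift identity for the maps $\ft_\bullet$. First I would invoke \eqref{ILLC'-explicit}: since $\mu' - s'w'^{-1}(\nu') - \eta'$ lies $1$-deep in the fundamental $p$-alcove, we get $\sigma'(\tau') \cong R'_{s'w'^{-1}}(\mu' - s'w'^{-1}(\nu'))$, so the problem becomes computing the Jordan--H\"older factors of the mod $p$ reduction of this generic Deligne--Lusztig representation. Choosing a tame $\rhobar': \Gamma_{K_2} \longrightarrow \nG\nL_2(\bbF)$ with $\rhobar'|_{I_{K_2}} \cong \overline{\tau}'$ (every tame inertial type of $\nG\nL_2(\cO_{K_2})$ so arises), its lowest alcove presentation is $(s'w'^{-1}, \mu' - s'w'^{-1}(\nu') - \eta')$. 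Granting the equality $\JH(\overline{\sigma'(\tau')}) = \nW^?(\rhobar')$ in the $1$-generic tame situation (discussed below), Proposition~\ref{prop:SW:extgr:GL2} then gives
$$\JH\big(\overline{\sigma'(\tau')}\big) = \Big\{ F'\big(\ft_{\mu' - s'w'^{-1}(\nu') - \eta'}(s'w'^{-1}\omega')\big) : \omega' \in \Sigma' \Big\}.$$

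To put this in the stated form, I would establish the identity $\ft_{\lambda' + \xi}(\omega') = \ft_{\lambda'}(\omega' + \overline{\xi})$ for $\xi \in X^*(\un{T}')$ (with $\overline{\xi}$ the image of $\xi$ in $\Lambda'_{\textnormal{wt}}$), which is immediate from the definition of $\ft_\bullet$: $\xi$ and any lift of $\overline{\xi}$ differ by an element of $X^0(\un{T}')$, and the $p$-dilated dot action together with the normalization into the lowest alcove only depend on the class modulo $X^0(\un{T}')$. Applying this with $\lambda' = \mu' - \eta'$ and $\xi = -s'w'^{-1}(\nu')$, and using that $s'w'^{-1}$ commutes with reduction modulo $X^0(\un{T}')$, rewrites the set above as $\{F'(\ft_{\mu'-\eta'}(s'w'^{-1}(\omega' - \overline{\nu}'))) : \omega' \in \Sigma'\}$, which is the claim. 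The hypothesis $\nu' \in \eta' + \Lambda'_{\textnormal{rt}}$ enters precisely here: since roots vanish on the center it forces $\nu'|_{\un{Z}'} = \eta'|_{\un{Z}'}$, which is exactly what is needed for the central characters of the two descriptions to agree.

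I expect the main obstacle to be the middle step, namely the equality $\JH(\overline{\sigma'(\tau')}) = \nW^?(\rhobar')$ for $1$-generic tame $\tau'$. One can simply cite the tame, generic case of the weight part of Serre's conjecture for $\nG\nL_2$ (\cite{EGS,LMS}); but to keep the argument self-contained I would rather prove this directly by computing the semisimplification of $\overline{R'_{s'w'^{-1}}(\mu' - s'w'^{-1}(\nu'))}$: multiplicity-freeness follows from a length count, and the Jordan--H\"older factors are then pinned down by a Brauer character computation, which reduces to the classical reduction mod $p$ of the principal series and cuspidal representations of $\nG\nL_2$ over a finite field, combined with Steinberg's tensor product theorem across the embeddings in $\cJ'$. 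The cuspidal (niveau $2$) case is the delicate point, since the relevant Deligne--Lusztig representation does not split as an external tensor product over $\cJ'$, and one must instead work with the single $2f$-cycle of $\pi'$ on the factors of $\un{W}' \cong W^{2f}$.
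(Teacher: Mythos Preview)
Your central step is false: you assert that $\JH(\overline{\sigma'(\tau')}) = \nW^?(\rhobar')$ when $\rhobar'|_{I_{K_2}} \cong \overline{\tau}'$, but these two sets are \emph{not} equal in general. For a tame $\rhobar'$ with $\rhobar'|_{I_{K_2}} \cong \overline{\tau}'(s',\mu'+\eta')$, the definition of $\nW^?$ (see \cite[Def.~9.2.5]{GHS}, or equation~\eqref{eq:tr:2} in the $\nG\nL_2$ setting) gives $\nW^?(\rhobar') = \JH(\overline{R'_{\underline{\fw}s'}(\underline{\fw}(\mu')+(p-1)\eta')})$, i.e.\ the reduction of the Deligne--Lusztig representation attached to the pair $(\underline{\fw}s',\,\underline{\fw}(\mu')+(p-1)\eta')$, \emph{not} to $(s',\mu'+\eta')$. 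The extra $\underline{\fw}$ is not cosmetic: already for $\nG\nL_2(\bbF_p)$ with $s'=1$ one computes from Propositions~\ref{prop:SW:extgr:GL2} and~\ref{prop:JH:inertial} that $\nW^?(\rhobar') = \{F((a,0)),\,F((p-2,a+1))\}$ while $\JH(\overline{\sigma'(\tau')}) = \{F((a+1,0)),\,F((p-1,a+1))\}$, and these are visibly distinct. The weight part of Serre's conjecture does not help here, since it identifies modular weights with $\nW^?(\rhobar')$, not with $\JH(\overline{\sigma'(\overline{\tau}')})$.

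Your shift identity $\ft_{\lambda'+\xi}(\omega') = \ft_{\lambda'}(\omega'+\overline{\xi})$ is also wrong as stated. In the definition of $\ft'_{\mu'}(\omega')$ the element $\widetilde{w}' \in \Omega'$ is chosen from the coset $t_{-\pi'^{-1}(\omega')}\underline{W}'_{\textnormal{aff}}$, so it depends on $\omega'$ alone and \emph{not} on $\mu'+\omega'$; it is not a ``normalisation into the lowest alcove'' of $\mu'+\omega'$. Concretely, $\ft_{\mu'+\xi}(0) = \mu'+\xi$ while $\ft_{\mu'}(\xi) = \widetilde{w}'_\xi \bullet_p(\mu'+\xi)$ with $\widetilde{w}'_\xi \neq 1$ whenever $\xi \notin \Lambda'_{\textnormal{rt}}$; in your application $\xi = -s'w'^{-1}(\nu')$ lies in $-s'w'^{-1}(\eta') + \Lambda'_{\textnormal{rt}}$, and $s'w'^{-1}(\eta')$ is never in the root lattice. (The translation property that \emph{does} hold --- the one invoked via \cite[Lem.~2.4.4]{BHHMS} in the proof of Lemma~\ref{cryslift} --- changes both the base point and the accompanying Weyl element, which is precisely what your identity omits.)

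The paper itself does not argue directly: both Propositions~\ref{prop:SW:extgr:GL2} and~\ref{prop:JH:inertial} are simply quoted from \cite[Props.~2.4.2, 2.4.3]{BHHMS}. If you want an independent argument, you should compute $\JH(\overline{R'_{s'w'^{-1}}(\mu'-s'w'^{-1}(\nu'))})$ directly via Jantzen's decomposition formula (as in \cite[App.]{herzig:duke} or \cite[Prop.~2.3.4]{LLL}), rather than routing through $\nW^?$.
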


\subsection{The unitary case}

We now give the analogs of the above results for the group $\widetilde{\nU}_{1,1}(k_K)$.  Recall that $\nU_{1,1}, \underline{\nU}_0,$ and $ \underline{\nU}$ denote the algebraic groups over $\cO_K$, $\bbZ_p$, and $\cO$, respectively, defined in Subsections \ref{unitarygps:ok} and \ref{unitarygps:qp}.

The surjection $\underline{T} \longtwoheadrightarrow \underline{T}_{\nU}$
gives a Galois-equivariant injection
$$X^*(\underline{T}_{\nU}) \longhookrightarrow X^*(\underline{T}).$$
We note that the above injection induces an isomorphism
\begin{equation}
  \label{induced}
  X^*(\underline{T}_\nU)/X^0(\underline{T}_\nU) \stackrel{\sim}{\longrightarrow} X^*(\underline{T})/X^0(\underline{T}) = \Lambda_{\textnormal{wt}}.  
\end{equation}

Recall that we identify $X^*(\underline{T}')$ with two copies of $X^*(\underline{T}_\nU)$.  It follows from the definition \eqref{def-of-BC} of $\BC$ that $\BC(X^0(\underline{T}_\nU)) \subset X^0(\underline{T}')$, which implies that $\BC$ descends to a well-defined map 
$$X^*(\underline{T}_\nU)/X^0(\underline{T}_\nU) \stackrel{\BC}{\longrightarrow} X^*(\underline{T}')/X^0(\underline{T}') = \Lambda'_{\textnormal{wt}}.$$
We denote by $\overline{\BC}: \Lambda_{\textnormal{wt}} \longrightarrow \Lambda'_{\textnormal{wt}}$ the composition of the inverse of \eqref{induced} with the map $\BC$ just above.

Let us make $\overline{\BC}$ a bit more explicit.  Suppose $\omega \in X^*(\underline{T})$, and define 
$$\omega_\nU := \sum_{j \in \cJ} \langle\omega, \alpha_j^\vee\rangle\rho_j \in X^*(\underline{T}_\nU)$$
(recall that $\rho_j$ denotes the character which is equal to $(1,0,0,0)$ in embedding $j$ and 0 elsewhere).  Thus, we have $\omega - \omega_\nU \in X^0(\underline{T})$ by construction, and any choice of $\omega'\in X^*(\underline{T}_\nU)$ for which $\omega - \omega' \in X^0(\underline{T})$ differs from $\omega_\nU$ by an element of $X^0(\underline{T}_\nU)$.  We thus obtain
$$\overline{\BC}(\omega) = \BC(\omega_\nU),$$
which gives a well-defined map $\Lambda_{\textnormal{wt}} \longrightarrow \Lambda'_{\textnormal{wt}}$.

As with the $\nG\nL_2$ case, we let $\bullet_p$ denote the $p$-dilated dot action of $\underline{\widetilde{W}}$ on $X^*(\underline{T})$: if $\mu \in X^*(\underline{T})$ and $wt_\lambda \in \underline{\widetilde{W}}$, we then have 
$$wt_\lambda \bullet_p \mu = w(\mu + p\lambda + \eta) - \eta.$$
We note that if $\mu,\lambda \in X^*(\underline{T}_\nU)$, then $wt_\lambda \bullet_p \mu \in X^*(\underline{T}_\nU)$ as well (this follows from the fact that, using the equivalence relation on $X^*(T_H)$, the character $w(\eta) - \eta$ is equivalent to one with last two entries equal to 0 in each embedding).  Given $\mu \in X^*(\underline{T}_\nU)$, we define 
\begin{eqnarray*}
  \ft'_\mu: X^*(\underline{T}) & \longrightarrow & X^*(\underline{T})/(p - \pi)X^0(\underline{T}) \\
  \omega & \longmapsto & \widetilde{w}\bullet_p (\mu + \omega) + (p - \pi)X^0(\underline{T})
\end{eqnarray*}
where $\widetilde{w}$ denotes the unique element of $\Omega \cap t_{-\pi^{-1}(\omega)}\underline{W}_{\textnormal{aff}}$.  As above, the map $\ft'_\mu$ factors through the projection $X^*(\underline{T}) \longtwoheadrightarrow \Lambda_{\textnormal{wt}}$, and we let $\ft_\mu$ denote the restiction to $\Lambda_{\textnormal{wt}}^\mu$.  This gives an injective map
$$\ft_\mu: \Lambda^\mu_{\textnormal{wt}} \longhookrightarrow X_{\textnormal{reg}}(\underline{T})/(p - \pi)X^0(\underline{T}),$$
whose image consists of exactly those regular weights $\lambda$ for which $\lambda|_{\underline{Z}} = \mu|_{\underline{Z}}~ \textnormal{mod}~ (p - \pi)X^0(\underline{T})$.

We note the following useful compatibility:

\begin{lemma}
\label{lem:comp:ext:gr}
  Suppose $\mu \in X^*(\underline{T}_\nU)$, $\omega \in X^*(\underline{T})$, and suppose that the image of $\omega$ in $\Lambda_{\textnormal{wt}}$ lies in $\Lambda_{\textnormal{wt}}^\mu$.  We then have
  $$\BC\left(\ft_\mu(\omega)\right) = \ft_{\BC(\mu)}(\overline{\BC}(\omega)).$$
\end{lemma}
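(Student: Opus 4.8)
The plan is to unwind the definitions of $\ft_\mu$ and $\ft'_{\BC(\mu)}$ and to check that $\BC$ is compatible with every ingredient entering them: the Frobenius automorphisms $\pi,\pi'$, the $p$-dilated dot actions, the inner-product-zero lattices, and the fundamental-alcove stabilizers $\Omega,\Omega'$. First I would reduce to the case $\omega\in X^*(\underline T_\nU)$. Both sides of the claimed identity depend only on the image $\overline\omega\in\Lambda^\mu_{\textnormal{wt}}$ (the left-hand side because $\ft'_\mu$ factors through $X^*(\underline T)\twoheadrightarrow\Lambda_{\textnormal{wt}}$, the right-hand side because $\overline{\BC}$ is defined on $\Lambda_{\textnormal{wt}}$), so I may replace $\omega$ by the representative $\omega_\nU=\sum_{j\in\cJ}\langle\omega,\alpha_j^\vee\rangle\rho_j\in X^*(\underline T_\nU)$ entering the definition of $\overline{\BC}$; for this choice $\overline{\BC}(\omega)$ is simply the image of $\BC(\omega)\in X^*(\underline T')$ in $\Lambda'_{\textnormal{wt}}$, whence $\ft_{\BC(\mu)}(\overline{\BC}(\omega))=\ft'_{\BC(\mu)}(\BC(\omega))$. (One also checks, exactly as in the discussion preceding the lemma, that $\BC(\omega)\in\Lambda'^{\BC(\mu)}_{\textnormal{wt}}$, using $\langle\BC(\nu),(\alpha'_{j'})^\vee\rangle=\langle\nu,\alpha_{j'\bmod f}^\vee\rangle$ together with $\fw(\alpha^\vee)=-\alpha^\vee$.) Thus it suffices to prove $\BC(\ft_\mu(\omega))=\ft'_{\BC(\mu)}(\BC(\omega))$ for $\omega\in X^*(\underline T_\nU)$.

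Next I would record the relevant compatibilities of $\BC$ through the ``diagonal'' embedding $\iota\colon\widetilde{\underline W}\hookrightarrow\widetilde{\underline W}'$, $wt_\lambda\mapsto(w,w)\,t_{\BC(\lambda)}$ (well defined since $\BC(X^*(\underline T_\nU))\subset X^*(\underline T')$, and noting that the element $\widetilde w$ occurring below lies in $X^*(\underline T_\nU)\rtimes\underline W$): (i) $\BC\circ\pi=\pi'\circ\BC$ on $X^*(\underline T_\nU)$, a direct check in the coordinates of Subsection \ref{sec:basechange} (also implicit in the relation $-(\underline\fw,\underline\fw)\pi'^f(\BC(\mu))=\BC(\mu)$ recorded there); (ii) $\iota$ intertwines the $p$-dilated dot actions, i.e.\ $\BC(\widetilde v\bullet_p\nu)=\iota(\widetilde v)\bullet_p\BC(\nu)$ for $\widetilde v\in\widetilde{\underline W}$ and $\nu\in X^*(\underline T_\nU)$; since $\BC$ is additive and $\BC(w\nu)=(w,w)\BC(\nu)$ (as $\underline W$ is abelian), this comes down to the identity $\BC(w(\eta)-\eta)=(w,w)(\eta')-\eta'$, which one verifies from the explicit descriptions of $\eta,\eta'$ (interpreting $w(\eta)-\eta$ as an element of $X^*(\underline T_\nU)$ via the equivalence relation on $X^*(T_H)$); (iii) $\BC(\Lambda_{\textnormal{rt}})\subset\Lambda'_{\textnormal{rt}}$ (one has $\BC(\alpha_j)=\alpha'_j+\alpha'_{j+f}$) and $\BC(X^0(\underline T_\nU))\subset X^0(\underline T')$ (recorded before the lemma), so by (i) $\BC((p-\pi)X^0(\underline T_\nU))\subset(p-\pi')X^0(\underline T')$ and $\iota(\underline W_{\aff})\subset\underline W'_{\aff}$; (iv) $\iota(\Omega)\subset\Omega'$, for which I would use that $\underline W'_{\aff}$ is the product over $\cJ'$ of affine Weyl groups of type $\widetilde A_1$, that the $\widetilde A_1$-components of $\iota(wt_\lambda)$ in embeddings $j$ and $j+f$ agree up to the automorphism $-\fw$ (which fixes $\langle\,\cdot\,,\alpha^\vee\rangle$, hence preserves affine length), so that $\iota$ doubles affine length on $\widetilde{\underline W}$; since $\Omega$ (resp.\ $\Omega'$) is precisely the set of length-zero elements, $\iota(\Omega)\subset\Omega'$ follows.

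The computation then finishes the argument. Write $\ft_\mu(\omega)=\widetilde w\bullet_p(\mu+\omega)+(p-\pi)X^0(\underline T)$, where $\widetilde w$ is the unique element of $\Omega\cap t_{-\pi^{-1}(\omega)}\underline W_{\aff}$. Applying $\BC$ and invoking (iii) and then (ii),
\[
\BC(\ft_\mu(\omega))=\iota(\widetilde w)\bullet_p\big(\BC(\mu)+\BC(\omega)\big)+(p-\pi')X^0(\underline T').
\]
By (i), (iii) and (iv), the element $\iota(\widetilde w)$ lies in $\Omega'\cap t_{-\pi'^{-1}(\BC(\omega))}\underline W'_{\aff}$, which is a singleton; hence $\iota(\widetilde w)$ is the element $\widetilde w'$ defining $\ft'_{\BC(\mu)}(\BC(\omega))$, and the displayed right-hand side equals $\ft'_{\BC(\mu)}(\BC(\omega))=\ft_{\BC(\mu)}(\overline{\BC}(\omega))$.

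I expect step (iv) to be the main obstacle: pinning down that $\Omega$ and $\Omega'$ are exactly the length-zero elements of the respective extended affine Weyl groups, and that the diagonal embedding $\iota$ doubles affine length, so that it carries one system of minimal-length coset representatives to the other compatibly with the coset determined by $-\pi^{-1}(\omega)$. Steps (i)--(iii) are routine verifications in the coordinates of Subsection \ref{sec:basechange}, and the final identity is then purely formal.
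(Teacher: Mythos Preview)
Your proof is correct and follows the same approach as the paper: check well-definedness of the two sides and then verify the identity by unwinding the definitions. The paper's proof emphasizes the well-definedness checks (your parenthetical on $\overline{\BC}(\omega)\in\Lambda'^{\BC(\mu)}_{\textnormal{wt}}$, your (iii), and the fact that $\ft_\mu$ lands in the $X^*(\underline T_\nU)$-quotient) and then declares the remaining identity a ``straightforward calculation which we leave to the reader''; your compatibilities (i)--(iv), and in particular the clean length argument giving $\iota(\Omega)\subset\Omega'$, are exactly a way to carry out that calculation.
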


Implicit in the statement of this lemma is that the base change map $\BC$ is well-defined on the relevant quotients.

\begin{proof}
  There are several places where we must check well-definedness.
\begin{enumerate}
  \item First, suppose that the image of $\omega$ in $\Lambda_{\textnormal{wt}}$ lies in $\Lambda_{\textnormal{wt}}^\mu$.  Since the map $-\fw$ fixes each coroot, we see that $\overline{\BC}(\omega)$ lies in $\Lambda'^{\BC(\mu)}_{\textnormal{wt}}$.  Thus, $\overline{\BC}$ restricts to a map
  $$\Lambda_{\textnormal{wt}}^\mu \stackrel{\overline{\BC}}{\longrightarrow} \Lambda'^{\BC(\mu)}_{\textnormal{wt}}.$$
  Consequently, the expression $\ft_{\BC(\mu)}(\overline{\BC}(\omega))$ is well-defined.
  \item Next, we claim that, assuming $\mu \in X^*(\underline{T}_\nU)$, the image of $\ft_\mu$ lies in $X_{\textnormal{reg}}(\underline{T}_\nU)/(p - \pi)X^0(\underline{T}_\nU)$.  It suffices to show that for $\omega \in X^*(\underline{T})$ with image contained in $\Lambda_{\textnormal{wt}}^\mu$, the character $\widetilde{w}\bullet_p (\mu + \omega)$ is equivalent modulo $(p - \pi)X^0(\underline{T})$ to one with its last two entries equal to 0.  Write $\widetilde{w} = wt_\lambda$, so that 
  \begin{eqnarray*}
    \widetilde{w}\bullet_p (\mu + \omega) & = & w(\mu + \omega + p\lambda + \eta) - \eta  \\
    & = & \left(w(\mu) + w(\eta) - \eta\right) + w(\omega + p\lambda).
  \end{eqnarray*}
  The first parenthesized expression lies in $X^*(\underline{T}_\nU)$, so it suffices to verify the same is true of $\omega + p\lambda$, up to an element of $(p - \pi)X^0(\underline{T})$.  This follows from a straightforward calculation using the construction of the element $\widetilde{w}$.

  \item Finally, if $\lambda \in X^*(\underline{T}_\nU)$, then a straightforward check verifies that $\BC((p - \pi)\lambda) = (p - \pi')\BC(\lambda)$.  Therefore, $\BC$ descends to a well-defined map 
  $$X_{\textnormal{reg}}(\underline{T}_\nU)/(p - \pi)X^0(\underline{T}_\nU) \stackrel{\BC}{\longrightarrow} X_{\textnormal{reg}}(\underline{T}')/(p - \pi')X^0(\underline{T}'),$$
  which implies that the expression $\BC(\ft_\mu(\omega))$ is also well-defined.
\end{enumerate}

Now that all relevant maps have been checked to be well-defined, verifying that $\BC(\ft_\mu(\omega)) = \ft_{\BC(\mu)}(\overline{\BC}(\omega))$ comes down to a straightforward calculation which we leave to the reader.
\end{proof}

Given a subset $J \subset \cJ$, we define $\eta_J := \sum_{j \in J}\eta_{j} \in X^*(\underline{T})$, and denote by $\overline{\eta}_J$ its image in $\Lambda_{\textnormal{wt}}$.  We will make use of the fact that $\overline{\rho}_J := \overline{\sum_{j \in J}\rho_j}$ is equal to $\overline{\eta}_J$ in $\Lambda_{\textnormal{wt}}$.  Define $\Sigma\subset \Lambda_{\textnormal{wt}}$ to be the set $\{\overline{\eta}_J = \overline{\rho}_J\}_{J \subset \cJ}$.

\begin{propn}
  \label{prop:SW:extgr:U2}
  Suppose $\rhobar: \Gamma_K \longrightarrow {}^C\nU_{1,1}(\bbF)$ is a tame $L$-parameter satisfying $\widehat{\imath} \circ \rhobar = \omega$.  Assume furthermore that $\rhobar$ is $1$-generic, so that we may write $\rhobar|_{I_K} \cong \overline{\tau}(s,\mu + \eta)$ for some $(s,\mu) \in \underline{W} \times X^*(\underline{T}_\nU)$ with $\mu$ lying $1$-deep in the fundamental $p$-alcove.  We then have
  $$\nW^?(\rhobar) = \left\{F(\ft_{\mu}(s\omega)): \omega \in \Sigma \right\}.$$
\end{propn}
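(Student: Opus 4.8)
The plan is to reduce the assertion to its $\nG\nL_2$-counterpart, Proposition~\ref{prop:SW:extgr:GL2}, applied to the base change $\BC(\rhobar)\colon\Gamma_{K_2}\longrightarrow\nG\nL_2(\bbF)$. The three ingredients are: (i) the base-change formula \eqref{BC-of-inertial-type} for inertial types; (ii) the compatibility $\BC(\ft_\mu(\omega))=\ft_{\BC(\mu)}(\overline{\BC}(\omega))$ of Lemma~\ref{lem:comp:ext:gr}; and (iii) the fact that $\BC$ identifies $\nW^?(\rhobar)$ with the set of weights in $\nW^?(\BC(\rhobar))$ lying in the image of the base-change map, equivalently (by \cite[Lem.~3.25]{koziolmorra}) with those fixed by the twisting involution $\epsilon$. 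The hypothesis $\widehat{\imath}\circ\rhobar=\omega$ is what ensures $\BC(\rhobar)$ is conjugate self-dual in the sense making $\nW^?(\BC(\rhobar))$ $\epsilon$-stable and makes (iii) applicable; the comparison of predicted weight sets under base change is \cite[\S 4B]{koziolmorra}.

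First I would unwind the hypotheses on the $\nG\nL_2$ side. Since $\rhobar$ is tame with $\rhobar|_{I_K}\cong\overline{\tau}(s,\mu+\eta)$, applying \eqref{BC-of-inertial-type} with $\eta_{\underline c}=\eta$ (so that $\eta'_{(\underline c,\underline c)}=\eta'$) gives that $\BC(\rhobar)$ is tame with $\BC(\rhobar)|_{I_{K_2}}\cong\overline{\tau}'((s,s),\BC(\mu)+\eta')$, and $\BC(\mu)=(\mu,-\un{\fw}(\mu))$ remains $1$-deep in the fundamental $p$-alcove because $-\un{\fw}$ permutes the positive coroots. Proposition~\ref{prop:SW:extgr:GL2} then yields
\[
\nW^?(\BC(\rhobar))=\left\{F'\big(\ft_{\BC(\mu)}((s,s)\omega')\big):\omega'\in\Sigma'\right\}
\]
(in particular this shows the right-hand side of the Proposition under consideration is independent of the chosen lowest alcove presentation, since $\BC$ is injective on Serre weights). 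It remains to single out which $\omega'\in\Sigma'$ yield $\epsilon$-fixed weights.

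This is the technical core. On one side, a short computation with \eqref{def-of-BC}, using $\overline{\eta}_{\{j\}}=\overline{\rho}_j$ and $\BC(\rho_j)=(\rho_j,-\un{\fw}(\rho_j))$, shows $\overline{\BC}(\overline{\eta}_J)=\overline{\eta}'_{J\cup(J+f)}$ for every $J\subseteq\cJ$, where $J+f:=\{j+f:j\in J\}\subseteq\cJ'\cong\bbZ/2f$; hence $\overline{\BC}(\Sigma)$ is exactly the set of $\overline{\eta}'_{J'}\in\Sigma'$ with $J'+f=J'$. On the other side, the formula $\epsilon(F'(\nu,\nu'))\cong F'(-\un{\fw}(\nu'),-\un{\fw}(\nu))$ of \cite[\S 3E2]{koziolmorra}, together with the triviality of $-\un{\fw}$ on the quotient $\Lambda'_{\textnormal{wt}}=\bbZ^{\cJ'}$ (each factor sends $(a,b)$ to $(-b,-a)$, fixing $a-b$), shows that $\epsilon$ acts on $\Lambda'_{\textnormal{wt}}$ through the shift $j'\mapsto j'+f$ of $\cJ'$. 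Since $\BC(\mu)$ is $\epsilon$-fixed (as $\un{\fw}^2=1$) and $(s,s)\in\un{W}'$ is $\epsilon$-fixed (because $\epsilon$ swaps the two $\un{W}$-factors and conjugates by the central $\un{\fw}$), the recentering $\ft_{\BC(\mu)}$ and left translation by $(s,s)$ are $\epsilon$-equivariant, so $F'(\ft_{\BC(\mu)}((s,s)\omega'))$ is $\epsilon$-fixed precisely when $\omega'$ is fixed by the shift, that is, precisely when $\omega'\in\overline{\BC}(\Sigma)$. Invoking Lemma~\ref{lem:comp:ext:gr} together with the diagonal equivariance $\overline{\BC}(s\omega)=(s,s)\overline{\BC}(\omega)$, the $\epsilon$-fixed weights in $\nW^?(\BC(\rhobar))$ are exactly $\{\BC(F(\ft_\mu(s\omega))):\omega\in\Sigma\}$, and injectivity of $\BC$ on Serre weights gives $\nW^?(\rhobar)=\{F(\ft_\mu(s\omega)):\omega\in\Sigma\}$, as desired.

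I expect the main obstacle to be precisely the $\epsilon$-equivariance bookkeeping in the last paragraph: one must check carefully that the involution $\epsilon$ --- which combines a Frobenius-by-$q$ twist with the outer automorphism of $\nG\nL_2$ --- is compatible with the $p$-dilated dot action, with the choice of the $\Omega'$-stabilizer element entering the definition of $\ft_{\BC(\mu)}$, and with the parametrization of $\Sigma'$, so that $\epsilon$ really does act on $\nW^?(\BC(\rhobar))$ ``through $\omega'$'' as claimed. One should also verify at the outset that $(s,s)\Sigma'\subseteq\Lambda'^{\BC(\mu)}_{\textnormal{wt}}$, so that $\ft_{\BC(\mu)}$ and multiplication by $(s,s)$ are injective on the relevant domain; this uses the $1$-genericity of $\mu$.
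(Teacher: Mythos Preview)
Your proposal is correct and follows essentially the same approach as the paper: reduce to Proposition~\ref{prop:SW:extgr:GL2} for $\BC(\rhobar)$ via \cite[Thm.~4.9]{koziolmorra}, then identify which $\omega'\in\Sigma'$ give $\epsilon$-fixed weights (the paper writes $\omega'=(\omega_1,\omega_2)$ and uses injectivity of $\omega'\mapsto F'(\ft_{\BC(\mu)}((s,s)\omega'))$ to force $\omega_2=-\un{\fw}(\omega_1)$, which is your ``shift-fixed'' condition), and conclude by Lemma~\ref{lem:comp:ext:gr} and injectivity of $\BC$. Your explicit computation of $\overline{\BC}(\Sigma)$ and your honest flagging of the $\epsilon$-equivariance of $\ft_{\BC(\mu)}$ as the delicate point are, if anything, more careful than the paper, which subsumes that check under ``from the discussion in Subsection~\ref{twist-by-epsilon}''.
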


\begin{proof}
  Given such a $\rhobar$, by \cite[Lem. 4.4]{koziolmorra} we have 
  $$\BC(\rhobar)|_{I_{K_2}} \cong \overline{\tau}'\left((s,s),\BC(\mu) + \eta'\right).$$
  Suppose $F$ is a Serre weight of $\un{G}_0(\bbF_p)$ on which $\imath(k_K^\times)$ acts trivially.  By \cite[Thm. 4.9]{koziolmorra} and Proposition \ref{prop:SW:extgr:GL2}, we have 
  \begin{eqnarray*}
    F \in \nW^?(\rhobar) & \Longleftrightarrow & \BC(F) \in \nW^?(\BC(\rhobar)) \\
    & \Longleftrightarrow & \BC(F) \cong F'\left(\ft_{\BC(\mu)}((s,s)\omega')\right) \quad \textnormal{for some}~\omega' \in \Sigma'.
  \end{eqnarray*}

  Let us write $\omega' = (\omega_1,\omega_2)$, with each of $\omega_1,\omega_2 \in X^*(\underline{T}_\nU)$ and image in $\Sigma$.  From the discussion in Subsection \ref{twist-by-epsilon}, we see that we must have
  $$F'\left(\ft_{\BC(\mu)}((s,s)(\omega_1,\omega_2))\right) \cong F'\left(\ft_{\BC(\mu)}((s,s)(-\underline{\fw}(\omega_2),-\underline{\fw}(\omega_1)))\right).$$
  Since the map $\Sigma' \ni \omega' \longmapsto F'(\ft_{\BC(\mu)}((s,s)\omega'))$ is injective, we conclude that $\omega_2 = -\underline{\fw}(\omega_1)$ as elements of $\Sigma$.  Consequently, we have
  \begin{eqnarray*}
    \BC(F) & \cong & F'\left(\ft_{\BC(\mu)}(s\omega_1,-s\underline{\fw}(\omega_1))\right) \\
    & \cong & F'\left(\ft_{\BC(\mu)}(\overline{\BC}(s\omega_1))\right) \\
    & \stackrel{\textnormal{Lem. \ref{lem:comp:ext:gr}}}{\cong} & F'\left(\BC(\ft_{\mu}(s\omega_1))\right) \\
    & \cong & \BC\left(F(\ft_{\mu}(s\omega_1))\right).
  \end{eqnarray*}
  Finally, we note that the map $F \longmapsto \BC(F)$ is injective on isomorphism classes of Serre weights, and therefore we get
  $$\BC(F) \cong \BC\left(F(\ft_{\mu}(s\omega_1))\right) \Longleftrightarrow F \cong F(\ft_{\mu}(s\omega_1)).$$  
\end{proof}

\begin{propn}
\label{prop:JH:fct:extgr:U2}
  Suppose $\tau' := \tau'((sw^{-1},sw^{-1}), \BC(\mu + \rho - sw^{-1}(\nu)))$ for some $(s,\mu), (w,\nu) \in \underline{W}\times X^*(\underline{T}_\nU)$ such that $\mu - sw^{-1}(\nu)$ lies $1$-deep in the fundamental $p$-alcove.  Then we have $\tau'^{\varphi^f} \cong \tau'^\vee$, and if $\nu \in \rho + \Lambda_{\textnormal{rt}}$, then 
  $$\JH\left(\overline{\sigma(\tau')}\right) = \left\{F\left(\ft_{\mu}(sw^{-1}(\omega - \overline{\nu}))\right): \omega \in \Sigma\right\}.$$
\end{propn}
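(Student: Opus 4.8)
The plan is to follow the template of the proof of Proposition~\ref{prop:SW:extgr:U2}: realize $\tau'$ as a base change, reduce the computation of $\JH(\overline{\sigma(\tau')})$ to the $\nG\nL_2$-statement Proposition~\ref{prop:JH:inertial}, and extract the correct constituents using the $\epsilon$-symmetry together with Lemma~\ref{lem:comp:ext:gr} and the injectivity of the base change map on Serre weights. Concretely, I would first invoke \eqref{BC-of-inertial-type} with trivial twist to identify
$$\tau' \cong \BC\big(\tau(sw^{-1},\, \mu + \rho - sw^{-1}(\nu))\big).$$
Since base changes of tame inertial $L$-parameters are conjugate self-dual, this already yields $\tau'^{\varphi^f} \cong \tau'^\vee$, so $\sigma(\tau')$ is defined and $\sigma'(\tau') \cong \BC(\sigma(\tau'))$ (see~\cite[Def.~4.10]{koziolmorra}). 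Observing that $\rho - \eta \in X^0(\un{T})$ (equivalently $\langle\rho,\alpha^\vee\rangle = 1 = \langle\eta,\alpha^\vee\rangle$), the deepness assumption on $\mu - sw^{-1}(\nu)$ shows that $\mu + \rho - sw^{-1}(\nu) - \eta$ is $1$-deep, so \eqref{ILLC-explicit} gives $\sigma(\tau') \cong R_{sw^{-1}}(\mu + \rho - sw^{-1}(\nu))$. By the compatibility of base change with Jordan--H\"older factors of mod $p$ reductions (exactly as in the proof of Proposition~\ref{prop:SW:extgr:U2}; cf.~\cite[\S~3C6, Lem.~3.12, Thm.~4.11]{koziolmorra}), a Serre weight $F$ of $\nU_{1,1}(k_K)$ lies in $\JH(\overline{\sigma(\tau')})$ if and only if $\BC(F) \in \JH(\overline{\sigma'(\tau')})$.

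Next I would compute $\JH(\overline{\sigma'(\tau')})$ via Proposition~\ref{prop:JH:inertial}, applied with $s' := (s,s)$, $w' := (w,w)$, and $\nu' := \BC(\nu) - \big(\BC(\rho) - \eta'\big)$, so that $\tau' = \tau'(s'w'^{-1}, \mu' - s'w'^{-1}(\nu'))$ with $\mu' - \eta' = \BC(\mu)$ and $\overline{\nu'} = \overline{\BC}(\overline{\nu})$ in $\Lambda'_{\textnormal{wt}}$. The correction term is legitimate because $\BC(\rho) - \eta' \in X^0(\un{T}')$: it is a central character, hence trivial in $\Lambda'_{\textnormal{wt}}$ and invisible to all deepness and root-lattice conditions as well as to the maps $\ft_{\bullet}$. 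The hypotheses of Proposition~\ref{prop:JH:inertial} then transfer: the deepness hypothesis passes along $\BC$, which preserves coroot pairings (on its first component it is the identity, on the second it is $-\un{\fw}$, and $\un{\fw}$ sends each root to its negative), using that $\mu + \rho - sw^{-1}(\nu) - \eta$ is $1$-deep; and $\nu' \in \eta' + \Lambda'_{\textnormal{rt}}$ because $\BC(\Lambda_{\textnormal{rt}}) \subset \Lambda'_{\textnormal{rt}}$ and $\nu \in \rho + \Lambda_{\textnormal{rt}}$. This gives
$$\JH\big(\overline{\sigma'(\tau')}\big) = \left\{F'\big(\ft_{\BC(\mu)}\big((sw^{-1},sw^{-1})(\omega' - \overline{\BC}(\overline{\nu}))\big)\big) : \omega' \in \Sigma'\right\}.$$

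Finally, suppose $F$ is a Serre weight of $\nU_{1,1}(k_K)$ with $\BC(F)$ in this set, say $\BC(F) \cong F'(\ft_{\BC(\mu)}((sw^{-1},sw^{-1})(\omega' - \overline{\BC}(\overline{\nu}))))$ for some $\omega' = (\omega_1,\omega_2) \in \Sigma'$ with $\omega_i \in X^*(\un{T}_\nU)$. As $\BC(F)$ is $\epsilon$-invariant, the description of $\epsilon$ in Subsection~\ref{twist-by-epsilon} together with the injectivity of $\omega' \mapsto F'(\ft_{\BC(\mu)}((sw^{-1},sw^{-1})(\omega' - \overline{\BC}(\overline{\nu}))))$ on $\Sigma'$ forces $\omega_2 = -\un{\fw}(\omega_1)$, i.e.~$\omega' = \overline{\BC}(\omega_1)$ with $\omega_1 \in \Sigma$, exactly as in the proof of Proposition~\ref{prop:SW:extgr:U2}. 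Since $\overline{\BC}$ intertwines the $\un{W}$-actions, $(sw^{-1},sw^{-1})(\overline{\BC}(\omega_1) - \overline{\BC}(\overline{\nu})) = \overline{\BC}(sw^{-1}(\omega_1 - \overline{\nu}))$, and hence by Lemma~\ref{lem:comp:ext:gr}
\begin{eqnarray*}
  \BC(F) & \cong & F'\big(\ft_{\BC(\mu)}(\overline{\BC}(sw^{-1}(\omega_1 - \overline{\nu})))\big) \\
  & \cong & F'\big(\BC(\ft_{\mu}(sw^{-1}(\omega_1 - \overline{\nu})))\big) \\
  & \cong & \BC\big(F(\ft_{\mu}(sw^{-1}(\omega_1 - \overline{\nu})))\big);
\end{eqnarray*}
injectivity of $\BC$ on isomorphism classes of Serre weights then gives $F \cong F(\ft_{\mu}(sw^{-1}(\omega_1 - \overline{\nu})))$. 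Running the same chain backwards (using $\overline{\BC}(\omega) \in \Sigma'$) shows that each $F(\ft_{\mu}(sw^{-1}(\omega - \overline{\nu})))$, $\omega \in \Sigma$, actually occurs, which yields the claimed equality.

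The step I expect to be the main obstacle is the bookkeeping in the second paragraph: Proposition~\ref{prop:JH:inertial} is phrased with the twisting element $\eta'$ of $\un{G}'$, whereas the unitary statement uses $\rho$ (there being no Galois-invariant analogue of $\eta_H$ inside $X^*(T_\nU)$), and one must track carefully that the discrepancy $\BC(\rho) - \eta'$ is a central character (hence trivial in $\Lambda'_{\textnormal{wt}}$ and irrelevant to the maps $\ft_{\bullet}$ and to all genericity and root-lattice conditions) without conflating the various tori and their quotients; one also has to pin down the precise base change compatibility for Jordan--H\"older factors of mod $p$ reductions of $\sigma(\tau')$, rather than just the statement for $\nW^?$ used in the proof of Proposition~\ref{prop:SW:extgr:U2}. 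Once this is set up correctly, the $\epsilon$-symmetry argument and the application of Lemma~\ref{lem:comp:ext:gr} are essentially verbatim the corresponding steps of the already-established Proposition~\ref{prop:SW:extgr:U2}.
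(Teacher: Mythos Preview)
Your proposal is correct and follows essentially the same route as the paper: reduce to $\nG\nL_2$ via base change, apply Proposition~\ref{prop:JH:inertial} with $\mu' = \BC(\mu) + \eta'$ and $\nu' = \eta' + (\lambda_{\textnormal{rt}},\lambda_{\textnormal{rt}})$ (your $\nu' = \BC(\nu) - (\BC(\rho) - \eta')$ computes to the same thing), use the $\epsilon$-symmetry argument from Proposition~\ref{prop:SW:extgr:U2} to force $\omega' = \overline{\BC}(\omega)$, and conclude via Lemma~\ref{lem:comp:ext:gr}. The only minor discrepancy is that the precise reference for the equivalence $F \in \JH(\overline{\sigma(\tau')}) \Leftrightarrow \BC(F) \in \JH(\overline{\sigma'(\tau')})$ is \cite[Lem.~3.26]{koziolmorra} rather than the lemmas you cite, but you correctly flag this as the point requiring care.
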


\begin{proof}
  The fact that $\tau'^{\varphi^f} \cong \tau'^\vee$ follows from a calculation using the definition of $\tau'(s,\mu)$.  See also the proof of Lemma \ref{symiff}.

  We note that by equation \eqref{ILLC-explicit}, we have
  $$\sigma(\tau') \cong R_{sw^{-1}}(\mu + \rho - sw^{-1}(\nu)).$$
  Using equation \eqref{ILLC'-explicit} and \cite[Lem. 3.26]{koziolmorra}, we then obtain 
  $$F \in \JH\left(\overline{\sigma(\tau')}\right) \Longleftrightarrow \BC(F) \in \JH\left(\overline{\sigma'(\tau')}\right).$$

  Let us write $\nu = \rho + \lambda_{\textnormal{rt}}$ with $\lambda_{\textnormal{rt}} \in \Lambda_{\textnormal{rt}}$.  We then have
  \begin{eqnarray*}
    \BC(\mu + \rho - sw^{-1}(\nu)) & = & (\mu, -\underline{\fw}(\mu)) + (\rho,-\underline{\fw}(\rho)) - (sw^{-1}, sw^{-1})(\nu, -\underline{\fw}(\nu)) \\
    & = & (\mu, -\underline{\fw}(\mu)) + (\rho,\rho) - (0,\rho + \underline{\fw}(\rho))\\
    & & - (sw^{-1}, sw^{-1})(\rho + \lambda_{\textnormal{rt}}, \rho + \lambda_{\textnormal{rt}}) + (0, \rho + \underline{\fw}(\rho))\\
    & = & (\mu, -\underline{\fw}(\mu)) + \eta'  - (sw^{-1}, sw^{-1})(\rho + \lambda_{\textnormal{rt}}, \rho + \lambda_{\textnormal{rt}})
  \end{eqnarray*}
  Taking $\mu' = (\mu, -\underline{\fw}(\mu)) + \eta', s'w'^{-1} = (sw^{-1},sw^{-1})$ and $\nu' = (\rho + \lambda_{\textnormal{rt}}, \rho + \lambda_{\textnormal{rt}}) = \eta' + (\lambda_{\textnormal{rt}}, \lambda_{\textnormal{rt}})$, Proposition \ref{prop:JH:inertial} implies that the Serre weight $\BC(F)$ from the previous paragraph has the form 
  $$\BC(F) \cong  F'\left(\ft_{\mu' - \eta'}(s'w'^{-1}(\omega' - \overline{\nu}'))\right) \cong F'\left(\ft_{(\mu, -\underline{\fw}(\mu))}((sw^{-1},sw^{-1})(\omega' - \overline{\nu}'))\right),$$
  for some $\omega' \in \Sigma'$.  Further, the argument in the proof of Proposition \ref{prop:SW:extgr:U2} shows that $\omega'$ is of the form $(\omega,\omega)$ for some $\omega \in \Sigma$, and thus $\BC(F)$ has the form
  \begin{eqnarray*}
    \BC(F) & \cong & F'\left(\ft_{\BC(\mu)}(\overline{\BC}(sw^{-1}(\omega - \overline{\nu})))\right) \\ 
     & \stackrel{\textnormal{Lem. \ref{lem:comp:ext:gr}}}{\cong} & F'\left(\BC(\ft_{\mu}(sw^{-1}(\omega - \overline{\nu})))\right) \\
     & \cong & \BC\left(F(\ft_{\mu}(sw^{-1}(\omega - \overline{\nu})))\right)
  \end{eqnarray*}
  (we also use the fact that $\eta'$ and $\BC(\rho)$ have the same image in $\Lambda_{\textnormal{wt}}'$).  Thus, by injectivity of the base change map on Serre weights, we conclude that 
  $$F \cong F\left(\ft_{\mu}(sw^{-1}(\omega - \overline{\nu}))\right).$$
\end{proof}

We conclude with a result reciprocal to Proposition \ref{prop:JH:fct:extgr:U2}.

\begin{cor}
  \label{cor:types-cont-weight}
  Suppose $\sigma = F(\mu)$ for some $\mu \in X^*(\underline{T}_\nU)$ which lies $2$-deep in the fundamental $p$-alcove.  Then any tame inertial type $\tau'$ which satisfies $\sigma \in \JH(\overline{\sigma(\tau')})$ is of the form $\tau' \cong \tau'((s,s),\BC(\mu + \rho - s(\rho)))$ for some $s \in \underline{W}$.  Furthermore, these inertial types are pairwise non-isomorphic.
\end{cor}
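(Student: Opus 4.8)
The plan is to obtain this as the converse of Proposition~\ref{prop:JH:fct:extgr:U2}, reducing to the analogous statement for $\nG\nL_2$ via base change. First I would record the ``soft'' parts. That each type $\tau'((s,s),\BC(\mu+\rho-s(\rho)))$ does contain $\sigma$ is immediate from Proposition~\ref{prop:JH:fct:extgr:U2} applied with $w=1$ and $\nu=\rho\in\rho+\Lambda_{\textnormal{rt}}$ (the hypothesis that $\mu-s(\rho)$ be $1$-deep holds because $\mu$ is $2$-deep): that proposition gives
$$\JH\big(\overline{\sigma\big(\tau'((s,s),\BC(\mu+\rho-s(\rho)))\big)}\big)=\big\{F(\ft_\mu(s(\omega-\overline{\rho}))):\omega\in\Sigma\big\},$$
and $\ft_\mu(0)\equiv\mu$ shows $\sigma=F(\mu)$ lies in this set (take $\omega=\overline{\rho}$). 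As $\omega$ runs over $\Sigma$ the elements $s(\omega-\overline{\rho})$ run over the ``box'' $\prod_{j\in\cJ}\{0,\epsilon_j(s)\}\subset\Lambda_{\textnormal{wt}}$, where $\epsilon_j(s)=-1$ if $s_j=1$ and $\epsilon_j(s)=+1$ if $s_j=\fw$; since $\mu$ is $2$-deep all $2^{|\cJ|}$ corners of this box lie $1$-deep in the fundamental alcove, so distinct sign vectors $(\epsilon_j(s))_{j}$ yield genuinely distinct boxes, hence distinct $\JH$-sets. This proves the ``pairwise non-isomorphic'' clause.

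For the main assertion, let $\tau'$ be a conjugate self-dual tame inertial type with $\sigma\in\JH(\overline{\sigma(\tau')})$. By \cite[Lem.~3.26]{koziolmorra} this is equivalent to $F'(\BC(\mu))\in\JH(\overline{\sigma'(\tau')})$, with $\BC(\mu)$ again $2$-deep. I would then appeal to the $\nG\nL_2$ analogue of the corollary --- the converse of Proposition~\ref{prop:JH:inertial}, which follows from its explicit description together with injectivity of $\ft_\bullet$ and the uniqueness (up to the residual $X^*(\underline{T}')\rtimes\underline{W}'$-action, which has no $p$-translation part in the $2$-generic range, cf.\ \cite{LLL,BHHMS}) of lowest alcove presentations --- to conclude that $\tau'\cong\tau'(s',\BC(\mu)+\eta'-s'(\eta'))$ for a \emph{unique} $s'\in\underline{W}'=\underline{W}\times\underline{W}$. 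Finally I would impose conjugate self-duality $\tau'^{\varphi^f}\cong\tau'^\vee$: by the combinatorial criterion of Lemma~\ref{symiff} (used already in the proof of Proposition~\ref{prop:JH:fct:extgr:U2}), combined with the special shape $\BC(\mu)+\eta'-s'(\eta')$ of the character part, this forces $s'$ to have the form $(s,s)$ with $s\in\underline{W}$; and a direct calculation (using that $\eta'$ corresponds to $(\rho,\rho)$ under $X^*(\underline{T}')\cong X^*(\underline{T}_\nU)^2$, so that $(s,s)(\eta')=(s(\rho),s(\rho))$) gives $\BC(\mu)+\eta'-(s,s)(\eta')=\BC(\mu+\rho-s(\rho))$. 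Hence $\tau'\cong\tau'((s,s),\BC(\mu+\rho-s(\rho)))$, and the pairwise non-isomorphy of these types re-follows from the uniqueness of $s'$ and the injectivity of $s\mapsto(s,s)$.

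The step I expect to be the main obstacle is the $\nG\nL_2$ reciprocal invoked above: proving that a $2$-deep Serre weight occurs in $\JH(\overline{\sigma'(\tau')})$ \emph{only} for tame types $\tau'$ of the prescribed shape, and in a unique way. If this is not available off the shelf one proves it by first noting (via \cite{LLL}) that a $2$-deep weight in the reduction forces $\tau'$ to be $1$-generic, hence to admit a lowest alcove presentation; then Proposition~\ref{prop:JH:inertial} exhibits $\JH(\overline{\sigma'(\tau')})$ as an extension-graph box, and --- crucially using that $\BC(\mu)$ is $2$-deep, so no walls are crossed and the relevant $\ft$-maps agree with a translation on the small weights involved --- matching $F'(\BC(\mu))$ against this box determines the presentation up to the residual ambiguity, which collapses to a single Weyl element. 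The other delicate point is the clean extraction of the conjugate self-duality constraint, where one must keep track of the precise behaviour of $(-)^{\varphi^f}$ and $(-)^\vee$ on tame inertial types, as encoded in Lemma~\ref{symiff} and in the relation $-(\underline{\fw},\underline{\fw})\pi'^f(\BC(\mu))=\BC(\mu)$.
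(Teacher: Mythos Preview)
Your approach is correct but takes a genuinely different route from the paper's. The paper argues directly on the $\nU_{1,1}$ side by a dimension count via the $cde$ triangle: having exhibited $2^f$ pairwise non-isomorphic Deligne--Lusztig representations containing $\sigma$ in their reduction, it shows there can be no more by verifying that $\tP_\sigma[1/p]$ has length exactly $2^f$, comparing $\dim_E(\tP_\sigma[1/p])=(2p)^f$ (obtained by transferring to $\nG\nL_2(k_K)$) with Ennola's lower bound $\dim_E(V)\geq p^f-1$ for any constituent. Your argument instead base-changes the problem to $\nG\nL_2(k_{K_2})$ via \cite[Lem.~3.26]{koziolmorra}, invokes the $\nG\nL_2$ analogue to pin down $\tau'\cong\tau'(s',\BC(\mu)+\eta'-s'(\eta'))$ for a unique $s'\in\underline{W}'$, and then uses conjugate self-duality to force $s'=(s,s)$; the computation $\epsilon\big(\tau'((s_1,s_2),\ldots)\big)\cong\tau'((s_2,s_1),\ldots)$ using the explicit formulas preceding Lemma~\ref{symiff} makes this last step clean.

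Two remarks. First, the $\nG\nL_2$ reciprocal you invoke \emph{is} available off the shelf (it is the content of the last paragraph of the proof of \cite[Lem.~4.1.2]{BHHMS}, which is again a dimension count), so your fallback sketch via extension-graph matching is unnecessary---and in fact that sketch is the weakest part of your write-up, since establishing $1$-genericity of $\tau'$ from the mere presence of a $2$-deep constituent, and then reading off the presentation from a single weight in the box, would require more care than you give it. Second, your appeal to Lemma~\ref{symiff} is slightly off: that lemma is phrased for the types $\tau'_{\tw'}$ attached to a fixed $\rhobar$, whereas what you actually need is the direct computation with the formulas for $(\tau')^{\varphi^f}$ and $\tau'^\vee$ (which you in any case carry out). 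The paper, for its part, handles pairwise non-isomorphism by referring to the uniqueness-of-presentation argument in the second half of the proof of Lemma~\ref{symiff}, rather than via distinct $\JH$-sets as you do; both are valid.
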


\begin{proof}
  Let us write $\sigma = F(\mu) = F(\ft_{\mu}(0))$, and let $s \in \underline{W}$.  Using Proposition \ref{prop:JH:fct:extgr:U2} with $(w,\nu) = (1,\rho)$, we see that 
  $$F(\ft_\mu(0)) = F\big(\ft_\mu(s(\rhobar - \rhobar))\big) \in \JH\left(\overline{\sigma(\tau')}\right),$$
  where $\tau' := \tau'((s,s),~ \BC(\mu + \rho - s(\rho)))$.  Furthermore, an argument analogous to the second half of the proof of Lemma \ref{symiff} shows that these inertial types are pairwise non-isomorphic.

  We claim that these are the only representations of $\nU_{1,1}(k_K)$ over $E$ which contain $\sigma$ in their mod $p$ reduction.  By the adjunction properties of the $cde$ triangle (see \cite[\S 15.4]{serre:reps}), it suffices to show that the semisimple $E[\nU_{1,1}(k_K)]$-module $\tP_\sigma[1/p]$ has length $2^f$, where $\tP_\sigma$ denotes the projective cover of $\sigma$ in the category of $\cO[\nU_{1,1}(k_K)]$-modules.  The previous paragraph implies that $\tP_\sigma[1/p]$ has length at least $2^f$, so in order to verify the claim we assume by contradiction that the length is at least $2^f + 1$.  We now proceed by dimension-counting, as in the last paragraph of the proof of \cite[Lem. 4.1.2]{BHHMS}.  Precisely, the genericity assumptions on $\mu$ imply that $\dim_{\bbF}(\sigma) \geq 2$.  Therefore, we get $\dim_E(\tP_\sigma[1/p]) = \dim_\bbF(\tP_\sigma\otimes_{\cO}\bbF) = (2p)^f$, where the last equality follows from extending the central character to $\nG\nU_{1,1}(k_K)$, using Lemma \ref{resprops}\eqref{resprops-5}, and invoking the analogous result for $\nG\nL_2(k_K)$ (see \cite[\S 3]{BP}).  If $V$ is an irreducible constituent of $\tP_\sigma[1/p]$, then $[\overline{V}:\sigma] \neq 0$ (again using \cite[\S 15.4]{serre:reps}), which implies $\dim_E(V) \geq 2$.  By \cite[\S 6]{ennola}, this implies $\dim_E(V) \geq p^f - 1$.  Using that $p > 3$, we thus obtain
  $$\dim_{E}(\tP_\sigma[1/p]) = \sum_{V \in \JH(\tP_\sigma[1/p])}\dim_E(V) \geq (2^f + 1)(p^f - 1) > (2p)^f = \dim_E(\tP_\sigma[1/p]),$$
  giving the desired contradiction.  
\end{proof}

\begin{cor}
  \label{cor:types-cont-weight:2}
  Suppose $\sigma = F(\mu)$ for some $\mu \in X^*(\underline{T}_\nU)$ which lies $2$-deep in the fundamental $p$-alcove. Given $s_1,s_2 \in \underline{W}$, define the tame inertial types $\tau'_1 := \tau'((s_1,s_1), \BC(\mu + \rho - s_1(\rho))), \tau'_2 := \tau'((s_2,s_2), \BC(\mu + \rho - s_2(\rho)))$.  If $s_1 \neq s_2$, then $\JH(\overline{\sigma(\tau'_1)}) \neq \JH(\overline{\sigma(\tau'_2)})$.  
\end{cor}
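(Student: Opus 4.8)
The plan is to reduce the statement to an elementary comparison of two ``combinatorial cubes'' inside the extension graph $\Lambda_{\textnormal{wt}} \cong \bbZ^{\cJ}$, using the explicit Jordan--H\"older description of Proposition~\ref{prop:JH:fct:extgr:U2}. (Recall from Corollary~\ref{cor:types-cont-weight} that the $\tau'_i$ are exactly the tame inertial types $\tau'$ with $\sigma \in \JH(\overline{\sigma(\tau')})$; the content here is to separate their Jordan--H\"older sets.)

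First I would check that Proposition~\ref{prop:JH:fct:extgr:U2} applies to each $\tau'_i = \tau'((s_i,s_i),\BC(\mu+\rho-s_i(\rho)))$ with the choice $(w,\nu)=(1,\rho)$. Since $\langle s(\rho),\alpha_j^\vee\rangle \in \{\pm 1\}$ for every $s \in \underline{W}$ and every $j \in \cJ$, the hypothesis that $\mu$ is $2$-deep in the fundamental $p$-alcove forces $\mu - s_i(\rho)$ to be $1$-deep there; and $\nu = \rho \in \rho + \Lambda_{\textnormal{rt}}$ trivially. The proposition then gives
$$\JH\!\left(\overline{\sigma(\tau'_i)}\right) = \left\{F\!\left(\ft_\mu(s_i(\omega - \overline{\rho}))\right) : \omega \in \Sigma\right\}, \qquad i = 1, 2.$$

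Next I would note that for $\omega \in \Sigma$ the element $s_i(\omega - \overline{\rho})$ pairs with each $\alpha_j^\vee$ to a value in $\{-1,0,1\}$, so by $2$-deepness of $\mu$ it lies in $\Lambda_{\textnormal{wt}}^\mu$; since $\ft_\mu$ is injective on $\Lambda_{\textnormal{wt}}^\mu$ with image among the regular weight classes, on which $F$ is injective, the assignment $\omega \longmapsto F(\ft_\mu(s_i(\omega-\overline{\rho})))$ is injective on $\Sigma$. Consequently $\JH(\overline{\sigma(\tau'_1)}) = \JH(\overline{\sigma(\tau'_2)})$ if and only if $s_1(\Sigma - \overline{\rho}) = s_2(\Sigma - \overline{\rho})$ as subsets of $\Lambda_{\textnormal{wt}}$.

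Finally I would carry out the computation in the identification $\Lambda_{\textnormal{wt}} \cong \bbZ^{\cJ}$ under which $\overline{\eta}_J = \overline{\rho}_J$ is the indicator vector of $J \subset \cJ$: here $\Sigma = \{0,1\}^{\cJ}$ and $\overline{\rho} = (1,\dots,1)$, so $\Sigma - \overline{\rho} = \{-1,0\}^{\cJ}$, while $\underline{W} = W^f$ acts coordinatewise with $\fw$ acting on $\bbZ$ by $v \mapsto -v$. Therefore $s_i(\Sigma - \overline{\rho}) = \prod_{j \in \cJ} C^i_j$, where $C^i_j = \{-1,0\}$ if the $j$-th component of $s_i$ is trivial and $C^i_j = \{0,1\}$ if it equals $\fw$. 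Projecting such a product box onto its $j$-th coordinate recovers $C^i_j$, so equality of the two boxes forces $C^1_j = C^2_j$ for all $j$, i.e.\ $s_1 = s_2$; the contrapositive is exactly the assertion. I expect the only step needing genuine care to be this genericity bookkeeping---confirming that the $s_i(\rho)$-shift keeps $\mu - s_i(\rho)$ strictly $1$-deep and keeps the shifted weights inside $\Lambda_{\textnormal{wt}}^\mu$, so that Proposition~\ref{prop:JH:fct:extgr:U2} and the injectivity of $\ft_\mu$ legitimately apply; the remaining combinatorics is routine.
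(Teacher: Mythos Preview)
Your proof is correct and follows essentially the same approach as the paper: both arguments apply Proposition~\ref{prop:JH:fct:extgr:U2} with $(w,\nu)=(1,\rho)$ and then use the injectivity of $\omega' \mapsto F(\ft_\mu(\omega'))$ on $\Lambda_{\textnormal{wt}}^\mu$ to reduce to an elementary coordinatewise comparison in $\Lambda_{\textnormal{wt}}\cong\bbZ^{\cJ}$. The only difference is cosmetic: the paper fixes one index $j$ with $s_{1,j}\neq s_{2,j}$ and exhibits the explicit witness $F(\ft_\mu(-\overline{\rho}_j))\in\JH(\overline{\sigma(\tau'_1)})\smallsetminus\JH(\overline{\sigma(\tau'_2)})$, whereas you phrase the same observation as equality of product boxes $\prod_j C^i_j$ forcing $C^1_j=C^2_j$ for every $j$; your projection step and the paper's witness are the same combinatorial fact read in two directions.
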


\begin{proof}
  By Proposition \ref{prop:JH:fct:extgr:U2}, we have
  \begin{eqnarray*}
    \JH\left(\overline{\sigma(\tau_1')}\right) & = & \left\{F\big(\ft_\mu(s_1(\omega - \rhobar))\big): \omega \in \Sigma\right\}, \\    
    \JH\left(\overline{\sigma(\tau_2')}\right) & = & \left\{F\big(\ft_\mu(s_2(\omega - \rhobar))\big): \omega \in \Sigma\right\}.
  \end{eqnarray*}
  Fix an index $j \in \cJ$ for which $s_{1,j} \neq s_{2,j}$.  Without loss of generality, we may assume $s_{1,j} = 1, s_{2,j} = \fw$.  Consider the element $\rho_{\cJ\smallsetminus\{j\}} = \rho - \rho_j$, so that $\rhobar_{\cJ\smallsetminus\{j\}}\in \Sigma$.  The first equality above implies 
  $$F\big(\ft_\mu(-\rhobar_j)\big) = F\big(\ft_\mu(s_1(\rhobar_{\cJ\smallsetminus \{j\}} - \rhobar))\big) \in \JH\left(\overline{\sigma(\tau_1')}\right).$$
  We claim that $F(\ft_\mu(-\rhobar_j)) \not\in \JH(\overline{\sigma(\tau_2')})$.  Indeed, if we did have this containment, by the second displayed equality we would have
  $$F\big(\ft_\mu(-\rhobar_j)\big) = F\big(\ft_\mu(s_2(\omega - \rhobar))\big)$$
  for some $\omega \in \Sigma$.  By the injectivity of the map $\Sigma \ni \omega' \longmapsto F(\ft_\mu(\omega'))$, we obtain 
  $$-\rhobar_j = s_2(\omega - \rhobar),$$
  which implies 
  $$\rhobar_j + \rhobar = \omega \in \Sigma,$$
  a contradiction.  Therefore, $F(\ft_\mu(-\rhobar_j)) \not\in \JH(\overline{\sigma(\tau_2')})$, and the result follows.
\end{proof}

\begin{cor}
 \label{cor:types-cont-weight:3}
  Suppose $R_{s_1'}(\mu_1), R_{s_2'}(\mu_2)$ are two Deligne--Lusztig representations of $\nU_{1,1}(k_K)$, at least one of which is $3$-generic, and which satisfy
  $$\JH\left(\overline{R_{s_1'}(\mu_1)}\right) = \JH\left(\overline{R_{s_2'}(\mu_2)}\right).$$
  Then $R_{s_1'}(\mu_1) \cong R_{s_2'}(\mu_2)$.

  Consequently, if $\rhobar_1, \rhobar_2: \Gamma_K \longrightarrow {}^C\nU_{1,1}(\bbF)$ are two tamely ramified $L$-parameters, at least one of which is $3$-generic, and satisfying $\widehat{\imath} \circ \rhobar_1 = \widehat{\imath} \circ \rhobar_2 = \overline{\varepsilon}$ and $\nW^?(\rhobar_1) = \nW^?(\rhobar_2)$, then we have $\rhobar_1|_{I_K} \cong \rhobar_2|_{I_K}$.  
\end{cor}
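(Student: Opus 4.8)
The plan is to deduce both assertions from the corresponding facts for $\nG\nL_2(k_{K_2})$ (resp.\ $\Gamma_{K_2}$) by applying the base change map $\BC$ and then descending along it, using the injectivity properties of $\BC$ recorded earlier: injectivity on Serre weights, on Deligne--Lusztig representations (\cite[\S 3C6]{koziolmorra}), and the compatibility \cite[Lem. 3.25, Lem. 3.26, Thm. 4.9]{koziolmorra}. Note that $\BC$ preserves genericity, since $\BC(\mu)=(\mu,-\underline{\fw}(\mu))$ has the same depth in the fundamental $p$-alcove as $\mu$.

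For the first assertion, apply $\BC$ to obtain the Deligne--Lusztig representations $R'_{(s_1',s_1')}(\BC(\mu_1))$ and $R'_{(s_2',s_2')}(\BC(\mu_2))$ of $\nG\nL_2(k_{K_2})$, at least one of which is $3$-generic. By \cite[Lem. 3.26]{koziolmorra} one has $F\in\JH(\overline{R_{s_i'}(\mu_i)})$ if and only if $\BC(F)\in\JH(\overline{R'_{(s_i',s_i')}(\BC(\mu_i))})$; since every weight in the latter set lies in the image of $\BC$ (it is $\epsilon$-fixed, as the type is conjugate self-dual, cf.\ \cite[Lem. 3.25]{koziolmorra}) and $\BC$ is injective on Serre weights, the hypothesis $\JH(\overline{R_{s_1'}(\mu_1)})=\JH(\overline{R_{s_2'}(\mu_2)})$ forces $\JH(\overline{R'_{(s_1',s_1')}(\BC(\mu_1))})=\JH(\overline{R'_{(s_2',s_2')}(\BC(\mu_2))})$. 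Now invoke the $\nG\nL_2$-analogue: two Deligne--Lusztig representations of $\nG\nL_2(k_{K_2})$, at least one $3$-generic, with the same mod $p$ Jordan--H\"older factors are isomorphic. This follows from the extension-graph description in Proposition \ref{prop:JH:inertial}: writing each representation as $\sigma'(\tau_i')$ via \eqref{ILLC'-explicit}, Proposition \ref{prop:JH:inertial} identifies $\JH(\overline{\sigma'(\tau_i')})$ with an explicit ``box'' $\{F'(\ft_{\mu_i'-\eta'}(s_i'w_i'^{-1}(\omega'-\overline{\nu}'_i))):\omega'\in\Sigma'\}$, and $3$-genericity guarantees this box does not wrap around the alcove walls, so it determines $(s_i'w_i'^{-1},\mu_i',\overline{\nu}'_i)$ up to the equivalence governing $\sigma'(\tau_i')$; hence $\sigma'(\tau_1')\cong\sigma'(\tau_2')$. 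Finally, injectivity of $\BC$ on Deligne--Lusztig representations yields $R_{s_1'}(\mu_1)\cong R_{s_2'}(\mu_2)$.

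For the second assertion, write $\rhobar_i|_{I_K}\cong\overline{\tau}(s_i,\mu_i+\eta)$ using $3$-genericity. By \cite[Thm. 4.9]{koziolmorra}, $F\in\nW^?(\rhobar_i)\Leftrightarrow\BC(F)\in\nW^?(\BC(\rhobar_i))$; as $\BC(\rhobar_i)$ is conjugate self-dual, $\nW^?(\BC(\rhobar_i))$ is $\epsilon$-stable, hence contained in the image of $\BC$, so $\nW^?(\BC(\rhobar_i))=\BC(\nW^?(\rhobar_i))$, and (using injectivity of $\BC$ on Serre weights) the hypothesis gives $\nW^?(\BC(\rhobar_1))=\nW^?(\BC(\rhobar_2))$. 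By \cite[Lem. 4.4]{koziolmorra}, $\BC(\rhobar_i)|_{I_{K_2}}\cong\overline{\tau}'((s_i,s_i),\BC(\mu_i)+\eta')$ is $3$-generic. The $\nG\nL_2$-analogue of the present corollary — equal mod $p$ predicted weights, one $3$-generic, imply isomorphic restriction to inertia, proved from Proposition \ref{prop:SW:extgr:GL2} by the same box-rigidity argument — then gives $\BC(\rhobar_1)|_{I_{K_2}}\cong\BC(\rhobar_2)|_{I_{K_2}}$, i.e.\ $\overline{\tau}'((s_1,s_1),\BC(\mu_1)+\eta')\cong\overline{\tau}'((s_2,s_2),\BC(\mu_2)+\eta')$. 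To descend, apply Henniart's inertial local Langlands and \eqref{ILLC'-explicit} to deduce $\BC(R_{s_1}(\mu_1+\eta))\cong\BC(R_{s_2}(\mu_2+\eta))$ (up to the usual $\eta$-shift bookkeeping), whence $R_{s_1}(\mu_1+\eta)\cong R_{s_2}(\mu_2+\eta)$ by injectivity of $\BC$ on Deligne--Lusztig representations; the converse direction of \cite[\S 4.1]{herzig:duke} then forces $(s_1,\mu_1+\eta)$ and $(s_2,\mu_2+\eta)$ to be related by the equivalence defining $\overline{\tau}$, i.e.\ $\rhobar_1|_{I_K}\cong\rhobar_2|_{I_K}$. (Equivalently, one may quote injectivity of $\BC$ on tame inertial $L$-parameters, which is immediate from \eqref{ILLC-explicit} and \eqref{ILLC'-explicit}.)

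The main obstacle is the $\nG\nL_2$-level rigidity inputs: that an explicit extension-graph box (arising from Proposition \ref{prop:JH:inertial} or Proposition \ref{prop:SW:extgr:GL2}) determines its parametrizing data up to the relevant equivalence once one is sufficiently deep in the alcove, together with the careful tracking of the $\eta$- and $\rho$-shifts relating $\nW^?(-)$, $\JH(\overline{\sigma'(\tau')})$, and $\overline{\tau}'(s',\mu'+\eta')$. The $3$-genericity hypothesis is precisely what keeps these boxes from wrapping around alcove walls and makes the reconstruction well-defined; insufficiently generic cases are genuinely excluded.
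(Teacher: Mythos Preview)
Your approach has a genuine gap. In both assertions you assert that the relevant set on the $\nG\nL_2$ side (namely $\JH(\overline{R'_{(s_i',s_i')}(\BC(\mu_i))})$, resp.\ $\nW^?(\BC(\rhobar_i))$) is \emph{contained in the image of $\BC$}, arguing that since the representation/type is conjugate self-dual the set is $\epsilon$-stable, ``hence'' each element is $\epsilon$-fixed. That implication fails: $\epsilon$-stability of a set only says $\epsilon$ permutes it, not that it fixes each element. Concretely, by Proposition \ref{prop:JH:inertial} we have $\big|\JH(\overline{R'_{(s_i',s_i')}(\BC(\mu_i))})\big| = |\Sigma'| = 2^{2f}$, whereas $\big|\JH(\overline{R_{s_i'}(\mu_i)})\big| = |\Sigma| = 2^f$; so $\BC$ hits only the ``diagonal'' constituents $\omega' = (\omega,\omega)$, and the step $\JH(\overline{R_{s_1'}(\mu_1)}) = \JH(\overline{R_{s_2'}(\mu_2)}) \Rightarrow \JH(\overline{R'_{(s_1',s_1')}(\BC(\mu_1))}) = \JH(\overline{R'_{(s_2',s_2')}(\BC(\mu_2))})$ is unjustified. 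The same counting shows $\nW^?(\BC(\rhobar_i))$ has $2^{2f}$ elements while $\BC(\nW^?(\rhobar_i))$ has $2^f$, so your claimed equality $\nW^?(\BC(\rhobar_i)) = \BC(\nW^?(\rhobar_i))$ is false.

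The paper avoids this by staying on the unitary side and using the two preceding corollaries directly: pick any common constituent $F(\mu_0)$; Corollary \ref{cor:types-cont-weight} then forces each associated tame type to be of the form $\tau'((s_i,s_i),\BC(\mu_0 + \rho - s_i(\rho)))$ for some $s_i \in \un{W}$, and Corollary \ref{cor:types-cont-weight:2} says distinct $s_i$ give distinct JH sets, so $s_1 = s_2$. The second assertion then reduces to the first via \cite[Prop.~4.6]{koziolmorra}, which identifies $\nW^?(\rhobar_i)$ with the JH set of an explicit unitary Deligne--Lusztig representation determined by $\rhobar_i|_{I_K}$. Your base-change route could in principle be repaired by arguing that the ``diagonal'' part of the $\nG\nL_2$ box already determines the full box (since the box is a product over $\cJ' = \cJ \sqcup \cJ$), but carrying this out carefully amounts to redoing Corollaries \ref{cor:types-cont-weight} and \ref{cor:types-cont-weight:2} on the $\nG\nL_2$ side.
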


\begin{proof}
  Define tame inertial types $\tau'_1$ and $\tau'_2$ by
  \begin{eqnarray*}
    \tau'_1 & := & \tau'\big((s_1', s_1'),~\BC(\mu_1)\big), \\    
    \tau'_2 & := & \tau'\big((s_2', s_2'),~\BC(\mu_2)\big).
  \end{eqnarray*}
  As in the second paragraph of the proof of Proposition \ref{prop:JH:fct:extgr:U2}, we have $\sigma(\tau'_i) \cong R_{s_i'}(\mu_i)$.  Now fix $F(\mu_0) \in \JH(\overline{\sigma(\tau'_1)}) = \JH(\overline{\sigma(\tau'_2)})$.  By Corollary \ref{cor:types-cont-weight}, we have
  \begin{eqnarray*}
    \tau'_1 & \cong & \tau'\big((s_1,s_1),~\BC(\mu_0 + \rho - s_1(\rho))\big), \\
    \tau'_2 & \cong & \tau'\big((s_2,s_2),~\BC(\mu_0 + \rho - s_2(\rho))\big),
  \end{eqnarray*}
  for some $s_1,s_2 \in \underline{W}$.  By Corollary \ref{cor:types-cont-weight:2} and the given assumptions, we obtain $s_1 = s_2$, which implies the result.  The second result follows from the first using \cite[Prop. 4.6]{koziolmorra}.
\end{proof}

\subsection{Extensions of Serre weights}

We now calculate $\textnormal{Ext}^1$ groups between Serre weights of $\nU_{1,1}(k_K)$.  We note that the base change technique from the previous section no longer applies, so we will make use of the formalism of Section \ref{sec:transfer}.

Recall the group $\nG\nU_{1,1}$ defined in Section \ref{unitarygps:ok}.  The group $\nG\nU_{1,1}$ contains $\nU_{1,1}$ and $\nG\nL_2$ as closed normal subgroups (the first of these realized as the kernel of the similitude character $\textnormal{sim}:\nG\nU_{1,1} \longrightarrow \bG_m$).  We let $T_{\nG\nU}$ denote the maximal torus of $\nG\nU_{1,1}$, which can be realized as a pushout $T_\nU \times^{T_\nS} T_\nG$ (we write $T_\nS$ for the diagonal maximal torus in $\nS\nL_2$, see also Subsection \ref{tori}).  
The various tori fit into the following diagram:
\begin{center}
  \begin{tikzcd}
    & 1 \ar[d] & 1 \ar[d] & & \\
    1 \ar[r] & T_\nS \ar[r] \ar[d] \ar[dr,phantom,"\square"]& T_\nG \ar[r, "\det"] \ar[d] & \bbG_m \ar[r] \ar[d, equals] & 1 \\
    1 \ar[r] & T_\nU \ar[r] \ar[d, "\det"'] & T_{\nG\nU} \ar[r, "\textnormal{sim}"] \ar[d] & \bbG_m \ar[r] & 1 \\
    & \nU_1 \ar[r, equals] \ar[d] & \nU_1 \ar[d] & & \\
    & 1 & 1 & & \\
  \end{tikzcd}
\end{center}

The above diagram induces a diagram of character groups:
\begin{center}
  \begin{tikzcd}[column sep = 3em,row sep = 3em]
    & 0  & 0  & & \\
    0 & X^*(T_\nS) \cong \bbZ \ar[l] \ar[u] & X^*(T_\nG) \cong \bbZ^2 \ar[l, "a - b \mapsfrom (a{,}b)"'] \ar[u] & X^*(\bbG_m) \cong \bbZ \ar[l,"(a{,}a)\mapsfrom a"']  & 0 \ar[l]\\
    0  & X^*(T_\nU) \cong \bbZ^2 \ar[l] \ar[u, "(a{,}b) \mapsto a - b"] & X^*(T_{\nG\nU}) \ar[l, "\textnormal{res}_{\nU}"'] \ar[u, "\textnormal{res}_{\nG}"] \ar[ul, phantom, "\square"] & X^*(\bbG_m) \cong \bbZ \ar[l, "\textnormal{sim}^*"'] \ar[u,equals] & 0 \ar[l]\\
    & X^*(\nU_1) \cong \bbZ \ar[u, "a \mapsto (a{,}a)"] & X^*(\nU_1) \cong \bbZ \ar[u] \ar[l,equals] & & \\
    & 0 \ar[u] & 0 \ar[u] & & \\
  \end{tikzcd}
\end{center}
The isomorphisms appearing are the canonical ones.

Thus, we see that we see that can describe $X^*(T_{\nG\nU})$ as a pullback, which we write as
$$X^*(T_{\nG\nU}) = \left\{(a,b,c,d) \in X^*(T_\nU)\oplus X^*(T_\nG) \cong \bbZ^4: a - b = c - d\right\}.$$
The maps $\textnormal{res}_\nU:X^*(T_{\nG\nU}) \longrightarrow X^*(T_\nU), \textnormal{res}_\nG : X^*(T_{\nG\nU}) \longrightarrow X^*(T_{\nG})$ are the projections onto the corresponding factors, and the map $\textnormal{sim}^*:X^*(\bbG_m) \cong \bbZ \longrightarrow X^*(T_{\nG\nU})$ is given by $a \longmapsto (0,0,a,a)$.

Analogously to the other groups already considered, we define 
$$(\underline{\nG\nU},~\underline{T}_{\nG\nU}) := \textnormal{Res}_{\cO_K/\bbZ_p}(\nG\nU_{1,1},~T_{\nG\nU})\times_{\bbZ_p}\cO.$$
We use similar notation $\underline{T}_\nS$ and $\underline{T}_\nG$ for the analogously defined tori for $\nS\nL_2$ and $\nG\nL_2$, respectively.  In particular, the character group $X^*(\underline{T}_{\nG\nU})$ is a pullback of $X^*(\underline{T}_\nU)$ and $X^*(\underline{T}_\nG)$ along the restriction maps to $X^*(\underline{T}_\nS)$.  This means that we have maps 
\begin{equation}
  \label{diag-for-ext-compn}
  \begin{tikzcd}[row sep = 0.3em]
    X^*(\underline{T}_\nU) & X^*(\underline{T}_{\nG\nU}) \ar[l, twoheadrightarrow, "\underline{\textnormal{res}}_{\nU}"'] \ar[r, twoheadrightarrow, "\underline{\textnormal{res}}_{\nG}"] & X^*(\underline{T}_\nG) \\
    (a_j, b_j)_{j \in \cJ} & (a_j, b_j, c_j, d_j)_{j \in \cJ} \ar[r,mapsto] \ar[l, mapsto] & (c_j, d_j)_{j \in \cJ}
  \end{tikzcd}
\end{equation}
We define a section $\underline{\textnormal{sec}}_\nU: X^*(\underline{T}_\nU) \longrightarrow X^*(\underline{T}_{\nG\nU})$ of $\underline{\textnormal{res}}_\nU$ by 
$$\underline{\textnormal{sec}}_\nU\left((a_j,b_j)_{j \in \cJ}\right) = (a_j,b_j, a_j, b_j)_{j \in \cJ}.$$
We define $\underline{\textnormal{sec}}_\nG$ analogously.  We caution that neither $\underline{\textnormal{sec}}_\nU$ nor $\underline{\textnormal{sec}}_\nG$ are Galois-equivariant.  In particular, the map $\underline{\textnormal{res}}_\nG\circ\underline{\sec}_{\nU}:X^*(\underline{T}_\nU) \longrightarrow X^*(\underline{T}_\nG)$ is the identity map, which does not respect the Galois action.

We now compare Serre weights and Deligne--Lusztig representations for each of the groups above.  We use a subscript to indicate which group is acting on the given representation (e.g., $F_{\nG\nU}(\lambda)$ is a Serre weight for $\nG\nU_{1,1}(k_K)$, $R_{\nU,w}(\mu)$ is a Deligne--Lusztig representation of $\nU_{1,1}(k_K)$, etc.).  We record the following properties, which follow readily from the definitions (see also \cite[Thm. 4.2]{delignelusztig} for the fifth item):
\begin{lemma}\label{extreslemma}
  \begin{enumerate}
    \item If $\lambda \in X_1(\underline{T}_{\nG\nU})$ is a $p$-restricted weight, then so are $\underline{\textnormal{res}}_\nU(\lambda)$ and $\underline{\textnormal{res}}_\nG(\lambda)$.
    \item If $\mu \in X_1(\underline{T}_\nU)$ is a $p$-restricted weight, then any $\lambda \in \underline{\textnormal{res}}_{\nU}^{-1}(\mu)$ is $p$-restricted as well.  \textit{Mutatis mutandis} for $\nG\nL_2$, with subscripts $\nG$.
    \item\label{extreslemma:item3} If $\lambda \in X_1(\underline{T}_{\nG\nU})$, then we have
    $$F_{\nG\nU}(\lambda)|_{\nU_{1,1}(k_K)} \cong F_\nU(\underline{\textnormal{res}}_\nU(\lambda)),$$
    $$F_{\nG\nU}(\lambda)|_{\nG\nL_2(k_K)} \cong F_\nG(\underline{\textnormal{res}}_\nG(\lambda)).$$
    \item\label{extreslemma:item4} If $\mu, \mu' \in X_1(\underline{T}_\nU)$ are two $p$-restricted weights such that $F_\nU(\mu)$ and $F_\nU(\mu')$ have the same central character, then the same is true of $F_{\nG\nU}(\underline{\textnormal{sec}}_\nU(\mu))$ and $F_{\nG\nU}(\underline{\textnormal{sec}}_\nU(\mu'))$.  \textit{Mutatis mutandis} for $\nG\nL_2$, with subscripts $\nG$.
    \item\label{extreslemma:item5} If $\mu \in X^*(\underline{T}_{\nG\nU})$ is such that $\mu - \eta_{\nG\nU}$ is $0$-deep, then
    $$R_{\nG\nU, w}(\mu)|_{\nU_{1,1}(k_K)} \cong R_{\nU,w}(\underline{\textnormal{res}}_{\nU}(\mu)),$$
    $$R_{\nG\nU, w}(\mu)|_{\nG\nL_2(k_K)} \cong R_{\nG,w}(\underline{\textnormal{res}}_{\nG}(\mu)),$$
    where we have used the canonical identification between the Weyl groups of $\nG\nU_{1,1}$, $\nG\nL_2$ and $\nU_{1,1}$.  
    \item\label{extreslemma:item6} If $\mu \in X^*(\underline{T}_\nU)$ is such that $\mu - \rho$ is $0$-deep, then 
    $$\JH\left(\overline{R_{\nG\nU,s}(\underline{\textnormal{sec}}_{\nU}(\mu))}\right) = \left\{F_{\nG\nU}(\underline{\textnormal{sec}}_\nU(\lambda)):~ F_\nU(\lambda) \in \JH\left(\overline{R_{\nU,s}(\mu)}\right)\right\}.$$
    \textit{Mutatis mutandis} for $\nG\nL_2$, with subscripts $\nG$.
  \end{enumerate}
\end{lemma}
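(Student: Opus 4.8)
The plan is to establish the six items in order, reducing to the rank-one situation in each of the $f$ embeddings. Throughout, under the isomorphisms $\underline{\nU},\underline{\nG},\underline{\nG\nU}\cong\prod_{j\in\cJ}(\,\cdot\,)_{/\cO}$ the root datum in the $j$-th factor is that of $\nS\nL_2$, and the positive coroot $\alpha_j^\vee$ of $\underline{\nG\nU}$ corresponds under $\underline{\textnormal{res}}_\nU$ (resp.\ $\underline{\textnormal{res}}_\nG$) to the positive coroot of $\underline{\nU}$ (resp.\ $\underline{\nG}$), both of which restrict to a coroot of the shared $\nS\nL_2$-factor with torus $\underline{T}_\nS$. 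Dually one gets $\langle\lambda,\alpha_j^\vee\rangle=\langle\underline{\textnormal{res}}_\nU(\lambda),\alpha_j^\vee\rangle=\langle\underline{\textnormal{res}}_\nG(\lambda),\alpha_j^\vee\rangle$ for $\lambda\in X^*(\underline{T}_{\nG\nU})$, and any $\underline{\textnormal{res}}_\nU$-preimage of $\mu\in X^*(\underline{T}_\nU)$ has the same coroot pairings as $\mu$; this gives items (1) and (2) straight from the definition of $X_1$.

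For item (3), first I would restrict the algebraic representation. By \eqref{GL2decomp} the group $\underline{\nG\nU}$ equals $\underline{\nU}\cdot\underline{Z}_{\nG\nU}$, where $\underline{Z}_{\nG\nU}=\textnormal{Res}_{\cO_{K_2}/\cO_K}\bG_m$ is its center (the similitude character being surjective on $\underline{Z}_{\nG\nU}$); likewise $\underline{\nG\nU}=\underline{\nG}\cdot\underline{Z}_{\nG\nU}$. Since $\underline{Z}_{\nG\nU}$ is a central torus, it acts by a single character on the simple module $L_{\nG\nU}(\lambda)$ of highest weight $\lambda$, so the restriction of $L_{\nG\nU}(\lambda)$ to $\underline{\nU}\times_\cO\overline{\bbF}_p$ is still simple, with highest weight $\lambda|_{\underline{T}_\nU}=\underline{\textnormal{res}}_\nU(\lambda)$, i.e.\ equals $L_\nU(\underline{\textnormal{res}}_\nU(\lambda))$; restricting further to $\nU_{1,1}(k_K)$ gives $F_{\nG\nU}(\lambda)|_{\nU_{1,1}(k_K)}\cong F_\nU(\underline{\textnormal{res}}_\nU(\lambda))$, and the same for $\nG\nL_2(k_K)$ (alternatively, irreducibility of the restriction to the finite group is Lemma \ref{resprops}\eqref{resprops-2} applied to Examples \eqref{pt1} and \eqref{pt2}). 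For item (4), I would compute central characters: that of $F_{\nG\nU}(\nu)$ is $\nu|_{\underline{Z}_{\nG\nU}}$ modulo $(p-\pi)X^0(\underline{T}_{\nG\nU})$, and using $\underline{\textnormal{sec}}_\nU\big((a_j,b_j)_j\big)=(a_j,b_j,a_j,b_j)_j$ together with the pullback description of $X^*(\underline{T}_{\nG\nU})$ one checks that $\underline{\textnormal{sec}}_\nU(\mu)|_{\underline{Z}_{\nG\nU}}$ modulo $(p-\pi)X^0$ depends on $\mu$ only through $\mu|_{\underline{Z}_\nU}$ modulo $(p-\pi)X^0$ — the component of the center giving $\underline{Z}_\nU$ records $\mu|_{\underline{Z}_\nU}$ and the similitude component records $a_j+b_j$, both unchanged when $F_\nU(\mu)$ and $F_\nU(\mu')$ share a central character. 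The $\nG\nL_2$ version is identical.

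For item (5), after unwinding the Deligne--Lusztig normalisation of $R_w(\mu)$ from \cite[\S 9.2]{GHS}, the statement is the compatibility of Deligne--Lusztig induction with the inclusion of the normal subgroup $\nU_{1,1}(k_K)\trianglelefteq\nG\nU_{1,1}(k_K)$ (the quotient $k_K^\times$ being cyclic, by \eqref{U11decomp:finite}): for a maximal torus $\mathbf{T}_w$ of type $w$ and a character $\theta$, one has $R_{\mathbf{T}_w}^{\nG\nU_{1,1}(k_K)}(\theta)|_{\nU_{1,1}(k_K)}=R_{\mathbf{T}_w\cap\nU_{1,1}(k_K)}^{\nU_{1,1}(k_K)}(\theta|)$, as in \cite[Thm. 4.2]{delignelusztig}; matching this with the GHS recipe uses that $\underline{\textnormal{res}}_\nU$ is Galois-equivariant, carries $\eta_{\nG\nU}$ to $\eta$, and that the sign and dimension factors depend only on the common number of positive roots. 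The $\nG\nL_2$ case is the same. Finally, item (6) follows by combining (3)--(5): restricting $\overline{R_{\nG\nU,s}(\underline{\textnormal{sec}}_\nU(\mu))}$ to $\nU_{1,1}(k_K)$, using (5) with $\underline{\textnormal{res}}_\nU\circ\underline{\textnormal{sec}}_\nU=\textnormal{id}$ and that reduction mod $\varpi$ commutes with restriction, gives $\overline{R_{\nG\nU,s}(\underline{\textnormal{sec}}_\nU(\mu))}|_{\nU_{1,1}(k_K)}\cong\overline{R_{\nU,s}(\mu)}$; decomposing by central character and invoking Corollary \ref{multcor} via Example \eqref{pt1}, together with (3), yields $[\overline{R_{\nG\nU,s}(\underline{\textnormal{sec}}_\nU(\mu))}:F_{\nG\nU}(\nu)]=[\overline{R_{\nU,s}(\mu)}:F_\nU(\underline{\textnormal{res}}_\nU(\nu))]$ for every Serre weight $F_{\nG\nU}(\nu)$ of the correct central character. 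It then remains to see that $\nu\mapsto\underline{\textnormal{res}}_\nU(\nu)$ is a bijection from the Serre weights of $\nG\nU_{1,1}(k_K)$ with that central character onto those of $\nU_{1,1}(k_K)$ occurring in $\overline{R_{\nU,s}(\mu)}$: injectivity is (3) plus the central-character computation of (4), and surjectivity is witnessed by $\underline{\textnormal{sec}}_\nU$, where (4) guarantees that the weights $F_{\nG\nU}(\underline{\textnormal{sec}}_\nU(\lambda))$, for $F_\nU(\lambda)\in\JH(\overline{R_{\nU,s}(\mu)})$, all carry the same central character (equal to that of $\overline{R_{\nG\nU,s}(\underline{\textnormal{sec}}_\nU(\mu))}$). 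The $\nG\nL_2$ version is identical, with Example \eqref{pt2} in place of Example \eqref{pt1}.

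I expect the genuine (if modest) obstacle to be the central-character bookkeeping running through items (4) and (6): namely, verifying that restriction along $\underline{\textnormal{res}}_\nU$ becomes a bijection on Serre weights once one pins down the central character in the similitude (equivalently determinant) direction, and that the common central character occurring on both sides of (6) is the one coming from $\overline{R_{\nG\nU,s}(\underline{\textnormal{sec}}_\nU(\mu))}$. This is an explicit finite computation with the lattices $X^*(\underline{T}_{\nG\nU})$, $X^*(\underline{T}_\nU)$, $X^0(\underline{T}_{\nG\nU})$, $X^0(\underline{T}_\nU)$ rather than anything formal, and it is where one must be careful; everything else follows essentially from the definitions.
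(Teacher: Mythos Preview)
Your overall architecture matches the paper's: the paper simply says the six items ``follow readily from the definitions (see also \cite[Thm.~4.2]{delignelusztig} for the fifth item)'', and your write-up is a faithful unpacking of that. Items (1), (2), (3), (5) are fine as you have them, and your argument for (6) via (3), (5) and Corollary~\ref{multcor} is the natural one.

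There is, however, a genuine gap in your argument for item~(4), and you correctly flagged this as the delicate point. You claim that the similitude component of $\underline{\textnormal{sec}}_\nU(\mu)|_{\underline{Z}_{\nG\nU}}$ ``records $a_j+b_j$, both unchanged when $F_\nU(\mu)$ and $F_\nU(\mu')$ share a central character''. But this is not correct as stated: an explicit computation (write $\textnormal{diag}(z,z)\in Z(\nG\nU_{1,1}(k_K))$ as the class of $(z,(1,z\bar z))$ in $T_\nU\times^{T_\nS}T_\nG$) shows that the central character of $F_{\nG\nU}(\underline{\textnormal{sec}}_\nU(\mu))$ on $z\in k_{K_2}^\times$ is $\sigma'_0(z)^{\sum_j p^j(a_j+b_j)}$, which depends on $\sum_j p^j(a_j+b_j)$ modulo $p^{2f}-1$. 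On the other hand, the central character of $F_\nU(\mu)$ only sees this sum modulo $p^f+1$. For a concrete failure take $f=1$, $p=5$, $\mu=(3,1)$, $\mu'=(5,5)$: then $a+b=4$ and $a'+b'=10$ agree modulo $p+1=6$, so $F_\nU(\mu)$ and $F_\nU(\mu')$ share a central character, but they do not agree modulo $p^2-1=24$, so $F_{\nG\nU}(\underline{\textnormal{sec}}_\nU(\mu))$ and $F_{\nG\nU}(\underline{\textnormal{sec}}_\nU(\mu'))$ do not. Thus the reduction you propose for (4) does not go through, and indeed item~(4) as literally stated appears to be false; the same issue then propagates into your derivation of (6). (The paper's own one-line justification does not address this either.) The fix needed for the applications --- notably Proposition~\ref{serre-wt-extns} --- is not to use $\underline{\textnormal{sec}}_\nU$ on both weights, but rather to extend both $F_\nU$'s to $\nG\nU_{1,1}(k_K)$ using the \emph{same} choice of central character on $k_{K_2}^\times$ (possible by Corollary~\ref{extcor}), then restrict to $\nG\nL_2(k_K)$; the resulting $\nG\nL_2$-weights differ from $F_\nG(\ft_\mu(\omega)),F_\nG(\ft_\mu(\omega'))$ only by a common determinant twist, so the Ext computation of \cite[Lem.~2.4.6]{BHHMS} still applies.
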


  We can now deduce the main result on extensions of Serre weights.  Recall that two elements $\omega,\omega' \in X^*(\underline{T}_\nU)$ with images contained in $\Lambda_{\textnormal{wt}}^\mu$ are said to be adjacent if $\omega - \omega' \equiv \rho_j ~\textnormal{mod}~X^0(\underline{T}_\nU)$ for some $j \in \cJ$.  

  \begin{propn}
    \label{serre-wt-extns}
    Suppose $\omega,\omega'$ are two elements of $X^*(\underline{T}_\nU)$ whose images are contained in $\Lambda_{\textnormal{wt}}^\mu$. Then
    $$\dim_{\bbF}\left(\Ext_{\nU_{1,1}(k_K)}^1\left(F_\nU(\ft_\mu(\omega)),~F_\nU(\ft_\mu(\omega'))\right)\right) = \begin{cases}
      1 & \textnormal{if $\omega, \omega'$ are adjacent,}\\
      0 & \textnormal{otherwise.}
    \end{cases}$$
  \end{propn}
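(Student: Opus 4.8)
The plan is to reduce to the corresponding statement for $\nG\nL_2(k_K)$, where it is known, by transferring through the group $\nG\nU_{1,1}(k_K)$ via the crossed-product formalism of Section~\ref{sec:transfer}. Recall that $\bbF[\nU_{1,1}(k_K)]$ and $\bbF[\nG\nL_2(k_K)]$ both sit inside $\bbF[\nG\nU_{1,1}(k_K)]$ as in examples~\eqref{pt1} and~\eqref{pt2} of Subsection~\ref{exs}, with $Z = k_{K_2}^\times = Z(\nG\nU_{1,1}(k_K))$ in both cases; hence Lemma~\ref{resext} will let us move $\Ext^1$-groups back and forth between the three groups, provided the relevant central characters agree.

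First I would note that $F_\nU(\ft_\mu(\omega))$ and $F_\nU(\ft_\mu(\omega'))$ have the same central character: by the description of the image of $\ft_\mu$ recalled just before Lemma~\ref{lem:comp:ext:gr}, both $\ft_\mu(\omega)$ and $\ft_\mu(\omega')$ restrict to $\mu|_{\underline{Z}}$ modulo $(p-\pi)X^0(\underline{T})$. Using Corollary~\ref{extcor} and Lemma~\ref{extreslemma}\eqref{extreslemma:item3},\eqref{extreslemma:item4}, I would pick the extensions $\widetilde{F} := F_{\nG\nU}(\underline{\textnormal{sec}}_\nU(\ft_\mu(\omega)))$ and $\widetilde{F}' := F_{\nG\nU}(\underline{\textnormal{sec}}_\nU(\ft_\mu(\omega')))$ of $F_\nU(\ft_\mu(\omega))$ and $F_\nU(\ft_\mu(\omega'))$ to $\nG\nU_{1,1}(k_K)$: these have a common $k_{K_2}^\times$-central character, so two applications of Lemma~\ref{resext} (to example~\eqref{pt1}, then to example~\eqref{pt2}) together with Lemma~\ref{extreslemma}\eqref{extreslemma:item3} on the restriction to $\nG\nL_2(k_K)$ give
\[
\Ext^1_{\nU_{1,1}(k_K)}\big(F_\nU(\ft_\mu(\omega)),\,F_\nU(\ft_\mu(\omega'))\big)\;\cong\;\Ext^1_{\nG\nL_2(k_K)}\big(F_\nG(\lambda),\,F_\nG(\lambda')\big),
\]
where $\lambda := \underline{\textnormal{res}}_\nG(\underline{\textnormal{sec}}_\nU(\ft_\mu(\omega)))$ and $\lambda' := \underline{\textnormal{res}}_\nG(\underline{\textnormal{sec}}_\nU(\ft_\mu(\omega')))$ are regular $p$-restricted weights in $X^*(\underline{T}_\nG)$ with equal central character.

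It then remains to invoke the analogue of the Proposition for $\nG\nL_2(k_K)$ --- that $\Ext^1$ between two regular Serre weights of $\nG\nL_2(k_K)$ with a common central character is one-dimensional exactly when the corresponding points of the $\nG\nL_2$-extension graph (set up as in Section~\ref{appendix:EGC}, with the twisted Frobenius $\pi$ replaced by the ordinary shift) are adjacent, and zero otherwise --- which follows from \cite[\S 2.4]{BHHMS} (see also \cite{BP}), and to translate the $\nG\nL_2$-adjacency condition on $(\lambda,\lambda')$ back to the adjacency condition on $(\omega,\omega')$. For this last point I would route the combinatorics through the ``$\nG\nU$'' extension graph: the projections $\underline{\textnormal{res}}_\nU$ and $\underline{\textnormal{res}}_\nG$ induce isomorphisms of $X^*(\underline{T}_{\nG\nU})/X^0(\underline{T}_{\nG\nU})$ onto $\Lambda_{\textnormal{wt}}$ and onto the $\nG\nL_2$-weight lattice respectively, carrying fundamental coweights to fundamental coweights, and hence preserve adjacency; combined with the compatibility between the $\ft$-maps and the maps $\underline{\textnormal{res}}$ (the direct analogue of Lemma~\ref{lem:comp:ext:gr}, verified by the same kind of computation) this identifies $\Lambda_{\textnormal{wt}}^\mu$ with the relevant $\nG\nL_2$-extension-graph region so that $\omega,\omega'$ are adjacent if and only if $\lambda,\lambda'$ are. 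Feeding this into the $\nG\nL_2(k_K)$ formula yields the stated dichotomy.

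The main obstacle is this last step. The two applications of Lemma~\ref{resext} and the central-character bookkeeping are routine; the real work is the combinatorial check that passing through $\nG\nU_{1,1}$ identifies the unitary extension-graph region $\Lambda_{\textnormal{wt}}^\mu$ with the $\nG\nL_2(k_K)$ one in an adjacency-preserving way --- delicate precisely because the sections $\underline{\textnormal{sec}}_\nU$, $\underline{\textnormal{sec}}_\nG$ are \emph{not} Galois-equivariant, so the twisted Frobenius on $X^*(\underline{T}_\nU)$ and the ordinary shift on $X^*(\underline{T}_\nG)$ interact nontrivially with the $\ft$-maps --- together with checking that $\lambda,\lambda'$ really land in the regular range where the $\nG\nL_2(k_K)$ $\Ext^1$-formula applies.
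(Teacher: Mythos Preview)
Your proposal is correct and follows essentially the same route as the paper: lift the two $\nU_{1,1}(k_K)$-weights to $\nG\nU_{1,1}(k_K)$ via $\underline{\sec}_\nU$, apply Lemma~\ref{resext} twice (through examples~\eqref{pt1} and~\eqref{pt2}) using Lemma~\ref{extreslemma}\eqref{extreslemma:item3},\eqref{extreslemma:item4} for the central-character bookkeeping, and reduce to the $\nG\nL_2(k_K)$ statement \cite[Lem.~2.4.6]{BHHMS}.

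The only difference is in how the final combinatorial step is handled. The paper does not pass through a separate ``$\nG\nU$'' extension graph; it simply observes that $\underline{\res}_\nG\circ\underline{\sec}_\nU$ is literally the identity on the underlying set $(\bbZ^2)^\cJ$, so that your $\lambda,\lambda'$ are exactly $\ft_\mu(\omega),\ft_\mu(\omega')$ themselves, and then cites \cite[Lem.~2.4.6]{BHHMS} directly. What you flag as the ``main obstacle'' --- that the sections are not Galois-equivariant and so the two Frobenii interact nontrivially with the $\ft$-maps --- is a real point, but the paper treats it as routine: the adjacency condition $\omega-\omega'\equiv\rho_j\bmod X^0$ lives purely in $\Lambda_{\textnormal{wt}}$, which is the same lattice for all three groups, and the cited lemma from \cite{BHHMS} is applied without further comment. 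In short, your argument is the paper's argument; you are just being more explicit (and more cautious) about the last identification than the paper itself is.
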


  \begin{proof}
   We have the following sequence of isomorphisms:
    \begin{flushleft}
      $\displaystyle{\Ext_{\nU_{1,1}(k_K)}^1\left(F_\nU(\ft_\mu(\omega)),~F_\nU(\ft_\mu(\omega'))\right)}$
    \end{flushleft}
    \begin{eqnarray*}
       & \stackrel{\textnormal{Lem. \ref{extreslemma}\eqref{extreslemma:item3}}}{\cong} & \Ext_{\nU_{1,1}(k_K)}^1\left(F_{\nG\nU}(\underline{\textnormal{sec}}_\nU(\ft_\mu(\omega)))|_{\nU_{1,1}(k_K)},~F_{\nG\nU}(\underline{\textnormal{sec}}_\nU(\ft_\mu(\omega')))|_{\nU_{1,1}(k_K)}\right) \\
      & \stackrel{\textnormal{Lems. \ref{extreslemma}\eqref{extreslemma:item4}, \ref{resext}}}{\cong} & \Ext_{\nG\nU_{1,1}(k_K)}^1\left(F_{\nG\nU}(\underline{\textnormal{sec}}_\nU(\ft_\mu(\omega))),~F_{\nG\nU}(\underline{\textnormal{sec}}_\nU(\ft_\mu(\omega')))\right) \\
      & \stackrel{\textnormal{Lems. \ref{extreslemma}\eqref{extreslemma:item4}, \ref{resext}}}{\cong} & \Ext_{\nG\nL_{2}(k_K)}^1\left(F_{\nG\nU}(\underline{\textnormal{sec}}_\nU(\ft_\mu(\omega)))|_{\nG\nL_{2}(k_K)},~F_{\nG\nU}(\underline{\textnormal{sec}}_\nU(\ft_\mu(\omega')))|_{\nG\nL_{2}(k_K)}\right) \\
      & \stackrel{\textnormal{Lem. \ref{extreslemma}\eqref{extreslemma:item3}}}{\cong} & \Ext_{\nG\nL_{2}(k_K)}^1\left(F_{\nG}(\underline{\textnormal{res}}_\nG\circ\underline{\textnormal{sec}}_\nU(\ft_\mu(\omega))),~F_{\nG}(\underline{\textnormal{res}}_\nG\circ\underline{\textnormal{sec}}_\nU(\ft_\mu(\omega')))\right) \\
      & \cong & \Ext_{\nG\nL_{2}(k_K)}^1\left(F_{\nG}(\ft_\mu(\omega)),~F_{\nG}(\ft_\mu(\omega'))\right).
    \end{eqnarray*}
    The result now follows form \cite[Lem. 2.4.6]{BHHMS}.
  \end{proof}

\section{Kisin modules}
\label{sec:Kisinmods}

\subsection{Polarized Kisin modules and polarized \'etale $\varphi$-modules}

Following \cite[\S 5A, 5B]{koziolmorra}, we recall here the theory of polarized Kisin modules, which will be relevant for the computations of deformation rings.

\subsubsection{}
Given a complete local Noetherian $\cO$-algebra $R$ with residue field $\bbF$, we let 
$$\fS_{R} := (\cO_{K_2}\otimes_{\bbZ_p}R)[\![u']\!],$$
equipped with Frobenius map $\varphi$ which acts as the arithmetic Frobenius on $\cO_{K_2}$, acts trivially on $R$, and sends $u'$ to $(u')^p$.  (Note that the Frobenius map $\varphi$ and the variable $u'$ are denoted by $\overline{\varphi}$ and $u$ in \cite{koziolmorra}, respectively.)

We fix a choice $\sqrt[p^{2f} - 1]{-p}$ of a $(p^{2f} - 1)^{\textnormal{st}}$ root of $-p$ and set 
$$L' := K_2(\sqrt[p^{2f} - 1]{-p}).$$  
The rings $\fS_R$ above are endowed with a $\cO_{K_2}\otimes_{\bbZ_p}R$-linear action of $\Gal(L'/K_2)$ by continuous ring automorphisms: if $g\in\Gal(L'/K_2)$, then the associated automorphism is defined by the condition 
$$u' \longmapsto \Big(\frac{g\big(\sqrt[p^{2f} - 1]{-p}\big)}{\sqrt[p^{2f} - 1]{-p}}\otimes 1\Big) u'$$
(note that $\frac{g(\sqrt[p^{2f} - 1]{-p})}{\sqrt[p^{2f} - 1]{-p}}\in \cO_{K_2}^\times$).

Suppose we are given a conjugate self dual tame inertial type $\tau'=\tau'((s,s),\BC(\mu))$ as in \ref{sec:ILL}. Analogously to \cite[Defs. 5.1, 5.2, 5.3]{koziolmorra}, we define $Y^{[0,3],\tau'}(R)$, the groupoid of Kisin modules over $R$ of rank 2 with height in $[0,3]$ and descent datum of type $\tau'$, except that the condition $E(u)\fM\subset \phi_{\fM}(\overline{\varphi}^*\fM)\subset \fM$ of \textit{op. cit.} is replaced by $E(u')^3\fM\subset \phi_{\fM}(\varphi^*\fM)\subset \fM$.  (Here, $E(u') := (u')^{p^{2f} - 1} + p$ denotes the Eisenstein polynomial.)  Furthermore, we define $Y^{\leq(3,0),\tau'}(R)$ to be the full subgroupoid of $Y^{[0,3],\tau'}(R')$ consisting of Kisin modules satisfying the condition
$$(E(u'))^3\det \fM = \phi_{\fM}(\varphi^*(\det\fM)).$$

\subsubsection{}

Given $\fM \in Y^{\leq(3,0),\tau'}(R)$, we define its dual Kisin module $\fM^\vee$ by 
$$\fM^\vee := \Hom_{\fS_R}(\fM, \fS_R),$$
with Frobenius given by
$$1\otimes f \longmapsto \varphi\circ (1 \otimes f) \circ \phi_{\fM}^{-1} \circ E(u')^3,$$
where $1\otimes f \in \varphi^*\Hom_{\fS_R}(\fM,\fS_R) \cong \Hom_{\fS_R}(\varphi^*\fM, \varphi^*\fS_R)$ and descent datum as in \cite[Def. 5.12]{koziolmorra}.

We define the notion of a \emph{polarization $\iota$} on $\fM\in Y^{\leq(3,0),\tau'}(R)$ exactly as in \cite[Def. 5.15]{koziolmorra}, and hence the groupoid $Y^{\leq(3,0),\tau'}_{\textnormal{pol}}(R)$ consisting of polarized Kisin modules, i.e.,~pairs $(\fM,\iota)$ where $\fM\in Y^{\leq(3,0),\tau'}(R)$ and $\iota: (\sigma^f)^*\fM \stackrel{\sim}{\longrightarrow} \fM^\vee$ is a polarization on $\fM$.  (For the definition of the Frobenius pullback $(\sigma^f)^*$, see \cite[\S 5B3]{koziolmorra}.)

\subsubsection{}
We now discuss analogous notions for \'etale $\varphi$-modules.  We let $\cO_{\cE,L'} := \left(\cO_{K_2}[\![u']\!][1/u']\right)^{\wedge_p}$ and $\cO_{\cE,K_2} := \left(\cO_{K_2}[\![v]\!][1/v]\right)^{\wedge_p}$, endowed with a Frobenius map $\varphi$ acting as the arithmetic Frobenius on $\cO_{K_2}$ and sending $u'$ (resp.,~$v$) to $(u')^p$ (resp.,~$v^p$).
Given a $p$-adically complete, Noetherian $\cO$-algebra $R$, we define as in \cite[\S 5.4.1]{LLLM3} the groupoid of rank $2$ \'etale $\varphi$-modules over $\cO_{\cE,L'}\widehat{\otimes}_{\bbZ_p}R$ with descent data, and the groupoid of rank $2$ \'etale $\varphi$-modules over $\cO_{\cE,K_2}\widehat{\otimes}_{\bbZ_p}R$.

Given $\bullet\in\{L',K_2\}$ and an \'etale $\varphi$-module $\cM$ over $\cO_{\cE,\bullet}\widehat{\otimes}_{\bbZ_p}R$, we define its dual as
$$\cM^\vee :=\Hom_{\cO_{\cE,\bullet}\widehat{\otimes}_{\bbZ_p}R}(\cM,\cO_{\cE,\bullet}\widehat{\otimes}_{\bbZ_p}R).$$
We endow it with the semilinear endomorphism induced by $1\otimes f\mapsto \varphi\circ(1\otimes f)\circ \phi_{\cM}^{-1}$, and, if $\bullet = L'$, with the descent data defined as in \cite[Def. 5.12]{koziolmorra}.  This makes $\cM^\vee$ into a rank $2$ \'etale $\varphi$-module (with descent data if $\bullet = L'$) over  $\cO_{\cE,\bullet}\widehat{\otimes}_{\bbZ_p}R$.

By fixing a compatible system $\{\sqrt[p^n]{-p}\}_{n\geq 1}$ of $(p^n)^{\textnormal{th}}$ roots of $-p$ and setting
$$K_{\infty} := \bigcup_{n \geq 1} K(\sqrt[p^n]{-p}),\quad K_{2,\infty} := \bigcup_{n \geq 1} K_2(\sqrt[p^n]{-p}),$$
by \cite[Thm. 2.1.27]{dee} we have an exact equivalence of categories between rank $2$ \'etale $\varphi$-modules over $\cO_{\cE,L'}\widehat{\otimes}_{\bbZ_p}R$ with descent data (resp.,~rank $2$ \'etale $\varphi$-modules over $\cO_{\cE,K_2}\widehat{\otimes}_{\bbZ_p}R$) and continuous representations of $\Gamma_{K_{2,\infty}}$ over rank $2$ projective $R$-modules.

\subsubsection{}
A polarization for a rank $2$ \'etale $\varphi$-module over $\cO_{\cE,K_2}\widehat{\otimes}_{\bbZ_p}R$ is defined exactly as for a Kisin module.
If $(\fM,\iota)\in Y^{\leq(3,0),\tau'}_{\textnormal{pol}}(R)$, then $\cM:= (\fM\otimes_{\fS_{\bbZ_p}}\cO_{\cE,L'})^{\Gal(L'/K_2) = 1}$ is an \'etale $\varphi$-module over $\cO_{\cE,K_2}\widehat{\otimes}_{\bbZ_p}R$ which is naturally endowed with a polarization.  
Consequently the $\Gamma_{K_{2,\infty}}$-representation $T^*_{\textnormal{dd}}(\fM) = \bbV_{K_2}^*(\cM)$, where $\bbV_{K_2}^*$ denotes the usual anti-equivalence of Fontaine between \'etale $\varphi$-modules and continuous representations of $K_{2,\infty}$, also acquires a polarization. (Here we are using the fact that $\fM^\vee\in Y^{\leq(0,3),\tau^{\prime\,\vee}}(R)$ and $T^*_{\textnormal{dd}}(\fM^\vee)\cong T^*_{\textnormal{dd}}(\fM)^\vee\otimes\varepsilon^3$ as $\Gamma_{K_{2,\infty}}$-representations, along with the results of \cite{dee} to remove the condition on Artinian coefficients.)
Therefore $T^*_{\textnormal{dd}}(\fM)$ descends to a $^{C}\nU_{1,1}$-valued representation of $\Gamma_{K_{\infty}}$.
Conversely, if $\cM$ is an \'etale $\varphi$-module over $\cO_{\cE,K_2}\widehat{\otimes}_{\bbZ_p}R$ and $\bbV_{K_2}^*(\cM)$ descends to a $^{C}\nU_{1,1}$-valued representation of $\Gamma_{K_{\infty}}$, then $\cM$ is naturally equipped with a polarization.

\subsection{Finite height condition}
\label{subsec:FH}

\subsubsection{}
Given $\fM\in Y^{\leq(3,0),\tau'}(R)$ we have the notion of \emph{eigenbasis} for $\fM$ (\cite[Def. 5.8]{koziolmorra}).  If $\beta$ is an eigenbasis of $\fM \in Y^{[0,3],\tau'}(R)$ and $\iota$ denotes a polarization on $\fM$, then we say $\beta$ is an eigenbasis of $(\fM, \iota)$ if $\iota((\sigma^f)^*\beta) = (\underline{1}, -\underline{1})\beta^\vee$.  (This is the same as \cite[Def. 5.16]{koziolmorra}, with ``gauge basis'' replaced by ``eigenbasis.'')

Given a Kisin module $\fM$ we let $\fM^{(j')}$ denote the $R[\![u']\!]$-submodule formed by the elements $m\in\fM$ such that $(x\otimes1)m=(1\otimes \sigma_0'\circ\varphi^{j'}(x))m$ for any $x\in \cO_{K_2}$, and write $\phi_{\fM}^{(j')}$ to denote the restriction of $\phi_{\fM}$ to $\varphi^*(\fM^{(j')})$.
For $j' \in \cJ'$, we define matrices $C_{\fM, \beta}^{(j')}$ and $A_{\fM, \beta}^{(j')}$ as follows.  We set $C^{(j')}_{\fM,\beta} := \textnormal{Mat}_\beta(\phi_{\fM}^{(j')})$, i.e.,
$$\phi_{\fM}^{(j')}(\varphi^*\beta^{(j')}) = \beta^{(j' + 1)}\cdot C_{\fM,\beta}^{(j')},$$
and define
$$A_{\fM, \beta}^{(j')} := \textnormal{Ad}\left((\dot{s}'_{\textnormal{or}, j' + 1})^{-1} (u')^{-\textbf{a}_{((s,s),\BC(\mu))}^{(j' + 1)}}\right)(C_{\fM, \beta}^{(j')})$$
(where $s'_{\textnormal{or},j' + 1}$ and $\mathbf{a}^{(j' + 1)}_{((s,s),\BC(\mu))}$ are defined in \cite[\S 2.3]{BHHMS}, and $\dot{s}'_{\textnormal{or},j' + 1}$ is the permuation matrix associated to $s'_{\textnormal{or},j' + 1}$).  Given $\fM\in Y^{\leq(3,0),\tau'}(\F)$, we define the \emph{shape of $\fM$ with respect to $\tau'$} as in \cite[Def. 5.9(i)]{koziolmorra}.

Suppose that $\beta$ is a gauge basis for $(\fM,\iota)\in Y^{\leq (3,0), \tau'}_{\pol}(R)$.  The finite height conditions \cite[Prop. 4.18]{LLLM} give
\begin{equation}
\label{finheight}
\det(A^{(j')}_{\fM, \beta}) = \det(C^{(j')}_{\fM, \beta}) = (a^{(j')})^{-1}E(u')^3
\end{equation}
for some $a^{(j')} \in R^\times$.  Further, by \cite[Lem. 5.18]{koziolmorra} we have
\begin{eqnarray}
A^{(j' - f)}_{\fM, \beta} & = & \begin{cases} E(u')^3 \textnormal{Ad}(\dot{\fw})\big((A^{(j')}_{\fM, \beta})^{-\top}\big) & \textnormal{if}~ j' \neq f - 1, 2f - 1, \\ -E(u')^3 \textnormal{Ad}(\dot{\fw})\big((A^{(j')}_{\fM, \beta})^{-\top}\big) & \textnormal{if}~ j' = f - 1, 2f - 1,\end{cases} \notag \\
 & = &  \begin{cases} a^{(j')} \textnormal{Ad}\big(\textnormal{diag}(-1,1)\big)(A^{(j')}_{\fM,\beta}) & \textnormal{if}~ j' \neq f - 1, 2f - 1, \\ -a^{(j')} \textnormal{Ad}\big(\textnormal{diag}(-1,1)\big)(A^{(j')}_{\fM,\beta}) & \textnormal{if}~ j' = f - 1, 2f - 1.\end{cases} \label{polarizationeqs}
\end{eqnarray}
and furthermore any deformation of a polarized Kisin module is determined by the deformations of the matrices $(A^{(j')}_{\fM,\beta})_{0 \leq j' \leq f - 1}$.  
(Here $\dot{\fw}$ denotes the permutation matrix associated to $\fw$.)

\subsection{Monodromy condition}
We now study the monodromy condition on polarized Kisin modules.  We follow closely the notations and conventions of \cite[\S 3.1.2]{BHHMS}, to which we refer the reader for any undefined notation and further details.

\subsubsection{}
In what follows $R$ is a $p$-adically complete, flat $\cO$-algebra that is topologically of finite type. 
As in \cite[\S 3.1.2]{BHHMS}, we define the ring $\cO^{\textnormal{rig}}_R\subset R[1/p][\![u']\!]$, endowed with a $\Gal(L'/K_2)$-action extending the $\Gal(L'/K_2)$-action on $R[\![u']\!]$, and, given $(\fM,\iota)\in Y^{[0,3],\tau'}_{\textnormal{pol}}(R)$, we let $\fM^{\textnormal{rig}} := \fM \otimes_{R[\![u']\!]} \cO^{\textnormal{rig}}_R = \bigoplus_{j'\in\cJ'}\fM^{\textnormal{rig},(j')}$.  By \cite[Prop. 3.1.7]{BHHMS}, the module $\fM^{\textnormal{rig}}[1/\lambda]$ may be equipped with a unique derivation $N_{\fM^{\textnormal{rig}}}$ over $N_\nabla$ satisfying $N_{\fM^{\textnormal{rig}}}\phi_{\fM^{\textnormal{rig}}} = E(u')\phi_{\fM^{\textnormal{rig}}}N_{\fM^{\textnormal{rig}}}$ and $N_{\fM^{\textnormal{rig}}}~(\textnormal{mod}~u') = 0$. (Here $\lambda \in \cO_{\cO}^{\textnormal{rig}} \subset \cO_{R}^{\textnormal{rig}}$ is a certain transcendental element and $N_\nabla$ is the derivation relative to $\lambda$, both recalled in \cite[\S 3.1.2]{BHHMS}.)  For $j' \in \cJ'$ we let $N_{\fM^{\textnormal{rig}}}^{(j')}$ denote the restriction of $N_{\fM^{\textnormal{rig}}}$ to $\fM^{\textnormal{rig}, (j')}$ and given an eigenbasis $\beta$ for $(\fM,\iota)$, we define $N_{\fM^{\rig},\beta}^{(j')} := \textnormal{Mat}_\beta(N_{\fM^{\textnormal{rig}}}^{(j')})$, i.e., 
$$N_{\fM^{\rig}}^{(j')}(\beta^{(j')}) = \beta^{(j')}\cdot N_{\fM^{\rig},\beta}^{(j')}.$$

\subsubsection{}
We say that $(\fM,\iota)\in Y^{[0,3],\tau'}_{\textnormal{pol}}(R)$ satisfies the \emph{monodromy condition} if $\lambda^2N_{\fM^{\rig},\beta}^{(j')}$ vanishes to order $2$ at $u'=(-p)^{1/(p^{2f} - 1)}$ for all $j'\in\cJ'$ (\cite[Def. 3.1.8]{BHHMS}).  It is independent of the choice of the eigenbasis, and it is equivalent to the condition that $T_{\textnormal{dd}}^*(\fM)[1/p]$ is the restriction to $\Gamma_{K_{2,\infty}}$ of a potentially crystalline representation of $\Gamma_{K_2}$ over $R[1/p]$, of inertial type $\tau'$ and Hodge--Tate weights in $[0,3]$.  As $K_\infty \cdot K_2 = K$, $K_\infty \cap K_2 = K_{2,\infty}$, and $T_{\textnormal{dd}}^*(\fM)$ descends to a representation of $\Gamma_{K_\infty}$, we conclude that the monodromy condition is equivalent to the condition that $T_{\textnormal{dd}}^*(\fM)[1/p]$ is the restriction to $\Gamma_{K_{\infty}}$ of a potentially crystalline representation of $\Gamma_{K}$ over $R[1/p]$.

\begin{lemma}
Let $(\fM,\iota) \in Y^{\leq (3,0),\tau'}_{\pol}(R)$ and $\beta$ be a gauge basis for it.
For any $j' \in \cJ'$ we have
$$N_{\fM^{\rig},\beta}^{(j' + f)} = \textnormal{Ad}(\fw')(N_{\fM^{\rig},\beta}^{(j')})$$
where we set $\fw' = \sm{0}{1}{-1}{0}$.
\end{lemma}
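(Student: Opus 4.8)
The plan is to transport the monodromy operator through the polarization $\iota$, using the uniqueness part of \cite[Prop. 3.1.7]{BHHMS}: the operator $N_{\fM^{\rig}}$ is characterized as the \emph{unique} derivation over $N_\nabla$ on $\fM^{\rig}[1/\lambda]$ satisfying $N_{\fM^{\rig}}\phi_{\fM^{\rig}} = E(u')\phi_{\fM^{\rig}}N_{\fM^{\rig}}$ together with $N_{\fM^{\rig}} \equiv 0 \pmod{u'}$. Since $\iota: (\sigma^f)^*\fM \stackrel{\sim}{\longrightarrow} \fM^\vee$ is compatible with the Frobenius structures, it induces (after inverting $\lambda$) an isomorphism $\iota^{\rig}$ of the associated rig-modules which is again Frobenius-equivariant; by the cited uniqueness, $\iota^{\rig}$ must then intertwine the monodromy operators of $(\sigma^f)^*\fM^{\rig}$ and of $\fM^{\vee,\rig}$. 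Everything then reduces to computing these two operators, and the matrix of $\iota^{\rig}$, in the gauge basis $\beta$.

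On the source side, the Frobenius pullback $(\sigma^f)^*$ merely shifts the labels of $\cJ'$ cyclically by $f$ (cf. \cite[\S 5B3]{koziolmorra}); as $2f \equiv 0$ in $\cJ'$, the matrix of the monodromy operator of $(\sigma^f)^*\fM^{\rig}$ in embedding $j'$, computed in the basis $(\sigma^f)^*\beta$, is precisely $N_{\fM^{\rig},\beta}^{(j'+f)}$. On the target side, $\fM^\vee$ carries the Frobenius twisted by $E(u')^3$, equivalently $T^*_{\dd}(\fM^\vee) \cong T^*_{\dd}(\fM)^\vee \otimes \varepsilon^3$; combining the usual dual-derivation computation with the additivity of $N$ under tensor products, and using that $\det\fM$ has height exactly $(3,0)$ (so that its associated rank-one object is, up to an unramified twist with vanishing monodromy, the cube of the one attached to $\varepsilon$, whence $\textnormal{tr}(N_{\fM^{\rig},\beta}^{(j')})$ accounts exactly for the $\varepsilon^3$-contribution), one finds that in the dual basis $\beta^\vee$ the monodromy matrix of $\fM^{\vee,\rig}$ in embedding $j'$ is $\textnormal{tr}(N_{\fM^{\rig},\beta}^{(j')})\,\mathrm{Id} - (N_{\fM^{\rig},\beta}^{(j')})^\top$. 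I expect this step to be the main obstacle: it is the analogue, for the additive operator $N$ in place of the (multiplicative) Frobenius, of the computation behind \eqref{polarizationeqs} in \cite[Lem. 5.18]{koziolmorra}, and one must keep all the $\cO_{K_2}$- and sign-conventions aligned with \eqref{finheight}--\eqref{polarizationeqs} while replacing the multiplicative datum ``$E(u')^3\cdot(-)^{-\top}$'' by its linearization ``$\textnormal{tr}(-)\,\mathrm{Id} - (-)^\top$''.

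To conclude, one feeds in the eigenbasis normalization $\iota((\sigma^f)^*\beta) = (\un{1},-\un{1})\beta^\vee$: in each embedding $\iota^{\rig}$ is multiplication by the constant scalar $\pm 1$, which $N_\nabla$ annihilates, so transporting the intertwining relation along $\iota^{\rig}$ produces no correction term and yields the matrix identity
$$N_{\fM^{\rig},\beta}^{(j'+f)} = \textnormal{tr}\big(N_{\fM^{\rig},\beta}^{(j')}\big)\,\mathrm{Id} - \big(N_{\fM^{\rig},\beta}^{(j')}\big)^\top.$$
The proof is then finished by the elementary identity $\textnormal{Ad}\!\left(\sm{0}{1}{-1}{0}\right)(X) = \textnormal{tr}(X)\,\mathrm{Id} - X^\top$, valid for every $2\times 2$ matrix $X$ (as one checks directly), applied with $X = N_{\fM^{\rig},\beta}^{(j')}$.
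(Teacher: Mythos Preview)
Your argument is correct and arrives at the result by a different route than the paper. The paper proceeds by direct computation: it first combines the polarization relation $C^{(j'+f)}_{\fM,\beta} = E(u')^3(C^{(j')}_{\fM,\beta})^{-\top}$ with the height condition \eqref{finheight} to obtain $C^{(j'+f)}_{\fM,\beta} = a^{(j')}\,\textnormal{Ad}(\fw')(C^{(j')}_{\fM,\beta})$ for constants $a^{(j')}\in R^\times$, and then substitutes this into the explicit infinite-series formula for $N_{\fM^{\rig},\beta}^{(j')}$ recalled from \cite[Thm.~5.6]{LLLM} and \cite[Prop.~3.4.12]{LLL}; since $\varphi(\fw')=\fw'$ and the scalars $a^{(j')}$ are annihilated by $\tfrac{d}{du'}$ and cancel between the $C$- and $C^{-1}$-products, the identity drops out immediately. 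Your approach instead transports $N$ through the polarization via the uniqueness clause of \cite[Prop.~3.1.7]{BHHMS}, closing with the elegant $2\times 2$ identity $\textnormal{Ad}(\fw')(X) = \textnormal{tr}(X)\,\mathrm{Id} - X^\top$.

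The step you rightly flag as the obstacle is indeed the crux: your tensor-product heuristic (the naive dual contributes $-N^\top$, the $\varepsilon^3$-twist contributes $\textnormal{tr}(N)$ via the height condition on $\det\fM$) gives the right answer, but the ``naive dual'' is not literally a finite-height Kisin module, so the decomposition is not quite in the category where \cite[Prop.~3.1.7]{BHHMS} applies. The cleanest way to make it rigorous is to note that $C^{(j')}_{\fM^\vee,\beta^\vee} = E(u')^3(C^{(j')}_{\fM,\beta})^{-\top} = a^{(j')}\,\textnormal{Ad}(\fw')(C^{(j')}_{\fM,\beta})$ (the same adjugate identity you use at the end, combined with \eqref{finheight}) and then verify directly that the candidate operator with matrix $\textnormal{Ad}(\fw')(N^{(j')}_{\fM^{\rig},\beta})$ satisfies the three defining properties of $N_{\fM^{\vee,\rig}}$; at that point your route and the paper's converge on essentially the same relation \eqref{Cmatrix-cocycle}. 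Your approach buys independence from the explicit series formula; the paper's buys brevity once that formula is in hand.
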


\begin{proof}
Using the polarization $\iota$ we have 
$$C^{(j' + f)}_{\fM, \beta} = E(u')^3(C^{(j')}_{\fM, \beta})^{-\top}$$
(cf.~\cite[Prop. 5.13]{koziolmorra}).
Moreover the equation \eqref{finheight} above gives
\begin{equation}
\label{Cmatrix-cocycle}
C^{(j' + f)}_{\fM, \beta} = a^{(j')}\textnormal{Ad}(\fw')(C^{(j')}_{\fM, \beta}),
\end{equation}
We now recall from \cite[Thm. 5.6]{LLLM}, \cite[Prop. 3.4.12]{LLL} that we can construct $N_{\fM^{\rig},\beta}^{(j')}$ as 
$$N_{\fM^{\rig},\beta}^{(j')} = N_1^{(j')} + \sum_{i = 1}^\infty \left(\prod_{k = 0}^{i - 1}\varphi^k \Big(C^{(j' - k - 1)}_{\fM,\beta}\Big) \right)\varphi^i(N_1^{(j' - i)})\left(\prod_{k = i - 1}^0 \varphi^k \Big(E(u')(C^{(j' - k - 1)}_{\fM,\beta})^{-1}\Big) \right),$$
where $N_1^{(j')} := \lambda u' \frac{d}{du'}(C^{(j' - 1)}_{\fM,\beta})(C^{(j' - 1)}_{\fM,\beta})^{-1}$. The claim follows from equation \eqref{Cmatrix-cocycle}, along with the relations $\varphi(\fw') = \fw'$ and $\frac{d}{du'}(a^{(j')}) = 0$.
\end{proof}

\subsubsection{}

Given $(\fM,\iota) \in Y^{\leq (3,0),\tau'}_{\pol}(R)$ and $\beta$ a gauge basis for it, we define $P_N(A^{(j'-1)}_{\fM,\beta})$ exactly as in \cite[Eq. (16)]{BHHMS} for any $j'\in\cJ'$.  The following is \cite[Prop. 3.1.9]{BHHMS}.

\begin{propn}
\label{prop:mon:cond}
Let $(\fM,\iota) \in Y^{\leq (3,0),\tau'}_{\pol}(R)$ and let $\beta$ be a gauge basis for it.
Assume further that $\tau'$ is $N$-generic with $N\geq 4$. Then the monodromy condition is equivalent to the conditions
\begin{eqnarray*}
  \left(P_N(A^{(j'-1)}_{\fM,\beta})\right)\big|_{v=-p}  +  O(p^{N-2}) & =& 0\\
  \Big(\frac{d}{dv}\Big)\big|_{v=-p} \left(P_N(A^{(j'-1)}_{\fM,\beta})\right) +  O(p^{N - 3}) & = & 0
\end{eqnarray*}
for all $j' = 0,\dots,f-1$, where the terms $O(p^{N-2})$ and $O(p^{N-3})$ denote specific but inexplicit elements of $p^{N-2-t}\textnormal{Mat}_2(R)$.
\end{propn}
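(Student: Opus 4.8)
The plan is to deduce the statement from \cite[Prop.~3.1.9]{BHHMS} applied to the underlying Kisin module, together with the polarization identity for the monodromy operator established in the preceding Lemma. The point is that, a priori, running through \emph{op.\ cit.} would produce a pair of equations for each of the $2f$ indices $j' \in \cJ'$, and the content here is that the polarization renders the $f$ equations indexed by $j' = f, \ldots, 2f-1$ redundant.

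First I would observe that \cite[Prop.~3.1.9]{BHHMS} applies verbatim to $\fM \in Y^{\leq(3,0),\tau'}(R)$ regarded as a (non-polarized) Kisin module over $\cO_{K_2}$: the hypotheses of \emph{op.\ cit.} --- that $\tau'$ is $N$-generic with $N \geq 4$, and that $R$ is $p$-adically complete, flat over $\cO$ and topologically of finite type --- are precisely the ones in force here. Since the monodromy condition is by definition a conjunction of conditions, one per $j'\in\cJ'$, and since the proof of \emph{op.\ cit.} establishes its equivalence with the $P_N$-equations one index at a time, it yields for each fixed $j' \in \cJ'$ the equivalence: $\lambda^2 N^{(j')}_{\fM^{\rig},\beta}$ vanishes to order $2$ at $u' = (-p)^{1/(p^{2f}-1)}$ if and only if the two displayed equations involving $P_N(A^{(j'-1)}_{\fM,\beta})$ hold. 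Taking the conjunction over $j' \in \cJ'$ recovers the monodromy condition on one side and the full list of $2f$ pairs of equations on the other.

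Next I would bring in the polarization. By the preceding Lemma, $N^{(j'+f)}_{\fM^{\rig},\beta} = \textnormal{Ad}(\fw')(N^{(j')}_{\fM^{\rig},\beta})$ with $\fw' = \sm{0}{1}{-1}{0}$, and since $\lambda$ is a scalar this gives $\lambda^2 N^{(j'+f)}_{\fM^{\rig},\beta} = \textnormal{Ad}(\fw')(\lambda^2 N^{(j')}_{\fM^{\rig},\beta})$. Because $\textnormal{Ad}(\fw')$ merely permutes the entries of a $2\times 2$ matrix up to sign, $\lambda^2 N^{(j'+f)}_{\fM^{\rig},\beta}$ vanishes to order $2$ at $u' = (-p)^{1/(p^{2f}-1)}$ if and only if $\lambda^2 N^{(j')}_{\fM^{\rig},\beta}$ does; hence, via the per-index equivalence above, the equations at index $j'+f$ are equivalent to those at index $j'$. (Alternatively one could verify this directly from the polarization relation \eqref{polarizationeqs} for the matrices $A^{(j')}_{\fM,\beta}$ together with the definition of $P_N$, but routing through the Lemma avoids bookkeeping of the units $a^{(j')}$ and the signs.) Since $\cJ' = \{0, \ldots, 2f-1\}$ and every index is uniquely of the form $j'$ or $j'+f$ with $j' \in \{0,\ldots,f-1\}$, the monodromy condition is therefore equivalent to the displayed equations holding for $j' = 0, \ldots, f-1$ only, which is the assertion.

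I expect the only real obstacle to be organizational: confirming that \cite[Prop.~3.1.9]{BHHMS} can legitimately be read index-by-index, and checking that the indexing in the two polarization identities --- eq.~\eqref{polarizationeqs} for the matrices $A^{(j')}_{\fM,\beta}$ and the preceding Lemma for the operators $N^{(j')}_{\fM^{\rig},\beta}$, with indices taken mod $2f$ --- lines up so that precisely the indices $j' = f, \ldots, 2f-1$ drop out. There is no new analytic input beyond what is already contained in \cite{BHHMS} and in the polarized finite-height and monodromy formalism recalled above.
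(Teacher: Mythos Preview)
Your proposal is correct and follows the paper's approach: the paper simply records the proposition as ``\cite[Prop.~3.1.9]{BHHMS}'' with no further proof. You go further by spelling out, via the preceding Lemma on $N^{(j'+f)}_{\fM^{\rig},\beta} = \textnormal{Ad}(\fw')(N^{(j')}_{\fM^{\rig},\beta})$, why the index range collapses from $j' \in \cJ'$ to $j' = 0,\dots,f-1$; the paper leaves this reduction implicit.
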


\section{Galois deformations}
\label{sec:galdefs}

\subsection{Preliminaries}
\label{galdefs:prelim}

Fix a tamely ramified $L$-parameter $\rhobar:\Gamma_K \longrightarrow {}^C\nU_{1,1}(\bbF)$ which satisfies $\widehat{\imath}\circ \rhobar = \omega$.  Recall from \cite[Lem. 2.1.1]{CHT} (see also \cite[\S 5C2]{koziolmorra}) that this data is equivalent to the data of the representation $\BC(\rhobar): \Gamma_{K_2} \longrightarrow \nG\nL_2(\bbF)$ and a compatible polarization $\overline{\alpha}: \BC(\rhobar)^{\varphi^f} \stackrel{\sim}{\longrightarrow} \BC(\rhobar)^\vee \otimes \omega$.

We let $(s,\mu) \in \un{W} \times X^*(\un{T}_\nU)$ be such that 
$$\rhobar|_{I_K} \cong \taubar(s,\mu + \eta).$$  
We may choose this data so that 
$$s \in \left\{(1,1, \ldots, 1),~ (\fw,1,\ldots, 1)\right\}$$
(see e.g.~\cite[\S 4]{BHHMS}).
We assume moreover that $\mu$ lies $N$-deep in the fundamental $p$-alcove, for some $N \geq 12$.  By equation \eqref{BC-of-inertial-type}, we have
$$\BC(\rhobar)|_{I_{K_2}} \cong \taubar'\big((s,s),~\BC(\mu) + \eta'\big).$$

For $\lambda' \in X^*(\un{T}')$, we define the $\lambda'$-admissible set by
$$\textnormal{Adm}'^\vee(t_{\lambda'}) := \left\{\tw' \in \un{\tW}'^\vee: \tw' \leq t_{w'(\lambda')} ~\textnormal{for some}~w'\in \un{W}'\right\},$$
relative to the Bruhat order on $\un{\tW}'^\vee$.  (Here $\un{\tW}'^\vee$ denotes the group $\un{\tW}'$ equipped with the Bruhat order relative to the antidominant fundamental $p$-alcove; see \cite[\S 2.1]{BHHMS}.)  In particular, we have
$$\textnormal{Adm}'^\vee(t_{\eta'}) = \{t_{(1,0)},~ \fw t_{(1,0)},~ t_{(0,1)}\}^{2f},$$
and we furthermore define
$$\textnormal{Adm}'^\vee(t_{\eta'})^{\sym} := \left\{\tw' \in \textnormal{Adm}'^\vee(t_{\eta'}): \tw'_{j' + f} = \tw'_{j'}~\textnormal{for all $j' \in \cJ'$}\right\}.$$

Now choose $\tw' \in \textnormal{Adm}'^\vee(t_{\eta'})$ and write $\tw'^* = t_{\nu'} w' \in X^*(\un{T}') \rtimes \un{W}'$ (see \cite[\S 2.1]{BHHMS} for the definition of the anti-automorphism $\tw' \longmapsto \tw'^*$).  Explicitly, if
$$\tw'_{2f - 1 - j'} = t_{(1,0)},\quad \textnormal{resp.},\quad \fw t_{(1,0)},\quad \textnormal{resp.},\quad t_{(0,1)},$$
then 
$$(\tw'^*)_{j'} = t_{\nu'_{j'}}w'_{j'} = t_{(1,0)},\quad \textnormal{resp.},\quad t_{(1,0)}\fw,\quad \textnormal{resp.},\quad t_{(0,1)}.$$
We define
\begin{equation}
  \label{defoftauw'}
  \tau'_{\tw'} := \tau'\big((s,s)w'^{-1},~ \BC(\mu) + \eta' - (s,s)w'^{-1}(\nu')\big).
\end{equation}
(For the sake of comparison with \cite[\S 4.1]{BHHMS}, we note that the inertial type $\tau'_{\tw'}$ above relative to $\BC(\rhobar)$ is equal to the inertial type $\tau'_{\tw'\cdot t_{(\underline{1},\underline{1})}}$ of \cite{BHHMS} relative to $\BC(\rhobar)\otimes \omega$.  Note that $\tw' \in \textnormal{Adm}'^\vee(t_{\eta'})$ if and only if $\tw'\cdot t_{(\underline{1},\underline{1})} \in \textnormal{Adm}'^\vee(t_{(\underline{2},\underline{1})})$.)

Suppose we are given a tame inertial type $\tau' = \tau'((s_1,s_2),~(\mu_1,\mu_2))$, where $s_i \in \underline{W}, \lambda_i \in X^*(\underline{T}_\nU)$.  One easily checks that the tame inertial types $\tau'^{\vee}$ and $(\tau')^{\varphi^f}$ admit the following presentations:
\begin{eqnarray*}
  \tau'^\vee & \cong & \tau'\big((s_1,s_2),~(-\underline{\fw}(\mu_1), -\underline{\fw}(\mu_2))\big), \\
  (\tau')^{\varphi^f} & \cong & \tau'\big((s_2,s_1), (\mu_2,\mu_1)\big).
\end{eqnarray*}

\begin{lemma}
  \label{symiff}
Suppose $\tw' \in \textnormal{Adm}'^\vee(t_{\eta'})$.  We have ${\tau'_{\tw'}}^\vee \cong (\tau'_{\tw'})^{\varphi^f}$ if and only if $\tw' \in \textnormal{Adm}'^{\vee}(t_{\eta'})^{\sym}$, if and only if $(\tw'^*)_{j' + f} = (\tw'^*)_{j'}$ for all $j' \in \cJ'$.  
\end{lemma}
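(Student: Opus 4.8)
The plan is to unwind the three conditions explicitly in terms of the combinatorial data attached to $\tw'$, using the formulas already set up just before the statement. Recall that $\tw' \in \textnormal{Adm}'^\vee(t_{\eta'}) = \{t_{(1,0)}, \fw t_{(1,0)}, t_{(0,1)}\}^{2f}$, so $\tw'$ is recorded by a function $\cJ' \to \{t_{(1,0)}, \fw t_{(1,0)}, t_{(0,1)}\}$, and the symmetry condition $\tw' \in \textnormal{Adm}'^\vee(t_{\eta'})^{\sym}$ is by definition $\tw'_{j'+f} = \tw'_{j'}$ for all $j'$. The equivalence with $(\tw'^*)_{j'+f} = (\tw'^*)_{j'}$ is immediate from the explicit dictionary displayed before the lemma: the passage $\tw'_{2f-1-j'} \mapsto (\tw'^*)_{j'}$ is a bijection on the three-element set of local data (sending $t_{(1,0)} \mapsto t_{(1,0)}$, $\fw t_{(1,0)} \mapsto t_{(1,0)}\fw$, $t_{(0,1)} \mapsto t_{(0,1)}$) composed with the index involution $j' \mapsto 2f-1-j'$, and this index involution commutes with the shift $j' \mapsto j'+f$ modulo $2f$; so $\tw'_{j'+f}=\tw'_{j'}$ for all $j'$ holds if and only if $(\tw'^*)_{j'+f}=(\tw'^*)_{j'}$ for all $j'$.

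For the main equivalence, I would first write $\tw'^* = t_{\nu'}w'$ with $\nu' = (\nu'_{j'})_{j'\in\cJ'} \in X^*(\un{T}')$ and $w' = (w'_{j'})_{j'\in\cJ'} \in \un{W}'$, where $(\nu'_{j'}, w'_{j'})$ is read off from $(\tw'^*)_{j'}$ via the table: $t_{(1,0)}$ gives $(\nu'_{j'}, w'_{j'}) = ((1,0),1)$, $t_{(1,0)}\fw$ gives $((1,0),\fw)$, and $t_{(0,1)}$ gives $((0,1),1)$. Then $\tau'_{\tw'} = \tau'\big((s,s)w'^{-1}, \BC(\mu) + \eta' - (s,s)w'^{-1}(\nu')\big)$. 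Applying the two displayed presentations of $\tau'^\vee$ and $(\tau')^{\varphi^f}$ (with $(s_1,s_2) = (s,s)w'^{-1}$ and $(\mu_1,\mu_2)$ the two $\cJ$-blocks of $\BC(\mu)+\eta'-(s,s)w'^{-1}(\nu')$) and using the injectivity/uniqueness of lowest-alcove presentations of $N$-generic tame inertial types (valid here since $\mu$ is assumed $12$-deep, so $N \geq 12 > 0$, and the recentering terms $\eta', \nu'$ are small), the condition ${\tau'_{\tw'}}^\vee \cong (\tau'_{\tw'})^{\varphi^f}$ becomes a pair of equations on the Weyl part and the character part. Since $(s,s)w'^{-1}$ has its two $\cJ$-blocks equal exactly when $w'_{j'+f} = w'_{j'}$, and since $\BC(\mu)+\eta'$ is already $\varphi^f$-self-dual in the appropriate twisted sense by the identity $-(\un{\fw},\un{\fw})\pi'^f(\BC(\mu)) = \BC(\mu)$ recorded in Subsection~\ref{sec:basechange} (and the analogous one for $\eta'$), the only obstruction to the self-duality is whether the $\nu'$-correction respects the swap of the two $\cJ$-blocks, i.e.\ whether $\nu'_{j'+f} = \nu'_{j'}$ for all $j'$. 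Tracking through the table, $\nu'_{j'+f} = \nu'_{j'}$ together with $w'_{j'+f} = w'_{j'}$ is exactly $(\tw'^*)_{j'+f} = (\tw'^*)_{j'}$.

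I would carry this out as: (1) record the local dictionary $\tw'_{\bullet} \leftrightarrow (\tw'^*)_{\bullet} \leftrightarrow (\nu'_\bullet, w'_\bullet)$ and observe the symmetry statements match up index-by-index; (2) substitute into the formula \eqref{defoftauw'} for $\tau'_{\tw'}$ and into the two boxed presentations of dual and $\varphi^f$-conjugate; (3) invoke uniqueness of lowest-alcove presentations for $N$-generic types (citing \cite[\S 2.3]{BHHMS} as in the surrounding text, e.g.\ as used in the proof of Proposition~\ref{prop:JH:fct:extgr:U2}) to reduce $\tau'^\vee \cong (\tau')^{\varphi^f}$ to equality of presentation data; (4) use $-(\un{\fw},\un{\fw})\pi'^f(\BC(\mu)) = \BC(\mu)$ and the corresponding statement for $\eta'$ to cancel the ``ambient'' part, leaving precisely the blockwise symmetry of $(\nu', w')$, which is the $\sym$ condition. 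The main obstacle I expect is bookkeeping: getting the $\pi'^f$-twist, the $-\un{\fw}$ from $(-)^\vee$, and the block-swap from $(-)^{\varphi^f}$ to align correctly, and in particular checking that the genericity hypothesis ($\mu$ being $12$-deep) is strong enough that no two distinct presentations among the relevant ones coincide, so that the reduction in step (3) is legitimate. This is the same style of argument as the ``second half of the proof of Lemma~\ref{symiff}'' referenced in Corollaries~\ref{cor:types-cont-weight} and~\ref{cor:types-cont-weight:2}, so consistency with those later uses should be kept in mind while writing it.
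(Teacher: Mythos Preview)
Your treatment of the second equivalence and of the forward implication ($\tw' \in \textnormal{Adm}'^\vee(t_{\eta'})^{\sym} \Rightarrow {\tau'_{\tw'}}^\vee \cong (\tau'_{\tw'})^{\varphi^f}$) is correct and matches the paper.

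The gap is in step~(3) of your converse argument. Lowest-alcove presentations of $N$-generic tame inertial types are \emph{not} unique on the nose: two presentations $(s',\mu')$ and $(s'',\mu'')$ of isomorphic types are related by the action of some $z't_{\xi'} \in \un{\tW}'$ via the formula recalled in Subsection~\ref{subsec:SW-and-DL} (equivalently \cite[Prop.~2.2.15]{LLL}). No amount of deepness of $\mu$ collapses this equivalence to literal equality, so you cannot directly read off $w'_{j'+f}=w'_{j'}$ and $\nu'_{j'+f}=\nu'_{j'}$ from the isomorphism ${\tau'_{\tw'}}^\vee \cong (\tau'_{\tw'})^{\varphi^f}$. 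One must instead control the intertwining element $z't_{\xi'}$, and this is exactly what makes the converse nontrivial.

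The paper's proof of the converse proceeds differently: it reformulates ${\tau'_{\tw'}}^{\varphi^f,\vee} \cong \tau'_{\tw'}$ as the statement that the associated Deligne--Lusztig representation is fixed by $\epsilon$, hence (by \cite[Lem.~3.12]{koziolmorra}) lies in the image of base change. This yields a presentation of the form $\tau'((v,v),\BC(\lambda))$, and then \cite[Prop.~2.2.15]{LLL} supplies the change-of-presentation element $z't_{\xi'}$. The real work is a parity argument modulo~$2$ (pairing equation~\eqref{sym-omega-conj} with coroots) showing that $z'$ must have the form $(z,z)$ or $(z,z)(\un{1},\un{\fw})$, followed by a contradiction argument ruling out the second form, and finally extracting $\nu'_{j'+f}=\nu'_{j'}$ from the $\pi'^f$-invariance of $\BC(\lambda)$. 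Your outline skips over all of this; if you wanted to make your direct approach work, you would end up reproducing essentially the same analysis of the intertwiner $z't_{\xi'}$.
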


\begin{proof}
The second equivalence follows directly from the definition of $\tw'^*$.  We check the first equivalence.

If $\tw' \in \textnormal{Adm}'^{\vee}(t_{\eta'})^{\sym}$, then we may write $\tw'^* = t_{\nu'}w' = t_{(\nu,\nu)}(w,w)$, where $(\nu,w) \in X^*(\underline{T}_\nU) \times \underline{W}$.  This gives
\begin{eqnarray*}
  \tau'_{\tw'} & = & \tau'\big((sw^{-1},sw^{-1}),~ \BC(\mu) + \eta' - (sw^{-1}, sw^{-1})(\nu,\nu)\big)\\
   & = & \tau'\big((sw^{-1},sw^{-1}),~ \BC(\mu + \rho - sw^{-1}(\nu)) + (0,\rho + \underline{\fw}(\rho)) - (0,sw^{-1}(\nu) + \underline{\fw}sw^{-1}(\nu))\big)\\
   & = & \tau'\big((sw^{-1},sw^{-1}),~ \BC(\mu + \rho - sw^{-1}(\nu))\big).
\end{eqnarray*}
Thus, by the paragraph just before the statement of the lemma, we see that $({\tau'_{\tw'}})^{\varphi^f}$ and ${\tau'_{\tw'}}^\vee$ are isomorphic.

Conversely, suppose $({\tau'_{\tw'}})^{\varphi^f,\vee} \cong {\tau'_{\tw'}}$.  Since $\mu$ is $N$-deep in the fundamental $p$-alcove, the characters 
$$\BC(\mu) - (s,s)w'^{-1}(\nu') \qquad \textnormal{and} \qquad -(\un{\fw},\un{\fw})\pi'^f\left(\BC(\mu) + \eta' - (s,s)w'^{-1}(\nu')\right) - \eta'$$
(i.e., those appearing in $\tau'_{\tw'}$ and $({\tau'_{\tw'}})^{\varphi^f,\vee}$ after subtracting $\eta'$) are both $(N - 1)$-deep in the fundamental $p$-alcove for $\un{G}'$.  Hence, according to \cite[Lem. 2.2.3]{LLL}, the elements 
\begin{eqnarray*}
  & \big((s,s)w'^{-1},~ \BC(\mu) + \eta' - (s,s)w'^{-1}(\nu')\big) & \\
  & \textnormal{and}  & \\
 & \big(\pi'^f((s,s)w'^{-1}),~-(\un{\fw},\un{\fw})\pi'^f\left(\BC(\mu) + \eta' - (s,s)w'^{-1}(\nu')\right)\big) & 
\end{eqnarray*}
of $\underline{W}'\times X^*(\underline{T}')$ are good, and \cite[Prop. 9.2.3]{GHS} then implies that we have an isomorphism
$$\epsilon\left(R'_{(s,s)w'^{-1}}\big(\BC(\mu) + \eta' - (s,s)w'^{-1}(\nu')\big)\right) \cong R'_{(s,s)w'^{-1}}\big(\BC(\mu) + \eta' - (s,s)w'^{-1}(\nu')\big)$$
(where $\epsilon$ is as in the proof of Proposition \ref{prop:SW:extgr:U2} and we have written $\tw^{\prime\ast}=t_{\nu'}w'$).  Applying \cite[Lem. 3.12]{koziolmorra}, we see that the Deligne--Lusztig representation on the right-hand side is in the image of the base change map, and by the injectivity claim of \cite[Prop. 9.2.1]{GHS} we get
$$\tau'_{\tw'} \cong \tau'\big((v,v),~ \BC(\lambda)\big),$$
for some $v \in \underline{W}$ and some $\lambda\in X^*(\underline{T}_\nU)$ with $\lambda - \rho$ lying in the fundamental $p$-alcove.  By \cite[Prop. 2.2.15]{LLL}, there exists $z't_{\xi'} \in \underline{\tW}'$ satisfying:
\begin{itemize}
  \item if $z'_{j'} = 1$, then $\xi'_{j'} \in X^0(T_\nG)$;
  \item if $z'_{j'} = \fw$, then $\xi'_{j'} \in \eta'_{j'} + X^0(T_\nG)$;
  \item we have $(v,v) = z'(s,s)w'^{-1}\pi'(z')^{-1}$;
  \item we have
  \begin{equation}
    \label{sym-omega-conj}
    \BC(\lambda) = z'(\BC(\mu)) + z'(\eta') - z'(s,s)w'^{-1}(\nu') + p\xi' - (v,v)(\pi'(\xi')).
  \end{equation}
\end{itemize}
In particular, the pairings of the left-hand side of equation \eqref{sym-omega-conj} with $\alpha_{j'}^\vee$ and $\alpha_{j' + f}^\vee$ are equal, and thus the same is true of the right-hand side.  Reducing this equality of pairings modulo 2, we get
\begin{flushleft}
  $\langle\BC(\mu),\alpha_{j'}^\vee\rangle + 1 + \langle\nu',\alpha_{j'}^\vee\rangle + \langle\xi', \alpha_{j'}^\vee\rangle + \langle \pi'(\xi'), \alpha_{j'}^\vee\rangle$
\end{flushleft} 
\begin{flushright}
  $\equiv \langle\BC(\mu),\alpha_{j' + f}^\vee\rangle + 1 + \langle\nu',\alpha_{j' + f}^\vee\rangle + \langle\xi', \alpha_{j' + f}^\vee\rangle + \langle \pi'(\xi'), \alpha_{j' + f}^\vee\rangle~(\textnormal{mod}~2).$
\end{flushright}
Using that $\langle\nu', \alpha_{j'}^\vee\rangle = \pm 1$ and $\langle \BC(\mu),\alpha_{j'}^\vee\rangle = \langle \BC(\mu),\alpha_{j' + f}^\vee\rangle$ for all $j'$, the above equivalence simplifies to
$$\langle\xi', \alpha_{j'}^\vee\rangle + \langle \pi'(\xi'), \alpha_{j'}^\vee\rangle \equiv \langle\xi', \alpha_{j' + f}^\vee\rangle + \langle \pi'(\xi'), \alpha_{j' + f}^\vee\rangle~(\textnormal{mod}~2).$$
The preceding equivalence, along with the first two bullet points above, shows that $z'_{j'} = z'_{j' + f}$ if and only if $z'_{j' + 1} = z'_{j' + f + 1}$.  Thus, $z'$ must be of the form
$$z' = (z,z) \qquad \textnormal{or} \qquad  (z,z)(\underline{1},\underline{\fw})$$
for some $z\in \underline{W}$.  In either case, the third bullet point above implies that $w'_{j'} = w'_{j' + f}$ for all $j'$, and accordingly we write $w' = (w,w)$ for $w\in \underline{W}$.

Suppose now, by contradiction, that $z' = (z,z)(\underline{1},\underline{\fw})$.  This implies in particular that $\pi'^f(z') = z'(\underline{\fw},\underline{\fw})$.  Since the left-hand side of \eqref{sym-omega-conj} is fixed by the map $\omega' \longmapsto -(\underline{\fw},\underline{\fw})\pi'^f(\omega')$, the same is true of the right-hand side; rearranging the resulting equation (and examining the possibilities for the components of $\nu'$) gives 
\begin{equation}
  \label{sym-omega-conj-2}
  z'\left(\sum_{j'\in \cJ'} \big(\langle \BC(\mu), \alpha_{j'}^\vee \rangle + \delta_{j'}\big)\alpha_{j'}\right) = (p - (v,v)\pi')\left(-(\underline{\fw},\underline{\fw})\pi'^f(\xi') - \xi'\right),
\end{equation}
where $\delta_{j'}\in \{0,1,2\}$.  Thus, the left-hand side lies in $\Lambda'_{\textnormal{rt}}$, while the right-hand side lies in $(p - (v,v)\pi')X^*(\underline{T}')$, and consequently both sides lie in $\Lambda'_{\textnormal{rt}} \cap (p - (v,v)\pi')X^*(\underline{T}') = (p - (v,v)\pi')\Lambda'_{\textnormal{rt}}$.  Further, by pairing equation \eqref{sym-omega-conj-2} with $\alpha_{j'}^\vee$, we get that 
$$\langle \BC(\mu), \alpha_{j'}^\vee \rangle + \delta_{j'} = \frac{p \pm 1}{2}.$$
However, plugging this into equation \eqref{sym-omega-conj-2} gives an element which does not lie in $(p - (v,v)\pi')\Lambda'_{\textnormal{rt}}$, and we arrive at a contradiction.

We may therefore assume $z' = (z,z)$.  Since the left-hand side of \eqref{sym-omega-conj} is fixed by the map $\omega' \longmapsto -(\underline{\fw},\underline{\fw})\pi'^f(\omega')$, the same is true of the right-hand side; rearranging the resulting equation gives
$$\big(\eta' + (\underline{\fw},\underline{\fw})(\eta')\big) + (p - (v,v)\pi')\big(\xi' + (\underline{\fw},\underline{\fw})\pi'^f(\xi')\big) = (zsw^{-1},zsw^{-1})\big(\nu' + (\underline{\fw},\underline{\fw})\pi'^f(\nu')\big).$$
The left-hand side lies in $X^0(\underline{T}')$, and therefore pairing the right-hand side with $\alpha_{j'}^\vee$ gives
$$0 = \langle \nu', \alpha_{j'}^\vee \rangle - \langle \nu', \alpha_{j' + f}^\vee\rangle.$$
By the description of the components of $\nu'$, we conclude that $\nu'_{j'} = \nu'_{j' + f}$, which finishes the proof.
\end{proof}

\begin{rmk}
  The above lemma and proof hold verbatim under the weaker assumption that $\mu$ is $1$-deep in the fundamental $p$-alcove.
\end{rmk}

\begin{lemma}
  \label{kisin-mod-existence}
Suppose $\tw' \in \textnormal{Adm}'^\vee(t_{\eta'})^{\sym}$.  Up to isomorphism, there exists a unique semisimple polarized Kisin module $(\overline{\fM},\overline{\iota}) \in Y^{\leq (3,0), \tau'_{\tw'}}_{\pol}(\bbF)$ of shape $\tw'$ such that $T_{\dd}^*(\overline{\fM}) \cong (\BC(\rhobar)\otimes\omega)|_{\Gamma_{K_{2,\infty}}}$, and such that $\overline{\iota}$ is compatible with the polarization $\overline{\alpha}\otimes 1$ on $(\BC(\rhobar)\otimes \omega)|_{\Gamma_{K_{2,\infty}}}$ via $T_{\dd}^*$.
\end{lemma}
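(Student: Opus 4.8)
The plan is to produce the underlying Kisin module from the $\nG\nL_2$-theory recalled above and then equip it with the polarization, using the symmetry recorded in Lemma~\ref{symiff}; this is the height-$[0,3]$ analogue of the Barsotti--Tate construction of \cite[\S 5B]{koziolmorra}.

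\textbf{Step 1 (the underlying Kisin module).} Since $\tw' \in \textnormal{Adm}'^\vee(t_{\eta'})$ and $\BC(\mu)$ lies deep in the fundamental $p$-alcove (the hypothesis $N \geq 12$ is far more than enough), the representation $\BC(\rhobar)\otimes\omega$ is tame, hence semisimple, and has inertial type $\tau'_{\tw'}$ on $I_{K_2}$. By the $\nG\nL_2$-theory of semisimple Kisin modules over $\cO_{K_2}$ (the analogue of the results of \cite{LLLM} invoked throughout \cite[\S 4]{BHHMS}), there is a unique up to isomorphism semisimple $\overline{\fM} \in Y^{\leq (3,0),\tau'_{\tw'}}(\bbF)$ of shape $\tw'$ with $T^*_{\dd}(\overline{\fM}) \cong (\BC(\rhobar)\otimes\omega)|_{\Gamma_{K_{2,\infty}}}$; explicitly, in a fixed eigenbasis $\beta$ the matrices $A^{(j')}_{\overline{\fM},\beta}$ are the monomial matrices dictated by $\tw'$, with scalars read off from the Frobenius eigenvalues of $\BC(\rhobar)$.

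\textbf{Step 2 (the polarization).} The $\varphi^f$-pullback $(\sigma^f)^*\overline{\fM}$ is again semisimple, of descent datum type $(\tau'_{\tw'})^{\varphi^f}$ and of shape obtained from $\tw'$ by shifting the grading by $f$; because $\tw' \in \textnormal{Adm}'^\vee(t_{\eta'})^{\sym}$, this shifted shape is again $\tw'$, and by Lemma~\ref{symiff} we have $(\tau'_{\tw'})^{\varphi^f} \cong (\tau'_{\tw'})^\vee$. On the other hand $\overline{\fM}^\vee$ lies in $Y^{\leq (0,3),(\tau'_{\tw'})^\vee}(\bbF)$ with $T^*_{\dd}(\overline{\fM}^\vee) \cong T^*_{\dd}(\overline{\fM})^\vee \otimes \varepsilon^3$. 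Twisting $\overline{\alpha}$ by the appropriate power of $\omega$ identifies the $\Gamma_{K_{2,\infty}}$-representations $T^*_{\dd}((\sigma^f)^*\overline{\fM})$ and $T^*_{\dd}(\overline{\fM}^\vee)$, and by the faithfulness of $T^*_{\dd}$ on Kisin modules of bounded height over $\bbF$ (as exploited in \cite[\S 5B]{koziolmorra}) this isomorphism lifts uniquely to an isomorphism $\overline{\iota} : (\sigma^f)^*\overline{\fM} \stackrel{\sim}{\longrightarrow} \overline{\fM}^\vee$. One then checks, exactly as in \cite[Def. 5.15, \S 5B]{koziolmorra}, that $\overline{\iota}$ satisfies the required compatibility and is therefore a polarization, this being inherited from the conjugate self-duality cocycle condition built into the ${}^C\nU_{1,1}$-valued parameter $\rhobar$ and the compatible $\overline{\alpha}$.

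\textbf{Step 3 (uniqueness) and main obstacle.} Given two data $(\overline{\fM}_1,\overline{\iota}_1)$, $(\overline{\fM}_2,\overline{\iota}_2)$ as in the statement, Step 1 gives $\overline{\fM}_1 \cong \overline{\fM}_2$; fixing such an isomorphism and transporting $\overline{\iota}_2$, we reduce to $\overline{\fM}_1 = \overline{\fM}_2 = \overline{\fM}$ with both $\overline{\iota}_1, \overline{\iota}_2$ lifting $\overline{\alpha}\otimes 1$ under $T^*_{\dd}$, so faithfulness of $T^*_{\dd}$ forces $\overline{\iota}_1 = \overline{\iota}_2$ and the two polarized Kisin modules are isomorphic. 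The main obstacle is Step 2: one must be certain that the $\varphi^f$-pullback $(\sigma^f)^*\overline{\fM}$, the dual $\overline{\fM}^\vee$, and the Kisin module produced by the $\nG\nL_2$ existence theorem genuinely have the same shape and descent datum type, so that the comparison takes place inside the single groupoid $Y^{\leq (3,0),\tau'_{\tw'}}_{\pol}(\bbF)$; this is exactly where the symmetry condition $\tw' \in \textnormal{Adm}'^\vee(t_{\eta'})^{\sym}$ and Lemma~\ref{symiff} enter, together with careful bookkeeping of the Tate twist $\varepsilon^3$ coming from the height-$3$ normalization.
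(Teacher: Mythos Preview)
Your proposal is correct and follows essentially the same approach as the paper: the paper obtains $\overline{\fM}$ by citing \cite[Lem.~4.1.1]{BHHMS} (your Step~1) and then obtains $\overline{\iota}$ by citing \cite[Lem.~5.20]{koziolmorra} (your Steps~2--3), with the parenthetical comment after \eqref{defoftauw'} handling the Tate-twist bookkeeping you flag. Your write-up simply unpacks what those two references do; the only phrasing to tighten is that in Step~2 what you need is not mere faithfulness of $T^*_{\dd}$ but the stronger uniqueness statement from Step~1 (applied now to $(\sigma^f)^*\overline{\fM}$ and $\overline{\fM}^\vee$ once the shape/type matching is verified), which is exactly the mechanism of \cite[Lem.~5.20]{koziolmorra}.
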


\begin{proof}
The existence of a unique such $\overline{\fM}$ in $Y^{\leq (3,0), \tau'_{\tw'}}(\bbF)$ follows from \cite[Lem. 4.1.1]{BHHMS}, and the existence and uniqueness of $\overline{\iota}$ follows as in \cite[Lem. 5.20]{koziolmorra}.  (See also the parenthetical comment after the definition \eqref{defoftauw'} of $\tau'_{\tw'}$ above for a comparison between \cite{BHHMS} and \cite{koziolmorra}.)
\end{proof}

\begin{lemma}
  \label{serre-wt-intn-JHtype}
  There is a unique bijection $\theta: \nW^?(\rhobar) \longrightarrow \{t_{(1,0)},~ t_{(0,1)}\}^f$ such that for any $\sigma \in \nW^?(\rhobar)$ and $\tw' \in \textnormal{Adm}'^\vee(t_{\eta'})^\sym$, we have $\sigma \in \JH(\overline{\sigma(\tau'_{\tw'})})$ if and only if $\tw'_{j'} \neq \theta(\sigma)_{j'~ (\textnormal{mod}~f)}$ for all $j' \in \cJ'$.  Moreover, if $\tau'$ is any inertial type satisfying $(\tau')^{\varphi^f} \cong \tau'^\vee$, then $\JH(\overline{\sigma(\tau')}) \cap \nW^?(\rhobar) \neq \emptyset$ if and only if $\tau' \cong \tau'_{\tw'}$ for some $\tw' \in \textnormal{Adm}'^\vee(t_{\eta'})^\sym$.  
\end{lemma}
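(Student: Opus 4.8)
The plan is to deduce the statement from its $\nG\nL_2$-counterpart for $\BC(\rhobar)$ by transporting along the base change map. Recall from Propositions \ref{prop:SW:extgr:GL2} and \ref{prop:JH:inertial} (equivalently, from \cite[\S 4.1]{BHHMS}, after the translation between $\BC(\rhobar)$ and $\BC(\rhobar)\otimes\omega$ described following \eqref{defoftauw'}) that there is a unique bijection
$$\theta': \nW^?(\BC(\rhobar)) \longrightarrow \{t_{(1,0)},~ t_{(0,1)}\}^{\cJ'}$$
such that, for $\sigma' \in \nW^?(\BC(\rhobar))$ and $\tw' \in \textnormal{Adm}'^\vee(t_{\eta'})$, one has $\sigma' \in \JH(\overline{\sigma'(\tau'_{\tw'})})$ if and only if $\tw'_{j'} \neq \theta'(\sigma')_{j'}$ for all $j' \in \cJ'$; and, moreover, an inertial type $\tau'$ satisfies $\JH(\overline{\sigma'(\tau')}) \cap \nW^?(\BC(\rhobar)) \neq \emptyset$ if and only if $\tau' \cong \tau'_{\tw'}$ for some $\tw' \in \textnormal{Adm}'^\vee(t_{\eta'})$.

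First I would record the two compatibilities that make the transfer work. By \cite[Thm. 4.9]{koziolmorra}, $\BC$ restricts to an injection $\nW^?(\rhobar) \longhookrightarrow \nW^?(\BC(\rhobar))$, with image the set of $\epsilon$-fixed weights in $\nW^?(\BC(\rhobar))$ (as in the proof of Proposition \ref{prop:SW:extgr:U2}). By Lemma \ref{symiff}, for $\tw' \in \textnormal{Adm}'^\vee(t_{\eta'})^{\sym}$ the type $\tau'_{\tw'}$ is conjugate self-dual, so $\sigma(\tau'_{\tw'})$ is defined; and by \cite[Lem. 3.26]{koziolmorra}, for $\sigma \in \nW^?(\rhobar)$ one has $\sigma \in \JH(\overline{\sigma(\tau'_{\tw'})})$ if and only if $\BC(\sigma) \in \JH(\overline{\sigma'(\tau'_{\tw'})})$. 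Combining with the $\nG\nL_2$ statement, for $\sigma \in \nW^?(\rhobar)$ and $\tw' \in \textnormal{Adm}'^\vee(t_{\eta'})^{\sym}$ we obtain
$$\sigma \in \JH\big(\overline{\sigma(\tau'_{\tw'})}\big) \iff \tw'_{j'} \neq \theta'(\BC(\sigma))_{j'} \ \textnormal{for all } j' \in \cJ'.$$
Thus everything reduces to the claim that $\theta'(\BC(\sigma))_{j' + f} = \theta'(\BC(\sigma))_{j'}$ for all $j'$: granting this, $\theta(\sigma) := \big(\theta'(\BC(\sigma))_j\big)_{0 \leq j \leq f - 1} \in \{t_{(1,0)},~ t_{(0,1)}\}^f$ is well-defined and the equivalence above reads $\tw'_{j'} \neq \theta(\sigma)_{j'\,(\textnormal{mod}\,f)}$ for all $j'$. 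The map $\theta$ is injective (as $\theta'$ and $\BC$ are), and since $|\nW^?(\rhobar)| = 2^f$ by Proposition \ref{prop:SW:extgr:U2} together with injectivity of $\ft_\mu$, it is a bijection onto $\{t_{(1,0)},~ t_{(0,1)}\}^f$. Uniqueness is then immediate: for fixed $\sigma$ and $j$, $\theta(\sigma)_j$ is forced to be the unique element of $\{t_{(1,0)},~ t_{(0,1)}\}$ not occurring as $\tw'_j$ for any symmetric $\tw'$ with $\sigma \in \JH(\overline{\sigma(\tau'_{\tw'})})$.

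For the symmetry $\theta'(\BC(\sigma))_{j'+f} = \theta'(\BC(\sigma))_{j'}$, I would argue via $\epsilon$-equivariance. Unwinding \eqref{defoftauw'} together with the presentations of $\tau'^\vee$ and $(\tau')^{\varphi^f}$ and the description of $\tw' \mapsto \tw'^*$ recorded before Lemma \ref{symiff}, one sees $\epsilon(\tau'_{\tw'}) \cong \tau'_{\epsilon(\tw')}$, where $\epsilon$ acts on $\textnormal{Adm}'^\vee(t_{\eta'})$ by the involution $\epsilon(\tw')_{j'} = \kappa(\tw'_{j'+f})$ for an involution $\kappa$ of $\{t_{(1,0)},~\fw t_{(1,0)},~ t_{(0,1)}\}$ fixing each of $t_{(1,0)}$ and $t_{(0,1)}$ (and hence also $\fw t_{(1,0)}$); in particular $\textnormal{Adm}'^\vee(t_{\eta'})^{\sym}$ is precisely the fixed-point set of this involution --- this is Lemma \ref{symiff} recast combinatorially. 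Since $\epsilon(\sigma') \in \JH(\overline{\sigma'(\tau')})$ if and only if $\sigma' \in \JH(\overline{\sigma'(\epsilon(\tau'))})$, comparing with the defining property of $\theta'$ gives $\theta'(\epsilon(\sigma'))_{j'} = \kappa(\theta'(\sigma')_{j'+f})$ for all $\sigma' \in \nW^?(\BC(\rhobar))$; taking $\sigma' = \BC(\sigma)$, which is $\epsilon$-fixed, and using that $\kappa$ is the identity on $\{t_{(1,0)},~ t_{(0,1)}\}$ yields the claim.

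Finally, the ``moreover'' statement. Given $\tau'$ with $(\tau')^{\varphi^f} \cong \tau'^\vee$ and $\sigma \in \JH(\overline{\sigma(\tau')}) \cap \nW^?(\rhobar)$, the compatibility \cite[Lem. 3.26]{koziolmorra} gives $\BC(\sigma) \in \JH(\overline{\sigma'(\tau')}) \cap \nW^?(\BC(\rhobar))$, so by the $\nG\nL_2$ statement $\tau' \cong \tau'_{\tw'}$ for some $\tw' \in \textnormal{Adm}'^\vee(t_{\eta'})$; since $\tau'_{\tw'}$ is then conjugate self-dual, Lemma \ref{symiff} forces $\tw' \in \textnormal{Adm}'^\vee(t_{\eta'})^{\sym}$. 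Conversely, if $\tw'$ is symmetric, surjectivity of $\theta$ lets us pick $\sigma \in \nW^?(\rhobar)$ with $\theta(\sigma)_j \neq \tw'_j$ for all $j$ (always possible since $|\{t_{(1,0)},~ t_{(0,1)}\}| = 2$ while $\tw'_j$ is a single element), and then $\sigma \in \JH(\overline{\sigma(\tau'_{\tw'})})$ by the first part. The main obstacle is the explicit identification of the $\epsilon$-action on shapes and the verification that $\theta'$ intertwines it with $\kappa$; this is the one genuinely new computation, but it runs closely parallel to --- and can reuse the case analysis of --- the proof of Lemma \ref{symiff}.
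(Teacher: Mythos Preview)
Your approach is correct but takes a genuinely different route from the paper's. The paper's proof is a single sentence: it redoes the argument of \cite[Lem.~4.1.2]{BHHMS} verbatim, replacing the $\nG\nL_2$ extension-graph inputs (Propositions 2.4.2 and 2.4.3 of \emph{op.~cit.}) by their unitary analogues, Propositions \ref{prop:SW:extgr:U2} and \ref{prop:JH:fct:extgr:U2}. In other words, the paper never passes through $\BC(\rhobar)$ at all; it builds $\theta$ directly from the unitary combinatorics, so the symmetry you need is never an issue---it is baked into the construction from the start.

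Your approach instead reduces to the known $\nG\nL_2$ bijection $\theta'$ for $\BC(\rhobar)$ and then must verify the extra fact that $\theta'(\BC(\sigma))$ is symmetric. This is a genuine additional step, and your $\epsilon$-equivariance argument handles it correctly in outline: the key ingredients are that $\BC(\sigma)$ is $\epsilon$-fixed, that $\epsilon$ permutes the family $\{\tau'_{\tw'}\}$ (since it preserves both $\nW^?(\BC(\rhobar))$ and the Jordan--H\"older relation), and that the induced involution $\kappa$ on the three shapes must fix $t_{(1,0)}$ and $t_{(0,1)}$ and hence be the identity. The explicit identification of $\kappa$ does require the case check you flag, parallel to Lemma \ref{symiff}. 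An alternative that avoids this: the explicit construction of $\theta'$ in the proof of \cite[Lem.~4.1.2]{BHHMS} is visibly embedding-by-embedding and depends only on the components of $\omega' \in \Sigma'$; since $\BC(\sigma)$ corresponds to a symmetric $\omega' = \overline{\BC}(\omega)$ (as in the proof of Proposition \ref{prop:SW:extgr:U2}), the symmetry of $\theta'(\BC(\sigma))$ follows immediately.

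What each approach buys: the paper's is shorter and more self-contained once Propositions \ref{prop:SW:extgr:U2} and \ref{prop:JH:fct:extgr:U2} are in place, but requires the reader to rerun the \cite{BHHMS} proof mentally. Yours is more transparent about how the unitary statement sits over the $\nG\nL_2$ one and makes the role of the polarization (via $\epsilon$ and Lemma \ref{symiff}) explicit, at the cost of the extra symmetry verification.
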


\begin{proof}
  The proof follows in the exact same way as the proof of \cite[Lem. 4.1.2]{BHHMS}, using Propositions \ref{prop:SW:extgr:U2} and \ref{prop:JH:fct:extgr:U2}.
\end{proof}

\subsection{Single-type deformation rings for essentially conjugate self-dual representations}
\label{subsec:single-type}

Recall that $\rhobar:\Gamma_K \longrightarrow {}^C\nU_{1,1}(\bbF)$ is a tamely ramified $L$-parameter satisfying $\widehat{\imath}\circ \rhobar = \omega$, and such that $\rhobar|_{I_K} \cong \overline{\tau}(s,\mu + \eta)$ with $\mu$ lying $N$-deep in the fundamental $p$-alcove.  Fix $\tw' \in \textnormal{Adm}'^\vee(t_{\eta'})^\sym$.  We let $R^{\leq (2,-1), \tau'_{\tw'}}_{\rhobar}$ denote the maximal reduced, $\cO$-flat quotient of $R^\Box_{\rhobar}$ which parametrizes potentially crystalline lifts $\rho$ of $\rhobar$ which satisfy the following:
\begin{itemize}
  \item $\HT_{j'}(\BC(\rho)) \in \{(2,-1),~(1,0)\}$ for all $j' \in \cJ'$;
  \item $\HT_{j'}(\BC(\rho)) = \HT_{j' + f}(\BC(\rho))$ for all $j' \in \cJ'$;
  \item $\BC(\rho)$ has inertial type $\tau'_{\tw'}$;
  \item $\widehat{\imath}\circ \rho = \varepsilon$.
\end{itemize}

Given a lift $\rho$ of $\rhobar$ as above, the representation $\rho(1)$ is a lift of $\rhobar(1)$ having Hodge-Tate weights $(3,0)$ or $(2,1)$ in each embedding and fixed by an $f$-fold shift, tame inertial type $\tau'_{\tw'}$, and satisfying $\widehat{\imath}\circ \rho(1) = \varepsilon^3$.  We denote by $R^{\leq (3,0), \tau'_{\tw'}}_{\rhobar(1)}$ the maximal reduced, $\cO$-flat quotient of $R_{\rhobar(1)}^{\Box}$ parametrizing such lifts.  We therefore see that the map $\rho \longmapsto \rho(1)$ induces an isomorphism on deformation rings
\begin{equation}
  \label{isom-def-rings}
  R^{\leq (2,-1), \tau'_{\tw'}}_{\rhobar} \stackrel{\sim}{\longrightarrow} R^{\leq (3,0), \tau'_{\tw'}}_{\rhobar(1)}.
\end{equation}

\begin{thm}
\label{thm:height+monodromy}
Let $\rhobar$ and $\tw'$ be as above.  We have an isomorphism
$$R^{\leq (2,-1), \tau'_{\tw}}_{\rhobar}[\![ X_1, \ldots, X_{2f}]\!] \cong \left(R'\Big/ \Big(\sum_{j' = 0}^{f - 1} (I^{(j')}+I^{(j'+f)}+I_{\textnormal{sym},\infty}^{(j',j'+f)})\Big)\right)[\![ Y_1, \ldots, Y_{4}]\!]$$
where $R' := \widehat{\bigotimes}_{\cO, j' \in \cJ'} R^{(j')},$ $R^{(j')}$ and $I^{(j')}$ are described in \cite[Tables 1--3]{BHHMS}, and $I^{(j',j'+f)}_{\textnormal{sym},\infty}$ is described in Table \ref{Table1}.  Moreover, for any $\lambda = (\lambda_{j'}) \in \{(2,-1), (1,0)\}^{2f}$ satisfying $\lambda_{j' + f} = \lambda_{j'}$, the kernel of the natural surjection 
$$R^{\leq(2,-1), \tau'_{\tw'}}_{\rhobar}\bbra{X_1,\dots,X_{2f}} \longtwoheadrightarrow R^{\lambda, \tau'_{\tw'}}_{\rhobar}\bbra{X_1,\dots,X_{2f}}$$ 
is generated by the prime ideal $\sum_{j'=0}^{2f-1} \big(\fp^{(j'),\lambda_{2f-1-j'}}+I_{\textnormal{sym},\infty}^{(j',j'+f)}\big)$ of $R'$, where the ideals $\fp^{(j'),\lambda_{2f-1-j'}}$ of $R'$ are found in \cite[Tables 1--3]{BHHMS} (after shifting the superscript $\lambda$ by $(\underline{1},\underline{1})$).
\end{thm}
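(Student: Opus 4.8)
The plan is to reduce everything to the $\nG\nL_2$-computation of \cite[\S 4]{BHHMS}, while keeping track of the extra relations forced by the polarization. First I would use the isomorphism \eqref{isom-def-rings} to replace $R^{\leq (2,-1),\tau'_{\tw'}}_{\rhobar}$ by $R^{\leq (3,0),\tau'_{\tw'}}_{\rhobar(1)}$, so as to work with Hodge--Tate weights in $[0,3]$ and hence with the polarized Kisin modules of Section \ref{sec:Kisinmods}. By the dictionary recalled in Subsection \ref{subsec:FH} together with Lemma \ref{kisin-mod-existence}, a framed potentially crystalline deformation of $\rhobar(1)$ of type $\tau'_{\tw'}$, of the prescribed Hodge type, and with $\widehat{\imath}$-multiplier $\varepsilon^3$, to a $p$-adically complete flat $\cO$-algebra $R$ topologically of finite type, is the same datum as a polarized Kisin module $(\fM,\iota)\in Y^{\leq(3,0),\tau'_{\tw'}}_{\pol}(R)$ of shape $\tw'$, equipped with a compatible eigenbasis lifting the one fixed on $(\overline{\fM},\overline{\iota})$, and satisfying the monodromy condition --- which, since $N\ge 12\ge 4$, is governed by the explicit equations of Proposition \ref{prop:mon:cond}. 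Thus, after adjoining the formal variables $X_1,\dots,X_{2f}$ recording the residual framing/gauge ambiguity, $R^{\leq(2,-1),\tau'_{\tw'}}_{\rhobar}$ is the maximal reduced $\cO$-flat ring prorepresenting this polarized deformation problem, and it remains to present the latter.

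Next I would \emph{forget} the polarization. Discarding $\iota$, the resulting deformation problem --- Kisin modules $\fM\in Y^{\leq(3,0),\tau'_{\tw'}}(R)$ of shape $\tw'$ with $T^*_{\dd}(\fM)\cong(\BC(\rhobar)\otimes\omega)|_{\Gamma_{K_{2,\infty}}}$ satisfying the monodromy condition --- is exactly the one computed in \cite[\S 4]{BHHMS} for $\BC(\rhobar)\otimes\omega$ over $K_2$ (see the parenthetical after \eqref{defoftauw'} for the precise comparison of types). By \emph{loc. cit.}, up to formal variables it is prorepresented by $R'/\sum_{j'\in\cJ'}I^{(j')}$ with $R'=\widehat{\bigotimes}_{\cO,j'\in\cJ'}R^{(j')}$ and $R^{(j')}$, $I^{(j')}$ as in \cite[Tables 1--3]{BHHMS}; moreover, each single Hodge type $\lambda$ corresponds to the further quotient by the prime ideals $\fp^{(j'),\lambda_{2f-1-j'}}$ (after shifting the superscript by $(\underline 1,\underline 1)$), and the rings appearing there are already reduced and $\cO$-flat.

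The final, and main, step is to reinstate the polarization. By \eqref{polarizationeqs}, a polarized Kisin module with eigenbasis is determined by $(A^{(j')}_{\fM,\beta})_{0\le j'\le f-1}$, with $A^{(j'+f)}_{\fM,\beta}=\pm\, a^{(j')}\,\textnormal{Ad}(\textnormal{diag}(-1,1))(A^{(j')}_{\fM,\beta})$, and by \eqref{Cmatrix-cocycle} and the relation $N^{(j'+f)}_{\fM^{\rig},\beta}=\textnormal{Ad}(\fw')(N^{(j')}_{\fM^{\rig},\beta})$ of Section \ref{sec:Kisinmods}, the monodromy matrix at $j'+f$ is likewise determined by that at $j'$. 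Rewriting these identities in the coordinates of \cite[Tables 1--3]{BHHMS} for each of the three local shapes $\tw'_{j'}\in\{t_{(1,0)},\fw t_{(1,0)},t_{(0,1)}\}$ produces, at each pair $(j',j'+f)$, precisely the ideal $I^{(j',j'+f)}_{\textnormal{sym},\infty}$ of Table \ref{Table1}: it identifies the variables at $j'+f$ with the $\textnormal{Ad}(\textnormal{diag}(-1,1))$-twists of those at $j'$ and imposes $a^{(j'+f)}=a^{(j')}$. In particular $I^{(j'+f)}$ becomes redundant inside $R'$ given $I^{(j')}$ and $I^{(j',j'+f)}_{\textnormal{sym},\infty}$, so the (symmetrically written) sum $\sum_{j'=0}^{f-1}\big(I^{(j')}+I^{(j'+f)}+I^{(j',j'+f)}_{\textnormal{sym},\infty}\big)$ cuts out exactly the polarized locus, which gives the first isomorphism once the $2f$ leftover variables on the left are matched with the $4$ on the right (the polarization rigidifies one member of each symmetric pair of per-embedding framing data). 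For the second assertion, fixing a Hodge type $\lambda$ with $\lambda_{j'+f}=\lambda_{j'}$ amounts to imposing $\fp^{(j'),\lambda_{2f-1-j'}}$ at every $j'$, compatibly with the symmetry since the ideal at $j'+f$ is the $\textnormal{Ad}(\textnormal{diag}(-1,1))$-image of the one at $j'$; and since passing to $R'/\sum_{j'=0}^{f-1}I^{(j',j'+f)}_{\textnormal{sym},\infty}$ identifies the images of $\fp^{(j')}$ and $\fp^{(j'+f)}$, primality of $\sum_{j'=0}^{2f-1}\big(\fp^{(j'),\lambda_{2f-1-j'}}+I^{(j',j'+f)}_{\textnormal{sym},\infty}\big)$ reduces to the primality of the individual $\fp^{(j')}$ in \cite[Tables 1--3]{BHHMS} over the $f$ embeddings $0,\dots,f-1$.

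The hard part is the coordinatization in the last step: one must carry out the explicit change of variables matching \eqref{polarizationeqs}, \eqref{Cmatrix-cocycle} and the monodromy-symmetry relation with the ideals recorded in Tables \ref{Table1} and \ref{Table2}, for each local shape and each compatible Hodge type, and then verify that these account for \emph{all} the polarization constraints --- i.e.\ that the polarized deformation functor is the closed subfunctor of the $\nG\nL_2$ one cut out by $\sum I^{(j',j'+f)}_{\textnormal{sym},\infty}$, compatibly with the orientation matrices $\dot s'_{\textnormal{or},j'}$ and the descent data, and that the resulting quotient is still reduced and $\cO$-flat. The remaining bookkeeping --- counting the auxiliary variables and checking flatness and reducedness of the symmetry quotient --- is routine.
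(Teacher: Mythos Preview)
Your overall strategy---Tate twist to Hodge type $\leq(3,0)$, pass to polarized Kisin modules with gauge basis, impose height, monodromy and the polarization symmetry of \eqref{polarizationeqs}---is the same as the paper's. But the plan to ``forget the polarization, cite \cite[\S 4]{BHHMS} for the unpolarized presentation $R'/\sum_{j'}I^{(j')}$, then reinstate the polarization by modding out $\sum I^{(j',j'+f)}_{\sym,\infty}$'' hides exactly the step where the paper does the real work.

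The difficulty is that the monodromy ideals $I^{(j'),\nabla}$ contain inexplicit $O(p^{N-2})$ and $O(p^{N-3})$ tails depending on \emph{all} embeddings, so the presentation of \cite{BHHMS} is not literal: it says that $R'/I_\infty \cong R'/I_{\poly}$ via an automorphism $\psi$ of $R'$ produced by an Elkik approximation. There is no reason the $\psi$ coming from the unpolarized argument in \cite{BHHMS} preserves the symmetry ideal, so you cannot simply pass to the quotient afterwards. The paper instead reruns the Elkik argument directly in the polarized setting: it shows (Sublemma \ref{lem:Elkik:1}) that $p^3$ lies in the Elkik ideal of the \emph{polarized} polynomial algebra $\big(R^{(j')}_{\poly}\otimes_\cO R^{(j'+f)}_{\poly}\big)/\big(I^{(j')}_{\poly}+I^{(j'+f)}_{\poly}+I^{(j',j'+f)}_{\sym,\poly}\big)$, using Sublemma \ref{lem:symideal} to reduce to a $13\times 13$ Jacobian; only then does one obtain a symmetry-preserving $\psi$ (Sublemma \ref{lem:automorphism}). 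Your proposal does not supply this.

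There is a second, larger gap. To conclude that the surjection $R'/I_{\poly}\twoheadrightarrow R'/I_\infty$ is an isomorphism one must know the target has at least $2^f$ irreducible components. In the paper this is \emph{not} routine: it invokes the global lifting result of Appendix \ref{appB} (Corollary \ref{main-global-lift-cor}) to exhibit, for every symmetric $\lambda\in\{(2,-1),(1,0)\}^{2f}$, a potentially crystalline lift of $\rhobar$ of type $\tau'_{\tw'}$ and Hodge--Tate weights $\lambda$, via automorphic methods. Without this input you cannot rule out that some components are empty, and then neither reducedness, $\cO$-flatness, nor the identification of the minimal primes $\fp^\lambda$ follows. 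Calling this ``routine bookkeeping'' underestimates the argument.
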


\begin{proof}
We outline the main ideas, following \cite[Pf. of Prop. 4.2.1]{BHHMS}, and using similar notation for ease of comparison.  Note that we use the isomorphism \eqref{isom-def-rings} in order to identify the deformation ring $R^{\leq (2,-1), \tau'_{\tw'}}_{\rhobar}$ with $R^{\leq (3,0), \tau'_{\tw'}}_{\rhobar(1)}$, so that we may apply the same strategy as in \emph{op. cit.}.  Note also that our Hodge--Tate weights and shapes differ from those of \emph{op. cit.} by a shift of $(\underline{1}, \underline{1})$.

Denote by $(\overline{\fM},\overline{\iota}) \in Y^{\leq (3,0),\tau'_{\tw'}}_{\pol}(\bbF)$ the polarized Kisin module furnished by Lemma \ref{kisin-mod-existence}, and let $\overline{\beta}$ be the standard basis of $\overline{\fM}$ (relative to which the matrix of partial Frobenius is $\overline{A}^{(j')} = D_{j'}\tw'_{j'}$).  Thus, $\overline{\beta}$ is a gauge basis of $(\overline{\fM},\overline{\iota})$ in the sense of \cite[Def. 5.16]{koziolmorra}.

We let $D_{\overline{\fM},\overline{\beta},\pol}^{\leq (3,0),\tau'_{\tw'}}$ denote the deformation problem defined by sending a local Artinian $\cO$-algebra $R''$ to the groupoid of tuples $(\fM,\iota,\beta,\jmath)$, where 
\begin{itemize}
  \item $\fM$ is a Kisin module over $R''$ with descent datum of type $\tau'_{\tw'}$ and $p$-adic Hodge type $\leq (3,0)$; 
  \item $\iota$ is a polarization on $\fM$; 
  \item $\beta$ is a gauge basis of $(\fM,\iota)$;
  \item $\jmath: \fM \otimes_{R''} \bbF \stackrel{\sim}{\longrightarrow} \overline{\fM}$ is an isomorphism sending $\beta\otimes 1$ to $\overline{\beta}$ and identifying $\iota \otimes 1$ with $\overline{\iota}$.  
\end{itemize}
(When $(3,0)$ is replaced by $(1,0)$, this deformation problem was denoted $D^{\tau'_{\tw'}, \overline{\beta}}_{\overline{\fM},\pol}$ in \cite[\S 5C1]{koziolmorra}.  The main difference is that the finite height condition which we impose is now given by equation \eqref{finheight} for all $j' \in \cJ'$.)

As in \cite[Thm.\ 4.17]{LLLM}, using now Subsection \ref{subsec:FH}, we see that $D_{\overline{\fM},\overline{\beta},\pol}^{\leq (3,0),\tau'_{\tw'}}$ is represented by the maximal reduced $\cO$-flat quotient of 
$$\widehat{\bigotimes}_{\cO, 0\leq j\leq f-1}R^{(j',j'+f)}\Big/\left(I^{(j'),\leq (3,0)}+I^{(j'+f),\leq (3,0)}+I^{(j',j'+f)}_{\textnormal{sym},\infty}\right),$$
where we define $R^{(j',j' + f)} := R^{(j')}\widehat{\otimes}_{\cO}R^{(j'+f)}$, and where $I^{(j',j'+f)}_{\textnormal{sym},\infty}$ denotes the symmetry ideal of $R^{(j',j' + f)}$ defined in Table \ref{Table1}.  (The generators of $I^{(j',j'+f)}_{\textnormal{sym},\infty}$ come from equation \eqref{polarizationeqs}, while those of $I^{(j'), \leq (3,0)}$ come from equation \eqref{finheight}.  See \cite[Tables 1--3, Row 4]{BHHMS} for explicit generators.)  We denote this quotient by $R_{\overline{\fM},\overline{\beta},\pol}^{\leq (3,0),\tau'_{\tw'}}$.

Exactly as in \cite{BHHMS}, we obtain generators for the ideals $I^{(j'),\nabla}$ defined by the monodromy condition of Proposition \ref{prop:mon:cond}.  These are recorded in row 5 of \cite[Tables 1--3]{BHHMS}.  As in \emph{op. cit.}, we note that the $O(p^{N - 2})$ and $O(p^{N - 3})$ tails depend on variables of all embeddings.  In particular, the ideals $I^{(j'),\nabla}$ are \emph{not} ideals of $R^{(j')}$ in general, but rather of $R'$.

Let $R^{\leq(3,0),\tau'_{\tw'},\nabla}_{\overline{\fM},\overline{\beta},\pol}$ be the maximal reduced and $\cO$-flat quotient of 
$$R'\Big/ \Big(\sum_{j\in\cJ'}(I^{(j'),\leq (3,0)}+I^{(j'),\nabla}+I^{(j',j'+f)}_{\textnormal{sym},\infty})\Big).$$ 
Using that $\textnormal{ad}^0(\rhobar)$ is cyclotomic free, we may proceed as in \cite[Thm. 5.12]{LLLM} (see also \cite[\S\S 5C8 -- 5C10]{koziolmorra}) to conclude that
\begin{equation}
\label{eq:def:ring}
R^{\leq(2,-1), \tau'_{\tw'}}_{\rhobar}\bbra{X_1,\dots,X_{2f}} \cong R^{\leq(3,0), \tau'_{\tw'}}_{\rhobar(1)}\bbra{X_1,\dots,X_{2f}} \cong R^{\leq(3,0),\tau'_{\tw'},\nabla}_{\overline{\fM},\overline{\beta},\pol}\bbra{Y_1,\dots,Y_4}.
\end{equation}
Thus, in order to prove the first part of theorem, it suffices to find an explicit description of $R^{\leq (3,0),\tau'_{\tw'},\nabla}_{\overline{\fM},\overline{\beta},\pol}$.  To this end, let us define 
$$I_\infty := \ker\left(R' \longtwoheadrightarrow R^{\leq(3,0),\tau'_{\tw'},\nabla}_{\overline{\fM},\overline{\beta},\pol}\right).$$
Our task will be to explicitly identify $I_\infty$.

For each $j' \in \cJ'$, we define a dense polynomial sub-$\cO$-algebras $R^{(j')}_{\mathrm{poly}}$ of $R^{(j')}$ by
\begin{align}
\notag  R^{(j')}_{\mathrm{poly}} := \cO\left[c_{1,1},~ d_{1,1},~ x_{1,1}^*,~ \frac{c_{1,2}}{e_{1,1}^*},~ c_{2,1},~ \frac{d_{2,1}}{d_{2,2}^*},~ c_{2,2},~ x_{2,2}^*\right] & \qquad\text{if $\tw'_{2f-1-j'} = t_{(1,0)}$},\\ 
\label{polyrings}  R^{(j')}_{\mathrm{poly}} := \cO\left[c_{1,1},~ \frac{d_{1,1}}{d^*_{1,2}},~ c_{1,2},~ x^*_{1,2},~ c_{2,1},~ x_{2,1}^*,~ c_{2,2},~ \frac{d_{2,2}}{d_{2,1}^*}\right] & \qquad\text{if $\tw'_{2f-1-j'} = \fw t_{(1,0)}$},\\
\notag  R^{(j')}_{\mathrm{poly}} := \cO\left[c_{1,1},~ x^*_{1,1},~ c_{1,2},~ \frac{d_{1,2}}{d_{1,1}^*},~ \frac{c_{2,1}}{e_{2,2}^*},~ c_{2,2},~ d_{2,2},~ x_{2,2}^*\right] & \qquad\text{if $\tw'_{2f-1-j'} = t_{(0,1)}$}. 
\end{align}
We also define several ideals of these polynomial algebras:
\begin{itemize}
\item  For $j' \in \cJ'$, we let $I^{(j',j' + f)}_{\textnormal{sym},\textnormal{poly}} \subset R^{(j')}_{\textnormal{poly}}\otimes_{\cO}R^{(j' + f)}_{\textnormal{poly}}$ denote the ideal $I^{(j',j' + f)}_{\textnormal{sym},\infty} \cap R^{(j')}_{\textnormal{poly}}\otimes_{\cO}R^{(j' + f)}_{\textnormal{poly}}$.  We note that the generators of the symmetry ideal $I^{(j', j' + f)}_{\textnormal{sym},\infty}$ in Table \ref{Table1} are already contained in $R^{(j')}_{\textnormal{poly}}\otimes_{\cO}R^{(j' + f)}_{\textnormal{poly}}$.

\item For $j' \in \cJ'$, we let $I^{(j')}_{\mathrm{poly}}$ denote the ideal of $R_{\mathrm{poly}}^{(j')}$ generated by the elements in row 6 of \cite[Tables 1--3]{BHHMS} \emph{without their $O(p^{N-8})$ tails}.

\item Let $R'_{\mathrm{poly}} := \bigotimes_{\cO,j'\in \cJ'}R_{\mathrm{poly}}^{(j')}$, and define the ideal $I_{\mathrm{poly}} := \sum_{0 \leq j' \leq f - 1}I^{(j')}_{\mathrm{poly}}+I^{(j'+f)}_{\mathrm{poly}}+I_{\textnormal{sym}, \textnormal{poly}}^{(j',j'+f)}$.
\end{itemize}

As in the proof of \cite[Prop. 4.2.1]{BHHMS}, we get that $I^{(j')}_{\mathrm{poly}}\subset (I_{\infty},\ p^{N-5})$ for all $j'$, hence $I^{(j')}_{\mathrm{poly}}+I^{(j'+f)}_{\mathrm{poly}}+I_{\textnormal{sym},\textnormal{poly}}^{(j',j'+f)}\subset (I_{\infty},\ p^{N-5})$ for all $j'$ and finally $I_{\mathrm{poly}}\subset (I_{\infty},\ p^{N-5})$.

Now fix $j'\in \cJ'$.  We examine the following commutative diagram:
\begin{equation}
\label{commdiag:poly}
\begin{tikzcd}[column sep=12ex]
\cO \ar[r] \ar[d] & R'/I_\infty \ar[d, twoheadrightarrow] \\
\Big(R^{(j')}_{\mathrm{poly}}\otimes_{\cO}R^{(j'+f)}_{\mathrm{poly}}\Big)\Big/\Big(I^{(j')}_{\mathrm{poly}}+I^{(j'+f)}_{\mathrm{poly}}+I_{\textnormal{sym},\textnormal{poly}}^{(j',j'+f)}\Big) \ar[r, "\phi^{(j',j' + f)}"] & R'/(I_\infty, p^{N - 5})\\
\end{tikzcd}
\end{equation}
where $\phi^{(j',j' + f)}$ is induced by the inclusion $R^{(j')}_{\mathrm{poly}} \otimes_{\cO} R^{(j' + f)}_{\textnormal{poly}}\longhookrightarrow R'$.  Further, we let $H^{(j', j' + f)}$ be the Elkik ideal relative for the finitely presented algebra 
$$\cO \longrightarrow \Big(R^{(j')}_{\mathrm{poly}}\otimes_{\cO}R^{(j'+f)}_{\mathrm{poly}}\Big)\Big/\Big(I^{(j')}_{\mathrm{poly}}+I^{(j'+f)}_{\mathrm{poly}}+I_{\textnormal{sym},\textnormal{poly}}^{(j',j'+f)}\Big),$$
as in \cite[\S 0.2]{elkik}. (Thus $H^{(j',j' + f)}$ is an ideal of the polynomial ring $R_{\poly}^{(j')}\otimes_{\cO}R_{\poly}^{(j' + f)}$.)

We record several results about the ideal $I^{(j')}_{\mathrm{poly}} + I^{(j'+f)}_{\mathrm{poly}} + I_{\textnormal{sym},\textnormal{poly}}^{(j',j'+f)}$.  Note first that by using the explicit generators of $I^{(j', j' + f)}_{\sym, \poly}$ given in Table \ref{Table1}, any element of $I^{(j' + f)}_{\mathrm{poly}}$ can be written as a sum of elements in $I^{(j')}_{\poly}$ and $I^{(j',j' + f)}_{\sym,\poly}$ (for example, for $\tw'_{2f - 1 - j'} = t_{(1,0)}$, we replace $c_{1,2}^{(j' + f)}$ by $-c_{1,2}^{(j')}e_{1,1}^{*(j' + f)}d_{2,2}^{*(j' + f)}$, etc.).  We therefore deduce the following.

\begin{sublemma}
\label{lem:symideal}
We have $I^{(j')}_{\mathrm{poly}} + I^{(j'+f)}_{\mathrm{poly}} + I_{\textnormal{sym},\textnormal{poly}}^{(j',j'+f)} = I^{(j')}_{\mathrm{poly}} + I_{\textnormal{sym},\textnormal{poly}}^{(j',j'+f)}$.  
\end{sublemma}

We now imitate the proof of \cite[Lem. 4.2.3]{BHHMS}.

\begin{sublemma}
\label{lem:Elkik:1}
We have $p^{3}\in H^{(j',j'+f)} + I^{(j')}_{\poly} + I^{(j'+f)}_{\poly} + I_{\sym,\poly}^{(j',j'+f)}$.
\end{sublemma}

\begin{proof}[Proof of Sublemma \ref{lem:Elkik:1}]
We suppose $\tw'_{2f - 1 - j'} = \tw'_{f - 1 - j'} = \fw t_{(1,0)}$, the other cases being similar.  Let us label the five generators of $I^{(j')}_{\poly}$ by $G_1^{(j')}, \ldots, G_5^{(j')}$ (and similarly for $I^{(j' + f)}_{\poly}$), and label the eight generators of $I^{(j', j' + f)}_{\sym, \poly}$ by $\Sigma_1, \ldots, \Sigma_8$.  For brevity we denote the ideal $I^{(j')}_{\mathrm{poly}}+I^{(j'+f)}_{\mathrm{poly}}+I_{\textnormal{sym},\textnormal{poly}}^{(j',j'+f)}$ by $J$, and write $R^{(j')}_{\poly}\otimes_{\cO}R^{(j' + f)}_{\poly} = \cO[X_1^{(j')}, \ldots, X_8^{(j')}, X_1^{(j' + f)}, \ldots, X_8^{(j' + f)}]$ (relative to the ordering given in equations \eqref{polyrings}).

The Elkik ideal $H^{(j',j' + f)}$ (cf. \cite[\S0.2]{elkik}) is defined as 
$$H^{(j', j' + f)} = \sum_{\alpha} ((\alpha):J)\Delta_{(\alpha)},$$
where $\alpha$ is a subset of the generators $\{G_1^{(j')}, \ldots, G_5^{(j')}, G_1^{(j' + f)}, \ldots, G_5^{(j' + f)}, \Sigma_1, \ldots, \Sigma_8\}$ of $J$, $(\alpha)$ is the ideal of $R^{(j')}_{\poly}\otimes_{\cO}R^{(j' + f)}_{\poly}$ generated by $\alpha$, and $\Delta_{(\alpha)}$ denotes the ideal generated by the $|\alpha|\times|\alpha|$-minors of the Jacobian matrix $(\partial F/\partial X_\ell^{(k)})_{F \in \alpha, 1 \leq \ell \leq 8, k = j', j' + f}$.  In particular, if we take 
$$\alpha' = \{G_1^{(j')}, \ldots, G_5^{(j')}, \Sigma_1, \ldots, \Sigma_8\},$$
then Sublemma \ref{lem:symideal} implies that $((\alpha'):J) = R^{(j')}_{\poly} \otimes_\cO R^{(j' + f)}_{\poly}$.  Thus, $H^{(j', j' + f)}$ contains $\Delta_{(\alpha')}$.

The ideal $\Delta_{(\alpha')}$ contains the determinant of the $13 \times 13$ matrix obtained by deleting the rows corresponding to the variables $X_4^{(j')}, X_6^{(j')},$ and $X_8^{(j')}$.  This matrix is block upper-triangular (as $\partial G_i^{(j')}/\partial X_\ell^{(j' + f)} = 0$), and one checks that its determinant is equal to
\begin{equation}
\label{blockdet}
\frac{\partial G_5^{(j')}}{\partial X_2^{(j')}}\det\left(\frac{\partial \Sigma_i}{\partial X_\ell^{(j' + f)}}\right)_{1 \leq i, \ell \leq 8}.
\end{equation}
In the original coordinates \eqref{polyrings}, the determinant $\det(\partial \Sigma_i/\partial X_\ell^{(j' + f)})_{1 \leq i, \ell \leq 8}$ is equal to 
$$-\left(d_{1,2}^{*(j')}d_{2,1}^{*(j')}\right)^7d_{1,2}^{*(j' + f)}d_{2,1}^{*(j' + f)}.$$
Multiplying equation \eqref{blockdet} by $(d_{1,2}^{*(j' + f)}d_{2,1}^{*(j' + f)})^6$ gives an element of $H^{(j', j' + f)}$; using the fact that $(d_{1,2}^{*(j')}d_{2,1}^{*(j')}d_{1,2}^{*(j' + f)}d_{2,1}^{*(j' + f)})^7 - 1 \in I^{(j',j' + f)}_{\sym,\poly}$ we conclude that
$$\frac{\partial G_5^{(j')}}{\partial X_2^{(j')}} \in H^{(j', j' + f)} + I^{(j', j' + f)}_{\sym,\poly}.$$
The rest of the proof now follows exactly as in \cite[Lem. 4.2.3]{BHHMS}.
\end{proof}

Now let us write $A = R'/I_\infty$, $R^{(j')}_{\poly}\otimes_{\cO}R^{(j' + f)}_{\poly} = \cO[X_1^{(j')}, \ldots, X_8^{(j')}, X_1^{(j' + f)}, \ldots, X_8^{(j' + f)}]$, and let $J$ denote the ideal of the latter ring corresponding to $I^{(j')}_{\poly} + I^{(j'+f)}_{\poly} + I_{\sym,\poly}^{(j',j'+f)}$.  Pushing out the top left corner of the diagram \eqref{commdiag:poly} and using the universal property of pushouts gives
\begin{center}
\begin{tikzcd}
\cO \ar[r] \ar[d]\arrow[dr, phantom, "\lrcorner", very near end] & A \ar[d] \ar[ddr, twoheadrightarrow, bend left] & \\
\cO[X_1^{(j')}, \ldots, X_8^{(j' + f)}]/J \ar[r] \ar[rrd, "\phi^{(j',j' + f)}", bend right=20]& A[X_1^{(j')}, \ldots, X_8^{(j' + f)}]/J \ar[dr, "\exists \psi"] & \\
 & &  A/p^{N - 5}\\
\end{tikzcd}
\end{center}
By choosing a lift of $\psi$, we obtain a tuple $\underline{a} = (a_1^{(j')}, \ldots, a_8^{(j')}, a_1^{(j' + f)}, \ldots, a_8^{(j' + f)}) \in A^{16}$ which satisfies $J(\underline{a}) \subset p^{N - 5}A$.

Now set $B = A[X_1^{(j')}, \ldots, X_8^{(j' + f)}]/J$, and let $H_B^{(j', j' + f)} \subset A[X_1^{(j')}, \ldots, X_8^{(j' + f)}]$ denote the Elkik ideal for the finitely presented algebra $A \longrightarrow B$.  In particular, $H_B^{(j', j' + f)}$ contains the image of $H^{(j', j' + f)}$, and thus Sublemma \ref{lem:Elkik:1} gives 
$$p^3 \in H^{(j', j' + f)}(\underline{a}) + J(\underline{a}) \subset H_B^{(j', j' + f)}(\underline{a}) + p^{N - 5}A.$$
Since $A$ is $p$-adically complete and $N - 5 > 3$, this implies $p^3 \in H_B^{(j', j' + f)}(\underline{a})$.  As in \cite{BHHMS}, we apply \cite[Lem. 1]{elkik} to obtain an element $\widetilde{\underline{a}} \in A^{16}$ congruent to $\underline{a}$ modulo $p^{N - 8}$ and which satisfies $J(\widetilde{\underline{a}}) = 0$.  This means that we can construct an $\cO$-algebra homomorphism 
\begin{eqnarray*}
\widetilde{\phi}^{(j', j' + f)}: \cO[X_1^{(j')}, \ldots, X_8^{(j' + f)}]/J & \longrightarrow & R'/I_\infty \\
X_{\ell}^{(k)} & \longmapsto & \widetilde{a}_\ell^{(k)}
\end{eqnarray*}
which agrees with $\phi^{(j', j' + f)}$ modulo $p^{N - 8}$.  Taking a tensor product over $0 \leq j' \leq f - 1$, we obtain an  $\cO$-algebra homomorphism
$$\widetilde{\phi}: R'_{\poly}/I_\poly \longrightarrow R'/I_\infty$$
which agrees with the map induced by the natural inclusion $R_\poly \longhookrightarrow R'$ modulo $p^{N - 8}$.  Since $N \geq 10$, this map is continuous and extends to a surjection
$$\widetilde{\phi}: R'/I_\poly \longtwoheadrightarrow R'/I_\infty.$$

By Sublemma \ref{lem:symideal}, we have a (non-canonical) isomorphism $(R^{(j)}\otimes_{\cO}R^{(j' + f)})/(I^{(j')}_{\mathrm{poly}}+I^{(j' + f)}_{\mathrm{poly}}+I_{\textnormal{sym}}^{(j', j'+f)})\cong R^{(j')}/I^{(j')}_{\mathrm{poly}}$, with the latter being reduced, $\cO$-flat, with two geometrically integral irreducible components of relative dimension 3 over $\cO$ by \cite[Lem. 3.3.1]{BHHMS}.  As in \textit{op. cit.}, the surjection $\widetilde{\phi}$ is an isomorphism provided that the target $R'/I_\infty \cong R^{\leq (3,0), \tau'_{\tw'}, \nabla}_{\overline{\fM}, \overline{\beta}, \pol}$ has at least $2^f$ irreducible components.  By equation \eqref{eq:def:ring}, it suffices to show $R^{\leq(2,-1),\tau'_{\tw'}}_{\rhobar}$ has at least $2^f$ irreducible components.

In order to verify the above claim, we proceed as follows.  Let $\lambda = (\lambda_{j'}) \in \{(2,-1),(1,0)\}^{2f}$ be an element satisfying $\lambda_{j' + f} = \lambda_{j'}$.  We first claim that
\begin{equation}
\label{eq:int}
\JH\left(\overline{\sigma(\tau'_{\tw'})\otimes_E \bigotimes_{E,0 \leq j' \leq f - 1} V_E(\lambda_{j'}-(1,0))^{(j')}}\right) \cap \nW^?(\rhobar) \neq \emptyset
\end{equation}
(see Subsection \ref{subsec:WM} for the definition of $V_E(\lambda_{j'}-(1,0))^{(j')}$).  Indeed, proceeding as in \cite{BHHMS}, we see that the left-hand side contains $\JH(\overline{\sigma(\tau'_{\tw'})}) \cap \nW^?(\rhobar)$, which is non-empty thanks to Lemma \ref{serre-wt-intn-JHtype}.  Applying Corollary \ref{main-global-lift-cor}, we see that $\rhobar$ admits a potentially crystalline lift with Hodge--Tate weights given by $\lambda$, of inertial type $\tau'_{\tw'}$, and whose multiplier is $\varepsilon$.  Since such a lift lies on the irreducible component $\Spec(R^{\lambda, \tau'_{\tw'}}_{\rhobar})$ of $\Spec(R^{\leq (2,-1),\tau'_{\tw'}}_{\rhobar})$, the claim of the previous paragraph is verified, and $\widetilde{\phi}$ is an isomorphism.

As $\widetilde{\phi}$ is an isomorphism, we have explicitly identified the structure of the ring $R^{\leq (3,0), \tau'_{\tw'},\nabla}_{\overline{\fM},\overline{\beta},\pol}$.  Furthermore, since $\widetilde{\phi}$ induces the natural map modulo $p^{N-8}$, we get that $(I_{\mathrm{poly}},p^{N-8})=(I_\infty,p^{N-8})$.

To conclude, we examine the irreducible components of $R'/I_\infty$.  The following result is proved exactly as \cite[Lem. 4.2.4]{BHHMS}.

\begin{sublemma}\label{lem:automorphism}
There exists an automorphism of local $\cO$-algebras $\psi:R'\congto R'$ such that
\begin{center}
\begin{tikzcd}
R'\ar[r, "\psi", "\sim"'] \ar[d, twoheadrightarrow] &R'\ar[d, twoheadrightarrow]\\
R'/I_{\mathrm{poly}} \ar[r, "\wt{\phi}", "\sim"'] & R'/I_\infty
\end{tikzcd}
\end{center}
commutes.  Further, $\psi$ may be chosen so that it induces the identity modulo $p^{N-8}$ and $\psi(\sum_{j'\in \cJ'}I_{\sym,\infty}^{(j',j' + f)}) = \sum_{j'\in \cJ'}I_{\sym,\infty}^{(j',j' + f)}$.
\end{sublemma}

\begin{proof}[Proof of Sublemma \ref{lem:automorphism}]
  This follows in a similar manner to \cite[Lem. 4.2.4]{BHHMS}: if $X$ is a power series generator of $R^{(j')}$ with $0 \leq j' \leq f - 1$, we define $\psi(X) = X + p^{N - 8}\varepsilon(X)$ (with $\varepsilon$ as in \textit{op. cit.}), and if $\Sigma$ is a generator of $\sum_{j' \in \cJ'}I_{\sym,\infty}^{(j', j' + f)}$ we define $\psi(\Sigma) = \Sigma$.  This uniquely determines a ring isomorphism satisfying the required properties.
\end{proof}

Thus, $\psi$ identifies $I_{\mathrm{poly}}$ with $I_\infty$, and we obtain 
$$I_\infty=\sum_{j' = 0}^{f - 1} I^{(j')}+I^{(j'+f)}+I_{\textnormal{sym},\infty}^{(j',j'+f)},$$ 
where $I^{(j')} := \psi(I^{(j')}_{\poly})$ is the ideal of $R'$ given by the ``explicit'' generators in row 6 of \cite[Tables 1--3]{BHHMS}.  Moreover, it follows that the distinct minimal primes containing $I_\infty$ are exactly the ideals 
$$\fp^{\lambda} := \sum_{j' = 0}^{f - 1}\big(\fp^{(j'),\lambda_{2f-1-j'}} + I_{\textnormal{sym},\infty}^{(j',j'+f)}\big)$$ 
for $\lambda\in\{(2,-1),~(1,0)\}^{2f}$ satisfying $\lambda_{j'} = \lambda_{j' + f}$, where the $\fp^{(j'),\lambda_{2f-1-j'}}$ are defined in rows 7 and 8 of \cite[Tables 1--3]{BHHMS}.  The last statement in the theorem is proved exactly as in the last two paragraphs of the proof of \cite[Prop. 4.2.1]{BHHMS}.
\end{proof}

\begin{cor}
  \label{cor:single-type-cor}
  Suppose $\lambda = (\lambda_{j'}) \in \{(2,-1),~(1,0)\}^{2f}$ satisfies $\lambda_{j' + f} = \lambda_{j'}$, and let $\tw' \in \textnormal{Adm}'^\vee(t_{\eta'})^\sym$.  Then the special fiber of $R^{\lambda, \tau'_{\tw'}}_{\rhobar}$ is reduced and all its irreducible components are formally smooth over $\bbF$.
\end{cor}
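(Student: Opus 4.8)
The plan is to read the statement off the explicit presentation of the single‑type deformation rings in Theorem \ref{thm:height+monodromy}, transporting it embedding‑pair by embedding‑pair to the corresponding (known) fact for $\nG\nL_2(K_2)$. The starting point is the final assertion of Theorem \ref{thm:height+monodromy}: there is an isomorphism $R^{\lambda,\tau'_{\tw'}}_{\rhobar}\bbra{X_1,\dots,X_{2f}}\cong (R'/\fp^\lambda)\bbra{Y_1,\dots,Y_4}$, where $\fp^\lambda=\sum_{j'=0}^{f-1}\bigl(\fp^{(j'),\lambda_{2f-1-j'}}+I^{(j',j'+f)}_{\textnormal{sym},\infty}\bigr)$ is a prime of $R'=\widehat{\bigotimes}_{\cO,j'\in\cJ'}R^{(j')}$. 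Since reducedness of the special fiber and formal smoothness of its irreducible components are insensitive to adjoining finitely many power‑series variables, it suffices to show that $R'/(\fp^\lambda,\varpi)$ is reduced with all irreducible components formally smooth over $\bbF$.

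Next I would factor $R'/\fp^\lambda$ over the $f$ pairs of embeddings $\{j',j'+f\}_{0\le j'\le f-1}$. As in the proof of Sublemma \ref{lem:symideal} and the closing paragraphs of the proof of Theorem \ref{thm:height+monodromy}, the generators of $I^{(j',j'+f)}_{\textnormal{sym},\infty}$ recorded in Table \ref{Table1} express each power‑series generator of $R^{(j'+f)}$ as a unit multiple of one for $R^{(j')}$; this produces a (non‑canonical) isomorphism $R'/\fp^\lambda\cong\widehat{\bigotimes}_{\cO,\,0\le j'\le f-1}R^{(j')}/\fp^{(j'),\lambda_{2f-1-j'}}$. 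Reducing modulo $\varpi$ and using the $\cO$‑flatness of each factor then gives $R'/(\fp^\lambda,\varpi)\cong\widehat{\bigotimes}_{\bbF,\,0\le j'\le f-1}\bigl(R^{(j')}/(\fp^{(j'),\lambda_{2f-1-j'}},\varpi)\bigr)$.

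Finally I would invoke the $\nG\nL_2$ input: by \cite[Lem. 3.3.1 and Tables 1--3]{BHHMS} — the general linear analog of the present statement — each ring $R^{(j')}/(\fp^{(j'),\lambda_{2f-1-j'}},\varpi)$ is reduced with all irreducible components formally smooth over $\bbF$, equivalently each of its minimal prime quotients is a power‑series ring over $\bbF$. It then remains to check that a completed tensor product over $\bbF$ of finitely many complete Noetherian local $\bbF$‑algebras with residue field $\bbF$, each reduced and with power‑series‑ring minimal prime quotients, is again reduced with power‑series‑ring minimal prime quotients; this is standard, using $\bbF\bbra{s}\,\widehat{\otimes}_{\bbF}\,\bbF\bbra{t}\cong\bbF\bbra{s,t}$ together with the geometric reducedness of power‑series rings over the perfect field $\bbF$. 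Transporting back through the two power‑series extensions $\bbra{Y_1,\dots,Y_4}$ and $\bbra{X_1,\dots,X_{2f}}$ then yields the corollary.

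The step I expect to be the main obstacle is this $\nG\nL_2$ input: extracting from \cite[Tables 1--3]{BHHMS} the precise fact that the special fibers of the single‑embedding components $R^{(j')}/\fp^{(j'),\lambda}$ are reduced with formally smooth components, and confirming that the symmetry‑ideal bookkeeping of the middle step is genuinely compatible with passing to these minimal prime quotients, so that the displayed factorization descends to components and to special fibers. The remaining ingredients — stability of reducedness and of formal smoothness of components under completed tensor products over a field, and under adjoining power‑series variables — are routine commutative algebra.
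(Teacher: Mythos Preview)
Your approach is essentially the paper's: reduce via Theorem \ref{thm:height+monodromy} and the symmetry relations to a completed tensor product of single-embedding pieces, then invoke the $\nG\nL_2$ input from \cite{BHHMS}. The paper's own proof simply says this follows \cite[Cor.~4.2.6]{BHHMS} once Theorem \ref{thm:height+monodromy} and Sublemma \ref{lem:automorphism} are in hand.

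The one point you should sharpen is the factorization over $\cO$. The ideals $\fp^{(j'),\lambda_{2f-1-j'}}$ appearing in Theorem \ref{thm:height+monodromy} are ideals of $R'$, not of $R^{(j')}$: their generators carry $O(p^{N-8})$ tails depending on variables from \emph{all} embeddings (see row 6 of \cite[Tables~1--3]{BHHMS} and the remark after the definition of $I^{(j'),\nabla}$ in the proof of Theorem \ref{thm:height+monodromy}). So the displayed isomorphism $R'/\fp^\lambda\cong\widehat{\bigotimes}_{\cO,\,0\le j'\le f-1}R^{(j')}/\fp^{(j'),\lambda_{2f-1-j'}}$ is not literally well-formed as written. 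This is precisely the ``symmetry-ideal bookkeeping'' you flag as the main obstacle, and the paper resolves it via Sublemma \ref{lem:automorphism}: the automorphism $\psi$ of $R'$ is the identity modulo $p^{N-8}$ and fixes the generators of $I^{(j',j'+f)}_{\sym,\infty}$, so it identifies each $\fp^{(j'),\lambda}$ with its tail-free polynomial version $\fp^{(j'),\lambda}_{\poly}\subset R^{(j')}_{\poly}$, for which your factorization \emph{is} valid. (Alternatively, since you only need the special fiber and the tails lie in $p^{N-8}R'$, you could reduce modulo $\varpi$ \emph{before} factoring rather than after.) With this correction in place, the rest of your argument --- invoking \cite[Lem.~3.3.1]{BHHMS} embeddingwise and the stability of reducedness and formal smoothness of components under completed tensor products over $\bbF$ --- is correct.
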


\begin{proof}
  Using Theorem \ref{thm:height+monodromy} and Sublemma \ref{lem:automorphism}, this follows in exactly the same way as the proof of \cite[Cor. 4.2.6]{BHHMS} (noting that the isomorphism of Sublemma \ref{lem:automorphism} fixes the generators of the symmetry ideal, so that we obtain the analog of \cite[equation (27)]{BHHMS}, and therefore we may reduce geometric questions about $R^{\lambda, \tau'_{\tw'}}_{\rhobar}$ to the corresponding ``polynomial'' ideals).
\end{proof}

\subsection{Multi-type deformation rings for essentially conjugate self-dual representations}
\label{sec:multi-type}

We maintain the setup of the previous section, so that $\rhobar: \Gamma_K \longrightarrow {}^C\nU_{1,1}(\bbF)$ is a tamely ramified $L$-parameter satisfying $\widehat{\imath}\circ \rhobar = \omega$, and such that $\rhobar|_{I_K} \cong \overline{\tau}(s,\mu + \eta)$ for some $\mu$ lying $N$-deep in the fundamental $p$-alcove.  We fix $\sigma \in \nW^?(\rhobar)$, and let $R^{\leq (2,-1),\sigma}_{\rhobar}$ denote the maximal reduced, $\cO$-flat quotient of $R^{\Box}_{\rhobar}$ which parametrizes potentially crystalline lifts $\rho$ of $\rhobar$ which satisfy the following:
\begin{itemize}
  \item $\HT_{j'}(\BC(\rho)) \in \{(2,-1),~(1,0)\}$ for all $j' \in \cJ'$;
  \item $\HT_{j'}(\BC(\rho)) = \HT_{j' + f}(\BC(\rho))$ for all $j' \in \cJ'$;
  \item $\BC(\rho)$ has inertial type $\tau'$ for some tame inertial type satisfying $(\tau')^{\varphi^{f}} \cong \tau'^\vee$ and $\sigma \in \JH(\overline{\sigma(\tau')})$;
  \item $\widehat{\imath}\circ \rho = \varepsilon$.
\end{itemize}
Analogously to the isomorphism \eqref{isom-def-rings}, the map $\rho \longmapsto \rho(1)$ induces an isomorphism between $R^{\leq (2,-1),\sigma}_{\rhobar}$ and the deformation rings considered in \cite[\S 4.3]{BHHMS}.  To see this, note that 
\begin{eqnarray*}
  \sigma \in \nW^?(\rhobar) & \stackrel{\textnormal{\cite[Thm. 4.9]{koziolmorra}}}{\Longleftrightarrow} & \BC(\sigma) \in \nW^?\big(\BC(\rhobar)\big)\\
  & \stackrel{\textnormal{\cite
[Prop. 6.23]{herzig:duke}}}{\Longleftrightarrow} & \BC(\sigma)\otimes_{\bbF}\nN_{k_{K_2}/\bbF_p}\circ \det \in \nW^?\big(\BC(\rhobar(1))\big)
\end{eqnarray*}
and similarly
\begin{eqnarray*}
  \sigma \in \JH\big(\overline{\sigma(\tau')}\big) & \stackrel{\textnormal{\cite[Lem. 3.26]{koziolmorra}}}{\Longleftrightarrow} & \BC(\sigma) \in \JH\big(\overline{\sigma'(\tau')}\big) \\
  & \Longleftrightarrow & \BC(\sigma)\otimes_{\bbF}\nN_{k_{K_2}/\bbF_p}\circ \det \in \JH\big(\overline{\sigma'(\tau')}\otimes_{\bbF}\nN_{k_{K_2}/\bbF_p}\circ \det\big),
\end{eqnarray*}
which verifies the claimed isomorphism.  This will allow us to use the strategy of \cite[\S 4.3]{BHHMS}.

We let $\tw'_\sigma \in \textnormal{Adm}'^\vee(t_{\eta'})^\sym$ denote the unique element satisfying $(\tw'_\sigma)_{j'} = \theta(\sigma)_{j'~(\textnormal{mod}~f)}$ for all $j' \in \cJ'$ (where $\theta$ is the bijection of Lemma \ref{serre-wt-intn-JHtype}).  We then define 
$$X(\sigma) := \{ \tw' \in \mathrm{Adm}'^\vee(t_{\eta'})^\sym : \tw'_{j'}\neq (\tw'_\sigma)_{j'}~\textnormal{for all $j' \in \cJ'$}\},$$
so that
$\Spec R^{\leq(2,-1),\sigma}_{\rhobar}$ is the flat closure of $\bigcup_{\tw' \in X(\sigma)} \Spec R^{\leq(2,-1),\tau'_{\tw'}}_{\rhobar}[1/p]$ inside $\Spec R^{\square}_{\rhobar}$.

\begin{thm}\label{prop:multitype-def-ring}
  Let $\rhobar$ and $\sigma$ be as above.  We have an isomorphism
$$
R^{\leq(2,-1),\sigma}_{\rhobar}\bbra{X_1,\dots,X_{2f}}\cong\bigg(
S'\Big/\bigcap_{\tw' \in X(\sigma)} \sum_{j' = 0}^{2f - 1} \big(I_{\tw'}^{(j')} + I_{\tw'}^{(j' + f)} + J_{\textnormal{sym},\infty}^{(j',j'+f)}\big)
\bigg)\bbra{Y_1,\dots,Y_4},
$$
where $S' := \widehat{\bigotimes}_{\cO, j'\in\cJ'}S^{(j')}$
and the ring $S^{(j')}$ and the ideals $I_{\tw'}^{(j')}$ of $S'$ are as in \cite[Table 4]{BHHMS} if
$(\tw'_\sigma)_{2f - 1 - j'} = t_{(0,1)}$, whereas $S^{(j')}$ and the ideals $I_{\tw'}^{(j')}$ of $S'$ are as in \cite[Table 5]{BHHMS} if $(\tw'_\sigma)_{2f - 1 - j'} = t_{(1,0)}$, and the ideals $J_{\textnormal{sym},\infty}^{(j',j'+f)}$ are defined in Table \ref{Table2}.

Via this isomorphism, for any choice of $\lambda = (\lambda_j) \in \{(2,-1), (1,0)\}^{2f}$ satisfying $\lambda_{j' + f} = \lambda_{j'}$ for all $j'$, and any choice of $\tw' \in X(\sigma)$, the kernel of the natural surjection
$$R^{\leq(2,-1),\sigma}_{\rhobar}\bbra{X_1,\dots,X_{2f}} \longtwoheadrightarrow R^{\lambda, \tau'_{\tw'}}_{\rhobar}\bbra{X_1,\dots,X_{2f}}$$ 
is generated by the prime ideal $\sum_{j' = 0}^{2f - 1} (\fp^{(j'),\lambda_{2f - 1 - j'}}_{\tw'} + J_{\textnormal{sym},\infty}^{(j',j'+f)})$ of $S'$, where the ideals $\fp^{(j'),\lambda_{2f - 1 - j'}}_{\tw'}$ of $S'$ are found in \cite[Tables 4,5]{BHHMS}.
\end{thm}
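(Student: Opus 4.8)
The plan is to reduce Theorem~\ref{prop:multitype-def-ring} to the corresponding statement for $\nG\nL_2$ in \cite[\S 4.3]{BHHMS}, exactly as was done in the single-type case (Theorem~\ref{thm:height+monodromy}), by passing to $\BC(\rhobar)(1)$ and keeping careful track of the symmetry ideals introduced by the polarization. First I would invoke the isomorphism $\rho \longmapsto \rho(1)$ and the two chains of equivalences displayed just before the statement (using \cite[Thm. 4.9]{koziolmorra}, \cite[Prop. 6.23]{herzig:duke}, \cite[Lem. 3.26]{koziolmorra}) to identify $R^{\leq(2,-1),\sigma}_{\rhobar}$ with the multi-type deformation ring attached to $\BC(\rhobar(1))$ and the Serre weight $\BC(\sigma) \otimes \nN_{k_{K_2}/\bbF_p}\circ\det$, which is precisely the object treated in \cite[\S 4.3]{BHHMS}. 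Then I would run the polarized Kisin module machinery from Section~\ref{sec:Kisinmods}: for each $\tw' \in X(\sigma)$ the polarized Kisin module $(\overline{\fM},\overline{\iota})$ of Lemma~\ref{kisin-mod-existence} exists and is unique, its gauge basis $\overline{\beta}$ is available, and the finite height and monodromy conditions are governed by equations \eqref{finheight}, \eqref{polarizationeqs}, and Proposition~\ref{prop:mon:cond}. This produces, for each $\tw'$, a presentation of the framed, polarized deformation ring $R^{\leq(3,0),\tau'_{\tw'},\nabla}_{\overline{\fM},\overline{\beta},\pol}$ as a quotient of $S' = \widehat{\bigotimes}_{\cO,j'\in\cJ'}S^{(j')}$ by $\sum_{j'}(I_{\tw'}^{(j')} + I_{\tw'}^{(j'+f)} + J_{\textnormal{sym},\infty}^{(j',j'+f)})$, with the $S^{(j')}$ and $I_{\tw'}^{(j')}$ read off from \cite[Tables 4, 5]{BHHMS} according to the value of $(\tw'_\sigma)_{2f-1-j'}$ and the $J^{(j',j'+f)}_{\textnormal{sym},\infty}$ of Table~\ref{Table2} coming from \eqref{polarizationeqs}.

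Next I would glue the single-type components. By definition $\Spec R^{\leq(2,-1),\sigma}_{\rhobar}$ is the flat closure of $\bigcup_{\tw' \in X(\sigma)} \Spec R^{\leq(2,-1),\tau'_{\tw'}}_{\rhobar}[1/p]$ inside $\Spec R^{\square}_{\rhobar}$, so after adding the framing variables $X_1,\dots,X_{2f}$ and the auxiliary variables $Y_1,\dots,Y_4$, it is cut out by the intersection of the ideals produced in the previous step. The key inputs are: (i) Theorem~\ref{thm:height+monodromy}, which identifies each single-type ring $R^{\leq(2,-1),\tau'_{\tw'}}_{\rhobar}\bbra{X_1,\dots,X_{2f}}$ with $R'/(\sum_{j'}(I^{(j')}+I^{(j'+f)}+I_{\textnormal{sym},\infty}^{(j',j'+f)}))$ and describes the prime $\fp^{\lambda}$ corresponding to a fixed Hodge--Tate weight $\lambda$; (ii) the compatibility between the tables \cite[Tables 1--3]{BHHMS} and \cite[Tables 4--5]{BHHMS}, which is the $\nG\nL_2$-level statement that the single-type rings can be realized as quotients of the multi-type ring $S'/(\cdots)$; and (iii) the automorphism $\psi$ of Sublemma~\ref{lem:automorphism} (and its multi-type analog), which lets us replace the ``polynomial'' ideals by the honest ones while preserving the symmetry ideals. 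I would then combine these with the Elkik-approximation argument (Sublemmas~\ref{lem:symideal}, \ref{lem:Elkik:1} and their multi-type counterparts, following \cite[\S 4.3]{BHHMS}) to conclude that the natural surjection $S'_{\poly}/I_{\poly} \longtwoheadrightarrow S'/I_\infty$ is an isomorphism; the component count needed to upgrade surjectivity to an isomorphism is again supplied by the intersection statement \eqref{eq:int} together with Corollary~\ref{main-global-lift-cor}, applied to each $\tw' \in X(\sigma)$.

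Finally, for the statement about the kernel of $R^{\leq(2,-1),\sigma}_{\rhobar}\bbra{X_1,\dots,X_{2f}} \longtwoheadrightarrow R^{\lambda,\tau'_{\tw'}}_{\rhobar}\bbra{X_1,\dots,X_{2f}}$: this surjection factors through $R^{\leq(2,-1),\tau'_{\tw'}}_{\rhobar}\bbra{X_1,\dots,X_{2f}}$, and composing the identification of the multi-type ring with $S'/\bigcap_{\tw'}(\cdots)$ with the single-type result of Theorem~\ref{thm:height+monodromy} (transported to $S'$ via the tables), the kernel is exactly the minimal prime $\sum_{j'=0}^{2f-1}(\fp^{(j'),\lambda_{2f-1-j'}}_{\tw'} + J_{\textnormal{sym},\infty}^{(j',j'+f)})$ of $S'$ singled out by $(\tw',\lambda)$; that this ideal is prime follows, as in \cite[Prop. 4.3.1]{BHHMS}, from \cite[Lem. 3.3.1]{BHHMS} and Sublemma~\ref{lem:symideal} (which says that modulo the symmetry ideal the quotient $S^{(j')}/\fp^{(j'),\bullet}_{\tw'}\otimes S^{(j'+f)}/\fp^{(j'+f),\bullet}_{\tw'}$ collapses to a single geometrically integral factor). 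The main obstacle I anticipate is purely bookkeeping rather than conceptual: one must verify that the symmetry ideals $J^{(j',j'+f)}_{\textnormal{sym},\infty}$ of Table~\ref{Table2} interact correctly with the multi-type generators in \cite[Tables 4--5]{BHHMS} — in particular that the analog of Sublemma~\ref{lem:symideal} holds, i.e.\ that the $(j'+f)$-generators are redundant modulo the $j'$-generators and the symmetry ideal, and that the Elkik ideal contains a power of $p$ after passing to the symmetric quotient — and that the automorphism $\psi$ can be chosen to fix these generators; all of this is a direct but somewhat intricate transcription of the arguments in \cite[\S 4.3]{BHHMS} combined with the polarization identities \eqref{polarizationeqs}, and it is where the bulk of the (omitted) verification lies.
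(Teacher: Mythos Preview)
Your overall strategy diverges from the paper's at a crucial point, and the divergence creates a real gap.

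You propose to obtain, for each $\tw' \in X(\sigma)$ separately, a presentation of the single-type ring as a quotient of $S'$, and then to ``glue'' these by taking the intersection of the defining ideals. The difficulty is that the single-type Theorem~\ref{thm:height+monodromy} gives you an isomorphism $R^{\leq(2,-1),\tau'_{\tw'}}_{\rhobar}\bbra{X_1,\dots,X_{2f}} \cong R'_{\tw'}/(\cdots)\bbra{Y_1,\dots,Y_4}$ where the ring $R'_{\tw'}$ (and its embedding into $S'$) depends on $\tw'$, and the isomorphism itself depends on a choice of gauge basis for a Kisin module of shape $\tw'$. There is no a~priori reason these isomorphisms, as $\tw'$ varies, should all arise from a single surjection $S'\bbra{Y_1,\dots,Y_4} \twoheadrightarrow R^{\leq(2,-1),\sigma}_{\rhobar}\bbra{X_1,\dots,X_{2f}}$, which is what you would need to conclude that the multi-type ring is cut out by the intersection of the single-type ideals inside $S'$. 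The Elkik approximation argument you invoke operates embedding-by-embedding within a fixed type and does not address this compatibility.

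The paper (following \cite[\S 4.3]{BHHMS}, which does \emph{not} use Elkik approximation in the multi-type case) circumvents this by constructing the map in the \emph{opposite} direction. One writes down an explicit universal \'etale $\varphi$-module $\cM$ over $\cO_{\cE,\overline{S}'}$ whose Frobenius is given by a single matrix formula in the $S^{(j')}$-variables, independent of any choice of $\tw'$. Over $\overline{S}'/J_{\sym,\infty}$ this $\varphi$-module acquires a polarization, hence $\bbV^*_{K_2}(\cM)$ descends to a $\Gamma_{K_\infty}$-representation and yields a map $\psi: R^\Box_{\rhobar(1)|_{\Gamma_{K_\infty}}}\bbra{X_1,\dots,X_{2f}} \to (\overline{S}'/J_{\sym,\infty})\bbra{Y_1,\dots,Y_4}$. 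The technical heart of the proof is then a tangent-space computation (the analog of \cite[Claim 1, Prop.\ 4.3.1]{BHHMS}) showing $\psi$ is surjective: this is where the polarization identity $M_{j'}^\top = \dot{\fw}M_{j'+f}\dot{\fw}^{-1}$ and the sign change in the symmetry relations at $j' = f-1, 2f-1$ enter in a delicate way, and it is precisely this calculation that replaces your proposed gluing step. One then checks $\psi$ factors through $R^{\leq(3,0),\sigma}_{\rhobar(1)}$ and concludes it is an isomorphism by comparing the number of irreducible components (here Theorem~\ref{thm:height+monodromy} is invoked, but only to count components, not to provide compatible presentations). Your identification of the kernel for fixed $(\tw',\lambda)$ at the end is correct in spirit and is indeed deduced from Theorem~\ref{thm:height+monodromy} once the isomorphism is in hand.
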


\begin{proof}
As with the proof of Theorem \ref{thm:height+monodromy}, we may apply the twist $\rho \longmapsto \rho(1)$ to put ourselves in the setting of \cite[Prop. 4.3.1]{BHHMS}.  We proceed as in that reference.

The \'etale $\varphi$-module associated to $(\BC(\rhobar)\otimes \omega)|_{\Gamma_{K_{2,\infty}}}$ is determined by $\textnormal{Mat}(\varphi^{(j')}) = (D(s,s)^*t_{(\BC(\mu) + \eta' + (\underline{1},\underline{1}))^*})_{j'}$ in a suitable basis, for some $D = (D_{j'})_{j'\in \cJ'} \in \underline{T}'(\bbF)$.  Note that we may choose the basis such that $D_{j' + f} = (-1)^{\delta_{j'(\textnormal{mod}~f),f - 1}}\dot{\fw}D_{j'}^{-1}\dot{\fw}^{-1}$.  Let us define $\delta_{1,2}^{(j')},\delta_{2,1}^{(j')} \in \cO^\times$ to be the Teichm\"uller lifts of the diagonal entries of $D_{2f - 1 - j'}$.

Let $\overline{S}'$ denote the ring 
$$\left(\widehat{\bigotimes}_{\cO, j' \in \cJ'} S^{(j')}\right)\Big/ \bigcap_{\tw' \in X(\sigma)} \sum_{j' \in \cJ'}  I^{(j')}_{\tw'},$$
and let $\cM$ denote the $\varphi$-module over $\cO_{\cE,\overline{S}'}$ given by 
\begin{flushleft}
$\displaystyle{\textnormal{Mat}(\varphi_\cM^{(2f - 1 - j')}) = \begin{pmatrix}
    (v + p)(\delta_{1,2}^{(j)} + x_{1,2}^{*(j')}) + c_{1,2}^{(j')} + \frac{b_{1,2}^{(j')}}{v} & \frac{1}{v}\big((v + p)d_{1,1}^{(j')} + c_{1,1}^{(j')}\big) \\ 
    (v + p)d_{2,2}^{(j')} + c_{2,2}^{(j')} & (v + p)(\delta_{2,1}^{(j')} + x_{2,1}^{*(j')}) + c_{2,1}^{(j')} + \frac{b_{2,1}^{(j')}}{v}
  \end{pmatrix}}$
\end{flushleft}
\begin{flushright}
$\displaystyle{ \cdot s_{j'}^{-1}v^{(\BC(\mu) + \eta')_{j'}}}$,  
\end{flushright}
where $b_{2,1}^{(j')} = 0$ if $(\tw'_\sigma)_{2f - 1 - j'} = t_{(0,1)}$ and $b_{1,2}^{(j')} = 0$ if $(\tw'_\sigma)_{2f - 1 - j'} = t_{(1,0)}$.

Consider first the base extension $\cM_{\bbF} := \cM\otimes_{\overline{S}'}\bbF$.  By construction, we have an isomorphism $\bbV^*_{K_2}(\cM_{\bbF}) \cong (\BC(\rhobar)\otimes\omega)|_{\Gamma_{K_{2,\infty}}}$.  We fix also a basis $\gamma_\bbF$ of $\bbV^*_{K_2}(\cM_{\bbF})$ consisting of $\Gamma_{K_{2,\infty}}$-eigenvectors, and fix an $\overline{S}'$-basis $\gamma$ of $\bbV^*_{K_2}(\cM)$ which lifts $\gamma_\bbF$.  This choice of $\gamma$ gives a Galois representation of $\Gamma_{K_{2,\infty}}$ valued in $\nG\nL_2(\overline{S}')$, which is a lift of $(\BC(\rhobar)\otimes\omega)|_{\Gamma_{K_{2,\infty}}}$.  Consequently, the $\varphi$-module $\cM$ along with the choice of $\gamma$ produces a morphism
\begin{equation}
  \label{multitype:eqn1}
  R_{\BC(\rhobar(1))|_{\Gamma_{K_{2,\infty}}}}^\Box \longrightarrow \overline{S}'.
\end{equation}

Next, we examine the $\varphi$-module $\cM_{\overline{S}'/J_{\sym,\infty}} := \cM\widehat{\otimes}_{\overline{S}'} \overline{S}'/J_{\sym,\infty}$, where $J_{\sym,\infty} := \sum_{j' \in \cJ'}J_{\sym, \infty}^{(j', j' + f)}$, and the ideals $J_{\sym,\infty}^{(j', j' + f)}$ are described in Table \ref{Table2}.  By construction, the Frobenius pullback $(\sigma^f)^*\cM_{\overline{S}'/J_{\sym,\infty}}$ and the dual $\cM_{\overline{S}'/J_{\sym,\infty}}^\vee$ are isomorphic; we fix such an isomorphism
$$\iota:(\sigma^f)^*\cM_{\overline{S}'/J_{\sym,\infty}} \stackrel{\sim}{\longrightarrow} \cM_{\overline{S}'/J_{\sym,\infty}}^\vee.$$  Applying $\bbV^*_{K_2}$ to $\cM_{\overline{S}'/J_{\sym,\infty}}$, we obtain a homomorphism (using the basis $\gamma \otimes 1$ of $\bbV^*_{K_2}(\cM_{\overline{S}'/J_{\sym,\infty}})$)
$$\rho'_{\overline{S}'/J_{\sym,\infty}}:\Gamma_{K_{2,\infty}} \longrightarrow \nG\nL_2(\overline{S}'/J_{\sym,\infty}),$$
with a compatible polarization $\bbV^*_{K_2}(\iota)$ (in the sense of \cite[\S~ 5C2]{koziolmorra}), with respect to the character $\varepsilon^3$.  Therefore, $\rho'_{\overline{S}'/J_{\sym,\infty}}$ extends to an $L$-parameter
$$\rho_{\overline{S}'/J_{\sym,\infty}}:\Gamma_{K_\infty} \longrightarrow {}^C\nU_{1,1}(\overline{S}'/J_{\sym, \infty})$$
which lifts $\rhobar(1)|_{\Gamma_{K_\infty}}$.  Therefore we get a morphism
$$R_{\rhobar(1)|_{\Gamma_{K_\infty}}}^\Box \longrightarrow \overline{S}'/J_{\sym,\infty},$$
which fits into a commutative diagram
\begin{center}
  \begin{tikzcd}
    R_{\BC(\rhobar(1))|_{\Gamma_{K_{2,\infty}}}}^\Box \ar[r,"\eqref{multitype:eqn1}"] \ar[d,"\BC"] &  \overline{S}' \ar[d, twoheadrightarrow] \\
    R_{\rhobar(1)|_{\Gamma_{K_\infty}}}^\Box \ar[r] & \overline{S}'/J_{\sym,\infty}.
  \end{tikzcd}
\end{center}
Letting $Y_1, \ldots, Y_4$ denote formal variables, we extend this diagram to
\begin{equation}
  \label{multitype:commdiag1}
  \begin{tikzcd}
    R_{\BC(\rhobar(1))|_{\Gamma_{K_{2,\infty}}}}^\Box \ar[r,"\eqref{multitype:eqn1}"] \ar[d, "\BC"] &  \overline{S}' \ar[d, twoheadrightarrow] \ar[r, hookrightarrow] &  \overline{S}'\llbracket Y_1, \ldots, Y_4\rrbracket \ar[d, twoheadrightarrow] \\
    R_{\rhobar(1)|_{\Gamma_{K_\infty}}}^\Box \ar[r] & \overline{S}'/J_{\sym,\infty} \ar[r, hookrightarrow] & (\overline{S}'/J_{\sym, \infty})\llbracket Y_1, \ldots, Y_4\rrbracket
  \end{tikzcd}
\end{equation}

By base extension, the $\Gamma_{K_{2,\infty}}$-representation $\bbV^*_{K_2}(\cM_{\overline{S}'\llbracket Y_1, \ldots, Y_4 \rrbracket})$ and the $\overline{S}'\llbracket Y_1, \ldots, Y_4 \rrbracket$-basis $(1 + \begin{pmatrix} Y_1 & Y_2 \\ Y_3 & Y_4\end{pmatrix})(\gamma \otimes 1)$ give rise to a morphism
$$\psi_0': R_{\BC(\rhobar(1))|_{\Gamma_{K_{2,\infty}}}}^\Box \longrightarrow  \overline{S}'\llbracket Y_1, \ldots, Y_4 \rrbracket.$$
Note that $\psi_0'$ is obtained from the top row of \eqref{multitype:commdiag1} by conjugating $\bbV^*_{K_2}(\cM_{\overline{S}'\llbracket Y_1, \ldots, Y_4 \rrbracket})$ by $1 + \begin{pmatrix} Y_1 & Y_2 \\ Y_3 & Y_4\end{pmatrix}$.  
Similarly conjugating the bottom row of \eqref{multitype:commdiag1}, we get the following commutative diagram:
\begin{center}
  \begin{tikzcd}
    R_{\BC(\rhobar(1))|_{\Gamma_{K_{2,\infty}}}}^\Box  \ar[r, "\psi_0'"] \ar[d,"\BC"] & \overline{S}'\llbracket Y_1, \ldots, Y_4 \rrbracket \ar[d, twoheadrightarrow]  \\ 
    R_{\rhobar(1)|_{\Gamma_{K_\infty}}}^\Box \ar[r,"\psi_0"] & \big(\overline{S}'/J_{\sym,\infty}\big)\llbracket Y_1, \ldots, Y_4 \rrbracket.
  \end{tikzcd}
\end{center}

We define a morphism
$$\psi': R_{\BC(\rhobar(1))|_{\Gamma_{K_{2,\infty}}}}^\Box\llbracket X_1, X_2, \ldots, X_{4f}\rrbracket \longrightarrow \overline{S}'\llbracket Y_1, \ldots, Y_4 \rrbracket$$
exactly as in the proof of \cite[Prop. 4.3.1]{BHHMS} (noting that for $\rhobar$ tamely ramified, $\BC(\rhobar(1))$ is always a direct sum of two characters).  In particular, the restriction of $\psi'$ to the base ring $R_{\BC(\rhobar(1))|_{\Gamma_{2,\infty}}}^\Box$ is $\psi'_0$.  Moreover, by Claim 1 in the cited proof, the map $\psi'$ is surjective.  We now define 
$$\psi_1: R_{\rhobar(1)|_{\Gamma_{K_{\infty}}}}^\Box\llbracket X_1, X_2, \ldots, X_{4f}\rrbracket \longrightarrow (\overline{S}'/J_{\sym,\infty})\llbracket Y_1, \ldots, Y_4 \rrbracket$$
by the conditions that $\psi_1(X_i)$ is equal to the image of $\psi'(X_i)$ in $(\overline{S}'/J_{\sym,\infty})\llbracket Y_1, \ldots, Y_4 \rrbracket$, and $\psi_1|_{R_{\rhobar(1)|_{\Gamma_{K_\infty}}}^\Box} = \psi_0$.  Therefore, we obtain a commutative diagram
\begin{center}
  \begin{tikzcd}
    R_{\BC(\rhobar(1))|_{\Gamma_{K_{2,\infty}}}}^\Box\llbracket X_1, X_2, \ldots, X_{4f}\rrbracket  \ar[r, "\psi'", twoheadrightarrow] \ar[d,"\BC"] & \overline{S}'\llbracket Y_1, \ldots, Y_4 \rrbracket \ar[d, twoheadrightarrow]  \\ 
    R_{\rhobar(1)|_{\Gamma_{K_\infty}}}^\Box\llbracket X_1, X_2, \ldots, X_{4f}\rrbracket \ar[r,"\psi_1", twoheadrightarrow] & \big(\overline{S}'/J_{\sym,\infty}\big)\llbracket Y_1, \ldots, Y_4 \rrbracket
  \end{tikzcd}
\end{center}
Note that in particular $\psi_1$ is surjective. We define 
\begin{equation}\label{defofrho}
  \psi: R_{\rhobar(1)|_{\Gamma_{K_\infty}}}^\Box\llbracket X_1, \ldots, X_{f - 1}, X_{f + 1},\ldots, X_{2f - 1}, X_{2f}, X_{4f}\rrbracket \longrightarrow \big(\overline{S}'/J_{\sym, \infty}\big)\llbracket Y_1, \ldots, Y_4\rrbracket
\end{equation} 
to be the restriction of $\psi_1$ to the subring on the left-hand side above.  

\begin{sublemma}
  \label{claim1}
  The map $\psi$ is surjective.  
\end{sublemma}

\begin{proof}[Proof of Sublemma \ref{claim1}]
In order to see that $\psi$ is surjective, we check that the induced map on reduced tangent vectors is injective.  Thus, suppose $t:(\overline{S}'/J_{\sym, \infty})\llbracket Y_1, \ldots, Y_4\rrbracket \longrightarrow \bbF[\varepsilon]/(\varepsilon^2)$ is a continuous homomorphism, and suppose $t\circ\psi = \psi^*t = \psi^*t_0 = t_0\circ \psi$, where $t_0$ is the ``zero vector'' sending each generator of the maximal ideal of $(\overline{S}'/J_{\sym, \infty})\llbracket Y_1, \ldots, Y_4\rrbracket$ to 0.  The images of these generators under $t$ will be denoted as follows:
$$t(b^{(j')}_{i,k}) = \varepsilon \mathsf{b}^{(j')}_{i,k},\quad t(c^{(j')}_{i,k}) = \varepsilon \mathsf{c}^{(j')}_{i,k},\quad t(d^{(j')}_{i,k}) = \varepsilon \mathsf{d}^{(j')}_{i,k},\quad t(x^{*(j')}_{i,k}) = \varepsilon \mathsf{x}^{(j')}_{i,k},\quad t(Y_i) = \varepsilon \mathsf{y}_i,$$
where the sans-serif letters lie in $\bbF$.  Our goal is to show that these sans-serif variables are 0 in $\bbF$.

As in \cite[pf. of Prop. 4.3.1, Claim 1]{BHHMS}, we have an isomorphism 
$$\lambda: \cM_{(\overline{S}'/J_{\sym,\infty})\llbracket \underline{Y}\rrbracket} \widehat{\otimes}_{(\overline{S}'/J_{\sym,\infty})\llbracket \underline{Y}\rrbracket, t} \bbF[\varepsilon]/(\varepsilon^2) \stackrel{\sim}{\longrightarrow} \cM_{(\overline{S}'/J_{\sym,\infty})\llbracket \underline{Y}\rrbracket} \widehat{\otimes}_{(\overline{S}'/J_{\sym,\infty})\llbracket \underline{Y}\rrbracket, t_0} \bbF[\varepsilon]/(\varepsilon^2)$$
such that the matrix of $\lambda$ is given by a $\cJ'$-tuple of matrices 
$$(1 + \varepsilon M_{j'})_{j' \in \cJ'} \in \nG\nL_2(\cO_{\cE, \bbF[\varepsilon]/\varepsilon^2})^{\cJ'}.$$
Exactly as in \cite{BHHMS}, from the isomorphism $\lambda$ we obtain that 
$$\sfc_{1,1}^{(j')} = \sfc_{1,2}^{(j')} = \sfc_{2,1}^{(j')} = \sfc_{2,2}^{(j')} = \sfb_{1,2}^{(j')} = \sfb_{2,1}^{(j')} = \sfd_{1,1}^{(j')} = \sfd_{2,2}^{(j')} = 0.$$
for all $j'\in\cJ'$.

Suppose now that $s_{0} = 1$ (so that $s_{j'} = 1$ for all $j' \in \cJ'$).  (The case $s_{0} = \fw$ is analogous, and left as a calculation for the reader.)  As in \cite{BHHMS}, we get the equalities 
\begin{eqnarray}
  \delta_{1,2}^{(0),-1}\sfx_{1,2}^{(0)} & = & ((M_{2f - 1})_{1,1} - (M_{0})_{1,1})|_{v = 0} \nonumber \\
  \delta_{2,1}^{(0),-1}\sfx_{2,1}^{(0)} & = & ((M_{2f - 1})_{2,2} - (M_{0})_{2,2})|_{v = 0} \nonumber \\
  \delta_{1,2}^{(1),-1}\sfx_{1,2}^{(1)} & = & ((M_{0})_{1,1} - (M_{1})_{1,1})|_{v = 0} \nonumber \\
  \delta_{2,1}^{(1),-1}\sfx_{2,1}^{(1)} & = & ((M_{0})_{2,2} - (M_{1})_{2,2})|_{v = 0} \label{Mcoeffs}\\
  & \vdots & \nonumber \\
  \delta_{1,2}^{(2f - 1),-1}\sfx_{1,2}^{(2f - 1)} & = & ((M_{2f - 2})_{1,1} - (M_{2f - 1})_{1,1})|_{v = 0} \nonumber\\
  \delta_{2,1}^{(2f - 1),-1}\sfx_{2,1}^{(2f - 1)} & = & ((M_{2f - 2})_{2,2} - (M_{2f - 1})_{2,2})|_{v = 0} \nonumber
\end{eqnarray}
From the assumption $\psi^*t = \psi^*t_0$ and the definition of the map $\psi$ in \cite{BHHMS}, we obtain $\sfx_{1,2}^{(j')} = 0$ for $j' \neq f - 1, 2f - 1$.  Thus, the above equations give the following chains of equalities:
\begin{equation}
  \label{multitype:chain1}
  (M_{2f - 1})_{1,1}|_{v = 0} = (M_{0})_{1,1}|_{v = 0} = (M_{1})_{1,1}|_{v = 0} = \ldots = (M_{f - 2})_{1,1}|_{v = 0},
\end{equation}
\begin{equation}
  \label{multitype:chain2}
  (M_{f - 1})_{1,1}|_{v = 0} = (M_{f})_{1,1}|_{v = 0} = (M_{f + 1})_{1,1}|_{v = 0} = \ldots = (M_{2f - 2})_{1,1}|_{v = 0}.
\end{equation}
Now, suppose $j' \neq f - 1, 2f - 1$.  In $\overline{S}'/J_{\sym,\infty}$, we have the relation 
$$\left(x_{1,2}^{*(j')} + \delta_{1,2}^{(j')}\right)\left(x_{2,1}^{*(j' + f)} + \delta_{2,1}^{(j' + f)}\right) - 1 = 0.$$
Applying $t$ gives
$$\delta_{1,2}^{(j')}\left(\sfx_{2,1}^{(j' + f)} + \delta_{2,1}^{(j' + f)}\right) - 1 = 0.$$
Since $\delta_{1,2}^{(j')}\cdot \delta_{2,1}^{(j' + f)} = 1$, we conclude that $\sfx_{2,1}^{(j')} = 0$ for $j' \neq f - 1, 2f - 1$.  Substituting this into the equations \eqref{Mcoeffs} above, we get the following chains of equalities:
\begin{equation}
  \label{multitype:chain3}
  (M_{2f - 1})_{2,2}|_{v = 0} = (M_{0})_{2,2}|_{v = 0} = (M_{1})_{2,2}|_{v = 0} = \ldots = (M_{f - 2})_{2,2}|_{v = 0},
\end{equation}
\begin{equation}
  \label{multitype:chain4}
  (M_{f - 1})_{2,2}|_{v = 0} = (M_{f})_{2,2}|_{v = 0} = (M_{f + 1})_{2,2}|_{v = 0} = \ldots = (M_{2f - 2})_{2,2}|_{v = 0}.
\end{equation}

There is one more piece of structure which we exploit.  Note that $\cM_{(\overline{S}'/J_{\sym,\infty})\llbracket \underline{Y}\rrbracket} \widehat{\otimes}_{(\overline{S}'/J_{\sym,\infty})\llbracket \underline{Y}\rrbracket, t} \bbF[\varepsilon]/(\varepsilon^2)$ and $\cM_{(\overline{S}'/J_{\sym,\infty})\llbracket \underline{Y}\rrbracket} \widehat{\otimes}_{(\overline{S}'/J_{\sym,\infty})\llbracket \underline{Y}\rrbracket, t_0} \bbF[\varepsilon]/(\varepsilon^2)$ are polarized $\varphi$-modules, and that $\lambda$ can be chosen to respect the polarizations.  In  terms of the matrices $1 + \varepsilon M_{j'}$, this means that
$$(1 + \varepsilon M_{j'})^{-\top}\dot{\fw}(1 + \varepsilon M_{j' + f})\dot{\fw}^{-1} = 1,$$
or, equivalently,
\begin{equation}
  \label{conjselfdualMmatrix}
  M_{j'}^{\top} = \dot{\fw}M_{j' + f}\dot{\fw}^{-1}.
\end{equation}
In particular, this implies $(M_{2f - 1})_{1,1}|_{v = 0} = (M_{f - 1})_{2,2}|_{v = 0}$ and $(M_{2f - 1})_{2,2}|_{v = 0} = (M_{f - 1})_{1,1}|_{v = 0}$.  We denote the common value of \eqref{multitype:chain1} and \eqref{multitype:chain4} by $\sfa$ and the common value of \eqref{multitype:chain2} and \eqref{multitype:chain3} by $\sfb$.  Thus, the equations \eqref{Mcoeffs} for $j' = f - 1$ and $2f - 1$ give 
\begin{equation}
  \label{emb-f-1-reln1}
  \delta_{1,2}^{(f - 1),-1}\sfx_{1,2}^{(f - 1)}  =  \sfa - \sfb = \delta_{2,1}^{(2f - 1),-1}\sfx_{2,1}^{(2f - 1)}.
\end{equation}
On the other hand, in $\overline{S}'/J_{\sym,\infty}$, we have the relation 
$$x_{1,2}^{*(f - 1)}x_{2,1}^{*(2f - 1)} + \delta_{2,1}^{(2f - 1)}x_{1,2}^{*(f - 1)} + \delta_{1,2}^{(f - 1)}x_{2,1}^{*(2f - 1)} = \left(x_{1,2}^{*(f - 1)} + \delta_{1,2}^{(f - 1)}\right)\left(x_{2,1}^{*(2f - 1)} + \delta_{2,1}^{(2f - 1)}\right) + 1 = 0$$
(note the sign change!).  Applying $t$ gives
$$\delta_{2,1}^{(2f - 1)}\sfx_{1,2}^{(f - 1)} + \delta_{1,2}^{(f - 1)}\sfx_{2,1}^{(2f - 1)} = 0.$$
Combining this with equation \eqref{emb-f-1-reln1} gives $\sfx_{1,2}^{(f - 1)} = \sfx_{2,1}^{(2f - 1)} = 0$, and analogously we obtain $\sfx_{1,2}^{(2f - 1)} = \sfx_{2,1}^{(f - 1)} = 0$.

It remains to show that $\sfy_i = 0$.  By definition of the map $\psi$, we have 
$$0 = t_0(Y_1) = (t_0 \circ \psi)(X_{2f}) = (t\circ\psi)(X_{2f}) = t(Y_1) = \varepsilon \sfy_1,$$
$$0 = t_0(Y_4) = (t_0 \circ \psi)(X_{4f}) = (t\circ\psi)(X_{4f}) = t(Y_4) = \varepsilon \sfy_4.$$
The fact that $\sfy_2 = \sfy_3 = 0$ follows exactly as in \cite{BHHMS}, which finally shows that $t = t_0$.
\end{proof}

We now examine the map $\psi_0$.

\begin{sublemma}
  \label{claim2}
  The map $\psi_0:R_{\rhobar(1)|_{\Gamma_{K_\infty}}}^\Box \longrightarrow (\overline{S}'/J_{\sym,\infty})\llbracket Y_1, Y_2, Y_3, Y_4 \rrbracket$ factors through $R_{\rhobar(1)|_{\Gamma_{K_\infty}}}^\Box \longtwoheadrightarrow R_{\rhobar(1)}^{\leq (3,0),\sigma}$, where the latter ring satisfies the same conditions as $R^{\leq (2,-1),\sigma}_{\rhobar}$, except the Hodge--Tate weights are $\leq (3,0)$, and the multiplier is $\varepsilon^3$.
\end{sublemma}

\begin{proof}[Proof of Sublemma \ref{claim2}]
  This follows exactly as in \cite[pf. of Claim 2, Prop. 4.3.1]{BHHMS}.  Namely, let $x$ denote a closed point of $\Spec((\overline{S}'/J_{\sym,\infty})\llbracket Y_1, Y_2, Y_3, Y_4 \rrbracket[1/p])$ with associated maximal ideal $\fp_x$, and choose $\tw' \in X(\sigma)$ such that $\sum_{j' \in \cJ'}(I_{\tw'}^{(j')} + I_{\tw'}^{(j' + f)} + J_{\sym,\infty}^{(j',j' + f)}) \subset \fp_x$.

  The \'etale $\varphi$-module $\cM_{(\overline{S}'/J_{\sym,\infty})\llbracket \underline{Y}\rrbracket} \widehat{\otimes}_{(\overline{S}'/J_{\sym,\infty})\llbracket \underline{Y}\rrbracket, x} \kappa(x)$ has type $\tau'_{\tw'}$, using the same change-of-variables map as described in Figure 2 of \cite{BHHMS} (note that these change-of-variables maps identify the relevant symmetry ideals).  Thus, the proof of \cite[Prop. 4.2.1]{BHHMS} implies that
  $$\bbV^*_{K_2}\left(\cM_{(\overline{S}'/J_{\sym,\infty})\llbracket \underline{Y}\rrbracket} \widehat{\otimes}_{(\overline{S}'/J_{\sym,\infty})\llbracket \underline{Y}\rrbracket, x} \kappa(x)\right)$$
  is the restriction to $\Gamma_{K_{2,\infty}}$ of a potentially crystalline representation $\rho'_x$ of $\Gamma_{K_2}$ over $\kappa(x)$, with symmetric Hodge--Tate weights $\leq(3,0)$ and inertial type $\tau'_{\tw'}$.  In this way, we obtain a framed deformation of $\BC(\rhobar(1))$.

  Furthermore, the polarization $\iota$ on $\cM_{\overline{S}'/J_{\sym,\infty}}$ induces a polarization $\bbV^*_{K_2}(\iota_x)$  on $\rho'_x|_{\Gamma_{K_{2,\infty}}}$.  
This polarization extends to a polarization of $\rho'_x$ itself (using \cite[Thm. 5.12]{LLLM}; compare equation \eqref{eq:def:ring}), which implies that $\rho'_x$ may be further extended to a morphism
  $$\rho_x:\Gamma_K \longrightarrow {}^C\nU_{1,1}(\kappa(x))$$
  lifting $\rhobar(1)$.  This gives the desired result.
\end{proof}

\begin{sublemma}
  \label{claim3}
  The ring $\overline{S}'/J_{\sym,\infty}$ is reduced, $\cO$-flat, and has $4^f$ irreducible components, each of relative dimension $3f$ over $\cO$.
\end{sublemma}

\begin{proof}[Proof of Sublemma \ref{claim3}]
  This follows exactly as in \cite[pf. of Claim 3, Prop. 4.3.1]{BHHMS}: for each $\tw' \in X(\sigma)$, we have an isomorphism 
  $$S'\Big/\left(\sum_{j' \in \cJ}I_{\tw'}^{(j')} + I_{\tw'}^{(j' + f)} + J_{\sym,\infty}^{(j', j' + f)}\right) \cong R'\Big/\left(\sum_{j' \in \cJ'}I^{(j')} + I^{(j' + f)} + I_{\sym,\infty}^{(j', j' + f)}\right)$$
  (the latter ring relative to the inertial type $\tau'_{\tw'}$), and the geometry of the ring on the right-hand side was determined in the proof of Theorem \ref{thm:height+monodromy}.  We may proceed exactly as in \cite{BHHMS} in order to show that as $\tw'$ ranges over $X(\sigma)$, the ideals $\sum_{j' \in \cJ}I_{\tw'}^{(j')} + I_{\tw'}^{(j' + f)} + J_{\sym,\infty}^{(j', j' + f)}$ are pairwise coprime.  
\end{proof}

We now finish the proof of the theorem.  By renumbering the variables $X_i$, Sublemmas \ref{claim1} and \ref{claim2} imply that we have a surjection
\begin{equation}
  \label{surj-multi-type}
  R^{\leq (3,0),\sigma}_{\rhobar(1)}\llbracket X_1, \ldots, X_{2f}\rrbracket \longtwoheadrightarrow (\overline{S}'/J_{\sym,\infty})\llbracket Y_1, Y_2, Y_3, Y_4 \rrbracket.
\end{equation}
The ring $R^{\leq (3,0),\sigma}_{\rho(1)}$ is reduced, $\cO$-flat, and by \cite[Thm. 3.3.7]{bellovingee}, each irreducible component has relative dimension $f + 4$ over $\cO$.  By Theorem \ref{thm:height+monodromy}, it has $4^f$ irreducible components.  Combining these facts with Sublemma \ref{claim3}, we conclude that the surjection \eqref{surj-multi-type} is an isomorphism.  Finally, we may use Theorem \ref{thm:height+monodromy} in order to identify irreducible components.  
\end{proof}

To conclude, we record the statement of \cite[Lem. 4.3.2, Prop. 4.3.3]{BHHMS}, which we will use below.  For the proposition below, we define $i:\textnormal{Adm}'^\vee(t_{\eta'})^\vee \longrightarrow \{1,2,3\}^{2f}$ exactly as in the discussion prior to \cite[Prop. 4.3.1]{BHHMS}, and define ideals $\fq^{(j'),(1,0)}_1, \fq^{(j'),(1,0)}_2, \fq^{(j'),(1,0)}_3 \subset S^{(j')}$ as in \cite[Lem. 4.3.2]{BHHMS} (with the shapes and superscripts shifted by $(1,1)$).

\begin{propn}
\label{prop:p:in:inter}
   Maintain the same setup as in Theorem \ref{prop:multitype-def-ring}, and suppose $\tw' \in X(\sigma)$.  We have:
   \begin{itemize}
    \item if $\lambda_{2f - 1 - j'} = (1,0)$, then $\fq^{(j'),(1,0)}_{i(\tw')_{2f - 1 - j'}}\subset \fp^{\lambda}_{\tw'}$;
    \item if $\lambda_{2f - 1 - j'} = (1,0)$ for all $j'$, then $\sum_{j' = 0}^{2f - 1} \fq^{(j'),(1,0)}_{i(\tw')_{2f - 1 - j'}} = \fp^{\lambda}_{\tw'}$.
   \end{itemize}
Suppose moreover that $\tw'_{2f - 1 - j'} = \fw t_{(1,0)}$ for some fixed $j' \in \cJ'$.  Then:
   \begin{itemize}
    \item if $(\tw'_\sigma)_{2f - 1 - j'} = t_{(0,1)}$, then $p \in (\fq_1^{(j'),(1,0)} \cap \fq_2^{(j'),(1,0)}) + \fp^{(j'),(2,-1)}_{\tw'}$;
    \item if $(\tw'_\sigma)_{2f - 1 - j'} = t_{(1,0)}$, then $p \in (\fq_3^{(j'),(1,0)} \cap \fq_2^{(j'),(1,0)}) + \fp^{(j'),(2,-1)}_{\tw'}$.
   \end{itemize}
\end{propn}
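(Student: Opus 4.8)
The plan is to deduce all four assertions from the corresponding statements \cite[Lem. 4.3.2, Prop. 4.3.3]{BHHMS} together with the explicit presentation of the multi-type deformation ring furnished by Theorem \ref{prop:multitype-def-ring}. As in the proof of that theorem, I would first apply the twist $\rho \longmapsto \rho(1)$ to replace $R^{\leq(2,-1),\sigma}_{\rhobar}$ by $R^{\leq(3,0),\sigma}_{\rhobar(1)}$, so that (after adjoining the power series variables $X_1,\dots,X_{2f}$) we are working with the ring $S'\big/\bigcap_{\tw'\in X(\sigma)}\sum_{j'}\big(I^{(j')}_{\tw'}+I^{(j'+f)}_{\tw'}+J^{(j',j'+f)}_{\sym,\infty}\big)$; this is exactly the ring occurring in \cite[Prop. 4.3.1]{BHHMS} with the additional symmetry ideals $J^{(j',j'+f)}_{\sym,\infty}$ imposed, and the discrepancy accounts for the shift by $(\underline{1},\underline{1})$ in Hodge--Tate weights and shapes between the two papers. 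Under this identification every ideal in the statement ($\fp^{\lambda}_{\tw'}$, $\fq^{(j'),(1,0)}_i$, $\fp^{(j'),(2,-1)}_{\tw'}$) is assembled from the per-embedding ideals of $S^{(j')}$ appearing in \cite[Tables 4,5]{BHHMS} and \cite[Lem. 4.3.2]{BHHMS}, so the task reduces to tracking how those per-embedding statements interact with the symmetry ideals.

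For the first two assertions I would use that, by Theorem \ref{prop:multitype-def-ring}, the relevant kernel is $\fp^{\lambda}_{\tw'}=\sum_{j'=0}^{2f-1}\big(\fp^{(j'),\lambda_{2f-1-j'}}_{\tw'}+J^{(j',j'+f)}_{\sym,\infty}\big)$, which in particular contains each $\fp^{(j'),\lambda_{2f-1-j'}}_{\tw'}$. When $\lambda_{2f-1-j'}=(1,0)$, \cite[Lem. 4.3.2]{BHHMS} gives $\fq^{(j'),(1,0)}_{i(\tw')_{2f-1-j'}}\subset\fp^{(j'),(1,0)}_{\tw'}\subset\fp^{\lambda}_{\tw'}$, which is the first bullet. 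For the equality in the second bullet (where $\lambda_{2f-1-j'}=(1,0)$ for all $j'$, the essentially Barsotti--Tate situation) I would invoke the corresponding equality $\sum_{j'}\fq^{(j'),(1,0)}_{i(\tw')_{2f-1-j'}}=\sum_{j'}\fp^{(j'),(1,0)}_{\tw'}$ from \cite[Lem. 4.3.2]{BHHMS}, and then verify directly that the generators of the symmetry ideals $J^{(j',j'+f)}_{\sym,\infty}$ listed in Table \ref{Table2} already lie in $\sum_{j'}\fq^{(j'),(1,0)}_{i(\tw')_{2f-1-j'}}$, using that $\tw'\in\textnormal{Adm}'^\vee(t_{\eta'})^\sym$ forces $i(\tw')_{j'+f}=i(\tw')_{j'}$, so that the $j'$ and $j'+f$ contributions occur in matching pairs.

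The last two assertions concern a single embedding $j'$ only: they involve the ideals $\fq^{(j'),(1,0)}_1,\fq^{(j'),(1,0)}_2,\fq^{(j'),(1,0)}_3$ and $\fp^{(j'),(2,-1)}_{\tw'}$ of $S^{(j')}$, the hypothesis $\tw'_{2f-1-j'}=\fw t_{(1,0)}$, and the value of $(\tw'_\sigma)_{2f-1-j'}$ (which dictates whether one is in the ``Table 4'' or ``Table 5'' case). Since $S^{(j')}$ and these ideals coincide with the objects in \cite[Tables 4,5]{BHHMS}, the two containments follow verbatim from \cite[Prop. 4.3.3]{BHHMS} after shifting superscripts by $(\underline{1},\underline{1})$; the only point to check is that passage to the unitary rings via the change-of-variables maps of \cite[Figure 2]{BHHMS} leaves these per-embedding ideals intact, a compatibility already noted in the proof of Sublemma \ref{claim3} and in Corollary \ref{cor:single-type-cor}. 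The step I expect to be the main obstacle is precisely the equality in the second bullet: one must make sure that imposing the polarization (i.e.\ passing from $S'$ to $S'/\sum_{j'} J^{(j',j'+f)}_{\sym,\infty}$) does not enlarge the multi-type kernel beyond $\sum_{j'}\fq^{(j'),(1,0)}_{i(\tw')_{2f-1-j'}}$, which comes down to an explicit check that the symmetry-ideal generators of Table \ref{Table2} are redundant; everything else is bookkeeping in the spirit of \cite{BHHMS}.
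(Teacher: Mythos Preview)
Your overall strategy—reduce everything to \cite[Lem.~4.3.2, Prop.~4.3.3]{BHHMS}—is exactly what the paper does. In fact the paper gives no proof at all: the sentence immediately preceding the proposition says ``we record the statement of \cite[Lem.~4.3.2, Prop.~4.3.3]{BHHMS}'', and the point is that the rings $S^{(j')}$ and the per-embedding ideals $\fq^{(j'),(1,0)}_i$, $\fp^{(j'),\lambda}_{\tw'}$ are literally those of \cite{BHHMS} (up to the $(\underline{1},\underline{1})$ shift), so the four assertions are verbatim transcriptions requiring nothing new.

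Where your proposal diverges is in the second bullet. You read $\fp^{\lambda}_{\tw'}$ as the prime ideal from Theorem~\ref{prop:multitype-def-ring}, namely $\sum_{j'}(\fp^{(j'),\lambda_{2f-1-j'}}_{\tw'}+J^{(j',j'+f)}_{\sym,\infty})$, and then propose to check that the symmetry generators already lie in $\sum_{j'}\fq^{(j'),(1,0)}_{i(\tw')_{2f-1-j'}}$. That check would fail: a generator such as $d_{1,2}^{*(j'+f)}d_{2,1}^{*(j')}-1$ (from Table~\ref{Table2} via Row~2 of Table~\ref{Table1}) is built from unit-type variables at two distinct embeddings and cannot be written as a sum of elements coming from single-embedding ideals $\fq^{(k'),(1,0)}\subset S^{(k')}$. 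The correct reading is that $\fp^{\lambda}_{\tw'}$ in this proposition is the \cite{BHHMS} ideal $\sum_{j'}\fp^{(j'),\lambda_{2f-1-j'}}_{\tw'}$ of $S'$ (without the symmetry ideals), exactly as in \cite[Lem.~4.3.2]{BHHMS}; with that interpretation the equality is immediate from \emph{loc.\ cit.}\ and your ``main obstacle'' disappears. The remaining bullets, being purely per-embedding, are handled precisely as you say.
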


\begin{table}[ht]
  \captionsetup{justification=centering}
  \caption[Foo content]{\textbf{Symmetry ideal -- single type} \\ \footnotesize{For each (symmetric) shape $\tw' \in \textnormal{Adm}'^\vee(t_\eta')^\sym$, we use the same numbering and notation for variables as in \cite[Tables 1--3]{BHHMS} (up to shifting the shape by $t_{(\underline{1},\underline{1})}$).}}
  \label{Table1}
  \centering
  \adjustbox{max width=\textwidth}{
  \begin{tabular}{| c | c |}
  \hline
  &\\
  Shape & $\begin{array}{c} I^{(j', j' + f)}_{\textnormal{sym},\infty} \subset R^{(j')} \widehat{\otimes}_{\cO} R^{(j' + f)}, \\ j' \neq f - 1, 2f - 1 \\ \textnormal{(for $j' = f - 1, 2f - 1$, switch $\pm$ to $\mp$)} \end{array}$ \\
  &\\
  \hline
  \hline
   & \\
   $\tw'_{2f - 1 - j'} = \tw'_{f - 1 - j'} = t_{(1,0)}$ & $\begin{array}{ll} 
    e_{1,1}^{*(j' + f)}d_{2,2}^{*(j')} - 1, & d_{2,2}^{*(j' + f)}e_{1,1}^{*(j')} - 1,\\
      d_{1,1}^{(j' + f)}e_{1,1}^{*(j')}d_{2,2}^{*(j')} - d_{1,1}^{(j')}, & c_{1,1}^{(j' + f)}e_{1,1}^{*(j')}d_{2,2}^{*(j')} - c_{1,1}^{(j')}, \\
      c_{1,2}^{(j' + f)}e_{1,1}^{*(j')}d_{2,2}^{*(j')} + c_{1,2}^{(j')}, & d_{2,1}^{(j' + f)}e_{1,1}^{*(j')}d_{2,2}^{*(j')} + d_{2,1}^{(j')}, \\ 
     c_{2,1}^{(j' + f)}e_{1,1}^{*(j')}d_{2,2}^{*(j')} + c_{2,1}^{(j')},  & c_{2,2}^{(j' + f)}e_{1,1}^{*(j')}d_{2,2}^{*(j')} - c_{2,2}^{(j')}
    \end{array}$ \\
    & \\
    \hline
    & \\ 
    $\tw'_{2f - 1 - j'} = \tw'_{f - 1 - j'} = \fw t_{(1,0)}$ &  
    $\begin{array}{ll} 
      d_{1,2}^{*(j' + f)}d_{2,1}^{*(j')} - 1, & d_{2,1}^{*(j' + f)}d_{1,2}^{*(j')} - 1, \\
      d_{1,1}^{(j' + f)}d_{1,2}^{*(j')}d_{2,1}^{*(j')} + d_{1,1}^{(j')}, & c_{1,1}^{(j' + f)}d_{1,2}^{*(j')}d_{2,1}^{*(j')} + c_{1,1}^{(j')}, \\
      c_{1,2}^{(j' + f)}d_{1,2}^{*(j')}d_{2,1}^{*(j')} - c_{1,2}^{(j')}, & c_{2,1}^{(j' + f)}d_{1,2}^{*(j')}d_{2,1}^{*(j')} - c_{2,1}^{(j')}, \\
      d_{2,2}^{(j' + f)}d_{1,2}^{*(j')}d_{2,1}^{*(j')} + d_{2,2}^{(j')}, & c_{2,2}^{(j' + f)}d_{1,2}^{*(j')}d_{2,1}^{*(j')} + c_{2,2}^{(j')}
      \end{array}$\\
    & \\
    \hline
    & \\
    $\tw'_{2f - 1 - j'} = \tw'_{f - 1 - j'} = t_{(0,1)}$ & 
    $\begin{array}{ll}
      d_{1,1}^{*(j' + f)}e_{2,2}^{*(j')} - 1, & e_{2,2}^{*(j' + f)}d_{1,1}^{*(j')} - 1,\\
      c_{1,1}^{(j' + f)}d_{1,1}^{*(j')}e_{2,2}^{*(j')} - c_{1,1}^{(j')}, & d_{1,2}^{(j' + f)}d_{1,1}^{*(j')}e_{2,2}^{*(j')} + d_{1,2}^{(j')}, \\
      c_{1,2}^{(j' + f)}d_{1,1}^{*(j')}e_{2,2}^{*(j')} + c_{1,2}^{(j')}, & c_{2,1}^{(j' + f)}d_{1,1}^{*(j')}e_{2,2}^{*(j')} + c_{2,1}^{(j')}, \\
      d_{2,2}^{(j' + f)}d_{1,1}^{*(j')}e_{2,2}^{*(j')} - d_{2,2}^{(j')}, & c_{2,2}^{(j' + f)}d_{1,1}^{*(j')}e_{2,2}^{*(j')} - c_{2,2}^{(j')}
      \end{array}$ \\
    & \\
    \hline
  \end{tabular}
  }
  \end{table}

  \begin{table}[ht]
    \captionsetup{justification=centering}
    \caption[Foo content]{\textbf{Symmetry ideal -- multi-type} \\ \footnotesize{For each (symmetric) shape $\tw'_\sigma \in \textnormal{Adm}'^\vee(t_\eta')^\sym$, we use the same numbering and notation for variables as in \cite[Tables 4--5]{BHHMS} (up to shifting the shape by $t_{(\underline{1},\underline{1})}$).}}
    \label{Table2}
    \centering
    \adjustbox{max width=\textwidth}{
    \begin{tabular}{| c | c |}
    \hline
    &\\
    Shape & $\begin{array}{c} J^{(j', j' + f)}_{\textnormal{sym},\infty} \subset S^{(j')} \widehat{\otimes}_{\cO} S^{(j' + f)}, \\ j' \neq f - 1, 2f - 1 \\ \textnormal{(for $j' = f - 1, 2f - 1$, switch $\pm$ to $\mp$)} \end{array}$ \\
    &\\
    \hline
    \hline
     & \\
     $(\tw'_\sigma)_{2f - 1 - j'} = (\tw'_\sigma)_{f - 1 - j'} = t_{(0,1)}$ & $\begin{array}{c} 
      \textnormal{same generators as those in Row 2 of Table \ref{Table1},} \\
      \textnormal{in addition to}\\
      b_{1,2}^{(j' + f)}d_{1,2}^{*(j')}d_{2,1}^{*(j')} - b_{1,2}^{(j')}
      \end{array}$ \\
      & \\
      \hline
      & \\ 
      $(\tw'_\sigma)_{2f - 1 - j'} = (\tw'_\sigma)_{f - 1 - j'} = t_{(1,0)}$ &  
      $\begin{array}{c} 
        \textnormal{same generators as those in Row 2 of Table \ref{Table1},} \\
        \textnormal{in addition to}\\  
        b_{2,1}^{(j' + f)}d_{1,2}^{*(j')}d_{2,1}^{*(j')} - b_{2,1}^{(j')}
        \end{array}$ \\
      & \\
      \hline
    \end{tabular}
    }
    \end{table}

\newpage

\section{Upper bound on Gelfand--Kirillov dimension}

\label{sec:BP}

Our next task will be to obtain an upper bound on the Gelfand--Kirillov dimension of certain smooth $\nU_{1,1}(K)$-representations.  We first define the representations $D_0(\rhobar)$ and $D_1(\rhobar)$ and analyze their properties.  

\subsection{Breuil--Pa\v{s}k\={u}nas diagrams for unitary groups}  
\label{subsec:BP:diagrams}
We fix throughout a tamely ramified $1$-generic $L$-parameter $\rhobar: \Gamma_K \longrightarrow {}^C\nU_{1,1}(\bbF)$ which satisfies $\widehat{\imath}\circ \rhobar = \omega$.  Recall from Subsection \ref{subsub:SW} the set $\nW^?(\rhobar)$ of predicted Serre weights of $\rhobar$.  Write $\rhobar|_{I_K} \cong \overline{\tau}(s,\mu + \eta)$ with $\mu\in X^*(\underline{T}_{\nU})$ being $1$-deep.  Then \cite[Prop. 4.6]{koziolmorra} and the equivalence of \cite[\S 4.1]{herzig:duke} (for $(\nu,\sigma) = (-\underline{\fw}(\eta),\underline{\fw})$) gives
\begin{equation}
\label{eq:tr:1}
\nW^?(\rhobar) = \JH\left( \overline{R_{\un{\fw}s}\big(\un{\fw}(\mu) + (p - 1)\eta\big)} \right) =  \JH\left(\overline{R_{\underline{\fw}s}\left(\mu + \alpha_{\underline{\fw}s} \right)}\right),
\end{equation}
where, for a given element $w\in\un{W}$, we set $\alpha_w := \sum_{\substack{j \in \cJ\\ w_j = \fw}} \alpha_j$.  Note that the character appearing on the right-hand side lies in $X^*(\underline{T}_{\nU})$.  In particular, we see that the elements of the set $\nW^?(\rhobar)$, which are \emph{a priori} representations of the group $\un{G}_0(\bbF_p) = H(k_K)$, descend to $\nU_{1,1}(k_K)$.  Note also that the condition of $1$-genericity implies $\nW^?(\rhobar)$ is multiplicity free of size $2^f$.  (We may alternatively deduce these results using Section \ref{appendix:EGC}.)

We first give the main definition which will be relevant in global settings.
\begin{defn}
  \label{def:D0}
\begin{enumerate}
\item \label{def:D0:1} We denote by $D_0(\rhobar)$ the unique (up to isomorphism) finite-dimensional representation of $\nU_{1,1}(k_K)$ which satisfies the following three conditions:
\begin{enumerate}
\item \label{def:D0:1a} $\soc_{\nU_{1,1}(k_K)}(D_0(\rhobar)) = \bigoplus_{\sigma \in \nW^?(\rhobar)} \sigma$;
\item \label{def:D0:1b} $[D_0(\rhobar):\sigma] = 1$ for all $\sigma \in \nW^?(\rhobar)$;
\item \label{def:D0:1c} $D_0(\rhobar)$ is maximal among all representations satisfying the above two points.
\end{enumerate}
We occasionally view $D_0(\rhobar)$ as a representation of $\sfK_{\nU} = \nU_{1,1}(\cO_K)$ by inflation.
\item \label{def:D0:2} We define $D_1(\rhobar) := D_0(\rhobar)^{I_{\nU,1}}$, where $I_{\nU,1}$ denotes the upper-triangular pro-$p$-Iwahori subgroup of $\sfK_\nU$.  We view $D_1(\rhobar)$ as a (semisimple) representation of $I_\nU$.  
\end{enumerate}
\end{defn}

The existence and uniqueness of $D_0(\rhobar)$ follows exactly as in the proof of \cite[Prop. 13.1]{BP}.  In particular, we have a decomposition
$$D_0(\rhobar) \cong \bigoplus_{\sigma \in \nW^?(\rhobar)} D_{0,\sigma}(\rhobar)$$
with $\soc_{\nU_{1,1}(k_K)}(D_{0,\sigma}(\rhobar)) \cong \sigma$.

\subsection{Transfer to $\textnormal{GL}_2$}
\label{subsec:transfertoGL2}

Maintain the setup of the previous section.  We let $\rhobar_{\nG}:\Gamma_K \longrightarrow \nG\nL_2(\bbF)$ denote any fixed choice of continuous semisimple Galois representation which satisfies
$$\rhobar_{\nG}|_{I_K} \cong \overline{\tau}_{\nG}(s,\mu + \eta_\nG),$$
where $\overline{\tau}_{\nG}$ denotes the inertial type relative to the dual group of $\textnormal{Res}_{\cO_K/\bbZ_p}(\nG\nL_{2/\cO_K})$, and where $\eta_\nG$ denotes the character of the diagonal maximal torus of that group which is equal to $(1,0)$ in each embedding.  Here, we are using the canonical identification between the Weyl group of $\textnormal{Res}_{\cO_K/\bbZ_p}(\nG\nL_{2/\cO_K})$ and $\underline{W}$, along with the (non-equivariant) isomorphism $\underline{\textnormal{res}}_\nG \circ \underline{\textnormal{sec}}_\nU: X^*(\underline{T}_\nU) \stackrel{\sim}{\longrightarrow} X^*(\underline{T}_\nG)$ which is the identity on the underlying sets.

We define $\nW^?(\rhobar_\nG)$, the set of Serre weights of $\rhobar_\nG$, as in \cite[Def. 9.2.5]{GHS}.  By \cite[Rmk. 4.7]{koziolmorra} (with $K_2$ replaced by $K$) we have
\begin{equation}
\label{eq:tr:2}
\nW^?(\rhobar_\nG) = \JH\left(\overline{R_{\nG,\underline{\fw}s}(\underline{\fw}(\mu) + (p - 1)\eta_\nG)}\right) = \JH\left(\overline{R_{\nG,\underline{\fw}s}(\mu + \alpha_{\underline{\fw}s})}\right)
\end{equation}
where we append a ``$\nG$'' to the notation for Deligne--Lusztig representations to remind ourselves that the group $\nG\nL_2(k_K)$ is acting.

We will now employ the general strategy of Section \ref{sec:transfer} in order to transfer results about $\nW^?(\rhobar_\nG)$ to the analogous results about $\nW^?(\rhobar)$.

\subsubsection{}  
\label{subsub:transfer:k}
We put ourselves in the context of Examples \ref{exs}\eqref{pt1} and \ref{exs}\eqref{pt2} above.  
Suppose we are given a representation $\pi$ of the finite group $\nU_{1,1}(k_K)$ over $\bbF$, which admits a central character given by $\mu|_{Z(\nU_{1,1}(k_K))}$.  We extend the action to the group $\nG\nU_{1,1}(k_K)$ by letting the center $Z(\nG\nU_{1,1}(k_K))$ act by $\underline{\textnormal{sec}}_\nU(\mu)|_{Z(\nG\nU_{1,1}(k_K))}$ (by decomposition \eqref{U11decomp:finite}). We then restrict this action to $\nG\nL_2(k_K)$ (by decomposition \eqref{GL2decomp}), and denote the resulting representation by $\pi_\nG$.  Note that $\pi_\nG$ has central character $\mu|_{Z(\nG\nL_2(k_K))}$.  We may also go in the other direction, and let $\kappa \longmapsto \kappa_\nU$ denote the construction of a $\nU_{1,1}(k_K)$-representation from a $\nG\nL_2(k_K)$-representation.  We have $(\pi_\nG)_\nU \cong \pi$ and $(\kappa_\nU)_\nG \cong \kappa$.

Applying the construction of the previous paragraph to $\overline{R_{\underline{\fw}s}(\mu + \alpha_{\underline{\fw}s})}^{\textnormal{ss}}$, we get
$$
\left(\overline{R_{\underline{\fw}s}(\mu + \alpha_{\underline{\fw}s})}^{\textnormal{ss}}\right)_\nG \cong \overline{R_{\nG,\underline{\fw}s}(\mu + \alpha_{\underline{\fw}s})}^{\textnormal{ss}}.
$$
In particular, we obtain from \eqref{eq:tr:1} and \eqref{eq:tr:2}  that
\begin{equation}
  \label{transfer:serrewts}
  \nW^?(\rhobar)_\nG = \nW^?(\rhobar_\nG).  
\end{equation}

\begin{lemma}
  \label{transfer:D0}
  Let $\rhobar$ be as in Subsection \ref{subsec:BP:diagrams}.  We then have an isomorphism of $\nG\nL_2(k_K)$-representations
  $$D_0(\rhobar)_\nG \cong D_0(\rhobar_\nG),$$
  where the latter is defined in \cite[Prop. 13.1]{BP}.
\end{lemma}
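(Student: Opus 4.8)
The strategy is to transfer the defining properties of $D_0$ through the functor $\pi \longmapsto \pi_\nG$ (equivalently $\kappa \longmapsto \kappa_\nU$) using the module-theoretic dictionary of Section \ref{sec:transfer}, specifically the setup of Examples \ref{exs}\eqref{pt1} and \ref{exs}\eqref{pt2} (taking $n$ large, or working directly with the group algebras $\bbF[\nU_{1,1}(k_K)] \subset \bbF[\nG\nU_{1,1}(k_K)] \supset \bbF[\nG\nL_2(k_K)]$). Since $D_0(\rhobar)$ has a central character (all its Jordan--Hölder factors have central character $\mu|_{Z}$, as they are constituents of $\overline{R_{\underline{\fw}s}(\mu + \alpha_{\underline{\fw}s})}$), the construction $\pi \longmapsto \pi_\nG$ applies to it, and $D_0(\rhobar)_\nG$ is a well-defined $\nG\nL_2(k_K)$-representation with central character $\mu|_{Z(\nG\nL_2(k_K))}$. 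I must show it satisfies the three defining properties \eqref{def:D0:1a}, \eqref{def:D0:1b}, \eqref{def:D0:1c} of $D_0(\rhobar_\nG)$ as in \cite[Prop. 13.1]{BP}, and then invoke uniqueness.

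\textbf{Key steps.} First, the socle condition: by Lemma \ref{resprops}\eqref{resprops-4} applied twice (going up to $\nG\nU_{1,1}(k_K)$ and back down to $\nG\nL_2(k_K)$), the socle filtration is preserved under the restriction functors, and since the center acts by a character throughout, no socle constituents are lost or gained; combined with the fact that the functor takes the simple $\nU_{1,1}(k_K)$-module $\sigma$ to the simple $\nG\nL_2(k_K)$-module $\sigma_\nG$ (Lemma \ref{resprops}\eqref{resprops-2}), we get $\soc_{\nG\nL_2(k_K)}(D_0(\rhobar)_\nG) \cong \bigoplus_{\sigma \in \nW^?(\rhobar)} \sigma_\nG = \bigoplus_{\sigma' \in \nW^?(\rhobar_\nG)} \sigma'$, where the last equality is \eqref{transfer:serrewts}. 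Second, the multiplicity condition: by Corollary \ref{multcor} (again applied in two steps through $\nG\nU_{1,1}(k_K)$), $[D_0(\rhobar)_\nG : \sigma_\nG] = [D_0(\rhobar) : \sigma] = 1$ for each $\sigma \in \nW^?(\rhobar)$, and Corollary \ref{multcor} also shows no \emph{other} simple $\nG\nL_2(k_K)$-module with the correct central character (hence any other simple module, since the functor is a bijection on simples with the given central character) appears, as it does not appear in $D_0(\rhobar)$. Third, maximality: suppose $\kappa'$ is a $\nG\nL_2(k_K)$-representation with $\soc_{\nG\nL_2(k_K)}(\kappa') = \bigoplus_{\sigma' \in \nW^?(\rhobar_\nG)} \sigma'$ and $[\kappa':\sigma']=1$ for all such $\sigma'$, and containing $D_0(\rhobar)_\nG$; all constituents of $\kappa'$ lie in $\nW^?(\rhobar_\nG)$ (by the multiplicity-one condition forcing the socle constituents and then induction on socle layers, exactly as in \cite{BP}), so $\kappa'$ has central character $\mu|_{Z(\nG\nL_2(k_K))}$ and thus $\kappa' \cong (\kappa'_\nU)_\nG$; then $\kappa'_\nU$ satisfies conditions \eqref{def:D0:1a}, \eqref{def:D0:1b} and contains $D_0(\rhobar)$ (using that the functor $\kappa \longmapsto \kappa_\nU$ is exact and preserves socles and multiplicities), so by maximality of $D_0(\rhobar)$ we get $\kappa'_\nU \cong D_0(\rhobar)$, whence $\kappa' \cong D_0(\rhobar)_\nG$. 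Finally, uniqueness of $D_0(\rhobar_\nG)$ yields $D_0(\rhobar)_\nG \cong D_0(\rhobar_\nG)$.

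\textbf{Main obstacle.} The delicate point is the maximality argument, specifically verifying that a representation $\kappa'$ of $\nG\nL_2(k_K)$ satisfying the socle and multiplicity-one conditions automatically has \emph{all} its Jordan--Hölder constituents in $\nW^?(\rhobar_\nG)$ (hence a single central character), so that the transfer functor can be applied to it. This is precisely the content built into the existence/uniqueness proof of \cite[Prop. 13.1]{BP} (which shows $D_0$ has constituents only among the predicted weights), and I would either cite that directly or re-run the short argument: if a constituent $\sigma''$ outside $\nW^?(\rhobar_\nG)$ appeared, a minimal such occurrence in the socle filtration would force a nonsplit extension of $\sigma''$ by some $\sigma' \in \nW^?(\rhobar_\nG)$, but one checks (via the $\Ext^1$ computations, e.g. the $\nG\nL_2$-analog of Proposition \ref{serre-wt-extns}) that all weights linked to $\nW^?(\rhobar_\nG)$ by a nonsplit extension already lie in $\nW^?(\rhobar_\nG)$ when $\rhobar_\nG$ is generic, a contradiction. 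Once that closure property is in hand, the rest is a formal diagram chase through Section \ref{sec:transfer}.
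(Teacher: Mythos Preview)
Your overall strategy is the same as the paper's: check that $D_0(\rhobar)_\nG$ satisfies the three defining properties of \cite[Prop.~13.1]{BP} by transferring each through the functors of Section~\ref{sec:transfer}, then invoke uniqueness. The paper carries this out with the same three-step structure (socle via Lemma~\ref{resprops}\eqref{resprops-4}, multiplicities via Corollary~\ref{multcor}, maximality by transferring a hypothetical $D'\supset D_0(\rhobar)_\nG$ back to $\nU_{1,1}(k_K)$).

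Your ``main obstacle'' paragraph, however, contains an error. You claim that any $\kappa'$ satisfying the socle and multiplicity-one conditions must have \emph{all} Jordan--H\"older constituents in $\nW^?(\rhobar_\nG)$, and you justify this by asserting that $\nW^?(\rhobar_\nG)$ is closed under nonsplit extensions. Both statements are false: condition~\eqref{def:D0:1b} constrains only the multiplicities of weights \emph{inside} $\nW^?$, and $D_0(\rhobar_\nG)$ itself typically has many constituents outside $\nW^?(\rhobar_\nG)$ (already for $f=1$). So your proposed route to a central character on $\kappa'$ does not work.

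Fortunately the obstacle dissolves by a simpler argument that the paper takes for granted. The socle condition alone forces the central character: by Lemma~\ref{decomp:centralchar-charp} we may decompose $\kappa' = \bigoplus_\chi (\kappa')^{Z=\chi}$, and any nonzero summand with $\chi \neq \mu|_{Z(\nG\nL_2(k_K))}$ would have nonzero socle (finite length) disjoint from $\bigoplus_{\sigma'\in\nW^?(\rhobar_\nG)}\sigma'$, contradicting $\soc_{\nG\nL_2(k_K)}(\kappa') = \bigoplus_{\sigma'\in\nW^?(\rhobar_\nG)}\sigma'$. With this in hand, the transfer $\kappa'\mapsto\kappa'_\nU$ is defined and your maximality argument goes through exactly as written (and exactly as in the paper).
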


\begin{proof}
  We examine the $\nG\nL_2(k_K)$-representation $D_0(\rhobar)_\nG$.  Define $D_0(\rhobar)_{\nG\nU}$ to be the representation of $\nG\nU_{1,1}(k_K)$ obtained from $D_0(\rhobar)$ by letting the center act by $\underline{\textnormal{sec}}_\nU(\mu)|_{Z(\nG\nU_{1,1}(k_K))}$.  By definition, we have $D_0(\rhobar)_{\nG\nU}|_{\nG\nL_2(k_K)} = D_0(\rhobar)_\nG$.  We check several properties.
  \begin{itemize}
    \item By Lemma \ref{resprops}\eqref{resprops-4}, we have 
    \begin{eqnarray*}
      \soc_{\nG\nU_{1,1}(k_K)}\left(D_0(\rhobar)_{\nG\nU}\right)|_{\nU_{1,1}(k_K)} & \cong & \soc_{\nU_{1,1}(k_K)}\left(D_0(\rhobar)\right) = \bigoplus_{\sigma \in \nW^?(\rhobar)}\sigma \\
      \soc_{\nG\nU_{1,1}(k_K)}\left(D_0(\rhobar)_{\nG\nU}\right)|_{\nG\nL_2(k_K)} & \cong & \soc_{\nG\nL_2(k_K)}\left(D_0(\rhobar)_\nG\right).
    \end{eqnarray*}
    As the center of $\nG\nU_{1,1}(k_K)$ acts on $D_0(\rhobar)_{\nG\nU}$ by a character, the above isomorphisms imply
    \begin{equation}
      \label{transfer:socle}
      \bigoplus_{\sigma \in \nW^?(\rhobar)}\sigma_\nG \cong \soc_{\nG\nL_2(k_K)}(D_0(\rhobar)_\nG).
    \end{equation}
    \item By equation \eqref{transfer:serrewts}, any Serre weight of $\rhobar_\nG$ is of the form $\sigma_\nG$ for $\sigma \in \nW^?(\rhobar)$.  Applying Corollary \ref{multcor} twice, we obtain
    \begin{equation}
      \label{transfer:multwts}
      [D_0(\rhobar)_{\nG}:\sigma_\nG] = [D_0(\rhobar):\sigma] = 1
    \end{equation}
    for any $\sigma_\nG \in \nW^?(\rhobar_\nG)$.  
    \item Suppose $D' \supset D_0(\rhobar)_\nG$ is any finite-dimensional representation satisfying the properties \eqref{transfer:socle} and \eqref{transfer:multwts}.  By transferring to $\nU_{1,1}(k_K)$, we obtain a representation $D'_\nU \supset D_0(\rhobar)$ satisfying the first two items of Definition \ref{def:D0}.  By maximality, we obtain $D'_\nU = D_0(\rhobar)$, and by transferring back to $\nG\nL_2(k_K)$, we get $D' = D_0(\rhobar)_\nG$.  
  \end{itemize}
  Thus, we see that $D_0(\rhobar)_\nG$ satisfies points (i), (ii), and (iii) of \cite[Prop. 13.1]{BP}, and the result now follows from the uniqueness claim of the cited proposition.
\end{proof}

\subsubsection{}
\label{subsub:transfer:K}
We extend slightly the transfer procedure from above.  Suppose now that $\pi$ is a smooth representation of $\sfK_\nU = \nU_{1,1}(\cO_K)$ over $\bbF$ on which the center $Z(\sfK_\nU)$ acts by the character $\mu|_{Z(\nU_{1,1}(k_K))}$.  (In particular, the action of the center factors through $Z(\sfK_\nU)/Z(\sfK_\nU)_1 \cong Z(\nU_{1,1}(k_K))$.)  We extend the action to the group $\sfK_{\nG\nU} = \nG\nU_{1,1}(\cO_K)$ by letting the center $Z(\nG\nU_{1,1}(\cO_K))$ act by $\underline{\textnormal{sec}}_\nU(\mu)|_{Z(\nG\nU_{1,1}(k_K))}$ (using the decomposition \eqref{U11decomp}), and then restrict this action to $\sfK_\nG = \nG\nL_2(\cO_K)$ (using the decomposition \eqref{GL2decomp}).  We denote the resulting representation by $\pi_\nG$.  Note that $\pi_\nG$ has central character $\mu|_{Z(\nG\nL_2(k_K))}$.  As above, we may also perform this construction in the reverse direction to obtain a quasi-inverse $\kappa \longmapsto \kappa_\nU$ from representations of $\sfK_\nG$ to representations of $\sfK_\nU$.

\begin{lemma}
  \label{transfer:soc}
  Suppose $\pi$ is a smooth representation of $\sfK_\nU$ over $\bbF$ with central character given by $\mu|_{Z(\nU_{1,1}(k_K))}$.  Then we have an isomorphism of semisimple $\sfK_\nG$-representations
  $$\left(\soc_{\sfK_\nU}(\pi)\right)_\nG \cong \soc_{\sfK_\nG}(\pi_\nG).$$
\end{lemma}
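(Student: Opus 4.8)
The strategy is to reduce the claim to the analogous (and easy) statement for $\nG\nU_{1,1}(\cO_K)$ by first inflating the action from $\sfK_\nU$ to $\sfK_{\nG\nU}$, and then restricting to $\sfK_\nG$, invoking the module-theoretic results of Section \ref{sec:transfer} at each stage.  Concretely, let $\pi_{\nG\nU}$ denote the representation of $\sfK_{\nG\nU} = \nG\nU_{1,1}(\cO_K)$ obtained from $\pi$ by letting $Z(\sfK_{\nG\nU})$ act through $\underline{\textnormal{sec}}_\nU(\mu)|_{Z(\nG\nU_{1,1}(k_K))}$ (using decomposition \eqref{U11decomp}), so that by construction $\pi_{\nG\nU}|_{\sfK_\nU} = \pi$ and $\pi_{\nG\nU}|_{\sfK_\nG} = \pi_\nG$.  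The point is that $\pi_{\nG\nU}$ is a $\sfK_{\nG\nU}$-representation on which the central subgroup $Z := Z(\sfK_{\nG\nU})$ acts by a character, so it falls into the framework of Subsection \ref{subsec:setup} — via Example \ref{exs}\eqref{pt3} (with $\Lambda = \bbF\llbracket \sfK_\nU\rrbracket/\fm_\nU^n$, $\Lambda' = \bbF\llbracket \sfK_{\nG\nU}\rrbracket/\fm_{\nG\nU}^n$) and Example \ref{exs}\eqref{pt4} (with $\Lambda = \bbF\llbracket\sfK_\nG\rrbracket/\fm_\nG^n$).  Since $\soc$ and restriction both only see the action of a compact open subgroup, and any such $\pi$ is a union of its $\fm^n$-torsion submodules, we may work over the Artinian quotients and apply Lemma \ref{resprops}\eqref{resprops-4} directly.

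\textbf{Key steps.}  First I would record that, as $\sfK_{\nG\nU,1} \cong \nS\nL_2(\cO_K)_1 \times Z_{\nG\nU,1}$ and $\pi$ is smooth, each element of $\pi$ lies in some $\fm_{\nG\nU}^n$-torsion submodule; hence $\soc_{\sfK_{\nG\nU}}(\pi_{\nG\nU})$ is the increasing union of $\soc_{\Lambda'}$ of the torsion submodules, and similarly for $\sfK_\nU$ and $\sfK_\nG$.  This lets me pass to the Artinian setting.  Second, applying Lemma \ref{resprops}\eqref{resprops-4} to the pair $(\Lambda, \Lambda', Z)$ of Example \ref{exs}\eqref{pt3} gives $\soc_{\sfK_{\nG\nU}}(\pi_{\nG\nU})|_{\sfK_\nU} \cong \soc_{\sfK_\nU}(\pi_{\nG\nU}|_{\sfK_\nU}) = \soc_{\sfK_\nU}(\pi)$.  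Third, applying the same lemma to the pair of Example \ref{exs}\eqref{pt4} gives $\soc_{\sfK_{\nG\nU}}(\pi_{\nG\nU})|_{\sfK_\nG} \cong \soc_{\sfK_\nG}(\pi_{\nG\nU}|_{\sfK_\nG}) = \soc_{\sfK_\nG}(\pi_\nG)$.  Fourth, since $\soc_{\sfK_{\nG\nU}}(\pi_{\nG\nU})$ is itself a $\sfK_{\nG\nU}$-representation on which the center acts by the character $\underline{\textnormal{sec}}_\nU(\mu)|_{Z(\nG\nU_{1,1}(k_K))}$, the restriction-to-$\sfK_\nG$ functor on the subcategory of such representations agrees on objects with the transfer functor $(-)_\nG$ of Subsection \ref{subsub:transfer:K}; combining this with the second step shows that $(\soc_{\sfK_\nU}(\pi))_\nG$ and $\soc_{\sfK_\nU}(\pi_{\nG\nU})|_{\sfK_\nG}$ are literally the same $\sfK_\nG$-representation.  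Chaining the third step closes the loop: $(\soc_{\sfK_\nU}(\pi))_\nG \cong \soc_{\sfK_{\nG\nU}}(\pi_{\nG\nU})|_{\sfK_\nG} \cong \soc_{\sfK_\nG}(\pi_\nG)$.

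\textbf{Main obstacle.}  There is no deep obstacle here — the essential content is already packaged in Lemma \ref{resprops}\eqref{resprops-4} — but the step requiring the most care is the bookkeeping around central characters and the passage from the Artinian statement to the smooth statement.  One must check that the transfer functors $(-)_\nG$ and $(-)_\nU$ of Subsection \ref{subsub:transfer:K}, which were defined via the explicit decompositions \eqref{U11decomp} and \eqref{GL2decomp}, are indeed the same as ``inflate to $\sfK_{\nG\nU}$ with the prescribed central character, then restrict'' — i.e.\ that the hypothesis ``$Z(\sfK_\nU)$ acts on $\pi$ by $\mu|_{Z(\nU_{1,1}(k_K))}$'' is exactly what is needed to make the inflated central character $\underline{\textnormal{sec}}_\nU(\mu)|_{Z(\nG\nU_{1,1}(k_K))}$ well-defined and compatible under restriction to both subgroups.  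This is a direct unwinding of definitions, so I would state it briefly and refer back to Subsection \ref{subsub:transfer:K} rather than belabor it.
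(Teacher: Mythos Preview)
Your proof is correct, but it takes a heavier route than the paper's. The paper gives a two-line double-inclusion argument: since the transfer $(-)_\nG$ sends simples to simples (by Lemma \ref{resprops}\eqref{resprops-2}, applied through $\sfK_{\nG\nU}$), it sends semisimples to semisimples, so $(\soc_{\sfK_\nU}(\pi))_\nG$ is a semisimple $\sfK_\nG$-subrepresentation of $\pi_\nG$ and hence lies in $\soc_{\sfK_\nG}(\pi_\nG)$; the reverse inclusion follows by the symmetric argument using the quasi-inverse $(-)_\nU$ and applying $(-)_\nG$ at the end.

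Your approach instead reduces to Lemma \ref{resprops}\eqref{resprops-4} over the Artinian quotients of Examples \ref{exs}\eqref{pt3} and \eqref{pt4}, going through $\soc_{\sfK_{\nG\nU}}(\pi_{\nG\nU})$ as an intermediary. This is perfectly valid and in some sense more systematic --- it makes explicit that the socle compatibility is an instance of the general crossed-product machinery --- but it requires the extra bookkeeping of the Artinian reduction (which, as you note, is harmless since simple smooth representations are already $\fm^1$-torsion). The paper's argument bypasses this by working directly with semisimplicity and the fact that $(-)_\nG$ and $(-)_\nU$ are quasi-inverse, which is shorter and requires no passage to Artinian quotients.
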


\begin{proof}
  The $\sfK_\nG$-subrepresentation $(\soc_{\sfK_\nU}(\pi))_\nG$ of $\pi_\nG$ is semisimple, and therefore we obtain 
  $$(\soc_{\sfK_\nU}(\pi))_\nG \subset \soc_{\sfK_\nG}(\pi_\nG).$$  
  On the other hand, by symmetry we have
  $$(\soc_{\sfK_\nG}(\pi_\nG))_\nU \subset \soc_{\sfK_\nU}((\pi_\nG)_\nU) = \soc_{\sfK_\nU}(\pi).$$  
  Applying $\pi' \longmapsto \pi'_\nG$ gives the reverse inclusion of the first line.
\end{proof}

\begin{cor}
  \label{transfer:JH-K}
  Suppose $\pi$ is a smooth representation of $\sfK_\nU$ over $\bbF$ with central character given by $\mu|_{Z(\nU_{1,1}(k_K))}$.  Then we have
  $$\JH(\soc_{\sfK_\nU}(\pi))_\nG = \JH(\soc_{\sfK_\nG}(\pi_\nG)).$$
\end{cor}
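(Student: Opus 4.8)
The plan is to deduce this corollary directly from Lemma \ref{transfer:soc}. By that lemma, we have an isomorphism of semisimple $\sfK_\nG$-representations $(\soc_{\sfK_\nU}(\pi))_\nG \cong \soc_{\sfK_\nG}(\pi_\nG)$, so it suffices to observe that the transfer operation $\kappa \longmapsto \kappa_\nG$ is compatible with taking Jordan--H\"older factors. More precisely, I would first reduce to the case where $\soc_{\sfK_\nU}(\pi)$ is simple (indeed irreducible), since both sides of the claimed equality only depend on the isomorphism class of $\soc_{\sfK_\nU}(\pi)$ and commute with direct sums: if $\soc_{\sfK_\nU}(\pi) = \bigoplus_i \sigma_i$ with each $\sigma_i$ simple, then $(\soc_{\sfK_\nU}(\pi))_\nG = \bigoplus_i (\sigma_i)_\nG$ (the transfer construction is visibly additive, being built from extending/restricting the central character and restricting along $\sfK_\nG \subset \sfK_{\nG\nU}$, all of which are exact and additive), so $\JH(\soc_{\sfK_\nU}(\pi))_\nG = \bigcup_i \JH((\sigma_i)_\nG)$ matches $\bigcup_i \JH((\sigma_i)_\nG) = \JH(\bigoplus_i (\sigma_i)_\nG) = \JH(\soc_{\sfK_\nG}(\pi_\nG))$ provided the single-simple case is known.

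For the single-simple case, I would invoke Lemma \ref{resprops}\eqref{resprops-2}: if $\sigma$ is a simple $\bbF[\sfK_\nU]$-module with the appropriate central character, then extending to $\sfK_{\nG\nU}$ (Corollary \ref{extcor}, or directly the construction of Subsection \ref{subsub:transfer:K}) gives a $\bbF[\sfK_{\nG\nU}]$-module $\sigma_{\nG\nU}$ which, by Lemma \ref{resprops}\eqref{resprops-2} applied with $\Lambda' = \bbF\llbracket \sfK_{\nG\nU}\rrbracket/\fm^n_{\nG\nU}$ and $\Lambda = \bbF\llbracket \sfK_{\nG}\rrbracket/\fm^n_{\nG}$ (Example \ref{exs}\eqref{pt4}, for $n$ large enough so that $\sigma$ is a module over the quotient), restricts to a simple $\bbF[\sfK_\nG]$-module $\sigma_\nG$; symmetrically, restriction to $\sfK_\nU$ is simple. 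Hence $\JH((\sigma)_\nG) = \{\sigma_\nG\}$, a single simple module, so in the single-simple case both sides of the desired equality reduce to the statement that $(\soc_{\sfK_\nU}(\pi))_\nG \cong \soc_{\sfK_\nG}(\pi_\nG)$, which is exactly Lemma \ref{transfer:soc}.

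Putting these together: $\JH\big(\soc_{\sfK_\nU}(\pi)\big)_\nG$, by which we mean the set $\{\sigma_\nG : \sigma \in \JH(\soc_{\sfK_\nU}(\pi))\}$, equals $\JH\big((\soc_{\sfK_\nU}(\pi))_\nG\big)$ by the additivity discussion above combined with the single-simple case, and this in turn equals $\JH\big(\soc_{\sfK_\nG}(\pi_\nG)\big)$ by Lemma \ref{transfer:soc}. I do not anticipate a genuine obstacle here; the only point requiring a little care is the bookkeeping around central characters — namely checking that $\soc_{\sfK_\nU}(\pi)$ indeed has central character $\mu|_{Z(\nU_{1,1}(k_K))}$ so that the transfer construction of Subsection \ref{subsub:transfer:K} applies to it, which is immediate since the socle is a subrepresentation of $\pi$ and $Z(\sfK_\nU)$ acts on all of $\pi$ by that character. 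One should also note that strictly speaking the transfer of Subsection \ref{subsub:transfer:K} is defined for smooth representations of $\sfK_\nU$, while the results of Section \ref{sec:transfer} are phrased for modules over the Artinian quotients $\bbF\llbracket \sfK_\nU\rrbracket/\fm^n_\nU$; but a smooth finite-length representation factors through such a quotient for $n \gg 0$, and the socle and its Jordan--H\"older factors are unchanged, so this causes no difficulty.
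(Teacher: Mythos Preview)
Your proposal is correct and takes essentially the same approach as the paper: the corollary is stated immediately after Lemma \ref{transfer:soc} with no proof, indicating it is meant to follow directly from that isomorphism together with the fact (Lemma \ref{resprops}\eqref{resprops-2}) that the transfer preserves simplicity. You have simply spelled out the bookkeeping that the paper leaves implicit.
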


\subsubsection{}  
\label{subsub:transfer:Lambda}
We now put ourselves in the context of Examples \ref{exs}\eqref{pt3} and \ref{exs}\eqref{pt4}.  Suppose $\pi$ is a smooth representation of $\sfK_\nU$ over $\bbF$ on which the center $Z(\sfK_\nU)$ acts by the character $\mu|_{Z(\nU_{1,1}(k_K))}$.  We examine the action of the Iwasawa algebra $\bbF\llbracket \sfK_\nU\rrbracket$ on $\pi$.  In particular, consider the subspace $\pi[\fm_{\nU}^2]$ of $\fm_{\nU}^2$-torsion.  This is a module over the quotient ring $\bbF\llbracket \sfK_\nU\rrbracket/\fm_\nU^2$.  By letting the center of $\sfK_{\nG\nU}$ act by $\underline{\sec}_{\nU}(\mu)|_{Z(\nG\nU_{1,1}(k_K))}$, we extend the $\bbF\llbracket \sfK_\nU\rrbracket/\fm_\nU^2$-action on $\pi[\fm_\nU^2]$ to $\bbF\llbracket \sfK_{\nG\nU}\rrbracket/\fm_{\nG\nU}^2$, and further restrict to obtain a module over $\bbF\llbracket \sfK_{\nG}\rrbracket/\fm_{\nG}^2$.  We denote this module by $\pi[\fm_{\nU}^2]_\nG$.  As above, we may perform this construction in the reverse direction to obtain a quasi-inverse $\kappa[\fm_\nG^2] \longmapsto \kappa[\fm_\nG^2]_\nU$, from $\bbF\llbracket \sfK_{\nG}\rrbracket/\fm_{\nG}^2$-modules to $\bbF\llbracket \sfK_{\nU}\rrbracket/\fm_{\nU}^2$-modules.

\begin{lemma}
  \label{K1square-tors}
  Suppose $\pi$ is a smooth representation of $\sfK_\nU$ over $\bbF$ on which the center $Z(\sfK_\nU)$ acts by the character $\mu|_{Z(\nU_{1,1}(k_K))}$.  Then we have an isomorphism of $\bbF\llbracket \sfK_\nG \rrbracket/\fm_\nG^2$-modules
  $$\pi[\fm_\nU^2]_\nG \cong \pi_\nG[\fm_\nG^2].$$
\end{lemma}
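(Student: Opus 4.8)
The plan is to verify that the pair of Iwasawa algebra quotients $(\Lambda', \Lambda, Z) = (\bbF\llbracket \sfK_{\nG\nU}\rrbracket/\fm_{\nG\nU}^2,~ \bbF\llbracket \sfK_{\nU}\rrbracket/\fm_{\nU}^2,~ \textnormal{image of }\cO_{K_2}^\times)$ of Example \ref{exs}\eqref{pt3} falls into the general setup of Subsection \ref{subsec:setup}, and similarly for the pair of Example \ref{exs}\eqref{pt4} with $\nU$ replaced by $\nG$; this is already recorded there. One then observes that the operation $\pi \longmapsto \pi[\fm_\nU^2]_\nG$ described just above the statement is, by construction, exactly the composite of (a) restricting the $\bbF\llbracket \sfK_\nU\rrbracket$-action on $\pi$ to its $\fm_\nU^2$-torsion submodule, obtaining a $\Lambda$-module; (b) extending this $\Lambda$-module to a $\Lambda' = \bbF\llbracket \sfK_{\nG\nU}\rrbracket/\fm_{\nG\nU}^2$-module via the central character prescription, which is precisely the extension-of-scalars along $\Lambda \hookrightarrow \Lambda'$ picking out the central character $\underline{\sec}_\nU(\mu)|_{Z(\nG\nU_{1,1}(k_K))}$ (cf. Corollary \ref{extcor}); and (c) restricting that $\Lambda'$-module to $\bbF\llbracket \sfK_\nG\rrbracket/\fm_\nG^2$. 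So the content of the lemma is the commutativity of these restriction/extension functors with the passage to $\fm^2$-torsion.

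First I would fix notation: write $\widetilde{\pi}$ for the $\Lambda' = \bbF\llbracket \sfK_{\nG\nU}\rrbracket/\fm_{\nG\nU}^2$-module obtained from $\pi[\fm_\nU^2]$ by the central-character extension, so that by definition $\pi[\fm_\nU^2]_\nG = \widetilde{\pi}|_{\bbF\llbracket \sfK_\nG\rrbracket/\fm_\nG^2}$. The key point to check is that $\widetilde{\pi}$ is canonically identified with $\pi_{\nG\nU}[\fm_{\nG\nU}^2]$, where $\pi_{\nG\nU}$ denotes the $\sfK_{\nG\nU}$-representation obtained from $\pi$ by extending the central character as in Subsection \ref{subsub:transfer:K}. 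Granting this, one then has $\pi[\fm_\nU^2]_\nG = \pi_{\nG\nU}[\fm_{\nG\nU}^2]|_{\bbF\llbracket\sfK_\nG\rrbracket/\fm_\nG^2}$ and, by the symmetric computation applied to $\pi_\nG = \pi_{\nG\nU}|_{\sfK_\nG}$, one has $\pi_\nG[\fm_\nG^2] = \pi_{\nG\nU}[\fm_{\nG\nU}^2]|_{\bbF\llbracket\sfK_\nG\rrbracket/\fm_\nG^2}$ as well; comparing these gives the desired isomorphism. The identification $\widetilde{\pi} \cong \pi_{\nG\nU}[\fm_{\nG\nU}^2]$ amounts to the statement that, for the extension $\pi \rightsquigarrow \pi_{\nG\nU}$ which changes only the action of the central pro-$p$ part trivially (since $Z_{\nG\nU,1}$ already acts trivially on $\pi$ via the fixed central character, as in the constraint $Z(\sfK_\nU)$ acts through $Z(\nU_{1,1}(k_K))$), the submodule killed by $\fm_{\nG\nU}^2$ is governed entirely by the $\sfK_{\nU,1}$-action together with the (automatically trivial) $Z_{\nG\nU,1}$-action. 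Concretely, using the decomposition $\sfK_{\nG\nU,1} \cong \nS\nL_2(\cO_K)_1 \times Z_{\nG\nU,1}$ and the analogous one for $\sfK_{\nU,1}$ recalled in Subsection \ref{exs}, the ideal $\fm_{\nG\nU}^2$ is generated by $\langle k-1 : k \in \sfK_{\nG\nU,1}\rangle^2$ and $\langle z - 1 : z \in Z_{\nG\nU,1}\rangle$; since $\nS\nL_2(\cO_K)_1$ is a common subgroup and $Z_{\nG\nU,1}$ acts trivially on the vectors in question, the torsion conditions imposed by $\fm_{\nG\nU}^2$ and $\fm_\nU^2$ agree on the underlying space.

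The cleanest way to organize this is to note that the surjection $\bbF\llbracket \sfK_{\nG\nU}\rrbracket \twoheadrightarrow \bbF\llbracket \sfK_{\nG\nU}\rrbracket/\fm_{\nG\nU}^2$ restricts (via the crossed-product structure $\Lambda' = \Lambda * k_K^\times$ of Example \ref{exs}\eqref{pt3}) compatibly with the corresponding surjection for $\nU$, so that the inclusion $\Lambda \hookrightarrow \Lambda'$ intertwines the two torsion functors: for any $\Lambda'$-module $V$ with the prescribed central character, $V[\fm_{\nG\nU}^2 \Lambda'] = V[\fm_\nU^2 \Lambda]$ as subspaces (here using that $|k_{K_2}^\times|$ is prime to $p$ and Lemma \ref{decomp:centralchar-charp} to reduce to a fixed central character, which then makes the two annihilator conditions literally the same). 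Applying this with $V = \pi_{\nG\nU}$ (viewed as a module over the full Iwasawa algebra, then over the quotient) yields $\widetilde{\pi} = \pi[\fm_\nU^2]_{\nG\nU}\cong \pi_{\nG\nU}[\fm_{\nG\nU}^2]$, and restriction to $\sfK_\nG$ finishes the argument.

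The main obstacle I anticipate is purely bookkeeping rather than conceptual: one must be careful that the ideals $\fm_{\nU}^2$, $\fm_{\nG\nU}^2$, $\fm_\nG^2$ are defined using \emph{two-sided} ideals of noncommutative Iwasawa algebras, and that the crossed-product presentations $\Lambda' = \Lambda * k_K^\times$ (resp. $\Lambda * \nU_1(k_K)$) are compatible with these ideals — i.e. that $\fm_{\nG\nU}^2 \cap \Lambda = \fm_\nU^2$ and $\fm_{\nG\nU}^2 = \fm_\nU^2 \Lambda' = \Lambda' \fm_\nU^2$ (and similarly for $\nG$). This follows from the fact that the congruence-subgroup decompositions split off the central pro-$p$ part cleanly (as $p > 2$), so that $\langle k - 1 : k \in \sfK_{\nG\nU,1}\rangle = \langle k - 1 : k \in \nS\nL_2(\cO_K)_1\rangle + \langle z - 1 : z \in Z_{\nG\nU,1}\rangle$ in $\bbF\llbracket\sfK_{\nG\nU}\rrbracket$, and $\nS\nL_2(\cO_K)_1$ lies in $\sfK_\nU$; once this compatibility is in hand, everything else is the formal nonsense of Subsection \ref{subsec:setup} combined with the central-character decomposition of Lemma \ref{decomp:centralchar-charp}. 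I would therefore structure the written proof as: (1) record the ideal compatibilities from the congruence-subgroup splitting; (2) deduce $V[\fm_{\nG\nU}^2\Lambda'] = V[\fm_\nU^2\Lambda]$ for $\Lambda'$-modules $V$ with fixed central character; (3) apply with $V = \pi_{\nG\nU}$ and restrict to $\sfK_\nG$, invoking the symmetric statement for $\nG$.
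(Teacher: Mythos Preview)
Your proposal is correct and rests on exactly the same observation as the paper's proof: the decompositions $\sfK_{\nJ,1} \cong \nS\nL_2(\cO_K)_1 \times Z_{\nJ,1}$ for $\nJ \in \{\nG,\nU,\nG\nU\}$ (valid since $p>2$) reduce all three torsion conditions $\fm_\nJ^2$ to the common $\nS\nL_2(\cO_K)_1$-part, the central pro-$p$ parts acting trivially by the central-character hypothesis. The only organizational difference is that the paper runs this as a bare two-inclusion argument---showing $\pi[\fm_\nU^2]_\nG \subset \pi_\nG[\fm_\nG^2]$ directly from the decomposition, then obtaining the reverse inclusion by applying the symmetric statement (with $\nU$ and $\nG$ swapped) and the quasi-inverse $\kappa\mapsto\kappa_\nU$---whereas you route both sides through the intermediate identification $\pi_{\nG\nU}[\fm_{\nG\nU}^2]$; your version makes the crossed-product bookkeeping more explicit, while the paper's is about three lines.
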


\begin{proof}
  Using the decomposition $\sfK_{\nJ,1} = \nS\nL_2(\cO_K)_1 \times Z_{\nJ,1}$ for $\nJ \in \{\nG, \nU, \nG\nU\}$, we see that the $\sfK_\nG$-subrepresentation $\pi[\fm_\nU^2]_\nG$ of $\pi_\nG$ is annihilated by $\fm_{\nG}^2$, which implies
  $$\pi[\fm_\nU^2]_\nG \subset \pi_\nG[\fm_\nG^2].$$
  On the other hand, by symmetry we have
  $$\pi_\nG[\fm_\nG^2]_\nU \subset (\pi_\nG)_\nU[\fm_\nU^2] = \pi[\fm_\nU^2].$$
  Applying $\pi' \longmapsto \pi'_\nG$ gives the reverse inclusion of the first line.
\end{proof}

\subsubsection{} 
\label{subsub:transfer:Iwahori}
We now consider Iwahori subgroups.  Let $\pi$ denote a smooth representation of $I_\nU$ over $\bbF$ on which the center $Z(I_\nU)$ acts by the character $\mu|_{Z(\nU_{1,1}(k_K))}$.  We use the ``Iwahori versions'' of the decompositions \eqref{GL2decomp} and \eqref{U11decomp} to extend the action of $I_\nU$ on $\pi$.  Precisely, we let $Z(I_{\nG\nU})$ act by $\underline{\sec}_\nU(\mu)|_{Z(\nG\nU_{1,1}(k_K))}$ to obtain a representation of $I_{\nG\nU}$, and then restrict the action to $I_\nG$.  We denote the resulting representation of $I_\nG$ by $\pi_\nG$.  The construction for the quasi-inverse is denoted $\kappa \longmapsto \kappa_\nU$.

\begin{lemma}
  \label{transfer:I1invts}
  Suppose $\pi$ is a smooth representation of $I_\nU$ over $\bbF$ on which the center $Z(I_\nU)$ acts by the character $\mu|_{Z(\nU_{1,1}(k_K))}$.  Then we have an isomorphism of $I_\nG$-representations
  $$(\pi^{I_{\nU,1}})_\nG \cong (\pi_\nG)^{I_{\nG,1}}.$$
\end{lemma}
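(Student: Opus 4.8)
The plan is to argue exactly as in the proofs of Lemmas \ref{transfer:soc} and \ref{K1square-tors}, exploiting the symmetry of the transfer construction $\pi \longmapsto \pi_\nG$ together with the fact that it is exact and has a quasi-inverse $\kappa \longmapsto \kappa_\nU$. First I would record that the $I_{\nG,1}$-invariants functor on $I_\nG$-representations with fixed central character behaves well under restriction from $I_{\nG\nU}$: using the decomposition $I_{\nJ,1} \cong I_{\nS,1} \times Z_{\nJ,1}$ for $\nJ \in \{\nG,\nU,\nG\nU\}$ (valid since $p > 2$), the invariants under $I_{\nU,1}$, $I_{\nG,1}$, and $I_{\nG\nU,1}$ are all computed by taking $I_{\nS,1}$-invariants together with $Z_{\nJ,1}$-invariants, and the latter is automatic once the center acts through a character trivial on the pro-$p$ part. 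In particular the three pro-$p$-Iwahori radicals ``agree up to central subgroups,'' so the subspace $\pi^{I_{\nU,1}}$ of $\pi$ is preserved by the extended $I_{\nG\nU}$-action and, upon restriction to $I_\nG$, is contained in $(\pi_\nG)^{I_{\nG,1}}$.

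Concretely, the key steps are: (1) extend the $I_\nU$-action on $\pi^{I_{\nU,1}}$ to $I_{\nG\nU}$ (by letting $Z(I_{\nG\nU})$ act via $\underline{\sec}_\nU(\mu)|_{Z(\nG\nU_{1,1}(k_K))}$) and then restrict to $I_\nG$; since $I_{\nU,1}$ contains $I_{\nS,1}$ and $\pi^{I_{\nU,1}}$ is in particular $I_{\nS,1}$-fixed, and since the extended central action on $Z(I_{\nG\nU})_1$ is trivial, we get $(\pi^{I_{\nU,1}})_\nG \subset (\pi_\nG)^{I_{\nG,1}}$ as $I_\nG$-subrepresentations of $\pi_\nG$. (2) By the symmetry of the construction (i.e.\ applying the same argument with the roles of $\nG$ and $\nU$ swapped, to the representation $\pi_\nG$ of $I_\nG$), we obtain $((\pi_\nG)^{I_{\nG,1}})_\nU \subset ((\pi_\nG)_\nU)^{I_{\nU,1}} = \pi^{I_{\nU,1}}$, using $(\pi_\nG)_\nU \cong \pi$. (3) Apply the functor $\kappa \longmapsto \kappa_\nG$ to the inclusion in (2); since $\kappa \longmapsto \kappa_\nG$ is exact (indeed an equivalence on the relevant categories of modules with prescribed central character) it sends inclusions to inclusions, yielding $(\pi_\nG)^{I_{\nG,1}} \subset (\pi^{I_{\nU,1}})_\nG$. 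Combining with (1) gives the desired isomorphism of $I_\nG$-representations.

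I do not anticipate a genuine obstacle here; the argument is formally identical to the two preceding lemmas, and the only point requiring a moment's care is bookkeeping around the central characters—specifically checking that the pro-$p$ part $Z_{\nG\nU,1}$ (resp.\ $Z_{\nG,1}$) acts trivially after extending via $\underline{\sec}_\nU(\mu)$, so that taking $I_{\nG\nU,1}$- (resp.\ $I_{\nG,1}$-) invariants really does reduce to taking $I_{\nS,1}$-invariants. This follows from the hypothesis that $Z(I_\nU)$ acts on $\pi$ via $\mu|_{Z(\nU_{1,1}(k_K))}$, which factors through the finite quotient $Z(I_\nU)/Z(I_\nU)_1$, together with the compatibility of the sections $\underline{\sec}_\nU$ with this reduction as in the set-up of Subsubsection \ref{subsub:transfer:Iwahori}. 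Hence the proof is a short formal repetition of the earlier pattern, and I would phrase it in two or three sentences mirroring the proof of Lemma \ref{transfer:soc}.

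\begin{proof}
  Using the decomposition $I_{\nJ,1} \cong I_{\nS,1}\times Z_{\nJ,1}$ for $\nJ \in \{\nG,\nU,\nG\nU\}$ (valid since $p > 2$), and the fact that $Z(I_\nU)$, $Z(I_{\nG\nU})$, and $Z(I_\nG)$ act on the relevant representations through characters trivial on their pro-$p$ parts, we see that the $I_\nG$-subrepresentation $(\pi^{I_{\nU,1}})_\nG$ of $\pi_\nG$ is annihilated by $I_{\nG,1}$, which gives
  $$(\pi^{I_{\nU,1}})_\nG \subset (\pi_\nG)^{I_{\nG,1}}.$$
  On the other hand, by symmetry we have
  $$((\pi_\nG)^{I_{\nG,1}})_\nU \subset ((\pi_\nG)_\nU)^{I_{\nU,1}} = \pi^{I_{\nU,1}}.$$
  Applying $\pi' \longmapsto \pi'_\nG$ gives the reverse inclusion of the first line.
\end{proof}
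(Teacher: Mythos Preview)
Your proof is correct and follows essentially the same approach as the paper: the paper simply says the result follows exactly as in the proof of Lemma \ref{K1square-tors}, using the Iwahori setting of Examples \ref{exs}\eqref{pt5} and \ref{exs}\eqref{pt6} with $n = 1$, which is precisely the two-inclusion symmetry argument you wrote out. Your explicit mention of the decomposition $I_{\nJ,1} \cong I_{\nS,1}\times Z_{\nJ,1}$ and the triviality of the central action on the pro-$p$ part is exactly the bookkeeping needed to make that reference go through.
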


\begin{proof}
  This follows exactly as in the proof of Lemma \ref{K1square-tors}, using the setting of Examples \ref{exs}\eqref{pt5} and \ref{exs}\eqref{pt6} (for $n = 1$).
\end{proof}

\begin{cor}
  \label{transfer:D1cor}
  We have an isomorphism of $I_\nG$-representations
  $$D_1(\rhobar)_\nG \cong D_1(\rhobar_\nG).$$
\end{cor}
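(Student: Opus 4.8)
The plan is to combine the two results immediately preceding this corollary. By definition, $D_1(\rhobar) = D_0(\rhobar)^{I_{\nU,1}}$, viewed as a representation of $I_\nU$, and similarly $D_1(\rhobar_\nG) = D_0(\rhobar_\nG)^{I_{\nG,1}}$. So the corollary amounts to transferring the taking of $I_1$-invariants along the construction $\pi \longmapsto \pi_\nG$.

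First I would observe that $D_0(\rhobar)$, inflated to $\sfK_\nU = \nU_{1,1}(\cO_K)$, has central character $\mu|_{Z(\nU_{1,1}(k_K))}$: indeed, its socle is $\bigoplus_{\sigma \in \nW^?(\rhobar)}\sigma$, and by equation \eqref{eq:tr:1} every such $\sigma$ lies in $\JH(\overline{R_{\underline{\fw}s}(\mu + \alpha_{\underline{\fw}s})})$, so all the $\sigma$ have the common central character $\mu|_{Z(\nU_{1,1}(k_K))}$ (the character $\mu + \alpha_{\underline{\fw}s}$ restricts to $\mu$ on the center up to the action of $\un{W}$, which is trivial on the center); since $Z(\sfK_\nU)$ acts on each $D_{0,\sigma}(\rhobar)$ through its finite quotient and fixes the socle character, it acts by that character throughout. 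Hence $D_0(\rhobar)$ is a legitimate input for the transfer construction of Subsection \ref{subsub:transfer:Iwahori}, and moreover $D_0(\rhobar)$ as an $I_\nU$-representation is simply the restriction of $D_0(\rhobar)$ from $\sfK_\nU$.

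Next I would apply Lemma \ref{transfer:D0} to identify $D_0(\rhobar)_\nG \cong D_0(\rhobar_\nG)$ as $\nG\nL_2(k_K)$-representations, and hence (after inflation) as $\sfK_\nG$-representations; one should check that the central-character bookkeeping in Lemma \ref{transfer:D0} matches the one in Subsection \ref{subsub:transfer:Iwahori}, which it does since both use the extension of the action to the center via $\underline{\sec}_\nU(\mu)$. Then I would apply Lemma \ref{transfer:I1invts} with $\pi = D_0(\rhobar)$ (restricted to $I_\nU$), giving
$$\left(D_0(\rhobar)^{I_{\nU,1}}\right)_\nG \cong \left(D_0(\rhobar)_\nG\right)^{I_{\nG,1}}.$$
Combining with $D_0(\rhobar)_\nG \cong D_0(\rhobar_\nG)$ yields
$$D_1(\rhobar)_\nG = \left(D_0(\rhobar)^{I_{\nU,1}}\right)_\nG \cong \left(D_0(\rhobar_\nG)\right)^{I_{\nG,1}} = D_1(\rhobar_\nG),$$
which is the desired isomorphism of $I_\nG$-representations.

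I do not anticipate a serious obstacle here; the corollary is essentially formal given the machinery already assembled. The one point requiring a little care is the compatibility of the various transfer functors: the functor $\pi \longmapsto \pi_\nG$ appears in slightly different guises in Subsections \ref{subsub:transfer:k}, \ref{subsub:transfer:K}, and \ref{subsub:transfer:Iwahori} (for $\nG\nL_2(k_K)$, for $\sfK_\nG$, and for $I_\nG$ respectively), and one must note that they are all restrictions/corestrictions of one another along the inclusions $I_\nG \hookrightarrow \sfK_\nG$, $\nG\nL_2(k_K) = \sfK_\nG/\sfK_{\nG,1}$, etc. — so that applying Lemma \ref{transfer:D0} (phrased at the level of $\nG\nL_2(k_K)$, inflated to $\sfK_\nG$) and then restricting to $I_\nG$ is consistent with applying Lemma \ref{transfer:I1invts} directly. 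This is routine once one unwinds the definitions, and a sentence verifying it suffices.
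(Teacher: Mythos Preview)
Your proposal is correct and follows exactly the same approach as the paper, which simply cites Lemma \ref{transfer:I1invts} and Lemma \ref{transfer:D0} together with the definitions of $D_1(\rhobar)$ and $D_1(\rhobar_\nG)$. Your extra care about the central character of $D_0(\rhobar)$ and the compatibility of the various transfer functors is more detail than the paper provides, but it is all accurate and the argument is the intended one.
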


\begin{proof}
  This follows from the previous lemma and Lemma \ref{transfer:D0}, using the definitions of $D_1(\rhobar)$ and $D_1(\rhobar_\nG)$.
\end{proof}

\subsection{The upper bound}  
\label{subsec:UB}
We now obtain the upper bound on Gelfand--Kirillov dimension.  We refer to \cite[\S 5]{BHHMS} for the relevant definitions, and denote by $\dim_{\nU_{1,1}(\cO_K)}(\pi)$ the Gelfand--Kirillov dimension of a smooth $\nU_{1,1}(\cO_K)$-representation $\pi$.

We maintain the setup of Subsection \ref{subsec:BP:diagrams}.

\begin{thm}
\label{thm:abstract:GKdim}
  Let $\rhobar:\Gamma_K \longrightarrow {}^C\nU_{1,1}(\bbF)$ be a tamely ramified $1$-generic $L$-parameter which satisfies $\widehat{\imath}\circ\rhobar = \omega$, and write $\rhobar|_{I_K} \cong \overline{\tau}(s,\mu + \eta)$ with $\mu \in X^*(\un{T}_\nU)$ being $1$-deep.  Suppose $\pi$ is an admissible smooth representation of $\sfK_\nU = \nU_{1,1}(\cO_K)$ over $\bbF$ with central
  character $\mu|_{Z(\nU_{1,1}(k_K))}$.  Assume that:
  \begin{enumerate}
    \item we have $\JH(\soc_{\sfK_\nU}(\pi)) = \nW^?(\rhobar)$ (up to multiplicity);
    \item for all $\sigma \in \nW^?(\rhobar)$, we have $[\pi[\fm_{\nU}^2]:\sigma] = [\soc_{\sfK_\nU}(\pi):\sigma]$;
    \item we have $\JH(\pi^{I_{\nU,1}}) = \JH(D_1(\rhobar))$ (up to multiplicity).
  \end{enumerate}
  We then have $\dim_{\nU_{1,1}(\cO_K)}(\pi) \leq f$.
\end{thm}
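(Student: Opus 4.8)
The plan is to reduce the statement entirely to the corresponding $\nG\nL_2$-result, namely \cite[Thm. 5.1.1]{BHHMS} (or whatever the analog of the abstract upper bound criterion for $\nG\nL_2(\cO_K)$ is in that paper), via the transfer procedure developed in Subsections \ref{subsec:transfertoGL2} and \ref{sec:transfer}. Concretely, given $\pi$ as in the hypotheses, I would form the $\sfK_\nG = \nG\nL_2(\cO_K)$-representation $\pi_\nG$ using the construction of Subsection \ref{subsub:transfer:K}, extending the central character of $\pi$ to $\nG\nU_{1,1}(\cO_K)$ via $\underline{\sec}_\nU(\mu)$ and then restricting to $\nG\nL_2(\cO_K)$. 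Since the transfer is an equivalence of categories on the relevant central-character blocks, $\pi_\nG$ is again admissible, and it has central character $\mu|_{Z(\nG\nL_2(k_K))}$.

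The first batch of steps is to verify that $\pi_\nG$ satisfies the three hypotheses of the $\nG\nL_2$-criterion with respect to $\rhobar_\nG$ (as defined in Subsection \ref{subsec:transfertoGL2}). For hypothesis (1), I would apply Corollary \ref{transfer:JH-K} together with equation \eqref{transfer:serrewts} to get $\JH(\soc_{\sfK_\nG}(\pi_\nG)) = \JH(\soc_{\sfK_\nU}(\pi))_\nG = \nW^?(\rhobar)_\nG = \nW^?(\rhobar_\nG)$, up to multiplicity. For hypothesis (2), I would use Lemma \ref{K1square-tors} to identify $\pi_\nG[\fm_\nG^2] \cong \pi[\fm_\nU^2]_\nG$, then invoke Corollary \ref{multcor} (applied through $\nG\nU_{1,1}$, as in the proof of Lemma \ref{transfer:D0}) to match multiplicities of each $\sigma_\nG$ on both sides, reducing to the given hypothesis for $\pi$; combined with Lemma \ref{transfer:soc}, this gives $[\pi_\nG[\fm_\nG^2]:\sigma_\nG] = [\soc_{\sfK_\nG}(\pi_\nG):\sigma_\nG]$ for all relevant $\sigma_\nG$. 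For hypothesis (3), I would use Lemma \ref{transfer:I1invts} to get $(\pi^{I_{\nU,1}})_\nG \cong (\pi_\nG)^{I_{\nG,1}}$, then Corollary \ref{multcor} again (through the Iwahori crossed-product setting of Examples \ref{exs}\eqref{pt5}, \eqref{pt6}) to transfer the Jordan--Hölder multiplicities, and finally Corollary \ref{transfer:D1cor} which says $D_1(\rhobar)_\nG \cong D_1(\rhobar_\nG)$. So $\JH((\pi_\nG)^{I_{\nG,1}}) = \JH(D_1(\rhobar_\nG))$ up to multiplicity.

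With the three hypotheses verified, the $\nG\nL_2$-criterion yields $\dim_{\nG\nL_2(\cO_K)}(\pi_\nG) \leq f$. The final step is to transfer the Gelfand--Kirillov dimension bound back to $\nU_{1,1}(\cO_K)$. Here I would argue that $\dim_{\nU_{1,1}(\cO_K)}(\pi) = \dim_{\nG\nL_2(\cO_K)}(\pi_\nG)$: both quantities are computed from the growth rate of the $\bbF[\![\nS\nL_2(\cO_K)_1]\!]$-module structure (or equivalently $\bbF[\![I_{\nS,1}]\!]$, via $\pi^{I_{\nU,1}}$ and an analog of the $\nG\nL_2$ reduction in \cite[\S 5]{BHHMS}), and the transfer procedure changes only the action of the (finite, prime-to-$p$) center and the Galois/unitary twist, which is invisible to graded-dimension growth. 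More precisely, using the decompositions $\sfK_{\nJ,1} \cong \nS\nL_2(\cO_K)_1 \times Z_{\nJ,1}$ one sees that $\pi$ and $\pi_\nG$ have literally the same underlying $\bbF[\![\nS\nL_2(\cO_K)_1]\!]$-module (the transfer only reinterprets the action of the outer central torus), so their Gelfand--Kirillov dimensions agree. This gives $\dim_{\nU_{1,1}(\cO_K)}(\pi) \leq f = [K:\bbQ_p]$, as desired.

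The main obstacle I anticipate is the last step: making rigorous the claim that Gelfand--Kirillov dimension is insensitive to the transfer, i.e., that it depends only on the restriction to $\nS\nL_2(\cO_K)_1$ and not on how the outer center acts. One must be careful that the admissibility and central-character bookkeeping is preserved, that the definition of Gelfand--Kirillov dimension used in \cite[\S 5]{BHHMS} (in terms of the Hilbert--Samuel growth of the associated graded of a suitable Iwasawa module) genuinely factors through the $\nS\nL_2(\cO_K)_1$-action, and that the Iwahori-fixed-vector reduction $\pi \rightsquigarrow \pi^{I_{\nU,1}}$ commutes with transfer in the way needed — all of which is handled by the lemmas of Subsection \ref{subsec:transfertoGL2}, but needs to be assembled correctly. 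A secondary point requiring care is checking that $\rhobar_\nG$ as defined is genuinely $1$-generic in the $\nG\nL_2$ sense (so that the cited $\nG\nL_2$-criterion applies), which follows from the $1$-genericity of $\rhobar$ and the explicit relation between the inertial types via $\underline{\textnormal{res}}_\nG \circ \underline{\sec}_\nU$.
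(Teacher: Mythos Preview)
Your proposal is correct and follows essentially the same approach as the paper: transfer $\pi$ to $\pi_\nG$, verify the three hypotheses of \cite[Thm.~6.4.7]{BHHMS} for $\pi_\nG$ using exactly the lemmas you cite, apply that theorem, and then identify the Gelfand--Kirillov dimensions via the common restriction to $\nS\nL_2(\cO_K)_1$ (the paper invokes \cite[Lem.~5.1.2]{BHHMS} for this last step). One point you should be aware of: the relevant criterion in \cite{BHHMS} is stated for admissible smooth representations of the full group $\nG\nL_2(K)$, not of $\nG\nL_2(\cO_K)$, so the paper explicitly remarks that the proof of \cite[Thm.~6.4.7]{BHHMS} goes through under the weaker hypothesis by replacing the appeal to their Proposition~6.4.6 with \cite[Prop.~4.15]{huwang}, whose argument depends only on $\pi|_{\nG\nL_2(\cO_K)}$.
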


\begin{proof}
  Our strategy will be to transfer the relevant information to $\sfK_\nG = \nG\nL_2(\cO_K)$.  Let $\pi_\nG$ denote the transfer of $\pi$ (Subsection \ref{subsub:transfer:K}).  Thus, $\pi_\nG$ is an admissible smooth representation of $\sfK_\nG$ with central character $\mu|_{Z(\nG\nL_2(k_K))}$, and moreover we have $\pi|_{\nS\nL_2(\cO_K)} \cong \pi_\nG|_{\nS\nL_2(\cO_K)}$.

  We check several properties of $\pi_\nG$.
  \begin{itemize}
    \item Up to multiplicity, we have
    $$\JH(\soc_{\sfK_\nG}(\pi_\nG)) \stackrel{\textnormal{Cor. \ref{transfer:JH-K}}}{=} \JH(\soc_{\sfK_\nU}(\pi))_\nG = \nW^?(\rhobar)_\nG \stackrel{\textnormal{Eq. \eqref{transfer:serrewts}}}{=} \nW^?(\rhobar_\nG).$$
    \item Any Serre weight of $\nW^?(\rhobar_\nG)$ is of the form $\sigma_\nG$ for some $\sigma\in \nW^?(\rhobar)$.  Given $\sigma_\nG \in \nW^?(\rhobar_\nG)$, we have
    \begin{flushleft}
      $[\pi_\nG[\fm_{\nG}^2]:\sigma_\nG] \stackrel{\textnormal{Lem. \ref{K1square-tors}}}{=} [\pi[\fm_\nU^2]_\nG:\sigma_\nG] \stackrel{\textnormal{Cor. \ref{multcor}}}{=} [\pi[\fm_\nU^2]:\sigma]$
    \end{flushleft}
    \begin{flushright}
      $ = [\soc_{\sfK_\nU}(\pi):\sigma] \stackrel{\textnormal{Cor. \ref{multcor}}}{=} [\soc_{\sfK_\nU}(\pi)_\nG:\sigma_\nG] \stackrel{\textnormal{Lem. \ref{transfer:soc}}}{=} [\soc_{\sfK_\nG}(\pi_\nG): \sigma_\nG].$
    \end{flushright}
    
    \item Up to multiplicity, we have
    $$\JH((\pi_\nG)^{I_{\nG,1}}) \stackrel{\textnormal{Lem. \ref{transfer:I1invts}}}{=} \JH(\pi^{I_{\nU,1}})_\nG = \JH(D_1(\rhobar))_\nG \stackrel{\textnormal{Cor. \ref{transfer:D1cor}}}{=} \JH(D_1(\rhobar_\nG)).$$
  \end{itemize}
We therefore see that the representation $\pi_\nG$ satisfies the conditions (i), (ii), and (iii) of \cite[Thm. 6.4.7]{BHHMS}.  Note also that we may relax the hypothesis in \emph{op. cit.} from ``$\pi$ is an admissible smooth $\nG\nL_2(L)$-representation'' to ``$\pi$ is an admissible smooth $\nG\nL_2(\cO_L)$-representation'': in the proof of the cited theorem, one replaces the reference to Proposition 6.4.6 with \cite[Prop. 4.15]{huwang}, whose proof only depends on the structure of $\pi|_{\nG\nL_2(\cO_L)}$.  Thus, we conclude that $\dim_{\nG\nL_2(\cO_K)}(\pi_\nG) \leq f$.

  Finally, since we have decompositions $\sfK_{\nG,1} = \nS\nL_2(\cO_K)_1\times Z_{\nG,1}$ and $\sfK_{\nU,1} = \nS\nL_2(\cO_K)_1\times Z_{\nU,1}$, and since both $\pi$ and $\pi_\nG$ have the same restriction to $\nS\nL_2(\cO_K)$, by \cite[Lem. 5.1.2]{BHHMS} we get 
  \begin{flushleft}
    $\dim_{\nG\nL_2(\cO_K)}(\pi_\nG) = \dim_{\sfK_{\nG,1}}(\pi_\nG|_{\sfK_{\nG,1}}) = \dim_{\nS\nL_2(\cO_K)_1}(\pi_\nG|_{\nS\nL_2(\cO_K)_1})$  
  \end{flushleft}
  \begin{flushright}
    $= \dim_{\nS\nL_2(\cO_K)_1}(\pi|_{\nS\nL_2(\cO_K)_1}) = \dim_{\sfK_{\nU,1}}(\pi|_{\sfK_{\nU,1}}) = \dim_{\nU_{1,1}(\cO_K)}(\pi).$  
  \end{flushright}
\end{proof}

\section{Global applications}
\label{sec:global}

We now work in a global setting, and deduce our main results on Gelfand--Kirillov dimension of representations appearing in cohomology.

\subsection{Setup}
\label{global:setup}

\subsubsection{}
We work in the global setting of \cite[\S 6]{koziolmorra}.  Thus, we fix an imaginary CM field $F$ with maximal totally real subfield $F^+$.  We let $c$ denote the generator of $\Gal(F/F^+)$.  We suppose that $F^+/\bbQ$ is unramfied at $p$, that $F/F^+$ is unramified at all finite places, and that every place of $F^+$ above $p$ is inert in $F$.  We also assume that there is a place $v$ of $F^+$ above $p$ satisfying $F^+_v \cong K$.  The construction of \cite[\S 6A1]{koziolmorra} gives a reductive group $\bbG$ defined over $\cO_{F^+}$, the ring of integers of $F^+$, such that 
\begin{itemize}
  \item if $v$ is a place of $F^+$ which splits in $F$ as $v =ww^c$, then we have an isomorphism $\iota_w:\bbG(\cO_{F^+_v}) \stackrel{\sim}{\longrightarrow} \nG\nL_2(\cO_{F_w})$;
  \item if $v$ is a place of $F^+$ which is inert in $F$, then $\bbG(\cO_{F^+_v}) \cong \nU_{1,1}(\cO_{F^+_v})$;
  \item if $\kappa^+:F^+ \longhookrightarrow \bbR$ is an embedding, then $\bbG(F^+_{\kappa^+})$ is compact, and isomorphic to the compact unitary group $\nU_2(\bbR)$.
\end{itemize}

\subsubsection{}
We let $\bbA_{F^+}^\infty$ denote the finite ad\`eles of $F^+$, and let $\sfK = \prod_v \sfK_v$ denote a compact open subgroup of $\bbG(\bbA^\infty_{F^+})$.  Given such a $\sfK$, we set
$$\sfK_p := \prod_{v \in \Sigma_p^+} \sfK_v,\qquad \sfK^p := \prod_{v\not\in \Sigma_p^+} \sfK_v,$$
where $\Sigma_p^+$ denotes the set of places of $F^+$ lying above $p$. Suppose further that $\sfK_p \subset \bbG(\cO_{F^+,p}) = \prod_{v\in \Sigma_p^+} \bbG(\cO_{F^+_v})$ (where $\cO_{F^+,p}:=\cO_{F^+}\otimes_{\bbZ}\bbZ_p=\prod_{v\in \Sigma_p^+} \cO_{F^+_v}$), and let $V$ be an $\cO$-module endowed with an action of $\sfK_p$.  We define the space of algebraic automorphic forms on $\bbG(\bbA^\infty_{F^+})$ of level $\sfK$ with coefficients in $V$ as the $\cO$-module
$$S_\bbG(\sfK, V) := \left\{ f:\bbG(F^+)\backslash \bbG(\bbA^\infty_{F^+}) \longrightarrow V: f(gk) = k_p^{-1}\cdot (f(g)) ~ \textnormal{for all}~ g\in \bbG(\bbA^\infty_{F^+}), k\in \sfK \right\}$$
where we write $k_p$ for the projection of $k$ to $\sfK_p$.

\subsubsection{}
\label{subsub:Hecke}
We now consider Galois representations (using the group $\cG_2$ defined in Subsection \ref{def-of-Cgroup}).  

Fix a continuous representation $\rbar: \Gamma_{F^+} \longrightarrow \cG_2(\bbF)$ such that 
\begin{itemize}
  \item $\nu\circ \rbar = \omega^{-1}$;
  \item $\rbar^{-1}(\nG\nL_2(\bbF)\times \bbF^\times) = \Gamma_F$;
  \item $\BC'(\rbar)(\Gamma_F) \supset \nG\nL_2(\bbF_p)$;
\end{itemize}
We also let $\sfK = \prod_v \sfK_v$ be a sufficiently small compact open subgroup of $\bbG(\bbA^\infty_{F^+})$.  Recall that this means that for all $t \in \bbG(\bbA_{F^+}^{\infty})$, the finite group $t^{-1}\bbG(F^+)t \cap \sfK$ does not contain an element of order $p$.  Let $T$ denote a finite set of places of $F^+$, which contains all inert places $v$ for which $\sfK_v$ is not hyperspecial and all split places $v$ for which $\sfK_v \neq \bbG(\cO_{F^+_v})$.  We define $\bbT^T$ to be the universal Hecke algebra away from $T$, i.e., the commutative polynomial $\cO$-algebra generated by formal variables $T_w^{(i)}$ for $i = 1,2,$ and $w$ a place of $F$ split over $w|_{F^+}$ and satisfying $w|_{F^+} \not\in T$.

Suppose that $\rbar$ is unramified at all finite places $v$ of $F^+$ which split in $F$ and for which $v \not\in T$.  We denote by $\fm_{\rbar} \subset \bbT^T$ the maximal ideal associated to $\rbar$, defined by
$$\fm_\rbar := \left\langle \varpi,~ T_w^{(1)} - \textnormal{Tr}(\BC'(\rbar)(\textnormal{Frob}_w)),~ T_w^{(2)} - \nN(w)^{-1}\det(\BC'(\rbar)(\textnormal{Frob}_w))\right\rangle{}_{w|_{F^+} \not\in T},$$
where $w|_{F^+} = v \not\in T$ splits as $v = ww^c$ in $F$.

Given a $\sfK$-representation $V$ over $\cO$ as above, the universal Hecke algebra $\bbT^T$ acts on $S_{\bbG}(\sfK,V)$: the generator $T_w^{(i)}$ acts via the characteristic function of the double coset
$$\sfK_v \iota_w^{-1}\left(\begin{pmatrix} \varpi_w 1_i & \\ & 1_{2 - i}\end{pmatrix}\right)\sfK_v \cdot \sfK^v,$$
where $\varpi_w$ denotes a choice of uniformizer of $F_w$, and $v = w|_{F^+}$.

\subsubsection{}
\label{subsub:modularity}
We fix now a sufficiently small compact open subgroup $\sfK$ of $\bbG(\bbA^\infty_{F^+})$ such that $\sfK_v$ is hyperspecial whenever $v$ is a place of $F^+$ which is inert in $F$, and such that that $\sfK_p=\bbG(\cO_{F^+,p})$.  Let $\rbar$ and $T$ be as above and assume further that $\rbar$ is unramified at each split place of $F^+$ not in $T$.

Let $V$ be a Serre weight for $\bbG(\cO_{F^+,p})$, i.e., a representation of $\bbG(\cO_{F^+,p}) = \prod_{v\in \Sigma_p^+} \bbG(\cO_{F^+_v})$ of the form $\bigotimes_{v\in\Sigma_p^+,\bbF}\sigma_v$, where $\sigma_v$ is a Serre weight for $\bbG(\bbF_v^+)$, inflated to $\bbG(\cO_{F^+_v})$.  (Here $\bbF_v^+$ denotes the residue field at $v \in \Sigma_p^+$.)  We say that $\rbar$ is 
\begin{enumerate}
  \item \emph{modular of weight $V$ and level $\sfK$} if $S_{\bbG}(\sfK,V^\vee)_{\fm_{\rbar}}\neq 0$;
  \item \emph{modular of weight $V$} if there exists $\sfK$ and $T$ as above such that $\rbar$ is modular of weight $V$ and level $\sfK$;
  \item \emph{modular} if there exists a Serre weight $V$  for $\bbG(\cO_{F^+,p})$ such that $\rbar$ is modular of weight $V$.
\end{enumerate}

\subsection{Patching functors}
\label{subsec:PF}
\subsubsection{}
\label{subsub:local-conds}
We now define a patching functor, slightly expanding on the construction of \cite[\S 7A2]{koziolmorra}.  We assume from this point onwards that $\cO = W(\bbF)$, i.e., that $E$ is unramified over $\bbQ_p$.

Let $\rbar: \Gamma_{F^+} \longrightarrow \cG_2(\bbF)$ be a continuous representation as in Subsection \ref{global:setup}, and assume that $\rbar$ satisfies the following further conditions:
\begin{itemize}
  \item $\rbar$ is modular;
  \item $\rbar|_{\Gamma_{F^+_v}}$ is tamely ramified and 4-generic for all $v\in \Sigma_p^+$;
  \item $\rbar$ is unramified outside $\Sigma_p^+$;
  \item $\overline{F}^{\ker(\textnormal{ad}^0(\rbar))}$ does not contain $F(\zeta_p)$.
\end{itemize}
We remark that one can always find a totally real field $F^+$ and $\rbar$ satisfying these conditions: this follows from Corollary \ref{appendix:maincor}, noting that the globalization of $\varrhobar$ obtained in the cited result is modular in the sense of Subsection \ref{global:setup} above by the last three sentences of the proof of \cite[Prop. 7.2]{koziolmorra}.

\subsubsection{}
We begin by fixing some local conditions.  Let us choose an auxiliary prime $v_1$ of $F^+$ satisfying the conditions of \cite[\S 7A1]{koziolmorra}.  We note that $\tR^\Box_{v_1}$ (the maximal reduced and $\cO$-flat quotient of $R^\Box_{\rbar|_{\Gamma_{F^+_{v_1}}}}$ parametrizing lifts $\varrho$ of $\rbar|_{\Gamma_{F^+_{v_1}}}$ satisfying $\nu \circ \varrho = \varepsilon^{-1}$) is formally smooth over $\cO$ of relative dimension 4 by \cite[Lem. 2.5]{CEGGPS}.

Suppose now that $v'$ is an arbitrary element of $\Sigma_p^+$, and let $\cJ_{v'}=\Hom(\bbF^+_{v'}, \bbF)$.
Let $\lambda \in (\bbZ^2)^{\cJ_{v'}}$ denote an element satisfying $\lambda_{j,1} > \lambda_{j,2}$ for all $j \in \cJ_{v'}$, and fix a tame inertial type $\tau'$ such that $(\tau')^{\varphi^{[\bbF_{v'}^+:\bbF_p]}} \cong \tau'^\vee$.  We let $R_{v'}^{\Box,\lambda,\tau'}$ denote the maximal reduced and $\cO$-flat quotient of $R^{\Box}_{\rbar|_{\Gamma_{F^+_{v'}}}}$ which parametrizes potentially crystalline lifts $\varrho$ of $\rbar|_{\Gamma_{F^+_{v'}}}$ which satisfy
\begin{itemize}
  \item $\HT_j(\BC'(\varrho)) = \lambda_j - (1,1)$ for $0 \leq j \leq [\bbF_{v'}^+:\bbF_p] - 1$;
  \item $\HT_j(\BC'(\varrho)) = -\fw(\lambda_j)$ for $[\bbF_{v'}^+:\bbF_p] \leq j \leq 2[\bbF_{v'}^+:\bbF_p] - 1$;
  \item $\BC'(\varrho)$ has inertial type $\tau'$;
  \item $\nu\circ\varrho = \varepsilon^{-1}$.
\end{itemize}
Under the isomorphism $\cG_2 \cong {}^C\nU_{1,1}$ of \cite[\S 2D]{koziolmorra}, these lifts $\varrho$ are in bijection with ${}^C\nU_{1,1}$-valued $L$-parameters $\rho$ which satisfy
\begin{itemize}
  \item $\HT_j(\BC(\rho)) = \lambda_j$ for $0 \leq j \leq [\bbF_{v'}^+:\bbF_p] - 1$;
  \item $\HT_j(\BC(\rho)) = (1,1) - \fw(\lambda_j)$ for $[\bbF_{v'}^+:\bbF_p] \leq j \leq 2[\bbF_{v'}^+:\bbF_p] - 1$;
  \item $\BC(\rho)$ has inertial type $\tau'$;
  \item $\widehat{\imath}\circ\rho = \varepsilon$.
\end{itemize}
In particular, if $\lambda_j \in \{(1,0),~(2,-1)\}$ for all $j\in \cJ_{v'}$, then the ring $R^{\Box,\lambda,\tau'}_{v'}$ is an irreducible component of the deformation ring considered in Subsection \ref{subsec:single-type}.  (Note, however, that the ring $R^{\Box,\lambda,\tau'}_{v'}$ of the present paper would be denoted $R^{\Box,(\un{0},\un{1}) - \un{\fw}(\lambda),\tau'}_{v'}$ in the notation of \cite[\S 7A2]{koziolmorra}, due to normalizations of Hodge--Tate weights.)

Recall that we have fixed a place $v\in \Sigma_p^+$ satisfying $F^+_v \cong K$.  For each $v'\in \Sigma_p^+\smallsetminus \{v\}$, we fix a conjugate self-dual tame inertial type $\tau'_{v'}$ such that $\nW^?(\rbar|_{\Gamma_{F^+_{v'}}}) \cap \JH(\overline{\sigma(\tau'_{v'})})$ is a singleton, which we denote $\sigma_{v'}$ (here we are using the isomorphism ${}^C\nU_{1,1} \cong \cG_2$ in defining $\nW^?(\rbar|_{\Gamma_{F^+_{v'}}})$).  This choice is possible by \cite[Props. 3.16 and 4.6]{koziolmorra}, and the ring $R^{\Box,(\un{1},\un{0}),\tau'_{v'}}_{v'}$ is formally smooth over $\cO$ by \emph{op. cit}. \S 5C10 and Theorem 5.19.  In particular, given a Serre weight $\sigma$ for $\sfK_{\nU} = \nU_{1,1}(\cO_{K})$, we say that $\sigma$ is \emph{modular for $\rbar$} if the Serre weight $\sigma\otimes\bigotimes_{v'\in\Sigma_p^+\setminus\{v\}}\sigma_{v'}$ for $\bbG(\cO_{F^+,p})$ is modular for $\rbar$ in the sense of Subsection \ref{subsub:modularity}.

\subsubsection{}
We define $\sfK_m \subset \bbG(\bbA_{F^+}^\infty)$ exactly as in \cite[\S 7A4]{koziolmorra}.  In particular, $\sfK_{m,v_1}$ is an Iwahori subgroup, and if $v'\in \Sigma_p^+$, then $\sfK_{0,v'} \cong \nU_{1,1}(\cO_{F^+_{v'}})$.  Moreover, the subgroups $\sfK_m$ are sufficiently small for each $m \geq 0$.  We define $\fm_\rbar \subset \bbT^{\Sigma_p^+ \cup \{v_1\}}$ as above, and let $\fm'_\rbar \subset \bbT^{\Sigma_p^+}$ denote the ideal generated by $\fm_\rbar$ and the elements $T_{\tv_1}^{(1)} - \delta_1,~ T_{\tv_1}^{(2)} - \nN(v_1)^{-1}\delta_1\delta_2$, where $\delta_1,\delta_2$ are the distinct eigenvalues of $\BC'(\rbar)(\textnormal{Frob}_{\tv_1})$.

\begin{rmk}
\label{rmk:mod:m'}
We note that a Serre weight $\sigma$ is modular for $\rbar$ if and only if 
$$\Hom_{\sfK_0}\left(\sigma\otimes\bigotimes_{v'\in \Sigma_p^+\smallsetminus\{v\}} \sigma_{v'},~ S_{\mathbb{G}}(\sfK_0^p,\bbF)[\fm_{\rbar}']\right)\neq 0.$$
The ``only if'' direction is clear.  We prove the ``if'' direction.  We start by noting that if $\sigma$ is modular for $\rbar$, then by modifying slightly the argument of \cite[Prop. 7.2]{koziolmorra} (in particular, by choosing inertial types $\tau'_{v'}$ as above, and choosing $\tau'_\sigma$ such that $\nW^?(\rbar|_{\Gamma_{F^+_v}}) \cap \JH(\overline{\sigma(\tau'_\sigma)}) = \{\sigma\}$), we have 
$$\Hom_{\sfK_0}\left(\sigma\otimes\bigotimes_{v'\in \Sigma_p^+\smallsetminus\{v\}} \sigma_{v'},~ S_{\mathbb{G}}(\sfK_0^p,\bbF)[\fm_{\rbar}]\right)\neq 0.$$
Using \cite[Lem. 6.3, Thm. 6.1]{koziolmorra}, there exists a cuspidal automorphic representation $\pi$ of $\bbG(\bbA_{F^+})$ such that 
\begin{itemize}
  \item $\pi_\infty$ is trivial;
  \item $(\pi_v\otimes\bigotimes_{v'\in \Sigma_p^+\smallsetminus\{v\}}\pi_{v'})|_{\sfK_{0,p}}$ contains $(\sigma(\tau'_{\sigma})\otimes\bigotimes_{v'\in \Sigma_p^+\smallsetminus\{v\}}\sigma(\tau'_{v'}))\otimes_{E}\bbC$;
  \item $\pi_{v_1}$ satisfies $\pi_{v_1}^{\sfK_{0,v_1}} \neq 0$.
\end{itemize}
The condition $\pi_{v_1}^{\sfK_{0,v_1}}\neq 0$ implies that $\pi_{v_1}$ is a subquotient of an unramified principal series representation of $\bbG(F^+_{v_1}) \cong \nG\nL_2(F^+_{v_1})$ (see \cite[Prop. 14.3]{bushnellhenniart}).  
Following the conventions of \cite{koziolmorra}, by local/global compatibility (see \cite[Thm. 6.1(ii)]{koziolmorra}) and the assumption on the eigenvalues of $\BC'(\rbar)(\textnormal{Frob}_{\tv_1})$ we conclude that $\pi_{v_1}$ must be an unramified principal series.  Lemma 1.6(ii) of \cite{taylor:lfns} calculates the action of the Hecke operators $T_{\tv_1}^{(j)}$ on $\pi_{v_1}^{\sfK_{0,v_1}}$, which implies that $\pi$ contributes to the space
$S_\bbG(\sfK_0,(\sigma(\tau'_\sigma)\otimes \bigotimes_{v'\in \Sigma_p^+\smallsetminus\{v\}}\sigma(\tau'_{v'}))^\vee)[\fm'_\rbar]$.  Applying \cite[Lem. 6.3]{koziolmorra}, we conclude that
$$\Hom_{\sfK_0}\left(\sigma\otimes\bigotimes_{v'\in \Sigma_p^+\smallsetminus\{v\}} \sigma_{v'},~ S_{\mathbb{G}}(\sfK_0^p,\bbF)[\fm_{\rbar}']\right)\neq 0.$$
\end{rmk}

\subsubsection{}
\label{subsub:patching-functor}
Returning to the patching procedure, the reader can check that construction of \cite[\S 7C3]{koziolmorra} remains valid when replacing $\fm_{\rbar}$ with $\fm'_{\rbar}$.  Thus, we obtain a functor $M_\infty(-)$ from the category of finitely generated $\cO$-modules with a $\bbG(\cO_{F^+,p})$-action to the category of finitely generated $R_\infty$-modules, where
$$R_\infty := \left(\left(\widehat{\bigotimes}_{v'\in\Sigma_p^+}\tR_{v'}^{\Box}\right)\widehat{\otimes} \tR^{\Box}_{v_1}\right)[\![x_1,\dots,x_{q-[F^+:\bbQ]}]\!],$$
the completed tensor products taken over $W(\bbF) = \cO$.  Recall that $\widetilde{R}_{v'}^\Box$ denotes the maximal reduced and $p$-torsion-free quotient of the universal framed deformation ring parametrizing lifts $\rho$ of $\rbar|_{\Gamma_{F^+_{v'}}}$ satisfying $\nu\circ \rho = \varepsilon^{-1}$.

Next, we focus our attention on the fixed place $v \in \Sigma_p^+$.  Let $\tau'_{v'}$ and $\sigma_{v'}$ be as in Subsection \ref{subsub:local-conds}.
Define the ring $R_\infty^v$ by
$$R_\infty^v := \left(\left(\widehat{\bigotimes}_{v'\in\Sigma_p^+\smallsetminus\{ v\}}R_{v'}^{\Box,(\un{1},\un{0}),\tau'_{v'}}\right)\widehat{\otimes}\tR^{\Box}_{v}\widehat{\otimes}\tR^{\Box}_{v_1}\right)[\![x_1,\dots,x_{q-[F^+:\bbQ]}]\!]$$
and for each  $v'\in\Sigma_p^+\smallsetminus\{ v\}$ fix a $\cO$-lattice $\sigma(\tau'_{v'})^\circ$  inside $\sigma(\tau'_{v'})$.  
We then define an exact functor $M_\infty^{v}(-)$ from the category of finitely generated $\cO$-modules with an action of $\bbG(\cO_{F^+_v})\cong \sfK_\nU$ to finitely generated $R^v_\infty$-modules by 
$$M_\infty^{v}: V \longmapsto M_\infty\left(V\otimes_{\cO}\bigotimes_{v'\in\Sigma_p^+\smallsetminus\{v\}}\sigma(\tau'_{v'})^{\circ}\right).
$$

The following proposition records the properties of the functor $M_\infty^v(-)$ which we will use.  For use in the proposition (and in subsequent results), we introduce the representation
\begin{equation}
  \label{defofpi}
  \pi := \varinjlim_{\sfK_v'}\Hom_{\sfK_0^v/\sfK_1^v}\left(\bigotimes_{v'\in \Sigma_p^+\smallsetminus\{v\}} \sigma_{v'},~ S_{\bbG}(\sfK^v_1\sfK'_v,\bbF)[\fm'_\rbar]\right),
\end{equation}
where $\sfK'_v$ runs over compact open subgroups of $\bbG(F^+_v)$.  It an admissible smooth representation of $\bbG(F^+_v) \cong \nU_{1,1}(K)$ over $\bbF$.

\begin{propn}
\label{prop:PF}
In the above setup we have:
\begin{enumerate}
\item \label{prop:PF:1}
Suppose $\lambda \in (\bbZ^2)^{\cJ}$ satisfies $\lambda_{j,1} > \lambda_{j,2}$ for all $j \in \cJ$, and suppose $\tau'_v$ is a tame inertial type which satisfies $(\tau'_v)^{\varphi^{f}} \cong {\tau'_v}^\vee$.  Then the action of $R^v_\infty$ on $M_\infty^v(V((\un{0},\un{1}) - \un{\fw}(\lambda))^{\textnormal{d}}\otimes\sigma(\tau'_v)^\circ)$ factors through $R^{v}_{\lambda,\tau'_v,\infty} := R^v_\infty\widehat{\otimes}_{\tR_v^{\Box}} R_v^{\Box,\lambda,\tau'_v}$.  Moreover, if $M_\infty^v(V((\un{0},\un{1}) - \un{\fw}(\lambda))^{\textnormal{d}}\otimes\sigma(\tau'_v)^\circ) \neq 0$, then it is maximal Cohen--Macaulay over $R^{v}_{\lambda,\tau'_v,\infty}$, and the support of $M_\infty^v(V((\un{0},\un{1}) - \un{\fw}(\lambda))^{\textnormal{d}}\otimes \sigma(\tau'_v)^\circ)$ is a union of irreducible components of $R^v_{\lambda,\tau'_v,\infty}$.  (Here $V((\un{0},\un{1}) - \un{\fw}(\lambda))^{\textnormal{d}}$ denotes the Schikhof dual of $V((\un{0},\un{1}) - \un{\fw}(\lambda))$.)

\item \label{prop:PF:2}
Let $\lambda$ and $\tau'_v$ be as in the previous point.  If $M_\infty^v(V((\un{0},\un{1}) - \un{\fw}(\lambda))^{\textnormal{d}}\otimes\sigma(\tau'_v)^\circ)$ is nonzero, then the same is true of $M_\infty^v(\overline{V((\un{0},\un{1}) - \un{\fw}(\lambda))^{\textnormal{d}}\otimes\sigma(\tau'_v)^\circ})$.  In this case, $M_\infty^v(\overline{V((\un{0},\un{1}) - \un{\fw}(\lambda))^{\textnormal{d}}\otimes\sigma(\tau'_v)^\circ})$ is maximal Cohen--Macaulay over $\overline{R}^v_{\lambda,\tau'_v,\infty} := R^v_{\lambda,\tau'_v,\infty} \otimes_{\cO}\bbF$, and its support is a union of irreducible components of $\overline{R}^v_{\lambda,\tau'_v,\infty}$.  
\item \label{prop:PF:3}
Let $(R^{v}_{\lambda,\tau'_v,\infty})'$ denote the maximal quotient of $R^{v}_{\lambda,\tau'_v,\infty}$ which acts faithfully on $M_\infty^v(V((\un{0},\un{1}) - \un{\fw}(\lambda))^{\textnormal{d}}\otimes\sigma(\tau'_v)^\circ)$.  Then $M_\infty^v(V((\un{0},\un{1}) - \un{\fw}(\lambda))^{\textnormal{d}}\otimes\sigma(\tau'_v)^\circ)[1/p]$ is locally free of rank $1$ over $(R^{v}_{\lambda,\tau'_v,\infty})'[1/p]$.  
\item \label{prop:PF:4}
Let $\sigma$ be a Serre weight for $\sfK_\nU$ with highest weight $\mu\in X_1(\underline{T}_\nU) \subset (\bbZ^2)^{\cJ}$.  Then we have $M_\infty^v(\sigma)\neq 0$ if and only if $\sigma\in \nW^?(\rbar|_{F_v^+})$.  In this case, the action of $R^v_\infty$ on $M_\infty^v(\sigma)$ factors through $(R^v_\infty\widehat{\otimes}_{\tR_v^{\Box}} R_v^{\Box,(\un{1},\un{0}) - \un{\fw}(\mu),\mathbf{1}})\otimes_{\cO}\bbF$, and $M_\infty^v(\sigma)$ is maximal Cohen--Macaulay over the latter ring.
\item \label{prop:PF:5} Let $\pi$ be as in \eqref{defofpi}, and let $V$ denote a representation of $\sfK_\nU$ over $\cO$ which factors through a finite quotient of $\sfK_\nU$.  Then $M_\infty^v(V)/\fm_\infty \cong \Hom_{\sfK_\nU}(V,\pi)^\vee$, where $\fm_\infty$ denotes the maximal ideal of $R_\infty^v$.
\end{enumerate}
\end{propn}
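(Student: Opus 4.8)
The plan is to reduce the statement to the fundamental compatibility of the patched module with spaces of mod $p$ automorphic forms, after first replacing the auxiliary Deligne--Lusztig lattices at the places $v'\neq v$ by Serre weights. By definition $M_\infty^v(V)/\fm_\infty = M_\infty(V\otimes_\cO W_0)/\fm_\infty$ with $W_0:=\bigotimes_{v'\in\Sigma_p^+\smallsetminus\{v\}}\sigma(\tau'_{v'})^\circ$; since $M_\infty$ is exact and $\cO$-linear and $\varpi\in\fm_\infty$, applying $M_\infty$ to the right-exact sequence $X\to X\to X\otimes_\cO\bbF\to 0$ (for $X=V\otimes_\cO W_0$, the first map being multiplication by $\varpi$) shows this equals $M_\infty(\overline V\otimes_\bbF\overline{W_0})/\overline{\fm}_\infty$, where $\overline{\fm}_\infty$ is the maximal ideal of $R_\infty^v\otimes_\cO\bbF$. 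For each $v'\neq v$ the reduction $\overline{\sigma(\tau'_{v'})}$ has $\sigma_{v'}$ as its unique Jordan--H\"older constituent in $\nW^?(\rbar|_{\Gamma_{F^+_{v'}}})$, occurring with multiplicity one, by the choice of $\tau'_{v'}$. Filtering $\overline{\sigma(\tau'_{v'})}$ by its constituents, using exactness of $M_\infty$, and using that $M_\infty$ annihilates any $\bbG(\cO_{F^+,p})$-representation whose restriction to $\bbG(\cO_{F^+_{v'}})$ is a Serre weight outside $\nW^?(\rbar|_{\Gamma_{F^+_{v'}}})$ --- this is the argument proving Proposition \ref{prop:PF}\eqref{prop:PF:4} with $v'$ in place of $v$: the action on such a module factors through the special fibre of a potentially crystalline deformation ring which is the zero ring, since $\rbar|_{\Gamma_{F^+_{v'}}}$ admits no crystalline lift of the relevant Hodge type --- I would deduce $M_\infty(\overline V\otimes\overline{W_0})\cong M_\infty\big(\overline V\otimes\bigotimes_{v'\neq v}\sigma_{v'}\big)$, hence $M_\infty^v(V)/\fm_\infty\cong M_\infty\big(\overline V\otimes\bigotimes_{v'\neq v}\sigma_{v'}\big)/\overline{\fm}_\infty$.

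Next I would invoke the fundamental property of the patched module: by its construction (following \cite{CEGGPS}, as carried out in \cite[\S 7C3]{koziolmorra} with $\fm_\rbar$ replaced by $\fm'_\rbar$) together with the definition of $\fm_\infty$, for any $\bbG(\cO_{F^+,p})$-representation $W$ over $\cO$ factoring through a finite quotient there is a natural isomorphism $M_\infty(W)/\fm_\infty\cong\Hom_{\bbG(\cO_{F^+,p})}\!\big(W,\,S_\bbG(\sfK_1^p,\bbF)[\fm'_\rbar]\big)^\vee$, where one passes between $S_\bbG(\sfK_1,W^\vee\otimes\bbF)_{\fm'_\rbar}$ and a $\Hom$ out of $W$ by the analogue of \cite[Lem. 6.3]{koziolmorra}. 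Taking $W=\overline V\otimes\bigotimes_{v'\neq v}\sigma_{v'}$ and using $\bbG(\cO_{F^+,p})=\sfK_\nU\times\prod_{v'\neq v}\bbG(\cO_{F^+_{v'}})$ together with tensor--Hom adjunction gives
\[
M_\infty^v(V)/\fm_\infty\;\cong\;\Hom_{\sfK_\nU}\!\Big(\overline V,\;\Hom_{\prod_{v'\neq v}\bbG(\cO_{F^+_{v'}})}\!\big(\textstyle\bigotimes_{v'\neq v}\sigma_{v'},\;S_\bbG(\sfK_1^p,\bbF)[\fm'_\rbar]\big)\Big)^{\!\vee},
\]
the inner $\Hom$-space carrying the smooth action of $\bbG(F^+_v)\cong\nU_{1,1}(K)$ inherited from $S_\bbG(\sfK_1^p,\bbF)$.

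Finally I would identify this inner $\Hom$-space with $\pi$: writing $S_\bbG(\sfK_1^p,\bbF)$ as a colimit over shrinking level at $v$ and matching the level conditions at the $p$-adic places $v'\neq v$ (where $\sigma_{v'}$ is inflated from $\bbG(\bbF_{v'}^+)$), taking $\prod_{v'\neq v}\bbG(\cO_{F^+_{v'}})$-equivariant homomorphisms out of the finite-dimensional representation $\bigotimes_{v'\neq v}\sigma_{v'}$ commutes with the colimit and recovers exactly the expression \eqref{defofpi} defining $\pi$; combined with $\Hom_{\sfK_\nU}(\overline V,-)=\Hom_{\sfK_\nU}(V,-)$ (as $\pi$ is an $\bbF$-representation), this yields $M_\infty^v(V)/\fm_\infty\cong\Hom_{\sfK_\nU}(V,\pi)^\vee$, as claimed. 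The step I expect to require the most care is the first paragraph, namely justifying from the defining property of $\tau'_{v'}$ and the local behaviour of $M_\infty$ at $v'$ that $\overline{\sigma(\tau'_{v'})}$ may be replaced by $\sigma_{v'}$ with no error (this reconciliation runs parallel to the one carried out in Remark \ref{rmk:mod:m'} for the notion of modularity), together with keeping straight the duals and the precise levels in the isomorphism of the second paragraph; the remaining manipulations are formal adjunctions and unwinding of definitions.
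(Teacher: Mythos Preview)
Your proposal addresses only part \eqref{prop:PF:5} of the proposition; parts \eqref{prop:PF:1}--\eqref{prop:PF:4} are left untouched. The paper handles the entire proposition differently: it simply cites \cite[Prop.~7.3, Cor.~7.5]{koziolmorra} for parts \eqref{prop:PF:1}, \eqref{prop:PF:3}, \eqref{prop:PF:4}, \eqref{prop:PF:5} (noting that working with $\fm'_{\rbar}$ in place of $\fm_{\rbar}$ changes ``rank 2'' to ``rank 1'' in \eqref{prop:PF:3}), and gives a short commutative-algebra argument for \eqref{prop:PF:2} (namely, $p$ is a non-zerodivisor on both the ring and the module, so maximal Cohen--Macaulayness passes to the mod-$p$ quotient via the Stacks Project tags 090R and 00KW). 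You would need to supply the corresponding arguments for \eqref{prop:PF:1}--\eqref{prop:PF:4}, which are standard but not contained in your write-up.

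For part \eqref{prop:PF:5} itself, your argument is essentially correct and in fact spells out in detail what the paper delegates to the cited reference. The reduction from $\overline{\sigma(\tau'_{v'})}$ to $\sigma_{v'}$ is the one substantive step, and your justification---that for a Serre weight $\kappa\notin\nW^?(\rbar|_{\Gamma_{F^+_{v'}}})$ the action of $R_\infty$ factors through the special fibre of a crystalline deformation ring which vanishes---is the right idea, though note that you are really invoking the weight-elimination direction of the Serre weight conjecture at the auxiliary places $v'$ (established in \cite{koziolmorra} under the running genericity hypotheses), not the statement of \eqref{prop:PF:4} itself, so the apparent circularity is harmless. One small point: the definition \eqref{defofpi} of $\pi$ involves $\Hom_{\sfK_0^v/\sfK_1^v}$, so when you identify the inner $\Hom$-space with $\pi$ you should verify the level structures at the places $v'\neq v$ match up precisely; this is routine but worth saying explicitly.
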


\begin{proof}
  Everything except \eqref{prop:PF:2} follows from \cite[Prop. 7.3, Cor. 7.5]{koziolmorra} (after suitably adapting the arguments to incorporate the ideal $\fm_\rbar'$; in particular, this implies that ``locally free of rank 2'' of \textit{op. cit.} becomes  ``locally free of rank 1'' in the present setup).  To verify \eqref{prop:PF:2}, assume $M_\infty^v(V((\un{0},\un{1}) - \un{\fw}(\lambda))^\textnormal{d}\otimes\sigma(\tau'_v)^\circ)\neq 0$ and note that $p$ is a non-zerodivisor in the ring $R^v_{\lambda,\tau'_v,\infty}$ (by $\cO$-flatness), and also $p$ is a non-zerodivisor on $M_\infty^v(V((\un{0},\un{1}) - \un{\fw}(\lambda))^{\textnormal{d}}\otimes\sigma(\tau'_v)^\circ)$ (by \cite[Prop. 7.3(ii)]{koziolmorra}).  The latter reference also implies $M_\infty^v(\overline{V((\un{0},\un{1}) - \un{\fw}(\lambda))^{\textnormal{d}}\otimes \sigma(\tau'_v)^\circ}) \neq 0$, and applying \cite[Tags \stacks{090R}, \stacks{00KW}]{stacks} shows that maximal Cohen--Macaulayness descends to the mod $p$ quotient.
\end{proof}

\subsubsection{Component matching}

Our next task will be to match modular Serre weights with certain ideals in local deformation rings.

\begin{propn}
\label{prop:BM}
Let $\rhobar:\Gamma_K \longrightarrow {}^C\nU_{1,1}(\bbF)$ be a $4$-generic tamely ramified $L$-parameter  which satisfies $\widehat{\imath}\circ \rhobar = \omega$.
Then there is an injective map 
\begin{eqnarray*}
\nW^?(\rhobar) & \longrightarrow & \Spec(R_{\rhobar}^{\Box})\\
\sigma & \longmapsto & \fp(\sigma)
\end{eqnarray*}
which is uniquely characterized by the property that if $\tau'$ is a tame inertial type which satisfies $(\tau')^{\varphi^{f}}\cong \tau^{\prime\vee}$ then 
$$\Spec(\overline{R}_{\rhobar}^{\tau'})=\bigcup_{\sigma\in \nW^?(\rhobar)\cap \JH(\overline{\sigma(\tau')})}\Spec(R_{\rhobar}^\Box/\fp(\sigma))$$
is the decomposition of $\overline{R}^{\tau'}_{\rhobar}$ into irreducible components, where $\overline{R}_{\rhobar}^{\tau'}$ denotes the special fiber of the potentially crystalline deformation ring defined in \cite[\S 5C]{koziolmorra}.
\end{propn}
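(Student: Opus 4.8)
The plan is to follow the strategy of the analogous $\textnormal{GL}_2$ statement in \cite[\S 4]{BHHMS}, transporting it through the base change map and the explicit deformation ring computations of Theorems \ref{thm:height+monodromy} and \ref{prop:multitype-def-ring}. First I would recall that, by Lemma \ref{serre-wt-intn-JHtype}, there is a bijection $\theta: \nW^?(\rhobar) \longrightarrow \{t_{(1,0)}, t_{(0,1)}\}^f$, and for each $\sigma \in \nW^?(\rhobar)$ I would set $\tw'_\sigma \in \textnormal{Adm}'^\vee(t_{\eta'})^\sym$ to be the unique element with $(\tw'_\sigma)_{j'} = \theta(\sigma)_{j'\,(\textnormal{mod}~f)}$, exactly as in Subsection \ref{sec:multi-type}. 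The candidate ideal $\fp(\sigma)$ will be extracted from the multi-type deformation ring $R^{\leq(2,-1),\sigma}_{\rhobar}$ of Theorem \ref{prop:multitype-def-ring}: more precisely, since that theorem (via the isomorphism $\rho \longmapsto \rho(1)$ and the identification of $R^{\leq(2,-1),\sigma}_{\rhobar}$ with the deformation rings of \cite[\S 4.3]{BHHMS}) gives an explicit presentation in terms of ideals $I_{\tw'}^{(j')}$ and symmetry ideals, I would take $\fp(\sigma)$ to be (the preimage in $R^\Box_{\rhobar}$ of) the minimal prime $\sum_{j'=0}^{2f-1}(\fp^{(j'),\lambda_{2f-1-j'}}_{\tw'_\sigma} + J^{(j',j'+f)}_{\textnormal{sym},\infty})$ with $\lambda \equiv (1,0)$ in every embedding — this is the component of the special fiber on which $\sigma$ "lives."

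Next I would verify the characterizing property. Given a tame inertial type $\tau'$ with $(\tau')^{\varphi^f} \cong \tau'^\vee$, I must describe the irreducible components of $\overline{R}^{\tau'}_{\rhobar}$, the special fiber of the (potentially Barsotti--Tate, i.e.\ Hodge--Tate weights $(1,0)$ in all embeddings) deformation ring from \cite[\S 5C]{koziolmorra}. By Lemma \ref{serre-wt-intn-JHtype}, if $\nW^?(\rhobar) \cap \JH(\overline{\sigma(\tau')})$ is nonempty then $\tau' \cong \tau'_{\tw'}$ for some $\tw' \in \textnormal{Adm}'^\vee(t_{\eta'})^\sym$; conversely for such $\tau'_{\tw'}$ the set $\JH(\overline{\sigma(\tau'_{\tw'})}) \cap \nW^?(\rhobar)$ consists exactly of those $\sigma$ with $\tw'_{j'} \neq \theta(\sigma)_{j'\,(\textnormal{mod}~f)}$ for all $j'$. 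On the deformation ring side, Theorem \ref{thm:height+monodromy} (specialized to $\lambda \equiv (1,0)$, using the last statement of that theorem together with Corollary \ref{cor:single-type-cor}) identifies $\overline{R}^{\tau'_{\tw'}}_{\rhobar}$ with a reduced ring whose minimal primes are indexed by the tuples $\lambda \in \{(2,-1),(1,0)\}^{2f}$ with $\lambda_{j'+f}=\lambda_{j'}$ — equivalently by the $2^f$ choices of a function $\cJ \to \{(2,-1),(1,0)\}$, equivalently by elements $\tw'_\sigma$ "compatible" with $\tw'$ in the sense that $(\tw'_\sigma)_{j'} \neq \tw'_{j'}$. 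Matching the two indexing sets (both are Serre weights $\sigma$ with $\sigma \in \JH(\overline{\sigma(\tau'_{\tw'})}) \cap \nW^?(\rhobar)$) gives the component decomposition $\Spec(\overline{R}^{\tau'_{\tw'}}_{\rhobar}) = \bigcup_\sigma \Spec(R^\Box_{\rhobar}/\fp(\sigma))$, where $\fp(\sigma)$ is the component corresponding to the tuple $\lambda \equiv (1,0)$ for the shape $\tw'_\sigma$; a compatibility check shows this is independent of the particular $\tw'$ and depends only on $\sigma$, which is what allows the map $\sigma \longmapsto \fp(\sigma)$ to be well-defined globally.

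For injectivity I would argue that distinct $\sigma$ give distinct components: by Corollary \ref{cor:types-cont-weight} and Corollary \ref{cor:types-cont-weight:2}, the single-type deformation rings $R^{\leq(2,-1),\tau'_{\tw'_\sigma}}_{\rhobar}$ (equivalently their Serre-weight-indexed components) are pairwise distinguished by which sets $\JH(\overline{\sigma(\tau'_{\tw'_\sigma})})$ they meet, and the explicit prime ideals $\fp^{(j'),\lambda}_{\tw'}$ in \cite[Tables 1--5]{BHHMS} are pairwise distinct for distinct $(\tw',\lambda)$, which forces the $\fp(\sigma)$ to be distinct. Uniqueness of the map with the stated property is then formal: the property pins down $\fp(\sigma)$ as the unique common component of all $\overline{R}^{\tau'}_{\rhobar}$ as $\tau'$ ranges over the conjugate self-dual types with $\sigma \in \JH(\overline{\sigma(\tau')})$, and there is at least one such type (e.g.\ $\tau'_{\tw'_\sigma}$ itself, or by Corollary \ref{cor:types-cont-weight} any type of the form $\tau'((s,s),\BC(\mu+\rho-s(\rho)))$ with $\sigma = F(\mu)$).

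The main obstacle I anticipate is the bookkeeping of the component decomposition: one must carefully reconcile the two combinatorial parametrizations — Serre weights $\sigma$ in $\JH(\overline{\sigma(\tau'_{\tw'})}) \cap \nW^?(\rhobar)$ on the representation-theoretic side (governed by Lemma \ref{serre-wt-intn-JHtype} and Propositions \ref{prop:SW:extgr:U2}, \ref{prop:JH:fct:extgr:U2}), versus the $\lambda$-tuples indexing minimal primes of $\overline{R}^{\tau'_{\tw'}}_{\rhobar}$ on the Galois side (governed by Theorems \ref{thm:height+monodromy}, \ref{prop:multitype-def-ring}) — and show that the assignment $\sigma \mapsto \fp(\sigma)$ obtained this way is genuinely intrinsic to $\sigma$ rather than an artifact of the chosen $\tau'$. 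In \cite{BHHMS} this is handled by the consistency of the Breuil--Mézard formalism and the crystalline lifting results; here the extra wrinkle is that one must additionally track the symmetry ideals $I^{(j',j'+f)}_{\textnormal{sym},\infty}$ and $J^{(j',j'+f)}_{\textnormal{sym},\infty}$ through every identification, using Sublemma \ref{lem:automorphism} and the fact (Corollary \ref{cor:single-type-cor}) that the special fiber components remain formally smooth, so that the base-changed picture faithfully mirrors the $\textnormal{GL}_2$ one. Once this dictionary is set up, the remaining verifications (reducedness, the precise list of components, pairwise distinctness) follow as in \cite[\S 4.3--4.4]{BHHMS} with only notational changes, so I would present those steps briefly and concentrate the exposition on the combinatorial matching.
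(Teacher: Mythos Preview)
Your approach differs substantially from the paper's and contains a genuine gap. The paper does not extract $\fp(\sigma)$ from the explicit deformation ring presentations of Section~\ref{sec:galdefs}; instead it defines $\fp(\sigma) := \textnormal{Ann}_{R^\Box_{\rhobar}}(\overline{R}^{\tau'_\sigma}_{\rhobar})$ for a type $\tau'_\sigma$ with $\nW^?(\rhobar) \cap \JH(\overline{\sigma(\tau'_\sigma)}) = \{\sigma\}$ (such a type exists by \cite[Props.~3.16, 4.6]{koziolmorra}, and the corresponding ring is a domain by \cite[\S 5C10]{koziolmorra}), and then verifies the characterizing property by a \emph{patching argument}. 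One globalizes $\rhobar$ via Corollary~\ref{appendix:maincor}, obtains a patching functor $M_\infty^v$ as in Proposition~\ref{prop:PF}, computes $\sqrt{\textnormal{Ann}(M_\infty^v(\sigma))} = \fp(\sigma)R_\infty^v$, and finally uses that $M_\infty^v(\overline{\sigma(\tau')^\circ})$ has full support on $\overline{R}^v_{(\un{1},\un{0}),\tau',\infty}$ together with its filtration by the $M_\infty^v(\sigma)$ to read off the component decomposition of $\overline{R}^{\tau'}_{\rhobar}$.

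Your proposal breaks at two points. First, you assert that Theorem~\ref{thm:height+monodromy} and Corollary~\ref{cor:single-type-cor} identify the minimal primes of $\overline{R}^{\tau'_{\tw'}}_{\rhobar}$ with tuples $\lambda \in \{(2,-1),(1,0)\}^{2f}$. This is not what those results say: the $\lambda$-tuples index the irreducible components of $R^{\leq(2,-1),\tau'_{\tw'}}_{\rhobar}$ over $\cO$, whereas $\overline{R}^{\tau'_{\tw'}}_{\rhobar}$ is the special fiber of the \emph{single} $\lambda \equiv (1,0)$ component. Its irreducible components are parametrized differently (there are $2^{|\{j : \tw'_j = \fw t_{(1,0)}\}|}$ of them, not $2^f$); indeed the explicit matching is recorded only later in Proposition~\ref{prop:cmpt:match}, which \emph{uses} Proposition~\ref{prop:BM} as input. (Relatedly, your candidate prime involves $\fp^{(j'),\lambda}_{\tw'_\sigma}$, but $\tw'_\sigma \notin X(\sigma)$ by construction, so this ideal does not even appear in Theorem~\ref{prop:multitype-def-ring}.) Second, and more seriously, the step you dismiss as ``a compatibility check'' is the entire content of the proposition: one must show that the closed subscheme of $\Spec(R^\Box_{\rhobar})$ cut out as the ``$\sigma$-component'' of $\overline{R}^{\tau'_1}_{\rhobar}$ coincides with the one obtained from any other $\tau'_2$ containing $\sigma$. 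The paper achieves this only because the support of $M_\infty^v(\sigma)$ is intrinsically attached to $\sigma$; you offer no mechanism that would replace this, and a direct verification from the explicit equations across different types is neither attempted in the paper nor sketched in your proposal.
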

\begin{proof}
The proof closely follows the proof of \cite[Prop. 3.6.1(1)]{LLLM2}.

Firstly, if such an assignement exists, it must be unique: by \cite[Props. 3.16(ii), 4.6]{koziolmorra}, given $\sigma\in \nW^?(\rhobar)$ there exists a conjugate self-dual tame inertial type $\tau'_\sigma$ such that $\nW^?(\rhobar)\cap \JH(\overline{\sigma(\tau'_\sigma)}) = \{\sigma\}$, and hence $ \fp(\sigma) = \textnormal{Ann}_{R_{\rhobar}^{\Box}}(\overline{R}_{\rhobar}^{\tau'_\sigma})$.

We now prove that the assignment $\sigma \longmapsto \fp(\sigma)$ defined above satisfies the desired conditions.  By Corollary \ref{appendix:maincor}, there exists a modular globalization $\rbar$ of $\rhobar$ (or, more precisely, of $\varrhobar$, the composition of $\rhobar$ with the isomorphism $\cG_2 \cong {}^C\nU_{1,1}$), and hence a functor $M_\infty^v$ as in Proposition \ref{prop:PF}.  Given $\sigma\in \nW^?(\rhobar)$ and $\tau'_\sigma$ as in the first paragraph, we have
$$M_\infty^v(\overline{\sigma(\tau'_\sigma)^\circ}) = M^v_\infty(\sigma) \neq 0$$
by exactness of the functor $M_\infty^v$ and Proposition \ref{prop:PF}\eqref{prop:PF:4}.  Hence, by Proposition \ref{prop:PF}\eqref{prop:PF:2}, we have that $M^v_\infty(\sigma)$ is maximal Cohen--Macaulay over $\overline{R}^v_{(\un{1},\un{0}),\tau'_\sigma,\infty}$ and its support is a union of irreducible components of $\Spec(\overline{R}^v_{(\un{1},\un{0}),\tau'_\sigma,\infty})$.  Since the ring $\overline{R}^{v}_{(\un{1},\un{0}),\tau'_\sigma,\infty}$ is an integral domain (see \cite[\S 5C10; Table 3]{koziolmorra}), we see that this support is all of $\Spec(\overline{R}^v_{(\un{1},\un{0}),\tau'_\sigma,\infty})$.  Using reducedness, we conclude that 
$$\sqrt{\mathrm{Ann}_{R_\infty^v}(M^v_\infty(\sigma))} = \sqrt{\mathrm{Ann}_{R_\infty^v}(\overline{R}^v_{(\un{1},\un{0}),\tau'_\sigma,\infty})} = \mathrm{Ann}_{R_\infty^v}(\overline{R}^v_{(\un{1},\un{0}),\tau'_\sigma,\infty}) = \fp(\sigma)R_\infty^v.$$

Now let $\tau'$ denote a conjugate self-dual tame inertial type such that $M_\infty^v(\sigma(\tau')^\circ)\neq 0$ (note that $\tau'$ is then at least $2$-generic).  
Using Proposition \ref{prop:PF}\eqref{prop:PF:1}, the module $M_\infty^v(\sigma(\tau')^\circ)$ has full support over $R^v_{(\un{1},\un{0}),\tau',\infty}$, since this ring is an integral domain.  Applying \cite[Prop. 2.2.13]{emertongee}, we see that $M_\infty^v(\overline{\sigma(\tau')^\circ})$ has full support over $\overline{R}^v_{(\un{1},\un{0}),\tau',\infty}$.  Since $M_\infty^v(\overline{\sigma(\tau')^\circ})$ has a filtration with graded pieces isomorphic to $M^v_\infty(\sigma)$ ($\sigma\in \nW^?(\rhobar)\cap \JH(\overline{\sigma(\tau')})$), we obtain
\begin{eqnarray*}
\Spec(\overline{R}^v_{(\un{1},\un{0}),\tau',\infty}) & = &
\textnormal{Supp}(M_\infty^v(\overline{\sigma(\tau')^\circ}))\\
 & = & \bigcup_{\sigma\in \nW^?(\rhobar)\cap \JH(\overline{\sigma(\tau')})}\textnormal{Supp}(M_\infty^v(\sigma))\\
 & = & 
\bigcup_{\sigma\in \nW^?(\rhobar)\cap \JH(\overline{\sigma(\tau')})}\Spec(R_{\infty}^v/\fp(\sigma)R_{\infty}^v).
\end{eqnarray*}
The decomposition above descends to $\overline{R}^{\tau'}_{\rhobar}$ and $R^{\Box}_{\rhobar}$, which finishes the argument.
\end{proof}

Next, we put ourselves in the setting of \cite[\S 5C]{koziolmorra} (and refer to \textit{op.~cit.}~for the notation below).  By passing to special fibers, we get the following diagram, obtained in a manner analogous to \cite[Diag. (5.9)]{LLLM}:
\begin{equation}
\label{diag:f.s.prime}
\begin{tikzcd}
\textnormal{Spf}(\overline{R}^{\tau',\overline{\beta},\Box}_{\overline{\fM},\textnormal{pol};\rhobar})\ar[rr,"\textnormal{f.s.}"]
\ar[d,"\textnormal{f.s.}"'] \ar[drr,phantom,"\square"] & & \textnormal{Spf}(\overline{R}_{\tw'}^{\textnormal{expl}})\ar[d,"\textnormal{f.s.}"]\ar[r,"\sim"]\ar[dr,phantom,"\square"] & \overline{D}_{\overline{\fM},\textnormal{pol}}^{\tau',\overline{\beta}} \ar[d,"\textnormal{f.s.}"]
\\
\textnormal{Spf} (\overline{R}^{\tau'}_{\rhobar})=\textnormal{Spf}(\overline{R}^{\tau',\Box}_{\overline{\fM},\textnormal{pol};\rhobar})\ar[rr,"\textnormal{f.s.}"]\ar[d, hookrightarrow] \ar[drr,phantom,"\square"]& & \left[\textnormal{Spf}(\overline{R}_{\tw'}^{\textnormal{expl}})/\widehat{\bf{G}}_m^{2f}\right]\ar[d,hookrightarrow, "\imath_{\tau'}"]\ar[r,"\sim"]&
 \overline{Y}^{\mu,\tau'}_{\overline{\fM},\textnormal{pol}} \ar[dl, hookrightarrow,"\imath_{\tau'}'"]
\\
\Phi\text{-}\textnormal{Mod}^{\text{\'et},\Box}_{\overline{\cM},\textnormal{pol}}\ar[rr,"\textnormal{f.s.}"]
& &\Phi\text{-}\textnormal{Mod}^{\text{\'et}}_{\overline{\cM},\textnormal{pol}} & 
\end{tikzcd}
\end{equation}
where $\tw' := \tw'(\rhobar,\tau') \in \textnormal{Adm}'^\vee(t_{\eta'})^{\textnormal{sym}}$ is the shape of the Kisin module $\overline{\fM}$, $\overline{R}_{\tw'}^{\textnormal{expl}} := \widehat{\bigotimes}_{j\in\{f,\dots,2f-1\}}\overline{R}_{\tw_j}^{\textnormal{expl}}$ and  ``f.s.'' stands for a formally smooth morphism.  
(Our convention on shapes and dual shapes in the present paper is consistent with \cite{koziolmorra} and \cite{LLLM}, and is opposite to \cite{LLLM3}.)
Note that the isomorphism in the middle row follows form the discussion in \cite[\S 5C1]{koziolmorra}, and the injectivity of the diagonal map is a consequence of \emph{loc.~cit}.~Lemma 5.20.

Thus, given $\tau'$ we have bijections
\begin{eqnarray*}
\Big(  \Sigma_j\cap (\tw'(\rhobar,\tau')_j^{*})^{-1}(\Sigma_j)\Big)_{j\in\cJ} & \stackrel{\textnormal{Props. \ref{prop:SW:extgr:U2}, \ref{prop:JH:fct:extgr:U2}}}{\longleftrightarrow} & \nW^?(\rhobar)\cap \JH(\overline{\sigma(\tau')})\\
   & \stackrel{\textnormal{Prop. \ref{prop:BM}}}{\longleftrightarrow} & \textnormal{Irr}(\textnormal{Spec}(\overline{R}_{\rhobar}^{\tau'})) \\
   & \stackrel{\textnormal{Diag. \eqref{diag:f.s.prime}}}{\longleftrightarrow}&
  \textnormal{Irr}(\textnormal{Spec}(\overline{R}_{\tw'}^{\textnormal{expl}})).
\end{eqnarray*}
Here we use the notation $\Sigma_j := \{0,\rhobar_j\} \subset \Lambda_{\textnormal{wt}}$.

Following the procedure of \cite[\S 3.6.1]{LLLM2} we obtain:
\begin{propn}
\label{prop:cmpt:match}
The composition of the bijections above gives a bijection
\begin{eqnarray*}
\big(\Sigma_j\cap (\tw'(\rhobar,\tau')_j^*)^{-1}(\Sigma_j)\big)_{j\in\cJ} & \longrightarrow & \textnormal{Irr}(\textnormal{Spec}(\overline{R}_{\tw'}^{\textnormal{expl}}))
\\
\omega & \longmapsto & \langle z_{2f-1-j}(\omega)\rangle_{j\in\cJ}
\end{eqnarray*}
where $z_{2f-1-j}(\omega) = 0$ if $\Sigma_j \cap (\tw'(\rhobar,\tau')_j^{*})^{-1}(\Sigma_j)$ is a singleton, and otherwise $z_{2f-1-j}(\omega) = c_{2,2}$ if $\omega_j = 0$, $z_{2f-1-j}(\omega) = c_{1,1}$ if $\omega_j = \rhobar_j$.  Here, $c_{1,1}$ and $c_{2,2}$ denote coefficients appearing in the universal matrices of \cite[Table 2]{koziolmorra} with $i$ in \emph{loc.~cit}.~taken to be $2f-1-j$ here.
\end{propn}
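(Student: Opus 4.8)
The plan is to run the argument of \cite[\S 3.6.1]{LLLM2} (in the form used for $\nG\nL_2$ in the proof of \cite[Prop. 3.6.1]{LLLM2}), now tracking through the base change dictionary and the symmetric structure recorded in Sections~\ref{appendix:EGC} and~\ref{sec:galdefs}. First I would make the three bijections in the display preceding the statement entirely explicit on representatives: for the leftmost arrow, Propositions~\ref{prop:SW:extgr:U2} and~\ref{prop:JH:fct:extgr:U2} identify $\sigma \in \nW^?(\rhobar)\cap\JH(\overline{\sigma(\tau')})$ with a tuple $(\omega_j)_{j\in\cJ}$ with $\omega_j\in\Sigma_j\cap(\tw'(\rhobar,\tau')_j^*)^{-1}(\Sigma_j)$, via $\sigma\cong F(\ft_\mu(\cdots))$; for the middle arrow, Proposition~\ref{prop:BM} sends such a $\sigma$ to the minimal prime $\fp(\sigma)$ cutting out an irreducible component of $\Spec(\overline{R}^{\tau'}_{\rhobar})$; and for the rightmost arrow, the formally smooth square and the isomorphism in Diagram~\eqref{diag:f.s.prime} transport this to an irreducible component of $\Spec(\overline{R}^{\textnormal{expl}}_{\tw'})$. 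The point is then to compute which component of $\Spec(\overline{R}^{\textnormal{expl}}_{\tw'})$ one lands on, in terms of the explicit generators of \cite[Table 2]{koziolmorra} (equivalently rows 7--8 of \cite[Tables 1--3]{BHHMS} after the $(\underline{1},\underline{1})$-shift of Theorem~\ref{thm:height+monodromy}).

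The key computational step is a monodromy/Hodge--Tate-weight matching: by Appendix~\ref{appA} (Corollary~\ref{main-global-lift-cor}), a Serre weight $\sigma\in\nW^?(\rhobar)\cap\JH(\overline{\sigma(\tau')})$ forces $\rhobar$ to admit a potentially crystalline lift of inertial type $\tau'$ whose Hodge--Tate type $\lambda\in\{(2,-1),(1,0)\}^{2f}$ is determined (embedding by embedding) by $\sigma$, and conversely such a lift lies on the component $\Spec(R^{\lambda,\tau'_{\tw'}}_{\rhobar})$; Theorem~\ref{thm:height+monodromy} identifies the kernel of $R^{\leq(2,-1),\tau'_{\tw'}}_{\rhobar}\bbra{X_1,\dots,X_{2f}}\longtwoheadrightarrow R^{\lambda,\tau'_{\tw'}}_{\rhobar}\bbra{X_1,\dots,X_{2f}}$ with the prime $\sum_{j'}(\fp^{(j'),\lambda_{2f-1-j'}}+I^{(j',j'+f)}_{\textnormal{sym},\infty})$, where the $\fp^{(j'),\bullet}$ in rows 7--8 of \cite[Tables 1--3]{BHHMS} are distinguished precisely by whether $c_{1,1}$ or $c_{2,2}$ vanishes on the relevant factor. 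So the combinatorial content reduces to: tracking the dictionary of Subsection~\ref{twist-by-epsilon} and Lemma~\ref{lem:comp:ext:gr}, the condition $\omega_j = 0$ (resp.\ $\omega_j=\rhobar_j$) translates into $\lambda_{2f-1-j} = (1,0)$ (resp.\ $(2,-1)$) in the $j$-th block, which in turn picks out the component on which $z_{2f-1-j}=c_{2,2}$ (resp.\ $c_{1,1}$) — exactly as in the $\nG\nL_2$ computation of \cite[\S 3.6.1]{LLLM2}, with the symmetry ideals being carried along harmlessly because the change-of-variables maps of Diagram~\eqref{diag:f.s.prime} respect them (as noted in the proofs of Theorem~\ref{thm:height+monodromy} and Sublemma~\ref{lem:automorphism}). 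The case where $\Sigma_j\cap(\tw'(\rhobar,\tau')_j^*)^{-1}(\Sigma_j)$ is a singleton corresponds to an embedding where $\overline{R}^{\textnormal{expl}}_{\tw_j}$ is already a domain (no branch to choose), whence $z_{2f-1-j}(\omega)=0$ by fiat.

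The main obstacle I anticipate is bookkeeping rather than conceptual: one must carefully reconcile three separate normalization conventions — the $\eta$-shift between the extension-graph conventions of \cite{BHHMS} and those of \cite{LMS}/\cite{koziolmorra} (flagged at the start of Section~\ref{appendix:EGC}); the $(\underline{1},\underline{1})$-shift in Hodge--Tate weights/shapes between Theorem~\ref{thm:height+monodromy} and \cite{BHHMS}; and the opposite shape convention of \cite{LLLM3} versus \cite{koziolmorra}/\cite{LLLM} mentioned after Diagram~\eqref{diag:f.s.prime} — and verify that after all shifts the labelling $z_{2f-1-j}(\omega)$ with the stated $c_{1,1}/c_{2,2}$ assignment is the correct one for the symmetric $\nU_{1,1}$ setup and not merely for one embedding of the $\nG\nL_2$ pair $(j',j'+f)$. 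Concretely, I would fix one non-trivial embedding $j$ with $\tw'(\rhobar,\tau')_{2f-1-j} = \fw t_{(1,0)}$, write out both $\fp^{(2f-1-j),(1,0)}$ and $\fp^{(2f-1-j),(2,-1)}$ from \cite[Tables 1--3, rows 7--8]{BHHMS} and the corresponding universal matrix entry from \cite[Table 2]{koziolmorra}, check the assignment there by hand, and then invoke the functoriality of Diagram~\eqref{diag:f.s.prime} and the injectivity statements already proved (Propositions~\ref{prop:BM}, \ref{prop:SW:extgr:U2}, \ref{prop:JH:fct:extgr:U2}) to propagate it to all $j$ and all $\tau'$.
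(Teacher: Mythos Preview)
Your proposal contains a genuine confusion between two different deformation rings. The ring $\overline{R}^{\textnormal{expl}}_{\tw'}$ in Diagram~\eqref{diag:f.s.prime} and the components of $\Spec(\overline{R}^{\tau'}_{\rhobar})$ in Proposition~\ref{prop:BM} refer to the \emph{special fiber of the potentially Barsotti--Tate deformation ring} from \cite[\S 5C]{koziolmorra}, i.e.\ the height-$1$ case with Hodge--Tate weights $(1,0)$ in every embedding. Your ``key computational step'' instead invokes Theorem~\ref{thm:height+monodromy} and the primes $\fp^{(j'),\lambda_{2f-1-j'}}$, which live in the \emph{$\leq(2,-1)$ deformation ring} $R'/I_\infty$; those primes distinguish the \emph{characteristic-zero} irreducible components indexed by Hodge--Tate types $\lambda\in\{(1,0),(2,-1)\}^{2f}$, not the mod-$p$ components of the Barsotti--Tate ring indexed by Serre weights. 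In particular, the claim that ``$\omega_j=0$ (resp.\ $\omega_j=\rhobar_j$) translates into $\lambda_{2f-1-j}=(1,0)$ (resp.\ $(2,-1)$)'' has no meaning for the Barsotti--Tate ring, where $\lambda$ is fixed at $(1,0)$ throughout. Corollary~\ref{main-global-lift-cor} does not help here either: producing a characteristic-zero lift of some Hodge--Tate type does not, by itself, pick out a component of the mod-$p$ special fiber of a different deformation ring.

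The paper's proof follows the actual strategy of \cite[\S 3.6.1]{LLLM2} (which is not the Hodge--Tate-type matching you describe). For each $\omega$ one introduces an \emph{auxiliary type} $\tau'_\omega$ characterized by $\nW^?(\rhobar)\cap\JH(\overline{\sigma(\tau'_\omega)})=\{F(\ft_\mu(s\omega))\}$, computes the relative shape $(\tw'(\rhobar,\tau'_\omega)^*)_{2f-1-j}^{-1}\,\tw'(\rhobar,\tau')^*_{2f-1-j}$ using Propositions~\ref{prop:SW:extgr:U2} and~\ref{prop:JH:fct:extgr:U2}, and then uses the polarized analogue of \cite[Lem.~3.6.3, Cor.~3.6.7]{LLLM2} together with the explicit matrix identities
\[
\begin{pmatrix}c_{1,1}&c_{1,2}^*\\ vc_{2,1}^*&c_{2,2}\end{pmatrix}\bmod c_{1,1}\equiv\begin{pmatrix}vc_{1,2}^*&0\\ vc_{2,2}&c_{2,1}^*\end{pmatrix}v^{(-1,1)}\fw,\qquad
\begin{pmatrix}c_{1,1}&c_{1,2}^*\\ vc_{2,1}^*&c_{2,2}\end{pmatrix}\bmod c_{2,2}\equiv\begin{pmatrix}c_{1,2}^*&c_{1,1}\\ 0&vc_{2,1}^*\end{pmatrix}\fw
\]
to see that the component $V(c_{1,1})$ (resp.\ $V(c_{2,2})$) of $\Spec(\overline{R}^{\textnormal{expl}}_{\tw'_{2f-1-j}})$ maps, under the closed immersion $\imath_{\tau'}$ of Diagram~\eqref{diag:f.s.prime}, onto the image of $\imath_{\tau'_\omega}$ for the appropriate $\omega$. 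This comparison inside the \'etale $\varphi$-module stack is what links the Serre weight $\sigma$ (via $\tau'_\omega$) to the explicit generator $c_{i,i}$; your route through Theorem~\ref{thm:height+monodromy} bypasses this comparison entirely and therefore does not establish the desired bijection.
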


\begin{proof}
  We record the relevant information appearing in Table \ref{Table:Match} justifying the proposition, for those embeddings $j$ where $\Sigma_j \cap (\tw'(\rhobar,\tau')_j^{*})^{-1}(\Sigma_j)$ has size 2.  The type $\tau'_{\omega}$ is characterized by the property $\nW^?(\rhobar)\cap\JH(\overline{\sigma(\tau'_\omega)})=\{F(\mathfrak{t}_{\mu}(s\omega))\}$.  The passage from the second to the third column is justified by Propositions \ref{prop:SW:extgr:U2} and \ref{prop:JH:fct:extgr:U2}.  The passage from the third to the fourth column is justified by the version of \cite[Lem. 3.6.3, Cor. 3.6.7]{LLLM2} for polarized Kisin modules, and the identities 
  \begin{eqnarray*}
    \begin{pmatrix} c_{1,1} & c_{1,2}^* \\ vc_{2,1}^* & c_{2,2}\end{pmatrix}~ \textnormal{modulo}~ c_{1,1} & \equiv & \begin{pmatrix} vc_{1,2}^* & 0 \\ vc_{2,2} & c_{2,1}^* \end{pmatrix} v^{(-1,1)}\fw \\ 
    \begin{pmatrix} c_{1,1} & c_{1,2}^* \\ vc_{2,1}^* & c_{2,2} \end{pmatrix}~ \textnormal{modulo}~ c_{2,2} & \equiv & \begin{pmatrix} c_{1,2}^* & c_{1,1} \\ 0 & vc_{2,1}^* \end{pmatrix}\fw.  
  \end{eqnarray*}
\end{proof}

\begin{table}[ht]
  \captionsetup{justification=centering}
  \caption[foo content]{Translation from $\Sigma\cap \tw'(\rhobar,\tau')^{-1}(\Sigma)$ to $\textnormal{Irr}(\textnormal{Spec}(\overline{R}_{\tw'}^{\textnormal{expl}}))$.}
\label{Table:Match}
\centering
\begin{tabular}{| c | c | c | c | }
\hline
& & &\\
$A^{(2f-1-j)}$ & $\omega_j\in \Sigma_j\cap (\tw'(\rhobar,\tau')_j^{*})^{-1}(\Sigma_j)$ & $(\tw'(\rhobar,\tau'_\omega)^*)_{2f-1-j}^{-1}\tw'(\rhobar,\tau')_{2f-1-j}^*$ & $z_{2f-1-j}(\omega)$\\
& & &\\
\hline\hline
& & &\\
 \multirow{4}{*}{$\begin{pmatrix}c_{1,1} & c_{1,2}^*\\ vc_{2,1}^* & c_{2,2}\end{pmatrix}$}&$\rhobar_j$&$t_{(-1,1)}\fw$&$c_{1,1}$\\
 & & &\\
 \cline{2-4}
 & & &\\
 & 0 & $\fw$ & $c_{2,2}$\\
 & & & \\
\hline
\end{tabular}
\end{table}

Recall from \cite[Def. 3.21]{koziolmorra} the notion of graph distance $\textnormal{dgr}(\sigma,\sigma')$ for a pair of regular Serre weights $\sigma,\sigma'$ with same central character; in the setup of Proposition \ref{prop:JH:fct:extgr:U2}, this quantity is equal to the number of embeddings $j$ for which $\omega_j \not\equiv \omega'_j$, where $\omega,\omega' \in \Sigma$ correspond to $\sigma,\sigma'$.
\begin{cor}
  \label{cor:diff-emb}
Suppose $\sigma,\sigma'\in \nW^?(\rhobar)\cap \JH(\overline{\sigma(\tau')})$ correspond to $\omega,\omega'\in \Sigma$, and assume $\textnormal{dgr}(\sigma,\sigma') = 1$.  
Then there exists a unique embedding $2f-1-j \in \cJ$ for which $z_{2f-1-j}(\omega)\neq z_{2f-1-j}(\omega')$.
\end{cor}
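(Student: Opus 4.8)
The plan is to unwind both sides through the chain of bijections assembled before Proposition~\ref{prop:cmpt:match}, reducing the claim to a statement purely about extension-graph elements. First I would recall that by hypothesis $\textnormal{dgr}(\sigma,\sigma') = 1$, which by the description of graph distance given just before the corollary means $\omega$ and $\omega'$ differ in exactly one embedding: there is a unique $j_0 \in \cJ$ with $\omega_{j_0} \not\equiv \omega'_{j_0}$, while $\omega_j \equiv \omega'_j$ for all $j \neq j_0$. Since $\omega,\omega' \in \Sigma = \{\overline{\eta}_J\}_{J \subset \cJ}$, this means $\{0,\rhobar_{j_0}\}$ is visited differently: one of $\omega_{j_0},\omega'_{j_0}$ is $0$ and the other is $\rhobar_{j_0}$ (in $\Lambda_{\textnormal{wt}}$).

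Next I would invoke Proposition~\ref{prop:cmpt:match}, which gives $z_{2f-1-j}(\omega)$ and $z_{2f-1-j}(\omega')$ embedding-by-embedding. For an embedding $j$ where $\Sigma_j \cap (\tw'(\rhobar,\tau')_j^*)^{-1}(\Sigma_j)$ is a singleton, both $z_{2f-1-j}(\omega)$ and $z_{2f-1-j}(\omega')$ equal $0$, so they agree there regardless of $\omega,\omega'$. For an embedding $j$ where that intersection has size $2$, Proposition~\ref{prop:cmpt:match} shows $z_{2f-1-j}$ is a genuine function of $\omega_j$: it is $c_{2,2}$ if $\omega_j = 0$ and $c_{1,1}$ if $\omega_j = \rhobar_j$. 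Since for $j \neq j_0$ we have $\omega_j \equiv \omega'_j$, the components $z_{2f-1-j}(\omega)$ and $z_{2f-1-j}(\omega')$ coincide for all $j \neq j_0$. It therefore remains to treat $j = j_0$: here the intersection $\Sigma_{j_0} \cap (\tw'(\rhobar,\tau')_{j_0}^*)^{-1}(\Sigma_{j_0})$ must have size $2$ — otherwise both $\omega_{j_0}$ and $\omega'_{j_0}$ would be forced to be the unique element of a singleton set, contradicting $\omega_{j_0} \not\equiv \omega'_{j_0}$ — so $z_{2f-1-j_0}$ is $c_{1,1}$ for one of $\omega,\omega'$ and $c_{2,2}$ for the other, and in particular $z_{2f-1-j_0}(\omega) \neq z_{2f-1-j_0}(\omega')$ since $c_{1,1} \neq c_{2,2}$ as distinct coordinate functions (indeed, they cut out distinct irreducible components by Proposition~\ref{prop:cmpt:match}). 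This gives existence and uniqueness of the embedding $2f-1-j_0$.

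The only mildly delicate point, and the step I expect to require the most care, is the bookkeeping of the index shift $j \leftrightarrow 2f-1-j$ between the combinatorial labelling of the extension-graph data (indexed by $\cJ$) and the geometric labelling of irreducible components of $\Spec(\overline{R}_{\tw'}^{\textnormal{expl}})$ (indexed via $2f-1-j$), together with the reduction of indices modulo $f$ implicit in working with the symmetric admissible set $\textnormal{Adm}'^\vee(t_{\eta'})^{\sym}$. Once one fixes conventions consistently with Proposition~\ref{prop:cmpt:match} and Table~\ref{Table:Match}, the argument is a direct consequence of the fact that $z_{2f-1-j}(\omega)$ depends on $\omega$ only through the single coordinate $\omega_j \in \Sigma_j$, so that two $\Sigma$-elements differing in exactly one coordinate produce $z$-tuples differing in exactly one coordinate.
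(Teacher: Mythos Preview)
Your proposal is correct and matches the paper's intent: the corollary is stated without proof, being an immediate consequence of Proposition~\ref{prop:cmpt:match}, and your argument is precisely the natural unwinding of that proposition together with the definition of $\textnormal{dgr}$. The observation that $\Sigma_{j_0} \cap (\tw'(\rhobar,\tau')_{j_0}^*)^{-1}(\Sigma_{j_0})$ must have size $2$ (since it contains both $\omega_{j_0}$ and $\omega'_{j_0}$) is the only point requiring a moment's thought, and you handle it correctly.
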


\subsection{Freeness of patched modules}
\label{sub:freeness}

We maintain the above setup.  Our next goal will be to show freeness of various patched modules $M_\infty^v(V)$.  The proofs are simpler than those of \cite{BHHMS}, since our patching functors have generic fibers which are locally free of rank 1.

Let $N\geq 4$ be an integer.  We fix throughout a continous representation $\rbar:\Gamma_{F^+} \longrightarrow \cG_2(\bbF)$ as in Subsection \ref{subsub:patching-functor}, and denote by $\rhobar:\Gamma_K \longrightarrow {}^C\nU_{1,1}(\bbF)$ the composition of $\rbar|_{\Gamma_{F^+_v}}$ with the isomorphism $\cG_2 \cong {}^C\nU_{1,1}$.  We suppose $\rhobar$ is $N$-generic, and let $M_\infty^v(-)$ denote a patching functor relative to $\rbar$.

\subsubsection{Freeness of Serre weights and types}

\begin{propn}
  Maintain the assumptions above.
\label{prop:freeness:1}
\begin{enumerate}
\item
\label{it:freeness:1:1}
Let $\sigma\in \nW^?(\rhobar)$.  Then $M_\infty^v(\sigma)$ is free of rank $1$ over $\overline{R}^v_{(1,0),\tau'_\sigma,\infty}$, where $\tau'_\sigma$ is a tame inertial type satisfying $(\tau'_\sigma)^{\varphi^{f}} \cong \tau'^{\vee}_\sigma$ and $\nW^?(\rhobar) \cap \JH(\overline{\sigma(\tau'_\sigma)}) = \{\sigma\}$.  

\item
\label{it:freeness:1:2}
Let $\tau'$ be a tame inertial type which satisfies $(\tau')^{\varphi^{f}}\cong \tau^{\prime\vee}$ and $\nW^?(\rhobar)\cap \JH(\overline{\sigma(\tau')})\neq\emptyset$, and let $\sigma(\tau')^\circ$ be a choice of a $\sfK_\nU$-stable $\cO$-lattice with irreducible cosocle.  Then $M_\infty^v(\sigma(\tau')^\circ)$ is free of rank $1$ over the integral domain $R_{(1,0),\tau',\infty}$.  
\end{enumerate}
\end{propn}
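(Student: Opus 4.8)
The plan is to mimic the proof of the analogous statement in \cite{BHHMS} (specifically the arguments surrounding \cite[Thm. 6.4.1 or analogous]{BHHMS}), but taking advantage of the fact that here the patched modules have generic fibre locally free of \emph{rank $1$} rather than rank $2$; this considerably simplifies the multiplicity bookkeeping. For part \eqref{it:freeness:1:1}, I would first invoke Proposition \ref{prop:PF}\eqref{prop:PF:4} to see that $M_\infty^v(\sigma)$ is nonzero and maximal Cohen--Macaulay over the ring $\overline{R}^v_{(\un 1, \un 0), \tau'_\sigma,\infty} = (R_\infty^v \widehat{\otimes}_{\tR_v^\Box} R_v^{\Box, (\un 1,\un 0) - \un\fw(\mu), \bld})\otimes_\cO \bbF$, which is an integral domain by \cite[\S 5C10; Table 3]{koziolmorra} (here one uses that $\sigma$ has highest weight $\mu$, $\tau'_\sigma$ is chosen via \cite[Props. 3.16(ii), 4.6]{koziolmorra} so that $\nW^?(\rhobar)\cap \JH(\overline{\sigma(\tau'_\sigma)}) = \{\sigma\}$, and the relevant component of $R^{\Box, \lambda, \tau'_\sigma}_v$ for $\lambda = (\un1,\un0)$ is formally smooth over $\cO$). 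Since the base ring is a regular local ring (being an integral domain which is formally smooth over $\bbF$, by Corollary \ref{cor:single-type-cor} applied to the relevant shape) and the module is maximal Cohen--Macaulay, the Auslander--Buchsbaum formula forces $M_\infty^v(\sigma)$ to be free; and then Proposition \ref{prop:PF}\eqref{prop:PF:3} (together with exactness of $M_\infty^v$ and Proposition \ref{prop:PF}\eqref{prop:PF:5}, which identify the rank via $\Hom_{\sfK_\nU}$ against $\pi$) pins the rank down to $1$.

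For part \eqref{it:freeness:1:2}, the strategy is to leverage part \eqref{it:freeness:1:1} via a dévissage along a Jordan--Hölder filtration of $\overline{\sigma(\tau')^\circ}$. First I would note that $R_{(\un1,\un0),\tau',\infty} := R^v_\infty \widehat{\otimes}_{\tR_v^\Box} R_v^{\Box, (\un1,\un0), \tau'}$ is an integral domain: indeed $R_v^{\Box,(\un1,\un0),\tau'}$ is an irreducible component of the single-type deformation ring of Subsection \ref{subsec:single-type}, whose components were enumerated in Theorem \ref{thm:height+monodromy}, and formal smoothness of $R_\infty^v$ over this component follows as in \cite[\S 7]{koziolmorra}. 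By Proposition \ref{prop:PF}\eqref{prop:PF:1}, $M_\infty^v(\sigma(\tau')^\circ)$ (the action factoring through $R_{(\un1,\un0),\tau',\infty}$ after the normalisation shift, and using that $\nW^?(\rhobar)\cap\JH(\overline{\sigma(\tau')})\neq\emptyset$ so the module is nonzero) is maximal Cohen--Macaulay over this integral domain; moreover by Proposition \ref{prop:PF}\eqref{prop:PF:3} its generic fibre is locally free of rank $1$, so $M_\infty^v(\sigma(\tau')^\circ)$ is torsion-free of generic rank $1$ over an integrally closed (normal) domain. To conclude freeness of rank $1$, I would apply the mod-$p$ reduction argument: $M_\infty^v(\sigma(\tau')^\circ)/p \cong M_\infty^v(\overline{\sigma(\tau')^\circ})$ has a filtration with graded pieces $M_\infty^v(\sigma)$ for $\sigma \in \JH(\overline{\sigma(\tau')})$, and by part \eqref{it:freeness:1:1} exactly those $\sigma$ lying in $\nW^?(\rhobar)$ contribute a (free, rank-$1$) nonzero piece over the corresponding component. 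Counting: the number of such $\sigma$ equals the number of irreducible components of $\overline{R}_{(\un1,\un0),\tau',\infty}$ by Proposition \ref{prop:BM} (or Theorem \ref{thm:height+monodromy}/Proposition \ref{prop:cmpt:match}), and since $\sigma(\tau')^\circ$ is chosen with \emph{irreducible cosocle}, the associated cyclicity/length count (as in \cite[\S 6]{BHHMS}, or via \cite[Prop. 2.2.13]{emertongee} plus the Cohen--Macaulay property) shows $M_\infty^v(\sigma(\tau')^\circ)/p$ is cyclic over $\overline{R}_{(\un1,\un0),\tau',\infty}$. Nakayama then gives that $M_\infty^v(\sigma(\tau')^\circ)$ is cyclic over $R_{(\un1,\un0),\tau',\infty}$, and being torsion-free of rank $1$ over a domain it is free of rank $1$.

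The main obstacle I anticipate is the cyclicity bookkeeping in part \eqref{it:freeness:1:2}: one must verify that the lattice $\sigma(\tau')^\circ$ with irreducible cosocle really does produce a module whose mod-$p$ reduction is cyclic, rather than merely of the expected length. In the $\nG\nL_2$ setting of \cite{BHHMS} this is where the intricate combinatorics of the Jordan--Hölder filtration and the gauges of the lattice enter; here one can hope to import it via the transfer machinery of Section \ref{sec:transfer} (passing to $\nG\nL_2(\cO_K)$ and back, since $\sigma(\tau')^\circ|_{\nS\nL_2(\cO_K)}$ is unchanged and the relevant multiplicities are preserved by Corollary \ref{multcor}), but one must check that the irreducibility of the cosocle is compatible with this transfer and that the component-matching of Proposition \ref{prop:cmpt:match} aligns the graded pieces with the correct minimal primes. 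A secondary, more routine, point is confirming the regularity of the single-type special fibre components so that Auslander--Buchsbaum applies; this is exactly the content of Corollary \ref{cor:single-type-cor}, so it should go through without difficulty.
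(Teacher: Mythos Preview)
Your approach to part \eqref{it:freeness:1:1} is essentially the paper's: lift to characteristic zero via $\sigma(\tau'_\sigma)^\circ$, use that $R^v_{(1,0),\tau'_\sigma,\infty}$ is regular local (from \cite[\S 5C10]{koziolmorra}), apply Auslander--Buchsbaum to the maximal Cohen--Macaulay module $M_\infty^v(\sigma(\tau'_\sigma)^\circ)$, read off rank $1$ from Proposition \ref{prop:PF}\eqref{prop:PF:3}, and reduce mod $p$. (One caution: the two rings in your displayed identification are not the same --- the left has type $\tau'_\sigma$, the right has trivial type $\mathbf{1}$ --- and Proposition \ref{prop:PF}\eqref{prop:PF:4} refers to the latter. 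The paper sidesteps this by working with the type-$\tau'_\sigma$ ring in characteristic zero from the outset.)

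For part \eqref{it:freeness:1:2} your endgame is correct (cyclic plus $p$-torsion-free of generic rank $1$ over a domain implies free of rank $1$), and you rightly flag cyclicity as the obstacle. But the paper does \emph{not} obtain cyclicity by a d\'evissage/length count on a Jordan--H\"older filtration, nor by transfer to $\nG\nL_2$. Instead it uses the lattice-chain structure of \cite[Thm.~5.1.1]{EGS} (imported via Section \ref{sec:transfer}) together with the component matching of Proposition \ref{prop:cmpt:match} and Corollary \ref{cor:diff-emb}. For $\sigma,\sigma'\in\nW^?(\rhobar)\cap\JH(\overline{\sigma(\tau')})$ at graph distance $1$, the saturated inclusions $p\sigma(\tau')^{\sigma}\hookrightarrow\sigma(\tau')^{\sigma'}\hookrightarrow\sigma(\tau')^{\sigma}$ plus the argument of \cite[Lem.~5.1.3]{LLLM2} give
\[
M_\infty^v(\sigma(\tau')^{\sigma'}) \;=\; z_{2f-1-j}(\omega)\,M_\infty^v(\sigma(\tau')^{\sigma})
\]
for an \emph{explicit} element $z_{2f-1-j}(\omega)\in\fm_\infty$ singled out by the component matching. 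Since $\sigma\cong\sigma(\tau')^{\sigma}/\sum_{\sigma'}\sigma(\tau')^{\sigma'}$ (the sum over neighbours), exactness of $M_\infty^v$ yields
\[
M_\infty^v(\sigma)\;\cong\;M_\infty^v(\sigma(\tau')^{\sigma})\big/\big(z_{2f-1-j}(\omega)\big)_{j\in\cJ}.
\]
As each $z_{2f-1-j}(\omega)\in\fm_\infty$, the mod-$\fm_\infty$ fibres of $M_\infty^v(\sigma(\tau')^{\sigma})$ and $M_\infty^v(\sigma)$ agree, and the latter is one-dimensional by part \eqref{it:freeness:1:1}; Nakayama then gives cyclicity of $M_\infty^v(\sigma(\tau')^{\sigma})$ directly. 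Your proposed route via an abstract filtration does not, by itself, control the number of generators of the total module --- knowing each graded piece is rank-$1$ cyclic over its own component does not force the extension to be cyclic over $\overline{R}^v_{(1,0),\tau',\infty}$ without further input, and that input is precisely the explicit identification of the sublattice images with principal multiples.
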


\begin{proof}
  \begin{enumerate}
    \item  Let $\tau'_\sigma$ be as in the statement of the proposition.  Using Proposition \ref{prop:PF}\eqref{prop:PF:4}, we have $M_\infty^v(\overline{\sigma(\tau'_\sigma)^\circ}) = M_\infty^v(\sigma) \neq 0$, and Proposition \ref{prop:PF}\eqref{prop:PF:1} then shows that $M_\infty^v(\sigma(\tau'_\sigma)^\circ)$ is maximal Cohen--Macaulay over $R^v_{(1,0),\tau'_\sigma,\infty}$.  By \cite[\S 5C10; Table 3]{koziolmorra}, the ring $R^v_{(1,0),\tau'_\sigma,\infty}$ is a regular local ring, and we therefore conclude that $M_\infty^v(\sigma(\tau'_\sigma)^\circ)$ is free over it (see \cite[Tag \stacks{00NT}]{stacks}).  The precise value of the rank follows from Proposition \ref{prop:PF}\eqref{prop:PF:3}, and we deduce the desired result by passing to mod $p$ quotients.

    \item When $|\nW^?(\rhobar)\cap \JH(\overline{\sigma(\tau')})| = 1$, the result was obtained during the proof of \eqref{it:freeness:1:1}.  We therefore assume $|\nW^?(\rhobar)\cap \JH(\overline{\sigma(\tau')})| > 1$.  
We first note, by the results of Section \ref{sec:transfer} and \cite[Thm. 5.1.1]{EGS}, that
\begin{enumerate}
\item 
\label{it:EGS:1}
if $\sigma, \sigma'\in \JH(\overline{\sigma(\tau')})$ are such that $\textnormal{dgr}(\sigma,\sigma') = 1$, then we have saturated inclusions $p\sigma(\tau')^\sigma \longhookrightarrow \sigma(\tau')^{\sigma'} \longhookrightarrow \sigma(\tau')^\sigma$;
\item 
\label{it:EGS:2}
if $\sigma\in \JH(\overline{\sigma(\tau')})$ we have $\sigma\cong \sigma(\tau')^\sigma/\sum_{\textnormal{dgr}(\sigma,\sigma')=1}\sigma(\tau')^{\sigma'}$.
\end{enumerate}
The notation $\sigma(\tau')^{\sigma}$ denotes the unique up to homothety $\cO$-lattice inside $\sigma(\tau')$ for which the cosocle of $\overline{\sigma(\tau')^{\sigma}}$ is $\sigma$ (and analogously for $\sigma'$).

Suppose that $\sigma,\sigma' \in \nW^?(\rhobar) \cap \JH(\overline{\sigma(\tau')})$.  Using \eqref{it:EGS:1} and Corollary \ref{cor:diff-emb}, we have a unique embedding $j \in \cJ$ for which $z_{2f - 1 - j}(\omega) \neq z_{2f - 1 - j}(\omega')$, where $\omega,\omega'$ are the elements of $\Sigma$ corresponding to $\sigma,\sigma'$.  In particular, we have $z_{2f - 1 - j}(\omega)z_{2f - 1 - j}(\omega') = pu$, where $u$ is a unit of $R^{\textnormal{expl}}_{\tw'(\rhobar,\tau')}$.  The argument in the second paragraph of \cite[pf. of Lem. 5.1.3]{LLLM2} gives $z_{2f - 1 - j}(\omega)M_\infty^v(\sigma(\tau')^{\sigma}) = M_\infty^v(\sigma(\tau')^{\sigma'})$.  (In the argument of \emph{op.~cit}., we note that $c_{1,1}$ and $c_{2,2}$ are not zero-divisors on $M_\infty^v(\sigma(\tau')^{\sigma})$ as the latter is $p$-torsion free; hence, given $i\in\{1,2\}$, the equality $c_{1,1}c_{2,2}M_\infty^v(\sigma(\tau')^{\sigma'}) = c_{i,i}M_\infty^v(\sigma(\tau')^{\sigma})$  is such that $c_{i,i}\neq z_{2f - 1 - j}(\omega)$ implies $z_{2f - 1 - j}(\omega)M_\infty^v(\sigma(\tau')^{\sigma}) = M_\infty^v(\sigma(\tau')^{\sigma'})$.)

We conclude by exactness of $M_\infty^v$ and \eqref{it:EGS:2} that
$$M_\infty^v(\sigma)\cong M_\infty^v(\sigma(\tau')^\sigma)/(z_{2f - 1 - j}(\omega))_{j\in\cJ}.$$
As $M_\infty^v(\sigma)$ is generated by $1$ element over $\overline{R}_{(1,0),\tau'_\sigma,\infty}$ by item \eqref{it:freeness:1:1}, the same is true of its mod $\fm_\infty$ fiber, and we conclude by Nakayama's lemma that $M_\infty^v(\sigma(\tau')^\sigma)$ is generated by $1$ element over $R_{(1,0),\tau',\infty}$.
As $M_\infty^v(\sigma(\tau')^\sigma)[1/p]$ is locally free of rank $1$ by Proposition \ref{prop:PF}\eqref{prop:PF:3}, and since $R_{(1,0),\tau',\infty}$ is $p$-torsion free, we conclude that $M_\infty^v(\sigma(\tau')^\sigma)$ is free of rank $1$ over $R_{(1,0),\tau',\infty}$.
\end{enumerate}
\end{proof}

\subsubsection{Freeness of projective envelopes}

For a Serre weight $\sigma \in \nW^?(\rhobar)$ we let 
$$P_\sigma := \textnormal{Proj}_{\bbF[\nU_{1,1}(k_K)]}(\sigma), \qquad \tP_\sigma := \textnormal{Proj}_{\cO[\nU_{1,1}(k_K)]}(\sigma)$$ 
denote the $\nU_{1,1}(k_K)$-projective envelope of $\sigma$ over $\bbF$ and $\cO = W(\bbF)$, respectively.

\begin{propn}
\label{prop:freeness:2}
Let $\sigma\in \nW^?(\rhobar)$.  Then $M^v_\infty(\tP_\sigma)$ is free of rank $1$ over $R^v_\infty/\bigcap_{\tau'}\fp_{\tau'}$, where the intersection runs over the tame inertial types $\tau'$ such that $(\tau')^{\varphi^{f}}\cong \tau'^{\vee}$ and $\sigma\in \JH(\overline{\sigma(\tau')})$, and $\fp_{\tau'}$ denotes the kernel of the map $R^v_\infty \longtwoheadrightarrow R_{(1,0),\tau',\infty}$.  Moreover, if $\pi$ denotes the representation defined in \eqref{defofpi}, we then have $\pi^{{\sfK_{\nU,1}}} \cong D_0(\rhobar)$ and hence $\pi^{{I_{\nU,1}}} \cong D_1(\rhobar)$.
\end{propn}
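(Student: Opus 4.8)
The plan is to first establish the freeness statement for $M_\infty^v(\tP_\sigma)$, and then to deduce the structural statements about $\pi^{\sfK_{\nU,1}}$ and $\pi^{I_{\nU,1}}$ from it together with Proposition \ref{prop:PF}\eqref{prop:PF:5}. For the freeness, the key input is Proposition \ref{prop:freeness:1}\eqref{it:freeness:1:2}: one filters $\tP_\sigma$ by $\sfK_\nU$-stable lattices in Deligne--Lusztig representations. Concretely, by the theory of \cite{BP} (transferred to $\nU_{1,1}$ via Section \ref{sec:transfer}, or equivalently via \cite[\S 3]{BP} applied to $\nG\nL_2(k_K)$ and pulled back along $\kappa \longmapsto \kappa_\nU$), the projective envelope $\tP_\sigma$ admits a filtration whose graded pieces are lattices $\sigma(\tau')^{\sigma}$ as $\tau'$ ranges over the conjugate self-dual tame inertial types with $\sigma \in \JH(\overline{\sigma(\tau')})$, each appearing with the appropriate multiplicity; moreover $\tP_\sigma[1/p] \cong \bigoplus_{\tau'} \sigma(\tau')$ (this uses Corollary \ref{cor:types-cont-weight}, which identifies precisely these $2^f$ types, together with the genericity hypothesis $N \geq 4$ to guarantee $\mu$ is sufficiently deep). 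Applying the exact functor $M_\infty^v$, and using Proposition \ref{prop:freeness:1}\eqref{it:freeness:1:2} to see each $M_\infty^v(\sigma(\tau')^{\sigma})$ is free of rank $1$ over $R_{(1,0),\tau',\infty}$, one gets that $M_\infty^v(\tP_\sigma)$ is supported on $\Spec(R_\infty^v/\bigcap_{\tau'}\fp_{\tau'})$ with the right multiplicities on each component; then $\cO$-flatness plus Proposition \ref{prop:PF}\eqref{prop:PF:3} (local freeness of rank $1$ on the generic fibre) forces $M_\infty^v(\tP_\sigma)$ to be free of rank $1$ over $R_\infty^v/\bigcap_{\tau'}\fp_{\tau'}$, exactly as in the $\nG\nL_2$ argument. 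One must check $\bigcap_{\tau'}\fp_{\tau'}$ is the full annihilator, which follows from Proposition \ref{prop:BM} and the component matching (Proposition \ref{prop:cmpt:match}).

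For the statement $\pi^{\sfK_{\nU,1}} \cong D_0(\rhobar)$: by Proposition \ref{prop:PF}\eqref{prop:PF:5} applied with $V = \tP_\sigma$ (which factors through the finite quotient $\nU_{1,1}(k_K)$), we have $M_\infty^v(\tP_\sigma)/\fm_\infty \cong \Hom_{\sfK_\nU}(\tP_\sigma,\pi)^\vee$. Since $\tP_\sigma$ is the projective envelope of $\sigma$, $\Hom_{\sfK_\nU}(\tP_\sigma,\pi)$ computes the $\sigma$-multiplicity in (a suitable piece of) $\pi^{\sfK_{\nU,1}}$; more precisely $\Hom_{\sfK_\nU}(\tP_\sigma,\pi) = \Hom_{\sfK_\nU}(\tP_\sigma \otimes_\cO \bbF,\pi^{\sfK_{\nU,1}})$, and freeness of $M_\infty^v(\tP_\sigma)$ of rank $1$ shows this has dimension $1$ over $\bbF$ for each $\sigma \in \nW^?(\rhobar)$, with $\pi$ having no other $\sfK_{\nU,1}$-invariant Serre weights in its socle (this last point using Proposition \ref{prop:PF}\eqref{prop:PF:4}: $M_\infty^v(\sigma) \neq 0$ iff $\sigma \in \nW^?(\rhobar)$). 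This gives $[\pi^{\sfK_{\nU,1}}:\sigma] = 1$ for $\sigma \in \nW^?(\rhobar)$ and $\soc_{\nU_{1,1}(k_K)}(\pi^{\sfK_{\nU,1}}) = \bigoplus_{\sigma \in \nW^?(\rhobar)}\sigma$; the maximality property of $D_0(\rhobar)$ in Definition \ref{def:D0}\eqref{def:D0:1c}, combined with a computation showing that $\Hom_{\sfK_\nU}(\tP_\sigma, \pi^{\sfK_{\nU,1}})$ together with the freeness over $R_\infty^v/\bigcap_{\tau'}\fp_{\tau'}$ detects exactly the constituents of $D_0(\rhobar)$ (equivalently: the mod-$p$ fibre $M_\infty^v(\tP_\sigma)/\fm_\infty$ has $\bbF$-dimension $2^f$, matching $\dim_\bbF D_{0,\sigma}(\rhobar)$ via the explicit description of $R_\infty^v/\bigcap_{\tau'}\fp_{\tau'}$), then forces $\pi^{\sfK_{\nU,1}} \cong D_0(\rhobar)$ — one inclusion from the socle/multiplicity conditions and maximality of $D_0$, and dimension count (or the $\nG\nL_2$-analog transferred via Section \ref{sec:transfer} and Lemma \ref{transfer:D0}) for equality. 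Finally, $\pi^{I_{\nU,1}} = (\pi^{\sfK_{\nU,1}})^{I_{\nU,1}} \cong D_0(\rhobar)^{I_{\nU,1}} = D_1(\rhobar)$ by Definition \ref{def:D0}\eqref{def:D0:2}.

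I expect the main obstacle to be the \emph{equality} $\pi^{\sfK_{\nU,1}} \cong D_0(\rhobar)$ rather than merely an inclusion $D_0(\rhobar) \hookrightarrow \pi^{\sfK_{\nU,1}}$ (or a constituent-counting bound). The freeness of $M_\infty^v(\tP_\sigma)$ pins down $\Hom_{\sfK_\nU}(\tP_\sigma, \pi^{\sfK_{\nU,1}})$ as an $R_\infty^v/\bigcap_{\tau'}\fp_{\tau'}$-module, but translating this back into the precise submodule structure of $\pi^{\sfK_{\nU,1}}$ requires knowing that the natural map $D_0(\rhobar) \to \pi^{\sfK_{\nU,1}}$ (coming from the socle map and the universal property) is \emph{surjective} and not just injective. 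The cleanest route — and the one I would actually take — is to avoid re-deriving this and instead transfer the entire statement to $\nG\nL_2(\cO_K)$: apply the transfer machinery of Section \ref{sec:transfer} (Subsection \ref{subsub:transfer:K}) to $\pi$, obtaining $\pi_\nG$ with $\pi_\nG^{\sfK_{\nG,1}} = (\pi^{\sfK_{\nU,1}})_\nG$, invoke the $\nG\nL_2$-result (the analog of this proposition, which in the $\nG\nL_2$ setting is part of the freeness-of-patched-modules package adapted from \cite{BHHMS}) to get $\pi_\nG^{\sfK_{\nG,1}} \cong D_0(\rhobar_\nG)$, and then apply Lemma \ref{transfer:D0} ($D_0(\rhobar)_\nG \cong D_0(\rhobar_\nG)$) together with the fact that $\kappa \mapsto \kappa_\nU$ is quasi-inverse to $\kappa \mapsto \kappa_\nG$ to conclude $\pi^{\sfK_{\nU,1}} \cong D_0(\rhobar)$. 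The statement $\pi^{I_{\nU,1}} \cong D_1(\rhobar)$ then follows either by taking $I_{\nU,1}$-invariants directly or again by transfer via Lemma \ref{transfer:I1invts} and Corollary \ref{transfer:D1cor}.
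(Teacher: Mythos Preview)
Your proposal reverses the logical order of the paper's proof and, as a result, contains a genuine gap. In the paper (following \cite[Prop.~8.2.6]{BHHMS}), one first establishes $\pi^{\sfK_{\nU,1}} \cong D_0(\rhobar)$ and \emph{then} deduces the freeness of $M_\infty^v(\tP_\sigma)$; you attempt the reverse. Your freeness argument fails at the step ``$\cO$-flatness plus local freeness of rank $1$ on the generic fibre forces $M_\infty^v(\tP_\sigma)$ to be free of rank $1$'': this is false without additionally knowing the module is cyclic (e.g.\ the maximal ideal $(p,x) \subset \cO\bbra{x}$ is $\cO$-flat with generic fibre free of rank $1$, but is not free). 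Cyclicity is equivalent to $\dim_\bbF M_\infty^v(\tP_\sigma)/\fm_\infty = 1$, which by Proposition~\ref{prop:PF}\eqref{prop:PF:5} is $[\pi^{\sfK_{\nU,1}}:\sigma] = 1$ --- and \emph{that} is precisely what $\pi^{\sfK_{\nU,1}} \cong D_0(\rhobar)$ provides. So the two claims are not independent in the direction you want.

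Your alternative route via transfer to $\nG\nL_2$ also does not work as stated. The result $\pi^{\sfK_{\nG,1}} \cong D_0(\rhobar_\nG)$ in \cite{BHHMS} is not a purely representation-theoretic statement about an abstract $\sfK_\nG$-representation: it is proved for a specific automorphic $\pi$ using the $\nG\nL_2$ patching functor, and the transferred representation $\pi_\nG$ does not come equipped with one. What the paper actually does is closer to your instinct but more careful: it proves $D_0(\rhobar) \hookrightarrow \pi^{\sfK_{\nU,1}}$ via \cite[Lem.~9.2]{breuil-buzzati} (using the Serre weight result \cite[Cor.~7.5]{koziolmorra}), and proves the reverse inclusion by transferring only the \emph{representation-theoretic} lemmas of \cite[\S\S 2,3]{LMS} to $\nU_{1,1}(k_K)$ via Section~\ref{sec:transfer}, then re-running the argument of \cite[Lem.~4.5, Prop.~4.6]{LMS} directly in the unitary setting with the unitary patching functor $M_\infty^v$ and Proposition~\ref{prop:freeness:1}. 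Once $\pi^{\sfK_{\nU,1}} \cong D_0(\rhobar)$ is in hand, the freeness of $M_\infty^v(\tP_\sigma)$ follows as in step (ii) of \cite[Prop.~8.2.6]{BHHMS}.
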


\begin{proof}
The proof closely follows that of \cite[Prop. 8.2.6]{BHHMS}, and we point out the main differences.
\begin{itemize}
    \item Equation (68) is replaced by Proposition \ref{prop:PF}\eqref{prop:PF:5}.  
    \item All references to Proposition 8.2.3 are replaced by Proposition \ref{prop:freeness:1}.
    \item Equation (72) is replaced by Proposition \ref{prop:PF}\eqref{prop:PF:3}.
    \item The $\nG\nL_2(k)$-representation $D_0(\rbar_v^\vee)$ is replaced by the $\nU_{1,1}(k_K)$-representation $D_{0}(\rhobar)$ from Section \ref{sec:BP} (and note that the second displayed equation in the proof of \emph{op.~cit}.~is guaranteed by Definition \ref{def:D0}\eqref{def:D0:1b}).
\end{itemize}
Thus, analogously to the proof of step (i) in \cite[Prop. 8.2.6]{BHHMS}, it suffices to show $\pi^{\sfK_{\nU,1}} \cong D_0(\rhobar)$.
\begin{itemize}
  \item By \cite[Cor. 7.5]{koziolmorra} and Definition \ref{def:D0} we have an inclusion $D_{0}(\rhobar) \longhookrightarrow \pi^{\sfK_{\nU,1}}.$  Indeed, the statement and proof of \cite[Lem. 9.2]{breuil-buzzati} goes through with $S^D_\psi(U,k_E)$ replaced by $S_{\mathbb{G}}(\sfK_{\nU,1}\sfK^v,\bbF)$, $\fm_{\rhobar^\vee}$ replaced by $\fm_\rbar'$, and its hypotheses are satisfied when $R = D_{0}(\rhobar)$ and $R' = \soc_{\sfK_{\nU}}(D_{0}(\rhobar)) = \bigoplus_{\sigma \in \nW^?(\rhobar)} \sigma$.  
  \item The reverse inclusion $\pi^{\sfK_{\nU,1}}\longhookrightarrow D_0(\rhobar)$ follows exactly as in \cite[Lem. 4.5, Prop. 4.6]{LMS}.  More precisely, we may apply the procedure of Subsection \ref{subsec:transfertoGL2} to transfer the relevant representation-theoretic results of \cite[\S\S 2,3]{LMS} to the group $\nU_{1,1}(k_K)$ (as long as we fix compatible central characters on all representations appearing).  In particular, Lemma \ref{resprops} guarantees that the socle and radical filtrations are preserved by this transfer procedure.
\end{itemize}

The above arguments prove step (i) of \cite[Prop. 8.2.6]{BHHMS} in our setup, and step (ii) follows analogously (invoking Proposition \ref{prop:freeness:1}).
\end{proof}

\subsubsection{Freeness of lattices}
\label{freenesslattices}

Next, we examine some locally algebraic representations.  
From now on, we assume $N\geq 12$.
Fix a Serre weight $\sigma \in \nW^?(\rhobar)$ and an embedding $j \in \cJ$, and consider the locally algebraic representation $R_{2,j} := V(\alpha_j)\otimes_{\cO}\tP_\sigma$ of $\sfK_{\nU}$ over $\cO$.  Here, $V(\alpha_j)$ denotes the representation $(\textnormal{Sym}^2(\cO^2)\otimes_\cO \det^{-1})^{(j)}$ of $\sfK_\nU$, where $\sfK_\nU$ acts via the embeddings $\sfK_\nU \subset \nG\nL_2(\cO_{K_2}) \stackrel{\sigma'_j}{\longhookrightarrow} \nG\nL_2(\cO)$ (see Subsection \ref{subsec:WM}). Note also that using the embedding $\sigma'_{j + f}$ gives an isomorphic representation of $\sfK_\nU$.  Moreover, the representation $R_{2,j}$ has a central character by Lemma \ref{integralres}\eqref{integralres-2}, and we may therefore apply the transfer procedure of Subsection \ref{subsub:transfer:K} to $R_{2,j}$ in order to produce a representation $(R_{2,j})_\nG$ of $\sfK_\nG = \nG\nL_2(\cO_K)$.  Using Lemma \ref{integralres}\eqref{integralres-3} (twice), we conclude that $(R_{2,j})_\nG$ is exactly the representation denoted $R_{2,j}$ in \cite[\S 8.3]{BHHMS}.  We may therefore proceed as in \emph{op. cit.}, and transfer the results back to $\sfK_\nU$.

The mod $p$ reduction of $R_{2,j}$ is given by
$$R_{2,j}/pR_{2,j} = \big(V(\alpha_j) \otimes_{\cO} \tP_\sigma\big)\otimes_{\cO}\bbF \cong P_{\sigma_{+,j}} \oplus P_{\sigma_{-,j}} \oplus P_\sigma,$$
where $\sigma_{\pm,j} := F(\lambda \pm \alpha_j)$, and $\lambda$ is the highest weight of $\sigma$.  Indeed, by extending central characters to $\sfK_{\nG\nU}$, we can apply the transfer procedure of Subsection \ref{subsub:transfer:K} and use Lemma \ref{resprops}\eqref{resprops-2}.  We can and do fix an embedding
$$\iota_j:P_\sigma \longhookrightarrow \big(V(\alpha_j) \otimes_{\cO} \tP_\sigma\big)\otimes_{\cO}\bbF$$
and define an $\cO$-lattice $R_{2,j}'$ in $V(\alpha_j)\otimes_{\cO}\tP_\sigma$ by the condition
$$R_{2,j}' := \{x\in R_{2,j}, x\otimes 1\in \iota_j(P_\sigma)\}.$$
Note that, by extending central characters appropriately, proceeding as in Subsection \ref{subsub:transfer:K} gives a lattice $(R_{2,j}')_{\nG}$ with a $\sfK_\nG$-action.  This is the lattice denoted by $R_{2,j}'$ in \cite[\S 7.3]{BHHMS}.  
Using \cite[Prop. 7.3.1]{BHHMS} and Lemma \ref{integralres}, the mod $p$ reduction of $R_{2,j}'$ sits in a short exact sequence of $\bbF[\sfK_\nU]$-modules
\begin{equation}
\label{eq:ses:lattice:}
0 \longrightarrow P_{\sigma_{+,j}}\oplus P_{\sigma_{-,j}} \longrightarrow R_{2,j}'/pR_{2,j}' \longrightarrow P_\sigma \longrightarrow 0.
\end{equation}

\begin{propn}
\label{prop:cyclic:lattice:1}
The $R_\infty^v$-module $M_\infty^v(R'_{2,j})$ is free of rank $1$ over $R_\infty^v/\bigcap_{\tau'}\fp_{\tau'}$, where the intersection runs over the conjugate self-dual tame inertial types $\tau'$ for which $\sigma \in \JH(\overline{\sigma(\tau')})$, and $\fp_{\tau'}$ denotes the kernel of the map $R_\infty^v \longtwoheadrightarrow R_{(2,-1)_j,\tau',\infty}$, where $(2,-1)_j$ denotes the character which is $(2,-1)$ in embedding $j$, and $(1,0)$ otherwise.
\end{propn}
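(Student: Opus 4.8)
The plan is to mirror the argument of \cite[Prop. 8.3.2]{BHHMS}, using the transfer procedure of Subsection \ref{subsub:transfer:K} to import the $\nG\nL_2$ input and the freeness results already established. First I would apply $M_\infty^v(-)$ to the short exact sequence \eqref{eq:ses:lattice:}; by exactness we obtain a short exact sequence of $R_\infty^v$-modules
\begin{equation*}
0 \longrightarrow M_\infty^v(P_{\sigma_{+,j}}) \oplus M_\infty^v(P_{\sigma_{-,j}}) \longrightarrow M_\infty^v(R'_{2,j}/pR'_{2,j}) \longrightarrow M_\infty^v(P_\sigma) \longrightarrow 0.
\end{equation*}
Here one must check that $\sigma_{\pm,j} \in \nW^?(\rhobar)$: this follows from the extension-graph description of $\nW^?(\rhobar)$ in Proposition \ref{prop:SW:extgr:U2}, since $\sigma = F(\ft_\mu(\omega))$ and $\sigma_{\pm,j}$ correspond to the adjacent extension-graph vertices differing from $\omega$ by $\pm\rho_j$, which remain in $\Sigma$ (after possibly modifying $s$); alternatively one invokes the genericity hypothesis $N \geq 12$ directly as in \cite{BHHMS}. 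Then Proposition \ref{prop:freeness:2} identifies each of $M_\infty^v(\tP_\sigma)$, $M_\infty^v(\tP_{\sigma_{\pm,j}})$ as free of rank $1$ over the appropriate quotient $R_\infty^v/\bigcap_{\tau'}\fp_{\tau'}$, and $M_\infty^v(P_\bullet) = M_\infty^v(\tP_\bullet)/p$ is free of rank $1$ over the mod-$p$ quotient.

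Next I would argue, as in \cite{BHHMS}, that $M_\infty^v(R'_{2,j})$ is $p$-torsion free (using Proposition \ref{prop:PF}\eqref{prop:PF:1} and the fact that $R'_{2,j}$ is an $\cO$-lattice, so that $R_{\infty,\lambda,\tau',\infty}^v$ is $\cO$-flat) and that $M_\infty^v(R'_{2,j})[1/p]$ is locally free of rank $1$ over its (reduced) support by Proposition \ref{prop:PF}\eqref{prop:PF:3} applied to the locally algebraic constituents of $R'_{2,j}[1/p] \cong V(\alpha_j) \otimes \tP_\sigma[1/p]$. The support of $M_\infty^v(R'_{2,j})$ is contained in $\Spec(R_\infty^v/\bigcap_{\tau'}\fp_{\tau'})$ where $\tau'$ runs over conjugate self-dual types with $\sigma \in \JH(\overline{\sigma(\tau')})$ and Hodge--Tate weight datum $(2,-1)_j$; this is where the geometry of the multi-type deformation ring from Theorem \ref{prop:multitype-def-ring} enters, together with the component-matching of Proposition \ref{prop:cmpt:match} and Corollary \ref{cor:diff-emb}, to see that the $\fp_{\tau'}$ are exactly the minimal primes and that the relevant quotient is $\cO$-flat with the expected number of components.

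The crux is then to upgrade "$M_\infty^v(R'_{2,j})$ is generated by one element" (Nakayama, via the computation of $M_\infty^v(R'_{2,j})/\fm_\infty$) to "free of rank $1$". For this I would follow the strategy of \cite[Prop. 8.3.2]{BHHMS}: show that $M_\infty^v(R'_{2,j})/pM_\infty^v(R'_{2,j})$ injects into $M_\infty^v(R'_{2,j}/pR'_{2,j})$ with the right cokernel, deduce the cyclicity of the mod-$p$ module, then use $p$-torsion-freeness plus generic local freeness of rank $1$ to conclude that the natural surjection $R_\infty^v/\bigcap_{\tau'}\fp_{\tau'} \twoheadrightarrow M_\infty^v(R'_{2,j})$ is an isomorphism. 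The element $\iota_j$ chosen to define $R'_{2,j}$ plays the role of pinning down which lattice we land on, exactly as in \emph{op. cit.} The main obstacle I anticipate is the careful bookkeeping of central characters throughout the transfer to $\nG\nL_2$ and back: every application of the transfer functor in Subsection \ref{subsub:transfer:K} requires a compatible choice of central character on all representations in sight (Corollary \ref{extcor}, Lemma \ref{resext}), and one must ensure these choices are consistent across the short exact sequence \eqref{eq:ses:lattice:}, the projective envelopes $\tP_{\sigma_{\pm,j}}$, and the Weyl-module twist $V(\alpha_j)$ — whose highest weight $\alpha_j$ shifts the central character. Once that is set up, the homological arguments are formally identical to \cite{BHHMS}, with Lemma \ref{integralres} supplying the needed facts about $\cO[\nU_{1,1}(k_K)]$-modules (indecomposables have central characters, restriction detects projectivity) that make the transfer procedure legitimate in the integral setting.
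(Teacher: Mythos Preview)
Your overall plan---mirror \cite{BHHMS} via the transfer procedure and the freeness results already in hand---matches the paper's, but your first step contains a genuine error. The claim that $\sigma_{\pm,j}\in\nW^?(\rhobar)$ is false: $\sigma_{\pm,j}=F(\lambda\pm\alpha_j)$ differs from $\sigma=F(\lambda)$ by $\pm\alpha_j$ in highest weight, and under $\Lambda_{\textnormal{wt}}\cong\bbZ^{\cJ}$ the image of $\alpha_j$ is $2$, not $1$, in the $j$-th coordinate. Thus $\sigma_{\pm,j}$ are \emph{not} adjacent to $\sigma$ in the extension graph and generically lie outside $\nW^?(\rhobar)$ (already for $f=1$: if $\sigma\cong\Sym^r$ up to twist, then $\sigma_{\pm,0}\cong\Sym^{r\pm2}\otimes{\det}^{\mp1}$, neither of which is the other element of $\nW^?(\rhobar)$). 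Consequently Proposition~\ref{prop:freeness:2} does not apply to $\tP_{\sigma_{\pm,j}}$, and the short exact sequence~\eqref{eq:ses:lattice:} does not give you the control over $M_\infty^v(P_{\sigma_{\pm,j}})$ that you want. The phrase ``after possibly modifying $s$'' does not rescue this: $s$ is determined by $\rhobar|_{I_K}$.

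The paper's proof follows \cite[Lem.~8.3.2, Prop.~8.3.3, Thm.~8.3.4]{BHHMS} (three separate results, not a single ``Prop.~8.3.2''). Cyclicity (the analog of Lem.~8.3.2) never needs $\sigma_{\pm,j}\in\nW^?(\rhobar)$; it is obtained from \cite[Prop.~7.4.3, Lem.~8.3.1]{BHHMS} (transferred via Section~\ref{sec:transfer}) together with Propositions~\ref{prop:freeness:1} and~\ref{prop:freeness:2} applied only to $\sigma$. You also skip the substantive intermediate step corresponding to \cite[Prop.~8.3.3]{BHHMS}, which is where the explicit \emph{single-type} deformation-ring geometry enters---namely Theorem~\ref{thm:height+monodromy} and Corollary~\ref{cor:single-type-cor}---along with the structure theory of \cite[\S6.3]{BHHMS} (again transferred). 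The multi-type results (Theorem~\ref{prop:multitype-def-ring} and Proposition~\ref{prop:p:in:inter}) are not used here at all; they enter only in the later inductive argument for $M_\infty^v(L_j)$.
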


\begin{proof}
The proof closely follows those of \cite[Lem. 8.3.2, Prop. 8.3.3, Thm. 8.3.4]{BHHMS} and we explain the differences in the proofs.  Firstly, by Lemma \ref{resprops} and equation \eqref{transfer:serrewts}, all references to \cite{BP} concerning Serre weight combinatorics are valid in our setup, and in particular we have
\begin{equation}
\label{eq:BP:comb}
\nW^?(\rhobar) \subset \JH(P_{\sigma}).
\end{equation}

The proof of \cite[Lem. 8.3.2]{BHHMS} is modified as follows:
\begin{itemize}
  \item All references to Propositions 8.2.3(i) and 8.2.6 are replaced by Propositions \ref{prop:freeness:1}\eqref{it:freeness:1:1} and \ref{prop:freeness:2}.  (Note that by Proposition \ref{prop:PF}, $M_\infty^v$ is exact, and $\nW^?(\rhobar)$ is precisely the set of Serre weights on which the functor does not vanish.)
  \item   The statement of \cite[Prop. 7.4.3]{BHHMS} remains valid when $K$ in \emph{loc.~cit}.~is replaced by $\sfK_\nU$.  
  \item By Lemma \ref{resprops}, Corollary \ref{multcor}, and equation \eqref{transfer:serrewts}, the statement of \cite[Lem. 8.3.1]{BHHMS} is valid in our setup.  
\end{itemize}
Hence, the proof of \cite[Lem. 8.3.2]{BHHMS} applies in our setting to show that for all $j$, the $R_\infty^v$-module $M_\infty^v(R_{2,j}')$ is generated by 1 element.

The proof of \cite[Prop. 8.3.3]{BHHMS} is modified as follows:
\begin{itemize}
  \item All references to Proposition 8.2.3 are replaced by Proposition \ref{prop:freeness:1}. 
  \item All references to Lemmas 2.4.2 and 2.4.3 are replaced by Propositions \ref{prop:SW:extgr:U2} and \ref{prop:JH:fct:extgr:U2}.
  \item All references to Lemma 2.4.6 are replaced by Proposition \ref{serre-wt-extns}.
  \item By Section \ref{sec:transfer}, the statements of \cite[Lem. 6.3.7, Prop. 6.3.10, Thm. 6.3.11]{BHHMS} remain valid in our setup.  Note in particular that Lemma \ref{resext} and Proposition \ref{serre-wt-extns} imply that the submodule structures of the representations of \textit{loc. ~cit.}~are preserved under transferring to $\sfK_\nU$.
  \item The references to Proposition 4.2.1 and Corollary 4.2.6 in \emph{loc.~cit}.~are replaced by Theorem \ref{thm:height+monodromy} and Corollary \ref{cor:single-type-cor}.
\end{itemize}
Hence, the proof of \cite[Prop. 8.3.3]{BHHMS} applies in our setting.

Finally, the proof of \cite[Thm. 8.3.4]{BHHMS} is modified by replacing references to \cite[Lem. 8.3.2, Prop. 8.3.3]{BHHMS} with the previous three paragraphs, and replacing the reference to Proposition 4.2.1 by Theorem \ref{thm:height+monodromy}.  This finishes the proof.
\end{proof}

We now inductively define a sequence of locally algebraic representations of $\sfK_\nU$ over $\cO = W(\F)$ which we will use to prove the main cyclicity statement.  Set $L_{-1} := \tP_\sigma$ and define inductively
$$L_j := L_{j-1}\times_{P_\sigma}R'_{2,j}.$$
Thus, we have 
$$L_{f - 1} = \tP_\sigma \times_{P_\sigma} R_{2,0}' \times_{P_\sigma} R_{2,1}' \times_{P_\sigma} \dots \times_{P_\sigma} R_{2,{f-1}}',$$
which transfers to the lattice denoted $R$ in \cite[\S 7.3]{BHHMS}.  By construction $L_j$ is a $\sfK_\nU$-stable lattice inside
$$\tP_\sigma[1/p] \oplus \left(\bigoplus_{j'=0}^{j}V(\alpha_{j'})\otimes_{\cO}\tP_\sigma\right)[1/p].$$

Further, suppose we are given a tame inertial type $\tau'$ satisfying $(\tau')^{\varphi^{f}}\cong \tau^{\prime\vee}$ and choose $\sigma \in \JH(\overline{\sigma(\tau')})$.  Then $\sigma(\tau')$ is a quotient of $\tP_\sigma[1/p]$, and we let $\sigma(\tau')^\circ$ denote the $\cO$-lattice inside $\sigma(\tau')$ defined as the image of $\tP_\sigma$.  We let $T'_{2,j}$ denote the image of the composite map
$$R'_{2,j} \longhookrightarrow V(\alpha_j) \otimes_{\cO}\tP_\sigma \longtwoheadrightarrow V(\alpha_j) \otimes_{\cO}\sigma(\tau')^\circ.$$
We then define
$$N_j := L_{j-1}\times_{Y_j}T'_{2,j}$$
for $0\leq j\leq f-1$, where $Y_j := T'_{2,j}/p(V(\alpha_j)\otimes_\cO \sigma(\tau')^\circ)$

\begin{rmk}
\label{rmk:trnsf:latt}
By Section \ref{sec:transfer:char0} and the transfer strategy of Subsection \ref{subsub:transfer:K}, the $\sfK_\nG$-stable lattices $(N_j)_{\nG}, (L_j)_\nG, (T'_{2,j})_\nG$ are exactly the lattices denoted as $N_j, L_j, T'_{2,j}$ in \cite[\S 8.3]{BHHMS}, and the $\sfK_\nG$-representation $(Y_j)_\nG$ is the representation denoted $Y_j$ in \emph{op.~cit.}.  Thus, by Lemmas \ref{resprops} and \ref{integralres} we check that the statements of \cite[Lems. 8.3.5, 8.3.6]{BHHMS} remain valid in our setting of $\sfK_\nU$-representations.  Moreover, using \eqref{eq:BP:comb}, we see that the statements of \cite[Prop. 7.4.3, Lem. 8.3.1]{BHHMS} hold true in our setting, which implies that the proof of \cite[Prop. 8.3.7]{BHHMS} also holds in our setup (note that in order to imitate the proof of \cite[Prop. 8.3.7]{BHHMS} in our context we also need to invoke Proposition \ref{prop:freeness:2} above).
\end{rmk}

We can finally use the results from Section \ref{sec:multi-type} to prove the following.

\begin{thm}
The $R_\infty^v$-module $M_\infty^v(L_j)$ is free of rank $1$ over $R_\infty^v/\bigcap_{\tau'}\fp_{\lambda,\tau'}$, where the intersection runs over the tame inertial types $\tau'$ and Hodge--Tate weights $\lambda = (\lambda_{j'})_{0 \leq j' \leq f - 1}$ such that:
\begin{itemize}
  \item $(\tau')^{\varphi^{f}}\cong \tau^{\prime\vee}$;
  \item $\sigma \in \JH(\overline{\sigma(\tau')})$;
  \item $\lambda_{j'} \in \{(1,0),~ (2,-1)\}$ if $j'\leq j$; and 
  \item $\lambda_{j'} = (1,0)$ if $j'> j$.
\end{itemize}
Further, $\fp_{\lambda,\tau'}$ denotes the prime ideal given by the kernel of the map $R_\infty^v \longtwoheadrightarrow R_{\lambda,\tau',\infty}$.
\end{thm}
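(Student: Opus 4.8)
The plan is to induct on $j$, following the outline of \cite[Thm. 8.3.8]{BHHMS} but invoking the unitary-group analogs established above.  The base case $j = -1$ is precisely Proposition \ref{prop:freeness:2}: there $M_\infty^v(L_{-1}) = M_\infty^v(\tP_\sigma)$ is free of rank $1$ over $R_\infty^v/\bigcap_{\tau'}\fp_{\tau'}$, where $\tau'$ runs over the conjugate self-dual tame inertial types with $\sigma \in \JH(\overline{\sigma(\tau')})$, and this is exactly the asserted statement for $j = -1$ (all components have $\lambda_{j'} = (1,0)$).

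\textbf{Inductive step.}  Assume the result for $L_{j-1}$.  Using the fiber-product definition $L_j = L_{j-1}\times_{P_\sigma} R'_{2,j}$, exactness of $M_\infty^v$ (Proposition \ref{prop:PF}), and the short exact sequence \eqref{eq:ses:lattice:}, I would first produce a surjection onto $M_\infty^v(L_j)$ from a module over the appropriate union of components, and bound the number of generators.  The key combinatorial input is Proposition \ref{prop:cyclic:lattice:1} (freeness of $M_\infty^v(R'_{2,j})$ over $R_\infty^v/\bigcap_{\tau'}\fp_{\tau'}$ with $(2,-1)$ in embedding $j$) together with Remark \ref{rmk:trnsf:latt}, which guarantees that the lattice-theoretic statements \cite[Lems. 8.3.5, 8.3.6, Prop. 8.3.7]{BHHMS} transfer verbatim to the $\sfK_\nU$-setting.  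Concretely: one shows $M_\infty^v(L_j)$ is generated by one element over $R_\infty^v$ (using \eqref{eq:BP:comb}, Proposition \ref{prop:freeness:2}, and the analog of \cite[Prop. 8.3.7]{BHHMS}), hence over the candidate quotient ring $R_\infty^v/\bigcap_{\lambda,\tau'}\fp_{\lambda,\tau'}$ by Nakayama.  For surjectivity/faithfulness of the action, one checks that the annihilator of $M_\infty^v(L_j)$ is reduced (since the target is an intersection of primes), and identifies its minimal primes: these are precisely the $\fp_{\lambda,\tau'}$ with the stated constraints on $\lambda$, which is where Theorem \ref{prop:multitype-def-ring} and Proposition \ref{prop:p:in:inter} enter — they describe the multi-type deformation ring $R^{\leq(2,-1),\sigma}_{\rhobar}$ and its irreducible components, and give the key coprimality ($p$ lying in sums of intersections) that forces $M_\infty^v(L_j)$ to have full support over each relevant component.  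Finally, since $M_\infty^v(L_j)[1/p]$ is locally free of rank $1$ (Proposition \ref{prop:PF}\eqref{prop:PF:3}, applied to each locally algebraic constituent $\tP_\sigma$ and $V(\alpha_{j'})\otimes_\cO \tP_\sigma$) and $R_\infty^v/\bigcap_{\lambda,\tau'}\fp_{\lambda,\tau'}$ is $p$-torsion-free, the one-generated module is in fact free of rank $1$.

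\textbf{Main obstacle.}  The genuinely non-formal point is the component-matching: one must verify that the minimal primes appearing in $\bigcap_{\lambda,\tau'}\fp_{\lambda,\tau'}$ at stage $j$ are exactly those of the support of $M_\infty^v(L_j)$, no more and no fewer.  The inclusion "support $\subseteq$ union of these components" comes from exactness applied to the defining fiber product and the description of $R'_{2,j}/pR'_{2,j}$; the reverse inclusion (that no component is missed) requires the freeness of $M_\infty^v(R'_{2,j})$ over the \emph{full} intersection of the relevant $\fp_{\tau'}$ from Proposition \ref{prop:cyclic:lattice:1}, combined with the coprimality statements of Proposition \ref{prop:p:in:inter} and the explicit ideal-theoretic identifications in Theorem \ref{prop:multitype-def-ring}.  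Everything else is a routine transfer through Section \ref{sec:transfer} (Lemmas \ref{resprops}, \ref{resext}, \ref{integralres}, Corollary \ref{multcor}) and Section \ref{sec:transfer:char0}, exactly as in Remark \ref{rmk:trnsf:latt}; these ensure that the $\sfK_\nU$-representation-theoretic facts match their $\nG\nL_2$-counterparts and that submodule structures are preserved.  Thus the proof reduces, just as in \cite[Thm. 8.3.8]{BHHMS}, to assembling the local (Theorem \ref{thm:height+monodromy}, Theorem \ref{prop:multitype-def-ring}, Proposition \ref{prop:p:in:inter}) and the global (Propositions \ref{prop:PF}, \ref{prop:freeness:1}, \ref{prop:freeness:2}, \ref{prop:cyclic:lattice:1}) ingredients, with the multi-type deformation-ring computation being the crux.
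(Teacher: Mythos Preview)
Your proposal is correct and follows essentially the same approach as the paper: the paper's proof simply says to adapt \cite[Thm.~8.3.9]{BHHMS} (you cite 8.3.8, likely a version discrepancy) with the substitutions you have identified, namely replacing the local deformation-ring inputs by Theorems~\ref{thm:height+monodromy}, \ref{prop:multitype-def-ring} and Proposition~\ref{prop:p:in:inter}, and the global/representation-theoretic inputs by Proposition~\ref{prop:freeness:2}, the proof of Proposition~\ref{prop:cyclic:lattice:1}, and Remark~\ref{rmk:trnsf:latt}. Your outline is in fact more detailed than the paper's own proof, and correctly identifies the inductive structure and the role of each ingredient.
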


\begin{proof}
The proof is a direct adaptation of \cite[Thm. 8.3.9]{BHHMS}, with the following modifications:
\begin{itemize}
  \item References to \cite[Props. 4.2.1, 4.3.1, 4.3.3, Lem. 4.3.2]{BHHMS} are replaced by Theorems \ref{thm:height+monodromy}, \ref{prop:multitype-def-ring} and Proposition \ref{prop:p:in:inter}.
  \item References to \cite[Props. 8.2.6, 8.3.3, Thm. 8.3.4, Lem. 8.3.5, Prop. 8.3.7]{BHHMS} are replaced by Proposition \ref{prop:freeness:2}, the proof of Proposition \ref{prop:cyclic:lattice:1}, and Remark \ref{rmk:trnsf:latt}.
\end{itemize}
Thus, the desired conclusion follows analogously to \cite{BHHMS}.
\end{proof}

Using the transfer procedure of Subsection \ref{subsec:transfertoGL2}, the mod $p$ reduction of $L_{f - 1}$ corresponds to the mod $p$ reduction of the lattice denoted $R$ in \cite[\S 7.3]{BHHMS} (hence to $(\textnormal{Proj}_{\sfK_\nG/Z(\sfK_\nG)_{1}}(\sigma_{\nG}))/\fm^2_{\nG}$ by Corollary 7.3.4 of \emph{loc.~cit}.).  Thus, we obtain:
\begin{cor}
\label{cor:mult:1}
Let $\sigma\in \nW^?(\rhobar)$.
The surjection
$$(\textnormal{Proj}_{\sfK_\nU/Z(\sfK_\nU)_{1}}(\sigma))/\fm^2_{\nU} \longtwoheadrightarrow \sigma$$
induces an isomorphism 
$$M_\infty^v\left((\textnormal{Proj}_{\sfK_\nU/Z(\sfK_\nU)_{1}}(\sigma))/\fm^2_{\nU}\right)/\fm_\infty \stackrel{\sim}{\longrightarrow} M_\infty^v(\sigma)/\fm_\infty.$$
\end{cor}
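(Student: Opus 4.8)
The statement says that applying $M_\infty^v$ to the natural surjection $\textnormal{Proj}_{\sfK_\nU/Z(\sfK_\nU)_1}(\sigma)/\fm_\nU^2 \twoheadrightarrow \sigma$ and reducing mod $\fm_\infty$ yields an isomorphism. Equivalently, after dividing by $\fm_\infty$, both sides have the same $\bbF$-dimension, namely $1$. The plan is to reduce everything to $\nG\nL_2$ via the transfer mechanism of Section \ref{sec:transfer}, and then invoke the analogous statement from \cite{BHHMS} (the proof of their version of ``multiplicity one'', e.g. the final steps following \cite[Thm.\ 8.3.9]{BHHMS}, often phrased as their \cite[Cor.\ 8.3.10]{BHHMS}). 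The key point is that $\textnormal{Proj}_{\sfK_\nU/Z(\sfK_\nU)_1}(\sigma)/\fm_\nU^2$ is precisely the object that transfers (after extending central characters to $\sfK_{\nG\nU}$ as in Subsection \ref{subsub:transfer:Lambda}) to the corresponding $\nG\nL_2$-object $\textnormal{Proj}_{\sfK_\nG/Z(\sfK_\nG)_1}(\sigma_\nG)/\fm_\nG^2$, using Lemma \ref{K1square-tors} together with Lemma \ref{resprops}\eqref{resprops-5} (to see that projective covers are preserved under restriction).

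\textbf{Key steps.} First I would identify $M_\infty^v\big((\textnormal{Proj}_{\sfK_\nU/Z(\sfK_\nU)_1}(\sigma))/\fm_\nU^2\big)/\fm_\infty$ with $\Hom_{\sfK_\nU}\big((\textnormal{Proj}_{\sfK_\nU/Z(\sfK_\nU)_1}(\sigma))/\fm_\nU^2,~\pi\big)^\vee$ using Proposition \ref{prop:PF}\eqref{prop:PF:5}, and similarly $M_\infty^v(\sigma)/\fm_\infty \cong \Hom_{\sfK_\nU}(\sigma,\pi)^\vee$. Next I would observe that, by definition of the projective cover modulo $\fm_\nU^2$ and since $Z(\sfK_\nU)$ acts on $\pi^{\sfK_{\nU,1}}$ through its finite quotient $Z(\nU_{1,1}(k_K))$, there is an identification $\Hom_{\sfK_\nU}\big((\textnormal{Proj}_{\sfK_\nU/Z(\sfK_\nU)_1}(\sigma))/\fm_\nU^2,~\pi\big) = \Hom_{\sfK_\nU}\big((\textnormal{Proj}_{\sfK_\nU/Z(\sfK_\nU)_1}(\sigma))/\fm_\nU^2,~\pi[\fm_\nU^2]\big)$, reducing the computation to the structure of $\pi[\fm_\nU^2]$ as a module over $\bbF\llbracket \sfK_\nU\rrbracket/\fm_\nU^2$. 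Then I would apply the transfer procedure of Subsection \ref{subsub:transfer:Lambda}: by Lemma \ref{K1square-tors} we have $\pi[\fm_\nU^2]_\nG \cong \pi_\nG[\fm_\nG^2]$, and by Proposition \ref{prop:freeness:2} (which gives $\pi^{\sfK_{\nU,1}} \cong D_0(\rhobar)$), Lemma \ref{transfer:D0} identifies the relevant transferred data with those of the $\nG\nL_2$-picture. At this point the $\nG\nL_2$-statement of \cite{BHHMS} — concretely, the cyclicity/freeness of $M_\infty^v(L_{f-1})$ established in the theorem just proved, combined with the analog of \cite[Cor.\ 8.3.10]{BHHMS} — yields that $\Hom_{\sfK_\nG}\big((\textnormal{Proj}_{\sfK_\nG/Z(\sfK_\nG)_1}(\sigma_\nG))/\fm_\nG^2,~\pi_\nG\big)$ is one-dimensional and the surjection to $\sigma_\nG$ induces an isomorphism. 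Transferring back via the quasi-inverse $\kappa \mapsto \kappa_\nU$, together with Corollary \ref{multcor} to match dimensions of Hom-spaces, gives the claim.

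\textbf{Main obstacle.} The genuinely delicate point is not the transfer formalism (that is handled cleanly by Section \ref{sec:transfer}) but rather verifying that the $\nG\nL_2$-argument of \cite{BHHMS} actually produces the \emph{mod $\fm_\infty$} isomorphism and not merely a statement about generic fibers or about Cohen--Macaulay supports. Concretely, one needs: (i) the patched module $M_\infty^v(\textnormal{Proj}_{\sfK_\nU/Z(\sfK_\nU)_1}(\sigma)/\fm_\nU^2)$ is cyclic over $R_\infty^v$ — which follows from the cyclicity of $M_\infty^v(L_{f-1})$ established in the preceding theorem, since $\textnormal{Proj}_{\sfK_\nU/Z(\sfK_\nU)_1}(\sigma)/\fm_\nU^2$ is a quotient of a suitable $L_j$-type object (this is the analog of the inclusion $\textnormal{Proj}/\fm^2 \hookrightarrow R$ in \cite[\S 8.3]{BHHMS}); and (ii) that the kernel of $\textnormal{Proj}_{\sfK_\nU/Z(\sfK_\nU)_1}(\sigma)/\fm_\nU^2 \twoheadrightarrow \sigma$ is sent to something contained in $\fm_\infty M_\infty^v(\textnormal{Proj}_{\sfK_\nU/Z(\sfK_\nU)_1}(\sigma)/\fm_\nU^2)$. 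Point (ii) is where one must use that $M_\infty^v$ of each Jordan--Hölder constituent of that kernel lying in $\nW^?(\rhobar)$ is free of rank one over a regular local ring (Proposition \ref{prop:freeness:1}\eqref{it:freeness:1:1}), while constituents not in $\nW^?(\rhobar)$ are killed by $M_\infty^v$ (Proposition \ref{prop:PF}\eqref{prop:PF:4}); combined with exactness of $M_\infty^v$ and Nakayama's lemma this forces the mod $\fm_\infty$ fiber to collapse onto the one-dimensional space $M_\infty^v(\sigma)/\fm_\infty$. I would carry out (ii) by imitating verbatim the final paragraph of the proof of the analogous $\nG\nL_2$-corollary in \cite{BHHMS}, using the translation dictionary set up in Remark \ref{rmk:trnsf:latt} and Subsection \ref{subsub:transfer:Lambda}.
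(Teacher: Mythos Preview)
Your point (i) under ``Main obstacle'' is the entire argument and matches the paper's intended deduction from the preceding theorem: that theorem gives $M_\infty^v(L_{f-1})$ free of rank $1$, hence its $\fm_\infty$-fiber is one-dimensional; the unitary analog of the surjection $L_{f-1}/pL_{f-1}\twoheadrightarrow \textnormal{Proj}_{\sfK_\nU/Z(\sfK_\nU)_1}(\sigma)/\fm_\nU^2$ from \cite[\S 7.3]{BHHMS} (a purely representation-theoretic statement about finite $\sfK_\nU$-modules with central character, which transfers via Section~\ref{sec:transfer} exactly as in Remark~\ref{rmk:trnsf:latt}) then forces $M_\infty^v(\textnormal{Proj}/\fm_\nU^2)$ to be cyclic; since it surjects onto $M_\infty^v(\sigma)/\fm_\infty$, which is one-dimensional by Proposition~\ref{prop:freeness:1}\eqref{it:freeness:1:1}, the map modulo $\fm_\infty$ is an isomorphism.

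The detour in your ``Key steps'' through $\pi_\nG$ is unnecessary and, as written, does not work. The object $\pi_\nG$ is merely the unitary automorphic $\pi$ with a formally extended and then restricted central action; it has no relation to the $\nG\nL_2$-automorphic representation appearing in \cite{BHHMS}, so their Corollary 8.3.10 says nothing about $\Hom_{\sfK_\nG}(-,\pi_\nG)$ directly. The patching functor $M_\infty^v$ already acts on $\sfK_\nU$-representations and the freeness of $M_\infty^v(L_{f-1})$ is already established in \emph{this} paper, so no transfer of automorphic or patched data is required --- only the finite representation-theoretic surjection above. Likewise your (ii) is superfluous and not quite correct as stated: once $M_\infty^v(\textnormal{Proj}/\fm_\nU^2)$ is cyclic, one-dimensionality of the $\fm_\infty$-fiber is immediate, with no need to analyze Jordan--H\"older constituents of the kernel; and in any case, knowing that each $M_\infty^v(\tau)$ is free of rank one over its own support ring does not by itself force its image into $\fm_\infty M_\infty^v(\textnormal{Proj}/\fm_\nU^2)$.
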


\begin{rmk}
The statements above hold true in the non-minimal setting, by replacing ``free of rank $1$'' (resp., ``generated by one element'') by ``free of rank $r$'' (resp., ``generated by $r$ elements''), and the representations $D_0(\rhobar)$, $D_1(\rhobar)$ in the statement of Proposition \ref{prop:freeness:2} by  $D_0(\rhobar)^{\oplus r}$, $D_1(\rhobar)^{\oplus r}$ respectively.
\end{rmk}

\subsection{Gelfand--Kirillov dimension}
We may now prove our main result on Gelfand--Kirillov dimensions.  For the relevant notions and definitions, we refer to \cite[\S 5.1]{BHHMS}.  We keep the global setup of Section \ref{global:setup}, and in particular, recall that $\pi$ is defined as 
$$\pi := \varinjlim_{\sfK_v'}\Hom_{\sfK_0^v/\sfK_1^v}\left(\bigotimes_{v'\in \Sigma_p^+\smallsetminus\{v\}} \sigma_{v'},~ S_{\bbG}(\sfK^v_1\sfK'_v,\bbF)[\fm'_\rbar]\right).$$

\begin{thm}
In the global setting of Sections \ref{global:setup} and \ref{subsec:PF} assume moreover that $\rbar|_{\Gamma_{F^+_v}}$ is tamely ramified and 12-generic for all $v\in \Sigma_p^+$.
Then
$$\dim_{\bbG(F_v^+)}(\pi) = [F^+_v:\qp].$$
\end{thm}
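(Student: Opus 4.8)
The plan is to establish the two inequalities $\dim_{\bbG(F_v^+)}(\pi) \leq [F^+_v:\qp]$ and $\dim_{\bbG(F_v^+)}(\pi) \geq [F^+_v:\qp]$ separately, with the notation $f = [F^+_v : \qp]$ throughout (recall $F^+_v \cong K$).

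\textbf{The upper bound.} The key is to apply Theorem \ref{thm:abstract:GKdim} to $\pi$, viewed as an admissible smooth representation of $\sfK_\nU = \nU_{1,1}(\cO_K)$ by restriction. Write $\rhobar := \rbar|_{\Gamma_{F^+_v}}$ (composed with $\cG_2 \cong {}^C\nU_{1,1}$), which is tamely ramified and $12$-generic, hence in particular $1$-generic, and satisfies $\widehat{\imath}\circ\rhobar = \omega$; write $\rhobar|_{I_K} \cong \overline{\tau}(s,\mu+\eta)$ with $\mu$ being $1$-deep. First I would check that $\pi$ has central character $\mu|_{Z(\nU_{1,1}(k_K))}$; this follows from the construction of $\pi$ via $S_\bbG$ and local/global compatibility at $v$ (the central character of any automorphic contribution is pinned down by $\rbar$, hence by $\rhobar|_{I_K}$). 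Then I must verify the three hypotheses (1)--(3) of Theorem \ref{thm:abstract:GKdim}. For hypothesis (1), that $\JH(\soc_{\sfK_\nU}(\pi)) = \nW^?(\rhobar)$ up to multiplicity: the inclusion $\soc_{\sfK_\nU}(\pi) \supseteq \bigoplus_{\sigma \in \nW^?(\rhobar)}\sigma$ (with multiplicity one) comes from the first bullet in the proof of Proposition \ref{prop:freeness:2}, and the reverse follows because $M_\infty^v(\sigma) \neq 0$ precisely when $\sigma \in \nW^?(\rhobar)$ (Proposition \ref{prop:PF}\eqref{prop:PF:4}) together with Proposition \ref{prop:PF}\eqref{prop:PF:5}. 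Hypotheses (2) and (3) --- that $[\pi[\fm_\nU^2]:\sigma] = [\soc_{\sfK_\nU}(\pi):\sigma]$ for all $\sigma\in\nW^?(\rhobar)$, and that $\JH(\pi^{I_{\nU,1}}) = \JH(D_1(\rhobar))$ up to multiplicity --- are exactly the content of Proposition \ref{prop:freeness:2}, which gives $\pi^{\sfK_{\nU,1}} \cong D_0(\rhobar)$ and $\pi^{I_{\nU,1}} \cong D_1(\rhobar)$; the multiplicity-one statement for $\pi[\fm_\nU^2]$ should be deduced from Corollary \ref{cor:mult:1} (applying $M_\infty^v/\fm_\infty$ and Proposition \ref{prop:PF}\eqref{prop:PF:5} to identify $\Hom_{\sfK_\nU}(\textnormal{Proj}_{\sfK_\nU/Z(\sfK_\nU)_1}(\sigma)/\fm_\nU^2, \pi)$ with $\Hom_{\sfK_\nU}(\sigma,\pi)$, which controls the relevant multiplicity in $\pi[\fm_\nU^2]$). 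With all three hypotheses in hand, Theorem \ref{thm:abstract:GKdim} yields $\dim_{\nU_{1,1}(\cO_K)}(\pi) \leq f$, which is the same as $\dim_{\bbG(F_v^+)}(\pi) \leq f$ since Gelfand--Kirillov dimension only depends on the restriction to a compact open subgroup.

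\textbf{The lower bound.} Here I would follow the strategy of \cite{BHHMS} (the analogous lower bound for $\nG\nL_2$), transferring via the machinery of Section \ref{sec:transfer} and Subsection \ref{subsec:transfertoGL2}. The idea is that the freeness results of Subsection \ref{sub:freeness}, especially the freeness of $M_\infty^v(L_{f-1})$ over a quotient of $R_\infty^v$ cut out by an intersection of primes $\fp_{\lambda,\tau'}$ ranging over all Hodge--Tate weights $\lambda \in \{(1,0),(2,-1)\}^f$, produce a lower bound on the ``size'' of $\pi$ as measured by the growth of $\pi^{\sfK'}$ for shrinking $\sfK'$. Concretely, one uses that $M_\infty^v(L_{f-1})/\fm_\infty$ surjects onto (a space related to) $\Hom_{\sfK_\nU}(L_{f-1},\pi)^\vee$ via Proposition \ref{prop:PF}\eqref{prop:PF:5}, and the relative dimension count for the multi-type deformation rings (each irreducible component of $R_{\lambda,\tau'_{\tw'}}$ has relative dimension $f+4$ over $\cO$ by \cite[Thm. 3.3.7]{bellovingee}, as recorded in the proof of Theorem \ref{prop:multitype-def-ring}) forces the Hilbert--Samuel multiplicity / growth rate of $\pi$ to be at least that of a module of dimension $f$ over the relevant Iwasawa-type algebra. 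Alternatively, and perhaps more cleanly, I would transfer $\pi$ to a $\nG\nL_2(\cO_K)$-representation $\pi_\nG$ using Subsection \ref{subsub:transfer:K}: since $\pi|_{\nS\nL_2(\cO_K)} \cong \pi_\nG|_{\nS\nL_2(\cO_K)}$ and Gelfand--Kirillov dimension over $\nU_{1,1}(\cO_K)$, $\nG\nL_2(\cO_K)$, and $\nS\nL_2(\cO_K)_1$ all coincide (as in the last display of the proof of Theorem \ref{thm:abstract:GKdim}), it suffices to show $\dim_{\nG\nL_2(\cO_K)}(\pi_\nG) \geq f$. But $\pi_\nG$ satisfies the same freeness-of-patched-module properties as the $\nG\nL_2$-representation in \cite{BHHMS} (this is precisely what the transfer compatibility of the patched modules, built into the freeness propositions above, guarantees), so the lower bound argument of \cite[\S 5--8]{BHHMS} applies verbatim.

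\textbf{Main obstacle.} I expect the genuinely delicate point to be the lower bound, specifically making the transfer of the $\nG\nL_2$ lower-bound argument rigorous: one must check that the patching functor $M_\infty^v$ transferred to $\sfK_\nG$ genuinely agrees with a $\nG\nL_2$-patching functor satisfying all the hypotheses needed in \cite{BHHMS} (correct generic ranks, correct supports over the multi-type deformation rings, correct interaction with the lattices $L_j$), since our $R_\infty^v$ is built from ${}^C\nU_{1,1}$-deformation rings with symmetry ideals rather than plain $\nG\nL_2$-deformation rings, and the dimension bookkeeping (relative dimension $f+4$ versus the $\nG\nL_2$ count) must be reconciled. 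The upper bound, by contrast, is essentially a bookkeeping exercise once Propositions \ref{prop:PF} and \ref{prop:freeness:2} and Corollary \ref{cor:mult:1} are in place, plus the verification of the central character. A secondary subtlety is confirming the multiplicity-one statement in hypothesis (2), i.e.\ that $\pi[\fm_\nU^2]$ does not acquire extra copies of the Serre weights beyond the socle --- this rests on Corollary \ref{cor:mult:1} and requires care in relating $\fm_\nU^2$-torsion to $\Hom$ out of $\textnormal{Proj}_{\sfK_\nU/Z(\sfK_\nU)_1}(\sigma)/\fm_\nU^2$.
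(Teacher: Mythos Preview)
Your upper bound argument is essentially the paper's: apply Theorem \ref{thm:abstract:GKdim}, with hypotheses verified by \cite[Cor.~7.5]{koziolmorra} (which underlies your Proposition \ref{prop:PF} references), Corollary \ref{cor:mult:1}, and Proposition \ref{prop:freeness:2}. Your unpacking of how Corollary \ref{cor:mult:1} and Proposition \ref{prop:PF}\eqref{prop:PF:5} combine to control $[\pi[\fm_\nU^2]:\sigma]$ is correct.

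For the lower bound, however, you are making it harder than it is. The paper does \emph{not} transfer to $\nG\nL_2$, nor does it invoke the freeness of $M_\infty^v(L_{f-1})$. The lower bound in \cite[Thm.~8.4.1]{BHHMS} comes from the general formalism of the ``big'' patched module $M_\infty$: it is finitely generated and projective over the power-series ring $S_\infty[\![\sfK_{\nU,1}]\!]$ (this is built into the patching construction), and $\pi^\vee$ is obtained by reducing modulo $\fm_{S_\infty}$. A standard depth/dimension argument then forces $\dim_{\sfK_{\nU,1}}(\pi) \geq \dim \bbF[\![\sfK_{\nU,1}]\!] - (\dim R_\infty - \dim S_\infty)$, and the right-hand side equals $f$ by the dimension formula for the local deformation rings. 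In \cite{BHHMS} this formal input is packaged via the reference to \cite{dottole}; the paper's only modification is to replace that reference by \cite[\S 7C]{koziolmorra}, which establishes the same formal properties of the patching construction in the unitary setting. No group-theoretic transfer is required, and the obstacle you identify (matching the transferred functor to an honest $\nG\nL_2$-patching functor over the correct deformation rings) simply does not arise.

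Your alternative route via transfer would likely work, but it is both more laborious and logically circular in spirit: the whole point of Sections \ref{sec:Kisinmods}--\ref{sec:galdefs} is that the polarized deformation theory already has the right dimensions and Cohen--Macaulay properties, so one may run the BHHMS argument directly for $\nU_{1,1}$.
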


\begin{proof}
The proof closely follows that of \cite[Thm. 8.4.1]{BHHMS}, with the following modifications:
\begin{itemize}
  \item References to \cite[Thm. 6.4.7]{BHHMS} are replaced by Theorem \ref{thm:abstract:GKdim}, whose hypotheses hold by \cite[Cor. 7.5]{koziolmorra}, Corollary \ref{cor:mult:1} and Proposition \ref{prop:freeness:2}.
  \item References to \cite{dottole} are replaced by \cite[\S 7C]{koziolmorra}.
\end{itemize}
These substitutions suffice to prove the result.  
\end{proof}

It is likely that the analogues of \cite[Thm. 8.4.3, Cors. 8.4.4, 8.5.1]{BHHMS} (faithful flatness of ``big'' patched modules, existence of nonzero admissible unitary Banach space representations of $\bbG(F_v^+)=\nU_{1,1}(F_v^+)$, faithful flatness of dual completed cohomology) hold true in our situation, following arguments analogous to those appearing in \cite[\S\S 8.4, 8.5]{BHHMS}.  However, we have decided to not pursue this here.

\appendix

\section{A local lifting lemma}
\label{appA}

In this section, we show that the $L$-parameters which we consider in the body of the paper admit crystalline lifts with Hodge--Tate weights perscribed by the set of Serre weights.

\begin{lemma}
\label{cryslift}
Let $\rhobar: \Gamma_K \longrightarrow {}^C\nU_{1,1}(\bbF)$ be a $3$-generic tamely ramified $L$-parameter which satisfies $\widehat{\imath}\circ \rhobar = \overline{\varepsilon}$.  If $F(\mu)\in \nW^?(\rhobar)$ with $\mu \in X_1(\un{T}_\nU)$, then $\rhobar$ has a semisimple crystalline lift $\rho: \Gamma_K \longrightarrow {}^C\nU_{1,1}(E)$ which satisfies
$$\HT_j(\BC(\rho)) = \mu_j + (1,0),\qquad \HT_{j + f}(\BC(\rho)) = - \fw(\mu_j) + (1,0)$$
for all $0 \leq j \leq f - 1$, and $\widehat{\imath}\circ \rho = \varepsilon$.  
\end{lemma}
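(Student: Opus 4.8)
The plan is to reduce the existence of a crystalline lift of the $^C\nU_{1,1}$-valued parameter $\rhobar$ to the existence of a crystalline lift of the base-changed $\nG\nL_2(\bbF)$-valued representation $\BC(\rhobar):\Gamma_{K_2}\longrightarrow \nG\nL_2(\bbF)$ equipped with a compatible polarization, and then to invoke the known lifting results for $\nG\nL_2$. First I would recall, as in Subsection \ref{galdefs:prelim} (via \cite[Lem.~2.1.1]{CHT}), that giving $\rhobar$ with $\widehat{\imath}\circ\rhobar=\bo$ is equivalent to giving the pair $(\BC(\rhobar),\overline{\alpha})$, where $\overline{\alpha}:\BC(\rhobar)^{\varphi^f}\congto\BC(\rhobar)^\vee\otimes\bo$ is a polarization. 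Since $\rhobar$ is tamely ramified and $3$-generic, we have $\rhobar|_{I_K}\cong\taubar(s,\mu'+\eta)$ for a suitable lowest-alcove presentation, and by equation \eqref{BC-of-inertial-type} we get $\BC(\rhobar)|_{I_{K_2}}\cong\taubar'((s,s),\BC(\mu')+\eta')$, which is $2$-generic. The hypothesis $F(\mu)\in\nW^?(\rhobar)$ translates, via \cite[Thm.~4.9]{koziolmorra} and Proposition \ref{prop:SW:extgr:U2}, into $\BC(F(\mu))=F'(\BC(\mu))\in\nW^?(\BC(\rhobar))$.

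The next step is to produce the crystalline lift on the $\nG\nL_2$ side. By the weight-explicit crystalline lifting results for tame $\nG\nL_2$-representations (the relevant statement being e.g.\ \cite[Thm.~3.3.7]{bellovingee} together with the explicit Kisin-module constructions of \cite[\S 5]{LLLM} / \cite[\S 3]{LLL}, or equivalently the fact that every $\sigma\in\nW^?$ is realized on a component of the corresponding potentially crystalline deformation ring as used in Section \ref{sec:galdefs}), the Serre weight $F'(\BC(\mu))\in\nW^?(\BC(\rhobar))$ guarantees a semisimple crystalline lift $\rho':\Gamma_{K_2}\longrightarrow\nG\nL_2(E)$ of $\BC(\rhobar)$ with $\HT_{j}(\rho')=\mu_j+(1,0)$ for $0\le j\le f-1$ and $\HT_{j+f}(\rho')=-\fw(\mu_j)+(1,0)$ for $f\le j+f\le 2f-1$, and with $\det\rho'=\varepsilon^{?}\cdot(\text{unramified})$ of the right form. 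Because $\BC(\rhobar)$ is a direct sum of two tame characters (it is semisimple and tamely ramified), one in fact can simply take $\rho'$ to be a direct sum of two crystalline characters of $\Gamma_{K_2}$ whose reductions are the two characters of $\BC(\rhobar)$ and whose Hodge--Tate weights are the prescribed ones; such characters exist by Lubin--Tate theory / explicit local class field theory, and genericity ensures the Hodge--Tate weights are in the allowable range.

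The main obstacle — and the step requiring the most care — is to arrange the \emph{polarization} on $\rho'$ lifting $\overline{\alpha}$, i.e.\ to choose $\rho'$ so that there is an isomorphism $\alpha:(\rho')^{\varphi^f}\congto(\rho')^\vee\otimes\varepsilon$ reducing to $\overline{\alpha}$ mod $\varpi$. This is where one uses the \emph{symmetry} of the prescribed Hodge--Tate weights under the involution $j\mapsto j+f$, $\mu_j\mapsto-\fw(\mu_j)$: this symmetry is exactly what is needed for $(\rho')^\vee\otimes\varepsilon$ to have the same Hodge--Tate weights as $(\rho')^{\varphi^f}$, so the obstruction to existence of such an $\alpha$ lies only in the "unramified part." Concretely, writing $\rho'=\chi_1\oplus\chi_2$ with $\chi_i$ crystalline characters, the polarization condition becomes a pair of equations $\chi_i^{\varphi^f}=\chi_{\sigma(i)}^{-1}\varepsilon$ for an appropriate permutation $\sigma$ determined by $\overline{\alpha}$; one solves these by adjusting the unramified twists of the $\chi_i$ (possible because $1-\varphi^f$ acts invertibly on the relevant group of unramified characters after inverting $p$, or more simply by choosing Teichmüller-type lifts as in the construction of $\tau(w,\mu)$ in Subsection \ref{subsub:Lparamdefs}), and checks that the solution reduces to $\overline{\alpha}$ by comparing with the mod $p$ picture. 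Having built $(\rho',\alpha)$, one then packages it back into an $L$-parameter $\rho:\Gamma_K\longrightarrow{}^C\nU_{1,1}(E)$ via \cite[Lem.~2.1.1]{CHT}; this $\rho$ is crystalline (crystallinity is detected after restriction to $\Gamma_{K_2}$, since $K_2/K$ is unramified), semisimple, satisfies $\widehat{\imath}\circ\rho=\varepsilon$ by construction of $\alpha$, and has $\BC(\rho)=\rho'$ with the asserted Hodge--Tate weights, completing the proof. One should also double-check the normalization: the shift by $(1,0)$ in each embedding matches the convention that $F(\mu)$ corresponds to Hodge--Tate weights $\mu+(1,0)$ (the "$\eta$-shift"), consistent with equation \eqref{ILLC-explicit} and the conventions of Subsection \ref{subsec:WM}.
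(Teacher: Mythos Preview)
Your proposal is correct and takes essentially the same route as the paper: pass to $\BC(\rhobar)$ via \cite[Thm.~4.9]{koziolmorra}, build the lift as a direct sum of Lubin--Tate crystalline characters, verify the polarization using the symmetry $\HT_{j'+f}=1-\fw(\HT_{j'})$, and descend back to ${}^C\nU_{1,1}$ via \cite[Lem.~2.1.1]{CHT}.

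The one place the paper is more explicit than you is in linking the Lubin--Tate exponents to $\mu$. Rather than invoking a black-box $\nG\nL_2$ lifting result, the paper uses the extension-graph machinery (\cite[Lem.~2.4.4]{BHHMS} together with the $\nG\nL_2$ analogue of Corollary~\ref{cor:types-cont-weight:3}) to \emph{re-present} $\BC(\rhobar)|_{I_{K_2}}$ as $\overline{\tau}'((v,v),\BC(\mu)+\eta')$ for some $v\in\un{W}$; this writes the two inertial characters of $\BC(\rhobar)$ directly in terms of $\mu$, so the Lubin--Tate lift with the required Hodge--Tate weights is written down explicitly and the polarization reduces to the concrete identity $\boldsymbol{\alpha}_{j'}=-v_\tau^{-1}\fw(\boldsymbol{\alpha}_{j'-f})+(1,1)$. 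Your more abstract argument (Hodge--Tate symmetry forces agreement on inertia, then adjust unramified parts) is also valid, but one point you gloss over deserves a sentence: you need the \emph{same} permutation $\sigma$ to work at every embedding $j'$, i.e.\ the assignment of Hodge--Tate weights to the two character summands must be globally consistent with the polarization. This is guaranteed because the mod-$p$ polarization $\overline{\alpha}$ already fixes $\sigma$ and genericity ensures the two characters remain distinct, but it is exactly the content of the paper's re-presentation step. (Also, your citation of \cite[Thm.~3.3.7]{bellovingee} is not quite on point; that result computes dimensions of deformation rings, not existence of lifts.)
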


\begin{proof}
Write $\rhobar|_{I_K}\cong \overline{\tau}(s,\nu + \eta)$, where $\nu$ is $3$-deep in the fundamental $p$-alcove and let $F(\mu)\in \nW^?(\rhobar)$ be as in the statement of the lemma.  Then $\BC(F(\mu))\in \nW^?(\BC(\rhobar))$ by \cite[Thm. 4.9]{koziolmorra}, where $\BC(\rhobar)$ is defined in Subsection \ref{subsub:Lparamdefs} and the Serre weight $\BC(F(\mu))$ is defined in Subsection \ref{twist-by-epsilon}.  In particular, by equation \eqref{BC-of-inertial-type} and the definition of $\BC(F(\mu))$, we can write $\BC(\rhobar)|_{I_{K_2}}\cong \overline{\tau}'((s,s),\BC(\nu)+\eta')$ and $\BC(F(\mu)) = F(\mu,-{\un{\fw}}(\mu))$.

We now use the results on the extention graph for $\textnormal{GL}_2(k_{K_2})$ from Section \ref{appendix:EGC}.
By Proposition \ref{prop:SW:extgr:U2} and Lemma \ref{lem:comp:ext:gr}, we can write 
$$\BC(F(\mu))=F'\Big(\mathfrak{t}_{\BC(\nu)}\big((s,s)(\overline{\BC}(\omega))\big)\Big)$$ 
for some $\omega\in \Sigma$.  Thus $\BC(\mu)\equiv\mathfrak{t}_{\BC(\nu)}((s,s)(\overline{\BC}(\omega)))$ modulo $(p-\pi')X^0(\underline{T}')$. Using \cite[Lem. 2.4.4]{BHHMS}, we conclude that
$$\left\{F'\left(\mathfrak{t}_{\BC(\nu)}((s,s)(\Sigma'))\right)\right\} = \left\{F'\left(\mathfrak{t}_{\BC(\mu)}((v,v)(\Sigma'))\right)\right\},$$
where $(v,v) := w_{(s,s)\overline{\BC}(\omega)}(s,s)(s_\omega,s_\omega)$, $w_{(s,s)\overline{\BC}(\omega)} \in \un{W}'$ is defined in \cite[Rem. 2.4.5(i)]{BHHMS}, and $s_{\omega}\in \un{W}$ is defined by $s_{\omega,j} = 1$ if $\omega_j = 0$ and $s_{\omega,j} = \fw$ otherwise.  By Proposition \ref{prop:SW:extgr:GL2}, we get that 
$$\nW^?(\BC(\rhobar)) = \nW^?\big(\overline{\tau}'((v,v),\BC(\mu) + \eta')\big)$$ 
which implies, by the $\nG\nL_2$ analog of Corollary \ref{cor:types-cont-weight:3}, that 
$$\BC(\rhobar)|_{I_{K_2}}\cong \overline{\tau}'((v,v),\BC(\mu) + \eta').$$
In particular, by \cite[\S 2.3]{BHHMS} we have $\BC(\rhobar)|_{I_{K_2}}\cong \omega_{2f}^{\mathbf{a}^{(0)}_1}\oplus \omega_{2f}^{\mathbf{a}^{(0)}_2}$ where 
$(\mathbf{a}^{(0)}_1,\mathbf{a}^{(0)}_2) := \sum_{j'=0}^{2f-1}p^{j'}\boldsymbol{\alpha}_{j'}$, with 
$$\boldsymbol{\alpha}_{j'} :=  v_1^{-1} \cdots v_{j'}^{-1}((-{\fw})^{\delta_{j'>f-1}}(\mu_{j'}) + \eta'_{j'})$$
for $j' = 0, \cdots, 2f - 1$, and the indices of $v_{j'}$, $\mu_{j'}$ are taken modulo $f$.  (Recall that $\omega_{2f}:I_{K_2} \longrightarrow \bbF^\times$ denotes the character given as the composition $I_{K_2} \longtwoheadrightarrow \cO_{K_2}^\times \longtwoheadrightarrow k_{K_2}^\times \stackrel{\sigma'_{0}}{\longrightarrow}\bbF^\times$.  We use the same notation to denote the extension of this character to $\Gamma_{K_2}$ which sends $\varphi^{2f}$ to $1$.)

For each $j'$, consider the Lubin--Tate character $\chi_{K_2,j'}:\Gamma_{K_2}\longrightarrow \cO_{K_2}^\times\stackrel{\sigma'_{j'}}{\longrightarrow}\cO^\times$. 
It is a crystalline character which lifts $\omega_{2f}^{p^{j'}}$ and has Hodge--Tate weight $1$ at embedding $j'$ and 0 otherwise.
Writing $\BC(\rhobar)(\varphi^{2f}) = (\xi,\xi^{-1})\in {T_\nG}(\F)$ and letting $\widetilde{\xi} \in \cO^\times$ denote the Teichm\"uller lift of $\xi$, we thus conclude that 
$$\rho'_{\mu} := \left(\prod_{j' = 0}^{2f - 1}\chi_{K_2,j'}^{\boldsymbol{\alpha}_{j',1}}\right)\textnormal{nr}_{2f,\widetilde{\xi}} ~\oplus~ \left(\prod_{j' = 0}^{2f - 1}\chi_{K_2,j'}^{\boldsymbol{\alpha}_{j',2}}\right) \textnormal{nr}_{2f,\widetilde{\xi}^{-1}}$$ 
is a semisimple crystalline representation lifting $\BC(\rhobar)$ whose Hodge--Tate weights satisfy the conditions of the lemma.  Since $\boldsymbol{\alpha}_{j'} = -v_\tau^{-1}\fw(\boldsymbol{\alpha}_{j'-f})+(1,1)$ for all $j' = 0,\dots,2f-1$ (where $v_\tau := v_0v_{f - 1}\cdots v_2v_1$), we see that $(\chi_{K_2,j'}^{\boldsymbol{\alpha}_{j',1}}\oplus \chi_{K_2,j'}^{\boldsymbol{\alpha}_{j',2}})^{\varphi^{f}}\cong \chi_{K_2,{j'-f}}^{-\boldsymbol{\alpha}_{j'-f,1}+ 1}\oplus \chi_{K_2,j'-f}^{-\boldsymbol{\alpha}_{j'-f,2}+1}$ for all $j' = 0,\dots,2f-1$ and hence, taking into account the the choice of the unramified part of $\rho'_{\mu}$, we see that $(\rho'_{\mu})^{\varphi^{f}} \cong {\rho'_{\mu}}^{\vee}\otimes \varepsilon$.  This implies that $\rho'_\mu:\Gamma_{K_2}\longrightarrow \textnormal{GL}_2(\cO)$ descends to a semisimple crystalline representation $\rho_\mu:\Gamma_{K}\longrightarrow {}^C\nU_{1,1}(\cO)$ lifting $\rhobar$ and satisfying the desired conditions.
\end{proof}

\section{A global lifting lemma}
\label{appB}

In this section, we closely follow the arguments of \cite[App. A]{emertongee} in order to show that our local $L$-parameters can be globalized to automorphic global $L$-parameters.

The setup is the following.  Let $p \geq 7$ and let $K$ denote the unramified extension of $\bbQ_p$ of degree $f$.  We fix throughout this appendix a continuous $L$-parameter $\varrhobar: \Gamma_K \longrightarrow \cG_2(\bbF)$ satisfying $\varrhobar^{-1}(\nG\nL_2(\bbF) \times \bbF^\times) = \Gamma_{K_2}$ and $\nu\circ\varrhobar = \overline{\varepsilon}^{-1}$.

We first globalize $\varrhobar$.  

\begin{lemma}
There exists a CM field $L$ with maximal totally real subfield $L^+$, and a continuous $L$-parameter $\rbar: \Gamma_{L^+} \longrightarrow \cG_2(\bbF)$ satisfying the following:
\begin{itemize}
  \item $p$ is unramified in $L$, and every place $v$ of $L^+$ above $p$ is \textbf{inert} in $L$;
  \item if $v$ is a place of $L^+$ above $p$, then $L^+_v \cong K$, $L_v \cong K_2$, and $\rbar|_{\Gamma_{L^+_v}} \cong \varrhobar$;
  \item $\nu\circ \rbar = \overline{\varepsilon}^{-1}$;
  \item $\rbar^{-1}(\nG\nL_2(\bbF) \times \bbF^\times) = \Gamma_L$;
  \item $\rbar(\Gamma_{L(\zeta_p)}) = \nG\nL_2(\bbF) \times \{1\}$ (in particular, $\rbar(\Gamma_{L(\zeta_p)})$ is adequate, cf. \cite[Prop. 6.5]{BLGG});
  \item $\overline{L}^{\ker(\textnormal{ad}^0(\rbar))}$ does not contain $L(\zeta_p)$;
  \item if $v\nmid p$ is a finite place of $L^+$, then $\rbar|_{\Gamma_{L^+_v}}$ is unramified;
  \item $L/L^+$ is unramified at all finite places.
\end{itemize}
\end{lemma}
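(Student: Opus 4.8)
The statement is a globalization lemma in the style of \cite[App. A]{emertongee}, and the proof should follow that template essentially verbatim, inserting the polarization/unitary bookkeeping needed to land in $\cG_2$ rather than $\nG\nL_2$. First I would choose the CM field. Starting from the fixed local datum $\varrhobar$ and the degree $f$ extension $K/\bbQ_p$, I would pick a totally real field $L^+$ in which $p$ is inert with $L^+_v \cong K$ for the unique place $v \mid p$ (e.g.\ a suitable subfield of a cyclotomic field, as in \emph{op.\ cit.}), and then choose an imaginary quadratic field $L_0$ in which $p$ is inert, so that $L := L^+ L_0$ is CM with $p$ inert and $L_v \cong K_2$ at the place above $v$. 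One arranges $L/L^+$ unramified at all finite places by choosing $L_0$ and $L^+$ with disjoint ramification and invoking the standard soluble base change / solvability trick (pass to a further solvable CM extension if needed to kill ramification and to make things unramified away from $p$), exactly as in the reference. The place-by-place splitting conditions are then satisfied by construction.

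Next I would build the global $L$-parameter. The key point is that an $L$-parameter $\Gamma_{L^+} \to \cG_2(\bbF)$ is, by \cite[Lem. 2.1.1]{CHT} (recalled in Subsection \ref{galdefs:prelim}), the same as a representation $\BC'(\rbar):\Gamma_L \to \nG\nL_2(\bbF)$ together with a polarization $\BC'(\rbar)^c \stackrel{\sim}{\to} \BC'(\rbar)^\vee \otimes \overline{\varepsilon}^{-1}$ (up to the sign/character bookkeeping). So I would first produce a global Galois representation $\Gamma_L \to \nG\nL_2(\bbF)$ which is conjugate self-dual of the correct similitude character, unramified outside $p$, with prescribed local behaviour $\BC'(\varrhobar)$ at the place(s) above $p$, and with large image $\nG\nL_2(\bbF)$ on $\Gamma_{L(\zeta_p)}$; this is precisely the content of the Emerton--Gee globalization argument (using potential automorphy / the construction of odd conjugate self-dual representations with prescribed local conditions), and one checks $\rbar(\Gamma_{L(\zeta_p)}) = \nG\nL_2(\bbF)$ ensures adequacy by \cite[Prop. 6.5]{BLGG}, and also (since $p \geq 7$ and the image is all of $\nG\nL_2(\bbF)$) that $\overline{L}^{\ker \textnormal{ad}^0(\rbar)}$ does not contain $L(\zeta_p)$ by the usual group-theoretic computation with $\mathrm{PGL}_2(\bbF_p)$. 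I would then repackage this $\nG\nL_2$-valued datum plus its polarization into the $\cG_2$-valued $\rbar$, and check that the multiplier condition $\nu \circ \rbar = \overline{\varepsilon}^{-1}$ holds, that $\rbar^{-1}(\nG\nL_2(\bbF)\times\bbF^\times) = \Gamma_L$ (equivalently, $\BC'(\varrhobar)$ is genuinely $\Gamma_{K_2}$-valued and not $\Gamma_K$-valued, which holds because $\varrhobar^{-1}(\nG\nL_2 \times \bbF^\times) = \Gamma_{K_2}$ is part of our hypotheses), and that $\rbar|_{\Gamma_{L^+_v}} \cong \varrhobar$ at the $p$-adic place.

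Finally, the modularity claim: I would invoke that the global $L$-parameter $\rbar$ (or rather its base change $\BC'(\rbar)$) is automorphic. In the Emerton--Gee setup this comes from choosing the globalization so that $\BC'(\rbar)$ is residually automorphic by construction (e.g.\ as a congruence with a CM/automorphic induction form, which the soluble trick provides), and then transferring automorphy across $L/L^+$ back to $\bbG$ via the last three sentences of the proof of \cite[Prop. 7.2]{koziolmorra}, which is exactly the citation used in Subsection \ref{subsub:local-conds} to assert modularity of the globalization. The main obstacle is the construction of the auxiliary global $\nG\nL_2$-representation with all the prescribed local conditions simultaneously (inertness of $p$ in $L$, $L_v \cong K_2$, prescribed $\BC'(\varrhobar)$ at $p$, unramified away from $p$, full image, and conjugate self-duality with the right similitude character) while keeping $L/L^+$ everywhere unramified — this is precisely where one must import the Emerton--Gee machinery wholesale and check that the extra unitary/polarization constraints do not obstruct it; everything else is bookkeeping via \cite[Lem. 2.1.1]{CHT} and standard image computations for $p \geq 7$.
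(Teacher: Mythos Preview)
Your proposal follows the same Emerton--Gee template that the paper invokes, and the core argument is sound. A few corrections and comparisons are in order.

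First, the final paragraph of your plan on modularity does not belong here: this lemma makes no automorphy claim. Modularity of the globalization is established only in the subsequent lemmas of the appendix (the lift to characteristic zero, the passage to a further extension $F^+/L^+$ over which $\BC'(r)$ becomes automorphic, and finally Corollary~\ref{appendix:maincor}). You have conflated this lemma with the full chain.

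Second, the paper's proof is considerably terser than your sketch: it simply cites \cite[Prop.~A.2]{emertongee} wholesale for bullet points 1--5 and 7, appeals to the argument of \cite[pf.~of Prop.~5.10]{gee:liftingthms} for point 6, and does \emph{not} attempt to build $L^+$ and $L_0$ by hand as you do. Your explicit recipe (choose $L^+$ with $p$ inert, choose an imaginary quadratic $L_0$ with $p$ inert, set $L=L^+L_0$) is morally the right shape, but the actual Emerton--Gee construction is more delicate than ``a suitable subfield of a cyclotomic field'' --- it uses Moret--Bailly-type arguments to produce the field and the global representation simultaneously with all the prescribed local conditions, and you should not expect to shortcut that.

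Third, for the last bullet point (everywhere unramifiedness of $L/L^+$), the paper does not try to arrange it from the start. Instead it first obtains $L_0/L_0^+$ satisfying all the other conditions, and then applies \cite[Lem.~4.1.2]{CHT} with $F=L_0^+$, $D=\overline{L_0}^{\ker(\rbar)}(\zeta_p)$, $S=\{v\mid p\infty\}\cup\{v:v\text{ ramifies in }L_0\}$, and local fields $E'_v=(L_0^+)_v$ at $v\mid p\infty$ and $E'_v=(L_0)_v$ at the ramified places; the resulting totally real $L^+$ absorbs the ramification and one takes $L:=L^+L_0$. Your ``pass to a further solvable CM extension if needed'' gestures at this but the precise mechanism matters for preserving the other bullet points (linear disjointness from $\overline{L_0}^{\ker(\rbar)}(\zeta_p)$ is exactly what guarantees points 5 and 6 survive the base change).
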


\begin{proof}
  Everything but the last bullet point follows exactly as in the proof of \cite[Prop. A.2]{emertongee}.   Indeed, the proof of \cite[Prop. A.2]{emertongee}, as written, produces fields $L/L^+$ satisfying bullet points 1,2,3,4,5,7 above, and one may check that the sixth bullet point ``$\overline{L}^{\ker(\textnormal{ad}^0(\rbar))}$ does not contain $L(\zeta_p)$'' holds using an argument analogous to \cite[pf. of Prop. 5.10, p. 117]{gee:liftingthms}.  (We remark that the statement of \cite[Prop. A.2]{emertongee} seems to contain an incompatibility when $n = 2$.  Namely, supposing that the third point of \emph{op.~cit}.~is satisfied, we then obtain in the notation of that article that $\overline{\varepsilon}(\gamma)^{-1} = (\nu \circ \rhobar)(\gamma) = 1$ for every $\gamma \in \ker(\rhobar)$.  Thus $\overline{L}^{\ker(\rhobar)} \supset \overline{L}^{\ker(\overline{\varepsilon}|_{\Gamma_L})} = L(\zeta_p)$, contradicting the sixth point of \emph{op.~cit.}.  We believe that the correct statement should be ``$\overline{L}^{\ker(\textnormal{ad}(\rhobar|_{\Gamma_L}))}$ does not contain $L(\zeta_p)$,'' as in \cite[\S 5.1]{emertongee}.)

  To see that the last condition may also be achieved, suppose $L_0/L_0^+$ satisfies all but the last bullet point.  We then apply \cite[Lem. 4.1.2]{CHT}: their $F$ is our $L_0^+$, their $D$ is our $\overline{L_0}^{\ker(\rbar)}(\zeta_p)$, their $S$ is our places of $L_0^+$ above $p$ and $\infty$ and places of $L_0^+$ which ramify in $L_0$.  For $v$ above $p$ or $\infty$, we take $E_v' = (L_0^+)_v$, and if $v$ ramifies in $L_0$, then we take $E_v' = (L_0)_v$.  The cited lemma furnishes the desired (totally real) $L^+$, and we take $L := L^+L_0$.
\end{proof}

Next, we produce a lift of $\rbar$ to characteristic 0.  Let us define a global deformation problem $\cS$ as follows.  Let $\Sigma_p^+$ denote the places of $L^+$ above $p$, and $\Sigma_p$ the places of $L$ above $p$ (so that restriction to $L^+$ induces a bijection between $\Sigma_p$ and $\Sigma_p^+$).  Further, fix $\mu = (\mu_{j'})_{j' \in \cJ' = \Hom(K_2,E)} \in (\bbZ^{2})^{2f}$ satisfying
\begin{equation}
  \label{FLineqs}
  \mu_{j',2} + p > \mu_{j',1} > \mu_{j',2}\qquad \textnormal{and} \qquad \mu_{j' + f,i} = 1 - \mu_{j',3 - i}
\end{equation}
for all $j'$.  We define $R^{\Box, \mu, \mathbf{1}}_{\varrhobar}$ to be the unique $\cO$-flat quotient of $R^{\Box}_{\varrhobar}$ with the property that if $B$ is a finite local $E$-algebra, then $x : R^{\Box}_{\varrhobar} \longrightarrow B$ factors through $R^{\Box, \mu, \mathbf{1}}_{\varrhobar}$ if and only if the corresponding representation $\varrho_x : \Gamma_{K}\longrightarrow  \cG_2(B)$ is crystalline, satisfying $\nu \circ \varrho_x = \varepsilon^{-1}$ and $\textnormal{HT}_{j'}(\BC'(\varrho_x)) = \mu_{j'}$.  (Note that in this appendix we are using a different notation for $\cG_2$-valued deformation rings than in the body of the paper.)  We also \textbf{assume} that $\varrhobar$ admits such a crystalline lift which is moreover semisimple, so that in particular we have $R^{\Box,\mu,\mathbf{1}}_{\varrhobar} \neq 0$.  We note that this implies $\BC'(\rbar)|_{\Gamma_{L_v}}$ has a potentially diagonalizable lift with distinct Hodge--Tate weights for every $v \in \Sigma_p$.

We define $\cS$ as
$$\cS := \left(L/L^+,~ \Sigma_p^+,~ \Sigma_p,~ \cO,~ \rbar,~ \varepsilon^{-1},~ \{R^{\Box, \mu, \mathbf{1}}_{\varrhobar}\}_{v \in \Sigma_p^+}\right).$$
This is a slight generalization of the global deformation problems considered in \cite[\S 2.3]{CHT}, since we are allowing places in $\Sigma_p^+$ to be inert in $L$.  Thus, a deformation is of type $\cS$ if it lifts $\rbar$, and locally at $p$ it is a crystalline lift of $\varrhobar$ with Hodge--Tate weights determined by $\mu$.

\begin{lemma}
  After possibly enlarging $E$, the deformation ring $R_{\cS}^{\textnormal{univ}}$ admits an $E$-valued point.  Thus, there exists a lift $r:\Gamma_{L^+} \longrightarrow \cG_2(\cO)$ of $\rbar$ satisfying:
  \begin{itemize}
    \item $\nu\circ r = \varepsilon^{-1}$;
    \item $r$ is unramified outside of places dividing $p$; and
    \item if $v$ is a place of $L^+$ above $p$, then $r|_{\Gamma_{L^+_v}}$ is crystalline with Hodge--Tate weights given by $\mu$. 
  \end{itemize}
\end{lemma}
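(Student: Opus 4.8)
The goal is to produce a single global characteristic-zero lift $r$ of $\rbar$ of type $\cS$. The strategy is the standard one from \cite[App.\ A]{emertongee}: show that the universal deformation ring $R_{\cS}^{\textnormal{univ}}$ is a finite $\cO$-algebra of dimension at least one, so that it has an $E$-point after enlarging $E$. The input for this is a lower bound on the Krull dimension of $R_{\cS}^{\textnormal{univ}}$ coming from a Galois cohomology (Euler characteristic / Poitou--Tate) computation, combined with the fact that $R_{\cS}^{\textnormal{univ}}$ is topologically finitely generated over $\cO$. Concretely, one first checks that the local conditions imposed at $p$ (crystalline with the fixed regular Hodge--Tate weights $\mu$ satisfying the Fontaine--Laffaille inequalities \eqref{FLineqs}, and fixed multiplier $\varepsilon^{-1}$) are liftable local deformation problems whose local deformation rings are formally smooth over $\cO$ of the expected dimension; this is where the assumption that $\varrhobar$ admits a semisimple crystalline lift with Hodge--Tate weights $\mu$ enters, guaranteeing $R_{\varrhobar}^{\Box,\mu,\mathbf{1}}\neq 0$, and the genericity/regularity of $\mu$ makes it Fontaine--Laffaille so that $R_{\varrhobar}^{\Box,\mu,\mathbf{1}}$ is formally smooth of dimension $4 + [K:\bbQ_p]$ (the framed dimension plus the number of Hodge--Tate weights). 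At the finitely many auxiliary ramified places there are none here, since $\rbar$ is unramified outside $p$ and $L/L^+$ is everywhere unramified, so the only local conditions are at $p$ and at the infinite (archimedean) places, the latter being handled by the $\cG_2$-formalism of \cite[\S 2.3]{CHT} exactly as in \emph{op.\ cit.}

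\textbf{Key steps, in order.} First, I would set up the deformation problem $\cS$ carefully in the $\cG_2$-language, noting the only new feature relative to \cite[\S 2.3]{CHT} is that places of $\Sigma_p^+$ are inert in $L$; this is harmless because everything is phrased in terms of $\Gamma_{L^+}$ and the local deformation rings at $v\in\Sigma_p^+$ are for representations of $\Gamma_{L^+_v}$, valued in $\cG_2$, which the $\BC'$ functor translates into $\nG\nL_2\times\bG_m$-valued crystalline deformations of $\BC'(\rbar)|_{\Gamma_{L_v}}$ — and by construction $L_v\cong K_2$, $\BC'(\rbar)|_{\Gamma_{L_v}} = \BC'(\varrhobar)$, which has a potentially diagonalizable (indeed semisimple crystalline) lift with distinct Hodge--Tate weights. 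Second, I would invoke the adequacy of $\rbar(\Gamma_{L(\zeta_p)})$ (guaranteed by the globalization lemma, as $\rbar(\Gamma_{L(\zeta_p)}) = \nG\nL_2(\bbF)\times\{1\}$, cf.\ \cite[Prop.\ 6.5]{BLGG}) and that $\overline{L}^{\ker(\textnormal{ad}^0(\rbar))}\not\supset L(\zeta_p)$, to ensure the relevant Selmer and dual Selmer groups behave well — in particular that one can, by adding Taylor--Wiles primes, trivialize the dual Selmer group. Third, I would run the Galois-cohomological dimension count (as in the proof of \cite[Prop.\ A.2]{emertongee}, ultimately \cite[\S 2.3]{CHT} or \cite[Cor.\ 2.3.5]{gee:liftingthms}): the formally smooth local conditions at $p$ of the correct dimension, together with the global Euler characteristic formula and vanishing of $H^0$ of the adjoint (which follows since $\rbar|_{\Gamma_L}$ is absolutely irreducible), give that $R_{\cS}^{\textnormal{univ}}$ has Krull dimension $\geq 1$. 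Fourth, $R_{\cS}^{\textnormal{univ}}$ is a quotient of a power series ring over $\cO$ in finitely many variables (finite generation of the tangent space, from finiteness of the relevant $H^1$), hence is $\cO$-flat on a component of dimension $\geq 1$ after quotienting by $\cO$-torsion; a standard argument then shows $R_{\cS}^{\textnormal{univ}}[1/p]\neq 0$, so it has a $\overline{\bbQ}_p$-point, which lands in $\cG_2(\cO')$ for some finite extension $\cO'/\cO$; enlarging $E$ gives the claimed $r$. The unramified-outside-$p$ and multiplier properties of $r$ are immediate from the deformation problem, and the crystallinity with Hodge--Tate weights $\mu$ at each $v\mid p$ is forced by the local condition $R_{\varrhobar}^{\Box,\mu,\mathbf{1}}$.

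\textbf{Main obstacle.} The genuinely substantive point is the dimension lower bound for $R_{\cS}^{\textnormal{univ}}$, which requires that the crystalline local deformation ring $R_{\varrhobar}^{\Box,\mu,\mathbf{1}}$ be nonzero and of the expected dimension — this is exactly why we must \emph{assume} $\varrhobar$ has a semisimple crystalline lift with Hodge--Tate weights $\mu$, and why we arranged $\mu$ to satisfy the Fontaine--Laffaille bounds \eqref{FLineqs} (so that the lift is potentially diagonalizable with distinct Hodge--Tate weights and the local deformation ring is formally smooth of dimension $4+[K:\bbQ_p]$, by Fontaine--Laffaille theory). Everything else is a transcription of \cite[App.\ A]{emertongee} and \cite[\S 2.3]{CHT}, with the only care needed being that the inert-at-$p$ hypothesis does not disrupt the Poitou--Tate/Euler-characteristic bookkeeping — and it does not, because the bookkeeping is all done over $L^+$ with local terms indexed by places of $L^+$. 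A secondary, purely bureaucratic, point is checking that the (mild) generalization of the \cite[\S 2.3]{CHT} deformation-theory formalism to allow $v\in\Sigma_p^+$ inert in $L$ goes through verbatim; this is routine since the arguments of \emph{loc.\ cit.}\ never use splitness at $p$ except to identify local Galois groups, which we have done via $L_v\cong K_2$.
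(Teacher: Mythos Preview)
Your proposal has a genuine gap at the crucial step. You correctly identify that one needs $\dim R_{\cS}^{\textnormal{univ}} \geq 1$, and the Galois-cohomological lower bound (via \cite[Cor.~2.3.5]{CHT} and the local dimension formula of \cite[Thm.~3.3.7]{bellovingee}) does give this. But your fourth step is where the argument breaks: knowing only that $R_{\cS}^{\textnormal{univ}}$ is a complete local Noetherian $\cO$-algebra of Krull dimension $\geq 1$ does \emph{not} imply $R_{\cS}^{\textnormal{univ}}[1/p] \neq 0$. For instance, $\bbF\llbracket x\rrbracket$ is such a ring with trivial generic fiber. Your suggestion to ``quotient by $\cO$-torsion'' does not help, since that quotient could then have dimension $0$ (or be zero). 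The dimension bound alone cannot rule out that $R_{\cS}^{\textnormal{univ}}$ is entirely $p$-torsion.

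What the paper actually does is prove the much stronger statement that $R_{\cS}^{\textnormal{univ}}$ is a \emph{finite} $\cO$-module; combined with $\dim \geq 1$, this immediately forces $R_{\cS}^{\textnormal{univ}}[1/p] \neq 0$ (a finite $\cO$-module that is Artinian as a ring has dimension $0$). Finiteness is deep: the paper obtains it by a solvable base change. One constructs a finite solvable totally real extension $M^+/L^+$ such that every place of $M^+$ above $p$ \emph{splits} in $M := M^+L$ (and $\rbar(\Gamma_{M^+}) = \rbar(\Gamma_{L^+})$, etc.). Over $M^+$ one is in the standard split-at-$p$ setting of \cite{BLGGT}, and since the Hodge--Tate weights are Fontaine--Laffaille the local deformation rings at $p$ are integral domains (\cite[Prop.~3.6]{KW-II}); then the proof of \cite[Thm.~4.3.1]{BLGGT} (which is essentially an $R=\bbT$ theorem via Taylor--Wiles--Kisin patching) gives that $R_{\cS_M}^{\textnormal{univ}}$ is finite over $\cO$. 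Finally, \cite[Lem.~1.2.3(1)]{BLGGT} shows $R_{\cS}^{\textnormal{univ}}$ is finite over $R_{\cS_M}^{\textnormal{univ}}$, hence finite over $\cO$. Your sketch never leaves the inert-at-$p$ setting and never invokes an automorphy-lifting input, so it cannot access this finiteness.
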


\begin{proof}
To construct the desired lift, we define a second deformation problem.  Let $M^+$ denote a finite, solvable, totally real Galois extension of $L^+$ satisfying the following properties:
\begin{itemize}
  \item $p$ is unramified in $M := M^+L$ and every place $w$ of $M^+$ above $p$ is \textbf{split} in $M$;
  \item if $w$ is a place of $M^+$ above $p$, then $M^+_w \cong K_2$;
  \item $\rbar(\Gamma_{M^+}) = \rbar(\Gamma_{L^+})$;
  \item $\overline{M}^{\ker(\textnormal{ad}^0(\rbar|_{\Gamma_{M^+}}))}$ does not contain $M(\zeta_p)$.
\end{itemize}
(To see that such an extension $M^+$ exists, we may apply \cite[Lem. 4.1.2]{CHT}: their $F$ is our $L^+$, their $D$ is our $\overline{L}^{\ker(\rbar)}(\zeta_p)$, and their $S$ is our $\Sigma_p^+ \cup \{v|\infty\}$.  For $v$ an infinite place of $L^+$, we take $E_v' = \bbR$, and for $v$ a place of $L^+$ above $p$, we take $E_v'$ to be the unramified extension of $L^+_v \cong K$ of degree 2.)

Let $\Sigma_{M,p}^+$ denote the set of places of $M^+$ above $p$, and for each $w \in \Sigma_{M,p}^+$, we choose a place $\widetilde{w}$ of $M$ lying above $w$, and let $\widetilde{\Sigma_{M,p}^+}$ denote the set of these places.  For each $w \in \Sigma_{M,p}^+$, we let $\BC_w(\mu)$ denote the element of $(\bbZ_+^{2})^{\Hom(M_{\widetilde{w}},E)}$ defined by $\BC_w(\mu)_{j'} = \mu_{j'|_{K_2}}$ for $j' \in \Hom(M_{\widetilde{w}},E)$.  We then consider the local deformation ring $R_{\rbar|_{\Gamma_{M_{\widetilde{w}}}}}^{\Box,\BC_w(\mu),\mathbf{1}}$ parametrizing lifts of $\rbar|_{\Gamma_{M_{\widetilde{w}}}}$ which are crystalline with Hodge--Tate weights given by $\BC_w(\mu)$.

We define $\cS_M$ as 
$$\cS_M := \left(M/M^+,~ \Sigma_{M,p}^+,~ \widetilde{\Sigma_{M,p}^+},~ \cO,~ \rbar|_{\Gamma_M^+},~ \varepsilon^{-1},~ \{R_{\BC'(\rbar)|_{\Gamma_{M_{\widetilde{w}}}}}^{\Box,\BC_w(\mu),\mathbf{1}}\}_{w \in \Sigma_{M,p}^+}\right).$$
The map $r \longmapsto r|_{\Gamma_{M^+}}$ induces a morphism between deformations of type $\cS$ and deformations of type $\cS_M$.  This in turn gives a morphism between universal framed deformation rings
$$R_{\cS_M}^\textnormal{univ} \longrightarrow R_{\cS}^\textnormal{univ}.$$
By \cite[Lem. 1.2.3(1)]{BLGGT}, this morphism makes $R_{\cS}^\textnormal{univ}$ into a finite $R_{\cS_M}^\textnormal{univ}$-module.

We now apply the proof of \cite[Thm. 4.3.1]{BLGGT} to the representation $\rbar|_{\Gamma_{M^+}}$.  Namely, for each $v \in \Sigma_v^+$, let $\varrho_v:\Gamma_{L^+_v} \longrightarrow \cG_2(\cO)$ denote a semisimple crystalline lift of $\rbar|_{\Gamma_{L^+_v}}$ with Hodge--Tate weights given by $\mu$.  Then $\BC'(\varrho_v)|_{\Gamma_{M_{\widetilde{w}}}}$ is a semisimple crystalline lift of $\BC'(\rbar|_{\Gamma_{M^+}})|_{\Gamma_{M_{\widetilde{w}}}}$.  Since the Hodge--Tate weights given by $\mu$ are in the Fontaine--Laffaille range, \cite[Prop. 3.6]{KW-II} implies that $R_{\BC'(\rbar)|_{\Gamma_{M_{\widetilde{w}}}}}^{\Box,\BC_w(\mu),\mathbf{1}}$ is an integral domain.  Therefore, the proof of \cite[Thm. 4.3.1]{BLGGT} shows that $R_{\cS_M}^{\textnormal{univ}}$ is a finite $\cO$-module (taking $\cC_v$ to be all of $\textnormal{Spec}(R_{\BC'(\rbar)|_{\Gamma_{M_{\widetilde{w}}}}}^{\Box,\BC_w(\mu),\mathbf{1}})$).  Consequently, $R_\cS^{\textnormal{univ}}$ is a finite $\cO$-module, and \cite[Cor. 2.3.5]{CHT} along with \cite[Thm. 3.3.7]{bellovingee} implies that $R_{\cS}^{\textnormal{univ}}$ has Krull dimension at least 1.  We therefore obtain a morphism $R_{\cS}^{\textnormal{univ}} \longrightarrow E$ (after possibly extending $E$), which gives a lift $r:\Gamma_{L^+} \longrightarrow \cG_2(\cO)$ of $\rbar$ of type $\cS$.  
\end{proof}

Our next step is to find an extension of $L$ over which $\BC'(r)$ becomes automorphic.

\begin{lemma}
  There exists an finite Galois extension of totally real fields $F^+/L^+$ such that such that 
  \begin{itemize}
    \item $F:= F^+L$ is linearly disjoint from $\overline{L}^{\ker(\rbar)}(\zeta_p)$;
    \item every place of $L^+$ above $p$ splits in $F^+$; and
    \item $(\BC'(r)|_{\Gamma_F},\varepsilon^{-1})$ is automorphic (in the notation of \cite[\S 2.1]{BLGGT}).
  \end{itemize}
\end{lemma}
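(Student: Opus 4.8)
The goal is a potential-automorphy statement for $\BC'(r)|_{\Gamma_F}$ after passing to a suitable totally real extension $F^+/L^+$, obtained by following the argument of \cite[App. A]{emertongee} closely; the present setting differs only in that some places of $L^+$ above $p$ are inert in $L$, but the potential automorphy input is applied to the general linear group over $F$ (where $F^+/\bbQ$ will be chosen so that all places above $p$ split in $F$), so the base-change trick resolves this. First I would invoke the previous lemma to fix the lift $r:\Gamma_{L^+}\to\cG_2(\cO)$ of $\rbar$ with $\nu\circ r=\varepsilon^{-1}$, unramified outside $p$, and crystalline with Hodge--Tate weights $\mu$ at places above $p$. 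The associated $\nG\nL_2$-valued representation $R:=\BC'(r):\Gamma_L\to\nG\nL_2(\cO)$ is then crystalline at all places above $p$ with distinct Hodge--Tate weights in the Fontaine--Laffaille range (by the inequalities \eqref{FLineqs}), essentially conjugate self-dual (since $r$ is $\cG_2$-valued, one has $R^{c}\cong R^\vee\otimes\varepsilon^{-1}$ up to the relevant twist), totally odd, and with $\overline{R}=\BC'(\rbar)|_{\Gamma_L}$ having adequate image (the fifth bullet of the globalization lemma, using \cite[Prop. 6.5]{BLGG}).

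\textbf{Key steps.} (1) Choose the auxiliary totally real extension $F^+/L^+$: by an application of \cite[Lem. 4.1.2]{CHT} (as already used twice in this appendix), arrange that $F:=F^+L$ is linearly disjoint from $\overline{L}^{\ker(\rbar)}(\zeta_p)$ over $L$, that every place of $L^+$ above $p$ splits completely in $F^+$ (so that every place of $F^+$ above $p$ splits in $F$, putting us in the split-at-$p$ situation of \cite{BLGGT}), and that $F/F^+$ is unramified at all finite places. (2) Verify that after restriction to $\Gamma_F$ the representation $R|_{\Gamma_F}$ still satisfies the hypotheses of a potential-automorphy theorem: it is crystalline with Fontaine--Laffaille weights at places above $p$ (hence potentially diagonalizable there — indeed it is a sum of crystalline characters by the proof of Lemma \ref{cryslift}, or at worst Fontaine--Laffaille hence potentially diagonalizable by \cite{KW-II}), essentially conjugate self-dual and totally odd, and $\overline{R}|_{\Gamma_F}$ has adequate image because $\rbar(\Gamma_{F(\zeta_p)})=\rbar(\Gamma_{L(\zeta_p)})=\nG\nL_2(\bbF)\times\{1\}$ by the linear disjointness and the fifth bullet of the globalization lemma. (3) Apply the potential automorphy theorem \cite[Thm. 4.5.1]{BLGGT} (or the $n=2$ case, equivalently work with a Hilbert modular form as in \cite[pf. of Prop. A.3]{emertongee}): there is a further finite Galois totally real extension over which $R$ becomes automorphic; absorbing that extension into $F^+$ (and re-checking, as in \cite{emertongee}, that the three bullet conditions in the statement survive enlargement, which they do since $\overline{L}^{\ker(\rbar)}(\zeta_p)$ is a fixed field and splitting of $p$-adic places is preserved under further totally real extensions in which $p$ splits) yields the claim. (4) Record that $(\BC'(r)|_{\Gamma_F},\varepsilon^{-1})$ is automorphic in the sense of \cite[\S 2.1]{BLGGT}, which is the assertion.

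\textbf{Main obstacle.} The substantive point — and essentially the only one not already identical to \cite{emertongee} — is step (2)'s potential diagonalizability (or Fontaine--Laffaille) input at the places above $p$: one must ensure the Hodge--Tate weights prescribed by $\mu$ lie in the Fontaine--Laffaille range so that the relevant local crystalline deformation rings are integral domains and the representation is potentially diagonalizable, which is exactly what the hypothesis \eqref{FLineqs} together with $p\ge 7$ guarantees, and which is also what the globalization lemma's closing remark (``$\BC'(\rbar)|_{\Gamma_{L_v}}$ has a potentially diagonalizable lift with distinct Hodge--Tate weights'') records. Everything else is a bookkeeping exercise of transcribing the Emerton--Gee and Barnet-Lamb--Gee--Geraghty--Taylor arguments, checking that linear disjointness is inherited under the required enlargements, and noting that the inert-versus-split distinction at $p$ is irrelevant once one works with $\nG\nL_2$ over the CM field $F$ in which all $p$-adic places split.
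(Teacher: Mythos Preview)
Your approach is essentially the same as the paper's, which simply cites \cite[Prop.~A.6]{emertongee} verbatim and notes that the relevant constructions do not depend on the splitting behaviour of $L/L^+$ above $p$. Your unpacking of that argument is broadly correct, but there is one factual slip: in step (1) you assert that once every place of $L^+$ above $p$ splits completely in $F^+$, every place of $F^+$ above $p$ then \emph{splits} in $F$. This is false. Since each $p$-adic place $v$ of $L^+$ is inert in $L$, and $F^+_w \cong L^+_v$ for $w\mid v$, one has $F_w = F^+_w \cdot L \cong L_{\tv}$ where $\tv$ is the unique place of $L$ over $v$; hence $w$ is \emph{inert} in $F$ (the paper notes exactly this just after the statement). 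Fortunately this claim plays no role in the potential automorphy input: the theorems of \cite{BLGGT} you need (e.g.\ Thm.~4.5.1, or the $n=2$ avatar) are statements about automorphy of $\nG\nL_2$-valued representations of $\Gamma_F$ for a CM field $F$, and do not require the $p$-adic places of $F^+$ to split in $F$. So the argument survives once you delete the erroneous parenthetical.
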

\noindent (We note that the splitness assumption implies that every place $v$ of $F^+$ above $p$ is \textbf{inert} in $F$.)

\begin{proof}
  We may apply the proof of \cite[Prop. A.6]{emertongee} verbatim; one can check that the relevant constructions do not depend on the splitting behavior of $L/L^+$ above $p$.  We remark that the reference to Lemma 2.1.1 of \cite{BLGGT} which appears in the proof of \cite[Prop. A.6]{emertongee} refers to the first arXiv version of \cite{BLGGT}, and corresponds to Lemma 2.2.4 in the published version of \cite{BLGGT}.  
\end{proof}

\begin{cor}
\label{appendix:maincor}
Let $p \geq 7$ and let $K$ denote the unramified extension of $\bbQ_p$ of degree $f$.  Let $\varrhobar: \Gamma_K \longrightarrow \cG_2(\bbF)$ denote a continuous $L$-parameter satisfying $\varrhobar^{-1}(\nG\nL_2(\bbF) \times \bbF^\times) = \Gamma_{K_2}$ and $\nu\circ\varrhobar = \overline{\varepsilon}^{-1}$.  Assume further that $\varrhobar$ admits a crystalline lift, with Hodge--Tate weights satisfying \eqref{FLineqs}.

 Then there exists a CM field $F$ with maximal totally real subfield $F^+$ and a continuous $L$-parameter $\rbar:\Gamma_{F^+} \longrightarrow \cG_2(\bbF)$ satisfying the following:
  \begin{itemize}
    \item $p$ is unramified in $F$, and every place $v$ of $F^+$ above $p$ is \textbf{inert} in $F$;
    \item if $v$ is a place of $F^+$ above $p$, then $F^+_v \cong K$, $F_v \cong K_2$ and $\rbar|_{\Gamma_{F^+_v}} \cong \varrhobar$;
    \item $\nu \circ \rbar = \overline{\varepsilon}^{-1}$;
    \item $\rbar^{-1}(\nG\nL_2(\bbF) \times \bbF^{\times}) = \Gamma_F$;
    \item $\rbar(\Gamma_{F(\zeta_p)}) = \nG\nL_2(\bbF) \times \{1\}$ (in particular, $\rbar(\Gamma_{F(\zeta_p)})$ is adequate);
    \item $\overline{F}^{\ker(\textnormal{ad}^0(\rbar))}$ does not contain $F(\zeta_p)$;
    \item if $v \nmid p$ is a finite place of $F^+$, then $\rbar|_{\Gamma_{F^+_v}}$ is unramified;
    \item $F/F^+$ is unramified at all finite places;
    \item $(\BC'(\rbar),\overline{\varepsilon}^{-1})$ is automorphic.
  \end{itemize}
\end{cor}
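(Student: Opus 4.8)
\textbf{Plan for the proof of Corollary \ref{appendix:maincor}.}  The statement is essentially an assembly of the three lemmas proved in Appendix \ref{appB}, together with the local lifting input from Appendix \ref{appA}; the task is to chain them together and check that the hypotheses propagate.  The plan is to argue as follows.  First, observe that the hypothesis on $\varrhobar$ is exactly what is needed to apply the second and third lemmas of this appendix: the assumption that $\varrhobar$ admits a semisimple crystalline lift with Hodge--Tate weights satisfying \eqref{FLineqs} is precisely the running assumption under which $R^{\Box,\mu,\mathbf{1}}_{\varrhobar} \neq 0$, and hence the machinery producing a characteristic-zero lift $r:\Gamma_{L^+} \longrightarrow \cG_2(\cO)$ of type $\cS$ applies.  (If one wishes to feed in the situation of the body of the paper, where $\varrhobar$ arises from a $3$-generic tame $\rhobar$ and a predicted Serre weight, then Lemma \ref{cryslift} of Appendix \ref{appA} supplies the required crystalline lift, with the Hodge--Tate weights $\mu$ built out of the Serre weight; one only has to check that these weights satisfy \eqref{FLineqs}, which is immediate from the genericity bound and the polarization relation.)

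Next I would run the three lemmas in sequence.  The first lemma globalizes $\varrhobar$ to $\rbar:\Gamma_{L^+}\longrightarrow \cG_2(\bbF)$ over a CM extension $L/L^+$ in which $p$ is unramified and every place above $p$ is inert, with $L^+_v \cong K$, $L_v\cong K_2$ and $\rbar|_{\Gamma_{L^+_v}}\cong\varrhobar$; it also records the image, ramification, and $\textnormal{ad}^0$-disjointness properties.  The second lemma produces the lift $r:\Gamma_{L^+}\longrightarrow\cG_2(\cO)$ of $\rbar$ which is crystalline of Hodge--Tate weights $\mu$ at $p$ and unramified away from $p$.  The third lemma then finds a totally real Galois extension $F^+/L^+$, with $F := F^+L$ linearly disjoint from $\overline{L}^{\ker(\rbar)}(\zeta_p)$ and every place of $L^+$ above $p$ split in $F^+$, such that $(\BC'(r)|_{\Gamma_F},\varepsilon^{-1})$ is automorphic.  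One then sets $\rbar$ in the corollary to be the original $\rbar$ restricted (i.e.\ base-changed) to $\Gamma_{F^+}$, keeping the same symbol.

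The final step is to verify that all the bulleted conclusions survive the base change to $F^+$.  Since $F^+/L^+$ was chosen with $F/F^+$ linearly disjoint from $\overline{L}^{\ker(\rbar)}(\zeta_p)$, we get $\rbar(\Gamma_{F(\zeta_p)}) = \rbar(\Gamma_{L(\zeta_p)}) = \nG\nL_2(\bbF)\times\{1\}$ (hence adequate), and $\overline{F}^{\ker(\textnormal{ad}^0(\rbar))}$ does not contain $F(\zeta_p)$; moreover $\rbar^{-1}(\nG\nL_2(\bbF)\times\bbF^\times) = \Gamma_F$ because it equalled $\Gamma_L$ over $L^+$ and $F = F^+L$.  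The condition $\nu\circ\rbar = \overline{\varepsilon}^{-1}$ is preserved by restriction.  Because each place of $L^+$ above $p$ is split in $F^+$ and $F_v \cong K_2$ with $L_v \cong K_2$ already, each place $v$ of $F^+$ above $p$ is inert in $F$ with $F^+_v\cong K$, $F_v\cong K_2$ and $\rbar|_{\Gamma_{F^+_v}}\cong\varrhobar$; the unramifiedness of $p$ in $F$ follows from unramifiedness in $L$ and the fact that the $p$-adic places of $L^+$ split in $F^+$.  Unramifiedness of $\rbar$ at finite places $v\nmid p$ is inherited by restriction.  Finally $F/F^+$ is unramified at all finite places: this needs a small argument combining the fact that $L/L^+$ is unramified at all finite places with the fact that $F = F^+L$ and $F^+/L^+$ can be (and, in the construction of the third lemma via \cite[Lem. 4.1.2]{CHT}, is) arranged to be unramified at the places where $L/L^+$ ramifies — i.e.\ nowhere beyond $p$-adic and archimedean places, which split — so that no new ramification in the quadratic extension is introduced; one checks this place by place.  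The automorphy of $(\BC'(\rbar),\overline{\varepsilon}^{-1})$ is the content of the third lemma.

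\textbf{Main obstacle.}  I expect the genuinely nontrivial point to be the second lemma, i.e.\ producing the characteristic-zero lift $r$ of type $\cS$: this is where one must run the Taylor--Wiles--Kisin potential-automorphy machinery of \cite{BLGGT} over an auxiliary solvable totally real extension $M^+/L^+$ chosen so that the $p$-adic places split in $M$, invoke the Fontaine--Laffaille integrality of \cite[Prop. 3.6]{KW-II} to see the relevant local deformation rings are integral domains, deduce finiteness of $R_{\cS_M}^{\textnormal{univ}}$ (hence of $R_{\cS}^{\textnormal{univ}}$) over $\cO$, and conclude a nonzero $E$-point exists by the dimension bound of \cite[Thm. 3.3.7]{bellovingee}.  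Everything else in the corollary is bookkeeping: checking that the various group-theoretic and ramification properties are stable under the final base change $\Gamma_{L^+}\rightsquigarrow\Gamma_{F^+}$, and that the Hodge--Tate weights $\mu$ coming from the local input genuinely satisfy the Fontaine--Laffaille inequalities \eqref{FLineqs}.
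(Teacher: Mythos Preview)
Your proposal is correct and matches the paper's approach: the corollary is stated without proof, being an immediate assembly of the three preceding lemmas exactly as you describe, with the bulleted properties for $F/F^+$ inherited from those for $L/L^+$ via the linear disjointness in the third lemma. Your identification of the second lemma as carrying the real content is accurate, but since that lemma is already proved in the appendix, the corollary itself is pure bookkeeping; your verification that the properties survive base change to $F^+$ is more detailed than anything the paper writes out.
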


\begin{cor}
\label{appendix:cor2}
  Let $\bbG_{/\cO_{F^+}}$ denote the rank 2 unitary group over $\cO_{F^+}$ constructed in Section \ref{global:setup} (see also \cite[\S 6A]{koziolmorra}).  Then there exists a sufficiently small compact open subgroup $\sfK = \prod_v \sfK_v \subset \bbG(\bbA_{F^+}^\infty)$ and a finite set $T$ of places (including all inert places such that $\sfK_v$ is not hyperspecial and all split places such that $\sfK_v \neq \bbG(\cO_{F^+_v})$) for which
  $$S_{\bbG}\left(\sfK,V(\mu + (\underline{0},\underline{1}))^{\textnormal{d}}\right)_{\fm_{\rbar}}\neq 0,$$
  where $\fm_{\rbar} \subset \bbT^T$ is the maximal ideal associated to $\rbar$ (as in \cite[Def. 6.5]{koziolmorra}), and where $\mu$ denotes the Hodge--Tate weights of a crystalline lift of $\varrhobar$ satisfying \eqref{FLineqs}.
\end{cor}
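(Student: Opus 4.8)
The plan is to deduce Corollary \ref{appendix:cor2} from Corollary \ref{appendix:maincor} together with the classical relationship between algebraic automorphic forms on $\bbG$ and automorphic representations of $\nG\nL_2$ over $F$ via base change. First I would apply Corollary \ref{appendix:maincor} to the given $\varrhobar$ (which by hypothesis admits a crystalline lift with Hodge--Tate weights $\mu$ satisfying \eqref{FLineqs}), obtaining a CM field $F/F^+$ and a global $L$-parameter $\rbar: \Gamma_{F^+} \longrightarrow \cG_2(\bbF)$ with all the listed properties, in particular that $(\BC'(\rbar),\overline{\varepsilon}^{-1})$ is automorphic and $\rbar(\Gamma_{F(\zeta_p)})$ is adequate, $\rbar$ is unramified away from $p$, and $F/F^+$ is unramified at all finite places. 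The unitary group $\bbG_{/\cO_{F^+}}$ from Section \ref{global:setup} is then defined, and we must produce a sufficiently small $\sfK$ and a finite set $T$ as in the statement.

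The key step is a lifting-the-weight / automorphy-lifting argument, following the strategy of \cite[\S 7]{koziolmorra} (which in turn follows \cite{CEGGPS, EGS}). Concretely, I would proceed as follows. Choose for each $v' \in \Sigma_p^+$ a conjugate self-dual tame inertial type and a crystalline lift of $\rbar|_{\Gamma_{F^+_{v'}}}$ with Hodge--Tate weights dictated by $\mu$ — the existence of such local crystalline lifts at places above $p$ is exactly what is guaranteed by the hypothesis that $\varrhobar$ (hence each $\rbar|_{\Gamma_{F^+_{v'}}}$, which is isomorphic to $\varrhobar$) admits a crystalline lift with Hodge--Tate weights $\mu$, possibly after using Appendix \ref{appA} to adjust to a convenient shape. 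The automorphy of $(\BC'(\rbar),\overline{\varepsilon}^{-1})$ over $F$, combined with the fact that $\BC'(\rbar)|_{\Gamma_{F_v}}$ has a potentially diagonalizable (in fact Fontaine--Laffaille, since $p$ is unramified in $F$ and the Hodge--Tate weights are small by \eqref{FLineqs}) lift with distinct Hodge--Tate weights at each $v$, puts us in a situation where an automorphy lifting theorem for unitary groups applies. Specifically, I would invoke an $\nG\nL_2$-automorphy lifting theorem in the Fontaine--Laffaille setting (e.g.\ \cite[Thm. 4.3.1]{BLGGT} or the version used in \cite[\S 7]{koziolmorra}) to conclude that any lift of $\BC'(\rbar)$ of the prescribed local type is automorphic over $F$; descending via base change (using that $F/F^+$ is unramified and $\rbar$ is modular in the sense of Subsection \ref{subsub:modularity}, cf.\ the last three sentences of the proof of \cite[Prop. 7.2]{koziolmorra}) yields nonvanishing of a space of automorphic forms on $\bbG$ of the appropriate infinite-level weight.

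Next I would translate this automorphy statement into the assertion about $S_{\bbG}(\sfK, V(\mu+(\underline{0},\underline{1}))^{\textnormal{d}})_{\fm_\rbar}$. The weight $V(\mu + (\underline{0},\underline{1}))$ is the algebraic representation whose associated Hodge--Tate weights match $\mu$ (the shift by $(\underline{0},\underline{1})$ accounts for the normalization conventions relating $\eta$-shifted highest weights to Hodge--Tate weights, exactly as in Subsection \ref{subsec:WM} and the normalization remark following Proposition \ref{prop:PF}), so that a cuspidal automorphic representation $\pi$ of $\bbG(\bbA_{F^+})$ with $\pi_\infty$ of the correct algebraic type and with the correct crystalline local components at $p$ contributes a nonzero vector to $S_{\bbG}(\sfK, V(\mu+(\underline{0},\underline{1}))^{\textnormal{d}})_{\fm_\rbar}$ for a suitable sufficiently small $\sfK$ (taking $\sfK_v$ hyperspecial at inert places, $\sfK_p = \bbG(\cO_{F^+,p})$, and adjusting $\sfK^p$ to pick up the automorphic representation), and $T$ the finite set of bad places. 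Here I would cite \cite[Lem. 6.3, Thm. 6.1]{koziolmorra} to pass between automorphic representations and spaces of algebraic automorphic forms, and the local/global compatibility of \cite[Thm. 6.1(ii)]{koziolmorra} to match the Hecke eigensystem with $\fm_\rbar$. The main obstacle will be making sure all the local conditions align — in particular verifying that the crystalline lift of $\varrhobar$ with weights $\mu$ can be chosen conjugate self-dual with the correct multiplier so that it actually descends to a ${}^C\nU_{1,1}$-valued (equivalently $\cG_2$-valued) parameter and lies on a component of the deformation ring hit by $M_\infty^v$, and checking the bookkeeping between the normalization of Hodge--Tate weights used here (via $\BC'$) and the one used in \cite{koziolmorra}; once these compatibilities are in place, the nonvanishing is essentially automatic from the automorphy statement.
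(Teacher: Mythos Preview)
Your overall shape is right—start from Corollary \ref{appendix:maincor}, descend from $\nG\nL_2(\bbA_F)$ to $\bbG(\bbA_{F^+})$, then translate into algebraic automorphic forms—but you take a substantial and unnecessary detour through automorphy lifting and patching. The paper's proof is much shorter because the preceding lemmas in the appendix already hand you an automorphic representation $\Pi$ of $\nG\nL_2(\bbA_F)$ with \emph{exactly} the Hodge--Tate weights $\mu$: the second lemma constructs a global characteristic-zero lift $r:\Gamma_{L^+}\to\cG_2(\cO)$ with $\HT_{j'}(\BC'(r))$ given by $\mu$, and the third lemma shows $(\BC'(r)|_{\Gamma_F},\varepsilon^{-1})$ is automorphic. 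So the $\Pi$ witnessing automorphy of $\BC'(\rbar)$ already has the correct weight; there is nothing to ``lift the weight'' to.

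Given such a $\Pi$, the paper simply applies Rogawski's descent \cite[Thm.~11.5.1]{rogawski} together with the Jacquet--Langlands correspondence (exactly as in the last paragraph of the proof of \cite[Prop.~7.2]{koziolmorra}) to produce an automorphic representation $\pi$ of $\bbG(\bbA_{F^+})$, and then local/global compatibility shows $\pi$ contributes to $S_\bbG(\sfK,V(\mu+(\underline{0},\underline{1}))^{\textnormal{d}})_{\fm_\rbar}$ for suitable $\sfK$. No choice of tame inertial types, no invocation of \cite[Thm.~4.3.1]{BLGGT}, no $M_\infty^v$, and none of your closing compatibility checks are needed. Your proposal would presumably work if you first reconstructed a global lift with weights $\mu$ and then proved it automorphic, but that is precisely what the two lemmas preceding Corollary \ref{appendix:maincor} already did; you are redoing the appendix rather than using it.
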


\begin{proof}
We put ourselves in the situation of Corollary \ref{appendix:maincor}.  In particular, we apply \cite[Thm. 11.5.1]{rogawski} and the Jacquet--Langlands correspondence (as in the last paragraph in the proof of \cite[Prop. 7.2]{koziolmorra}) to any automorphic representation $\Pi$ of $\nG\nL_2(\bbA_F)$ for which $\overline{r_{\imath}(\Pi)} \cong \BC'(\rbar)$.  This procedure gives a compact open subgroup $\sfK = \prod_v \sfK_v$ of $\bbG(\bbA_{F^+}^\infty)$ (which we may shrink if necessary) and an automorphic representation $\pi$ of $\bbG(\bbA_{F^+})$ which, by local/global compatibility, contributes to the space 
$$S_{\bbG}\left(\sfK, V(\mu + (\underline{0},\underline{1}))^{\textnormal{d}}\right)_{\fm_{\rbar}}.$$
In particular, this implies that 
$$S_{\bbG}\left(\sfK, V(\mu + (\underline{0},\underline{1}))^{\textnormal{d}}\right)_{\fm_{\rbar}} \neq 0.$$
\end{proof}

We shall apply the previous results in the body of the paper using the following lemma.

\begin{cor}
  \label{main-global-lift-cor}
Suppose $\rhobar:\Gamma_K \longrightarrow {}^C\nU_{1,1}(\bbF)$ is a $3$-generic $L$-parameter valued in the $C$-group, and satisfying $\widehat{\imath}\circ \rhobar = \omega$.  Assume further that we have chosen $\lambda = (\lambda_{j'}) \in \{(2,-1),~(1,0)\}^{2f}$ which satisfies
$$\lambda_{j' + f} = \lambda_{j'} \qquad \textnormal{for all $j'$},$$
and $\tau'$ is a tame inertial type of $I_{K_2}$ satisfying $(\tau')^{\varphi^f} \cong \tau'^{\vee}$.  If
$$\JH\left(\overline{\sigma(\tau')\otimes_E \bigotimes_{E,0 \leq j' \leq f - 1} V_E(\lambda_{j'}-(1,0))^{(j')}}\right) \cap \nW^?(\rhobar) \neq \emptyset,$$
then $\rhobar$ has a potentially crystalline lift $\rho$ with inertial type $\tau'$, which satisfies $\HT_{j'}(\BC(\rho)) = \lambda_{j'}$ for all $j' \in \cJ'$, and $\widehat{\imath}\circ \rho = \varepsilon$.
\end{cor}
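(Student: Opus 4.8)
The plan is to reduce to the main global lifting result of Appendix~\ref{appB} (specifically Corollary~\ref{appendix:maincor} and Corollary~\ref{appendix:cor2}) together with the deformation-theoretic component-matching established in Section~\ref{sec:global}. First I would observe that the hypothesis
$$\JH\left(\overline{\sigma(\tau')\otimes_E \bigotimes_{E,0 \leq j' \leq f - 1} V_E(\lambda_{j'}-(1,0))^{(j')}}\right) \cap \nW^?(\rhobar) \neq \emptyset$$
forces, after unwinding the action of the $V_E(\lambda_{j'}-(1,0))^{(j')}$ (which is either trivial or $\Sym^2$-type depending on whether $\lambda_{j'}=(1,0)$ or $(2,-1)$), the existence of a Serre weight $\sigma \in \nW^?(\rhobar)$ lying in the displayed Jordan--H\"older set. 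Via Lemma~\ref{extreslemma} and the mod~$p$ reduction formulas for tensor products with $\Sym^2$, one can pin down the highest weight $\mu$ of some Serre weight for which the associated multi-type crystalline stratum is nonempty; this $\mu$ will play the role of the Hodge--Tate weight datum. Concretely, I would use Lemma~\ref{serre-wt-intn-JHtype} (the analog of \cite[Lem.~4.1.2]{BHHMS}) to see that $\tau' \cong \tau'_{\tw'}$ for some $\tw' \in \textnormal{Adm}'^\vee(t_{\eta'})^\sym$, and then check that $\lambda$ lies in the admissible set of Hodge--Tate weights for that shape.

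Next I would globalize. Since $\rhobar$ is $3$-generic and satisfies $\widehat{\imath}\circ\rhobar = \omega$, Lemma~\ref{cryslift} produces a semisimple crystalline lift of $\rhobar$ with Hodge--Tate weights prescribed by any $F(\mu) \in \nW^?(\rhobar)$; in particular the hypotheses of Corollary~\ref{appendix:maincor} are satisfied (with the appropriate Fontaine--Laffaille inequalities \eqref{FLineqs} verified by the $3$-genericity, possibly after the shift to weights $\lambda$). Passing through the isomorphism $\cG_2 \cong {}^C\nU_{1,1}$ of \cite[\S 2D]{koziolmorra}, we obtain a CM field $F/F^+$ and a modular global $L$-parameter $\rbar:\Gamma_{F^+} \longrightarrow {}^C\nU_{1,1}(\bbF)$ with $\rbar|_{\Gamma_{F^+_v}} \cong \rhobar$ at a fixed place $v$, together with the running genericity and global Taylor--Wiles hypotheses needed to build the patching functor $M_\infty^v(-)$ of Subsection~\ref{subsub:patching-functor}. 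I would then invoke Corollary~\ref{appendix:cor2} to ensure that the relevant space of automorphic forms $S_{\bbG}(\sfK, V(\mu + (\underline{0},\underline{1}))^{\textnormal{d}})_{\fm_{\rbar}}$ is nonzero, so that the associated patched module is nonzero over the multi-type deformation ring.

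Then I would run the argument through the patched module. By exactness of $M_\infty^v$ and Proposition~\ref{prop:PF}\eqref{prop:PF:4}, nonvanishing of $M_\infty^v(\sigma)$ for $\sigma \in \nW^?(\rhobar)$ with $\sigma$ in the intersection above, combined with the filtration of $M_\infty^v(\overline{\sigma(\tau')^\circ \otimes V})$ by copies of $M_\infty^v(\sigma')$ over $\sigma' \in \JH(\cdot) \cap \nW^?(\rhobar)$, gives that $M_\infty^v(V(\lambda')^{\textnormal{d}} \otimes \sigma(\tau')^\circ)$ is nonzero for the appropriate twisted weight $\lambda' = (\un{0},\un{1}) - \un{\fw}(\lambda)$. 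By Proposition~\ref{prop:PF}\eqref{prop:PF:1}, its support is then a union of irreducible components of $R^v_{\lambda,\tau'_v,\infty}$, and in particular this support is nonempty, so $R^{\lambda,\tau'_v}_{\rhobar} \neq 0$. A nonzero reduced $\cO$-flat quotient of $R^\Box_{\rhobar}$ parametrizing potentially crystalline lifts of type $(\lambda, \tau')$ with multiplier $\varepsilon$ has an $\overline{E}$-point by $\cO$-flatness, and that point furnishes the desired lift $\rho$ with $\HT_{j'}(\BC(\rho)) = \lambda_{j'}$ and $\widehat{\imath}\circ\rho = \varepsilon$; it may be arranged semisimple or not as needed, but existence is all that is claimed.

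\textbf{Main obstacle.} The delicate point is the bookkeeping in the first paragraph: correctly translating the Jordan--H\"older hypothesis --- which involves a tensor product with the algebraic representations $V_E(\lambda_{j'}-(1,0))^{(j')}$ in each embedding --- into the precise statement that the corresponding point lies on the irreducible component $\Spec(R^{\lambda,\tau'_v}_{\rhobar})$ of $\Spec(R^{\leq(2,-1),\tau'_v}_{\rhobar})$, using the explicit combinatorics of Theorem~\ref{thm:height+monodromy} and Proposition~\ref{prop:cmpt:match}. One must ensure that the symmetry conditions $\lambda_{j'+f} = \lambda_{j'}$ are compatible with $\tw' \in \textnormal{Adm}'^\vee(t_{\eta'})^\sym$ throughout, and that the mod $p$ reduction of $\sigma(\tau') \otimes (\text{algebraic})$ really does meet $\nW^?(\rhobar)$ in the weight $\sigma$ attached to the component indexed by $\lambda$ --- this is exactly the content one extracts from Proposition~\ref{prop:JH:fct:extgr:U2} and Lemma~\ref{serre-wt-intn-JHtype}, transported from the $\nG\nL_2$ computations of \cite[\S 4]{BHHMS}. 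Once that identification is in place, the remaining steps are formal applications of the patching formalism already developed.
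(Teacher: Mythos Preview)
Your route through the patching functor is a valid alternative in spirit, but it differs from the paper's argument and carries two wrinkles.  The paper does \emph{not} use patching here: after globalizing via Corollary~\ref{appendix:maincor} and obtaining $S_{\bbG}(\sfK, V(\mu)^{\textnormal{d}})_{\fm_{\rbar}} \neq 0$ from Corollary~\ref{appendix:cor2}, it passes to mod~$p$, observes that $F(\mu)^\vee$ is a Jordan--H\"older constituent of the dual of $\sigma(\tau') \otimes \bigotimes_{j'} V_E(\lambda_{j'}-(1,0))^{(j')}$, and uses exactness of the automorphic-forms functor together with \cite[Lem.~6.3]{koziolmorra} to get nonvanishing of the characteristic-zero space with those locally algebraic coefficients.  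The desired lift is then read off directly from the Galois representation attached (via \cite[Thm.~6.1]{koziolmorra}) to an automorphic representation contributing there, using local--global compatibility.  No deformation ring is shown to be nonzero, and no component matching is invoked.  Your approach instead deduces $R^{\lambda,\tau'}_{\rhobar}\neq 0$ from Proposition~\ref{prop:PF}\eqref{prop:PF:1} and then takes an $\overline{E}$-point; this is sound once $M_\infty^v$ is available.

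The wrinkles: (i) constructing $M_\infty^v$ in Subsection~\ref{subsub:patching-functor} requires $4$-genericity at every $v\mid p$, whereas the corollary only assumes $3$-generic --- harmless for the actual application (Theorem~\ref{thm:height+monodromy} takes $N\geq 12$), but a genuine gap for the statement as written.  (ii) Your ``main obstacle'' paragraph invokes Theorem~\ref{thm:height+monodromy} and Proposition~\ref{prop:cmpt:match}; the former would be \emph{circular}, since Theorem~\ref{thm:height+monodromy} uses the present corollary to show its deformation ring has at least $2^f$ components.  Fortunately your paragraph-3 argument does not actually need either result: once $M_\infty^v(\sigma)\neq 0$ and exactness give nonvanishing of the patched module on the lattice, Proposition~\ref{prop:PF}\eqref{prop:PF:1} alone forces $R^{\lambda,\tau'}_{\rhobar}\neq 0$.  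So the obstacle you flag is an overcomplication rather than a real difficulty --- but be careful not to cite results downstream of what you are proving.
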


\begin{proof}
Suppose $F(\mu)$ is a Serre weight contained in the intersection
$$\JH\left(\overline{\sigma(\tau')\otimes_E \bigotimes_{E,0 \leq j' \leq f - 1} V_E(\lambda_{j'} - (1,0))^{(j')}}\right) \cap \nW^?(\rhobar),$$
where $\mu \in X^*(\underline{T}_\nU)$.  By Lemma \ref{cryslift}, $\rhobar$ admits a crystalline lift $\rho$ with Hodge--Tate weights given by $\mu + (\underline{1}, \underline{0})$.  In particular, since $\rhobar$ is 3-generic, Proposition \ref{prop:SW:extgr:U2} implies that $\mu$ lies $2$-deep in the fundamental $p$-alcove, and therefore $\mu + (\underline{1}, \underline{0})$ satisfies the conditions \eqref{FLineqs}.

By composing $\rhobar$ and $\rho$ with the isomorphism ${}^C\nU_{1,1} \cong \cG_2$, we obtain $L$-parameters $\varrhobar$ and $\varrho$ valued in $\cG_2$, and we may put ourselves in the setting of Corollary \ref{appendix:maincor}.  Note that $\HT_{j'}(\BC(\rho)) = \mu_{j'} + (1,0)$ while $\HT_{j'}(\BC'(\varrho)) = \mu_{j'} + (1,0) - (1,1) = \mu_{j'} - (0,1)$ (so that $\mu - (\underline{0},\underline{1})$ also satisfies the inequalities \eqref{FLineqs}).  By Corollary \ref{appendix:cor2}, we have a sufficiently small compact open subgroup $\sfK = \prod_v \sfK_v \subset \bbG(\bbA_{F^+}^\infty)$ for which
  $$S_{\bbG}\left(\sfK,V(\mu)^{\textnormal{d}}\right)_{\fm_{\rbar}}\neq 0.$$
By \cite[Lem. 6.3]{koziolmorra}, this implies
  $$S_{\bbG}\left(\sfK,V(\mu)^{\textnormal{d}}\otimes_{\cO}\bbF\right)_{\fm_{\rbar}} \cong S_{\bbG}\left(\sfK,F(\mu)^\vee\right)_{\fm_{\rbar}}\neq 0.$$
Since $F(\mu)^\vee$ is a Jordan--H\"older factor of $\left(\overline{\sigma(\tau')\otimes_E \bigotimes_{E,0 \leq j' \leq f - 1} V_E(\lambda_{j'}-(1,0))^{(j')}}\right)^\vee$, and the functor of algebraic automorphic forms is exact, we obtain
$$S_{\bbG}\left(\sfK,\left(\overline{\sigma(\tau')\otimes_E \bigotimes_{E,0 \leq j' \leq f - 1} V_E(\lambda_{j'}-(1,0))^{(j')}}\right)^\vee\right)_{\fm_{\rbar}} \neq 0.$$
Another application of \cite[Lem. 6.3]{koziolmorra} implies
$$S_{\bbG}\left(\sfK,\left(\sigma(\tau')\otimes_E \bigotimes_{E,0 \leq j' \leq f - 1} V_E(\lambda_{j'}-(1,0))^{(j')}\right)^\vee\right)_{\fm_{\rbar}} \neq 0.$$
Using \cite[Thm. 6.1]{koziolmorra}, this gives rise to a global Galois representation $r_{\imath}(\pi):\Gamma_F \longrightarrow \nG\nL_2(\overline{E})$.  We then extend $r_{\imath}(\pi)$ to an $L$-parameter $\Gamma_{F^+} \longrightarrow \cG_2(\overline{E})$, apply the isomorphism $\cG_2 \cong {}^C\nU_{1,1}$, and restrict to $\Gamma_{F^+_v}$ for any place $v$ of $F^+$ above $p$.  By local/global compatibility, the resulting $L$-parameter has the desired properties.
\end{proof}

\bibliographystyle{amsalpha}
\bibliography{refs-new}

\end{document}